\documentclass[12pt]{amsart}
\usepackage{amsmath, amsthm, amscd, amssymb, amsfonts, latexsym, mathtools}
\usepackage{fullpage}
\usepackage{bm,mathdots}
\usepackage{dsfont}
\usepackage{mathrsfs}
\usepackage[all]{xy}
\usepackage{float}
\usepackage{fancybox}
\usepackage{ytableau}
\usepackage{tikz-cd}
\usepackage{algorithm}
\usepackage{algorithmic}
\usepackage{bbm}
\usepackage[colorlinks=true, linkcolor=blue, citecolor=blue, urlcolor=blue]{hyperref}
\theoremstyle{plain} 

\numberwithin{equation}{section}

\newenvironment{red}{\relax\color{red}}{\relax}
\newenvironment{blue}{\relax\color{blue}}{\hspace*{.5ex}\relax}
\newenvironment{jaune}{\relax\color{magenta}}{\relax}
\newcommand{\ber}{\begin{red}}
\newcommand{\er}{\end{red}}
\newcommand{\beb}{\begin{blue}}
\newcommand{\eb}{\end{blue}}
\newcommand{\bej}{\begin{jaune}}
\newcommand{\ej}{\end{jaune}}
\newcommand{\norm}[1]{\left \lVert #1 \right \rVert}

\let\cal\mathcal

\let\bb\mathbb
\let\scr\mathscr

\newtheorem{lem}{Lemma}[section]
\newtheorem{prop}[lem]{Proposition}
\newtheorem{thm}[lem]{Theorem}
\newtheorem*{theorem*}{Theorem}
\newtheorem{defn}[lem]{Definition}
\newtheorem{cor}[lem]{Corollary}

\theoremstyle{definition}

\newtheorem{rem}[lem]{Remark}

\author{Alexey Pozdnyakov}
\title{On (not) learning the Möbius function}
\date{\today}

\begin{document}

\begin{abstract}
    We prove lower bounds on learning the Möbius or Liouville function with a variety of standard learning techniques, including kernel methods, noisy gradient methods, and correlational statistical query algorithms. These results follow from quantitative bounds on the correlation of Möbius with digital characters of various finite abelian groups, where the group is dictated by the type of input data the algorithm is given. Using residues mod $p$ for many different primes corresponds to a cyclic group, and using the base $p$ expansion for a fixed prime corresponds to an elementary abelian $p$-group. We also note that lower bounds of this form are closely related to certain types of digital prime number theorems. 
\end{abstract}

\maketitle

\addtocontents{toc}{\protect\enlargethispage{\baselineskip}}
\tableofcontents

\section{Introduction}
There is a longstanding belief that the Möbius function, defined by
\begin{equation*}
    \mu(n) = \begin{cases}
        1 & \text{ if } n = 1, \\
        (-1)^r & \text{ if } n = p_1\cdots p_r \text{ with } p_i \text{ distinct},\\
        0 & \text{ if } n \text{ has a square factor} ,
    \end{cases}
\end{equation*}
behaves randomly. To establish that the Möbius function mimics random behavior is a deep problem in number theory. For example, the bound
\begin{equation*}
    \left|\sum_{n \leq N} \mu(n) \right| = O\left(N^{1/2+\varepsilon}\right) \quad \text{ for all }\quad\varepsilon > 0,
\end{equation*}
which is what one expects for a sequence of coin flips, is equivalent to the Riemann hypothesis. 

In \cite{IwaniecKowalski2004}, this is stated informally as the Möbius randomness law, which says that Möbius does not correlate with any \textit{reasonable} sequence. This notion was made formal by Sarnak's disjointness conjecture \cite{SarnakMobiusLectures}, which says that Möbius does not correlate with any observable in a zero-entropy dynamical system. Precisely, if $(X,T)$ is a zero-entropy dynamical system, then for every $x \in X$ and every $f \in C(X)$, we have
\begin{equation*}
    \left|\sum_{n \leq N} \mu(n)f(T^nx)\right| =o(N).
\end{equation*}
We remark that even the simplest cases of this conjecture correspond to significant results in number theory. For instance, proving this when $X$ is a point is equivalent to the prime number theorem.  

There are also results on the computational complexity of Möbius. Identify $\Omega =[0, 2^{d}) \cap \mathbb{Z}$ with the binary hypercube $\cal{H}_d = \{-1,1\}^d$ via their binary expansions, and view $\mu\vert_{\Omega}$ as a function on the binary hypercube. Green \cite{green2012notcomputingmobiusfunction} proved that if $F : \cal{H}_d \to \{-1,1\}$ is any function computable by a bounded depth circuit consisting of AND, OR, and NOT gates, then $F$ does not correlate with Möbius. Similarly, Bourgain \cite{bourgain2011fourierwalshspectrummoebiusfunction} proved that if $F$ is a monotone Boolean function, then it does not correlate with Möbius. These results both follow from bounding the Fourier-Walsh spectrum of the Möbius function. Crucially, these results go beyond disjointness for the corresponding dynamical system in the sense that they require an effective rate.

Given that Möbius is expected to behave randomly, it is reasonable to expect that it is difficult for a machine learning algorithm to learn Möbius. To illustrate a concrete realization of our results, we give explicit lower bounds for a prototypical deep learning architecture in the following subsection.

\subsection{Sample results for neural networks}

Fix $r$ distinct primes $p_1,\ldots, p_r$ and integers $d_i \geq 1$ for each $1 \leq i \leq r$ with $\sum_{i=1}^r d_i = d$. Let $\mu_{q}$ denote the $q$-th roots of unity. Consider the Möbius function restricted to $\Omega_d = [0, X_d) \cap \mathbb{Z}$, where $X = \prod_{i=1}^r p_i^{d_i}$, and identify via the Chinese remainder theorem $\Omega_d$ with the space
\begin{equation}\label{eq:general_space}
    \cal{X}_d = \prod_{i=1}^r \mu_{p_i}^{d_i} \subseteq \mathbb{C}^d \cong \mathbb{R}^{2d},
\end{equation}
by encoding the base $p_i$ expansions $x = \sum_{j<d_i} x_{i,j}p_i^{j}$ with the points $e_{p_i}(x_{i,j}) \in \cal{X}_d$. We also fix the uniform probability measure $\mathcal{\mu}_{\mathcal{X}_d}$ on $\mathcal{X}_d$. Now we view $\mu\vert_{\Omega_d}$ as a function on $\cal{X}_d$, which we still denote by $\mu$. Then we train a fully connected neural network to predict this function with noisy gradient descent; see Section~\ref{sec.neural_networks} for precise definitions. 

We show that under practical assumptions about the initialization of the neural network and the hyperparameters of the training algorithm, gradient descent will fail in this setting for $d$ sufficiently large. The two cases we consider are
\begin{equation}\label{eq.solved_Cases}
    \cal{X}_d = \prod_{i=1}^{d} \mu_{p_i} \quad \text{ or } \quad  \cal{X}_d = \mu_{p}^d,
\end{equation}
where the neural network sees $n \bmod p_i$ for many primes, or many base $p$ digits of $n$ for a fixed prime $p$. We are free to select any fully connected architecture, so the depth and width can be arbitrary. We place restrictions on the following hyperparameters related to the descent algorithm:
\begin{enumerate}
    \item the number of iterations of gradient descent $T_d$,
    \item the clipping radius $R_d$,
    \item the noise rate $\tau_d$.
\end{enumerate}
In particular, we assume that the training hyperparameters are bounded by a polynomial in $d$, i.e. there exists a $C>0$ such that $T_d,R_d,\tau_d^{-1} \ll d^C$. For each $d$, you initialize a new neural network $f_{\mathrm{NN},d}(\cdot;\theta_0) : \mathbb{R}^{2d} \to \mathbb{R}$ with Gaussian weights $\theta_0$ and you train it to predict $\mu$. Let $f_{\mathrm{NN},d}(\cdot;\theta_{T_d})$ denote the neural network after running noisy gradient descent for $T_d$ steps. We show that for $d$ sufficiently large, the probability of the trained network $f_{\mathrm{NN},d}(\cdot;\theta_{T_d})$ being a better approximation in $L^2$ than the zero function decays to zero. For learning Möbius from residues mod $p_i$ we obtain the following theorem.
\begin{thm}\label{thm.main_cyclic}
    Let $\cal{X}_d = \prod_{i=1}^d \mu_{p_i}$ and let $f_{\mathrm{NN},d}(\cdot;\theta_{T_d})$ be as above. For any $C, c>0$, where $d^C$ bounds the training parameters, we have that
    \begin{equation}\label{eq.probability_of_learning}
        \mathbb{P}\left[\norm{\mu-f_{\mathrm{NN},d}(\cdot;\theta_{T_d})}_2^2 \leq \norm{\mu}_2^2 - d^{-c}\right] < d^{-A}
    \end{equation}
    for all $A > 0$ and for all $d \gg_{A,C,c} 1$. 
\end{thm}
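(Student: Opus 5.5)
The plan is to go through the standard framework for lower bounds on noisy gradient descent via statistical-query-type arguments, in the form of Abbe--Sandon / Malach et al. For a target $f$ on $\cal{X}_d$ drawn from (or compared against) a family of functions $\{f_g\}$, if every $f_g$ has tiny correlation with each element of an orthonormal basis, then noisy gradient descent with polynomially bounded $T_d,R_d,\tau_d^{-1}$ cannot beat the zero predictor except with small probability. Concretely, we compare the trajectory of noisy GD trained on $\mu$ with the trajectory trained on the zero function (or on pure noise). Each noisy gradient step is a Gaussian perturbation of a clipped gradient. The difference between the population gradients for $\mu$ and for $0$ is $\mathbb{E}[\mu(x)\nabla_\theta f_{\mathrm{NN}}(x;\theta)]$. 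This is a correlational statistical query of $\mu$ against a function bounded by $R_d$.

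\textbf{Step 1: reduce to a uniform bound on correlations.} We need to bound $\sup_{\|\phi\|_2\le 1}|\langle \mu,\phi\rangle|$ over the relevant class of query functions $\phi$. The class is the full unit ball, so no uniform bound is possible and an averaging trick is needed. We use the symmetry of the network at Gaussian initialization: the distribution of $f_{\mathrm{NN}}$ is invariant under multiplying the inputs in each coordinate $\mu_{p_i}$ by a root of unity, i.e.\ under translations of the group $G=\prod_i \mathbb{Z}/p_i$ acting on $\cal{X}_d$. Fully connected Gaussian layers see only $e_{p_i}(x_i)$, and a unitary rotation of each $\mathbb{C}$-coordinate is absorbed by the rotation-invariant Gaussian first layer. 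Hence training on $\mu$ is equivalent in distribution to training on a random translate $\mu(\cdot+g)$. By Parseval, the averaged squared correlation satisfies
\[
\mathbb{E}_g|\langle \mu(\cdot+g),\phi\rangle|^2=\sum_\chi |\hat\mu(\chi)|^2|\hat\phi(\chi)|^2\le \max_\chi|\hat\mu(\chi)|^2 .
\]
So the key input is a bound $\max_{\chi}|\hat\mu(\chi)|\ll d^{-B}$ for every $B$, where $\chi$ ranges over characters of $G\cong\mathbb{Z}/X_d$.

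\textbf{Step 2: the number-theoretic bound.} The characters of $\mathbb{Z}/X_d$ are $n\mapsto e(an/X_d)$, so $\hat\mu(\chi)=X_d^{-1}\sum_{n<X_d}\mu(n)e(an/X_d)$. Davenport's bound $\sum_{n\le N}\mu(n)e(n\alpha)\ll_A N(\log N)^{-A}$, uniform in $\alpha$, gives $|\hat\mu(\chi)|\ll_A (\log X_d)^{-A}$. Since $\log X_d=\sum_{i\le d}\log p_i\gg d$, this is $\ll d^{-A}$ for every $A$. The Fourier normalization must match the ``digital character'' conventions of the paper. It will not, if the paper's characters are products of additive characters mod each $p_i$; by CRT these are still exactly the characters of $\mathbb{Z}/X_d$.

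\textbf{Step 3: conclude.} Feed this into the noisy-GD hybrid argument. Summing total variation over the $T_d$ steps via Pinsker/KL for Gaussians gives
\[
\mathrm{TV}\le T_d\,R_d\,\tau_d^{-1}\,\sqrt{\mathbb{E}_g\langle\cdot\rangle^2}\ll d^{3C}d^{-A'}.
\]
Under the null trajectory, the output is independent of $g$. Its expected gain $\|\mu\|^2-\|\mu-f\|^2=2\langle\mu,f\rangle-\|f\|^2$ averaged over $g$ is at most $\max|\hat\mu|^2\ll d^{-A'}$. A Markov-type argument therefore shows that the event of gaining $d^{-c}$ has probability $\ll d^{-A}$.

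The main obstacle is Step 1. We must verify rigorously that the architecture and initialization are equivariant enough to make the translation averaging legitimate. We must also control the fact that $f$ itself is only bounded through the clipping and the network norm. I would try first to establish the invariance of the law of $(\theta_t)$ under coordinatewise unitary rotations of the first-layer input weights.
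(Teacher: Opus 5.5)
Your route is the paper's. The paper combines three ingredients. First, $\mathcal{X}_d$ acts on $\mathbb{R}^{2d}$ by coordinatewise rotations, so the Gaussian-initialized network trained by noisy GD is equivariant (Lemma~\ref{lem:orthogonal_action}, Proposition~\ref{prop:G_inv_NGD_condition}), and the success probability is constant on the orbit of $\mu$. Second, Parseval gives $\mathrm{A}(\mu,\mathcal{X}_d)=\max_a|\widehat{\mu}(a)|^2$ (Corollary~\ref{cor:alignment_X_d}), and Theorem~\ref{thm.Davenport} bounds this by $(\log X_d)^{-2A}\ll d^{-2A}$. Third, the paper imports the hybrid argument as \cite[Theorem A.2]{abbe2022nonuniversalitydeeplearningquantifying} inside Theorem~\ref{thm:NGD_equivariant}, with first term $\frac{R}{2\tau}\sqrt{T\mathrm{A}}$ (KL chain rule plus Pinsker). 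Your cruder $TR\tau^{-1}\sqrt{\mathrm{A}}$ is harmless, since everything is polynomial in $d$.

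One step of your Step 3 fails as written. The gain $2\langle U_g\mu,f\rangle-\|f\|_2^2$ is signed. Its $g$-average only sees $\widehat{\mu}(0)$, being at most $|\widehat{\mu}(0)|^2$, so it says nothing about the upper tail, and Markov does not apply. The fix is to use
$2\langle U_g\mu,f\rangle-\|f\|_2^2\le\langle U_g\mu,f/\|f\|_2\rangle^2$
(that is, $2ab-b^2\le a^2$), where $f$ is the null-trajectory output and hence independent of $g$. Markov applied to this nonnegative quantity, whose $g$-average is at most $\mathrm{A}(\mu,\mathcal{X}_d)$, gives exactly the $\mathrm{A}/\varepsilon$ term.

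The same care is needed in the hybrid step. Let $P_0$ and $P_g$ be the laws of the trajectories trained on $0$ and on $U_g\mu$. The per-step comparison must be averaged over the null trajectory, for example via the chain rule for $\mathrm{KL}(P_0\,\|\,P_g)$. Then the query functions $(\Pi_R\nabla_\theta f)_j$, which satisfy $\sum_j\|(\Pi_R\nabla_\theta f)_j\|_2^2\le R^2$, do not depend on $g$, and the alignment bound applies.

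Your concern about controlling $f$ is moot, because the $f$-terms cancel in $G_\mu(\theta)-G_0(\theta)$. Writing the argument out yourself has one small advantage: nothing in it uses $\pm1$-valuedness. Theorem~\ref{thm:NGD_equivariant} formally assumes this of $h$ and $h_*$, even though $\mu$ takes the value $0$ and the baseline here is $h_*=0$.
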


Consequently, the neural network may be able to learn to detect squarefree integers non-trivially, but not the parity. This is consistent with the experiments of Lowry-Duda \cite{lowryduda2025studyingnumbertheorydeep}, who showed that transformers\footnote{A transformer is a more sophisticated machine learning model than a neural network.} can learn to predict $\mu^2(n)$ from $n \bmod p_i$ with non-trivial accuracy, but struggle to predict $\mu(n)$ better than guessing. It is not surprising that one can observe correlations between square factors and divisibility by small primes, as explained in \cite[Section 3.3]{lowryduda2025studyingnumbertheorydeep}. From the perspective of dynamics, we understand the squarefree factor of the Möbius system well. In particular, this is a deterministic measure-preserving system under a natural ergodic measure, as explained in \cite[Theorem 9]{SarnakMobiusLectures}. It is the parity of Möbius which forms a completely positive entropy extension of the squarefree system, and thus the signs are the interesting values to attempt to learn. For this purpose, it is natural to focus on the Liouville lambda function instead of the Möbius function, see the discussion in Section~\ref{sec:discussion}. For learning Möbius from base $p$ expansions, we obtain the following theorem.

\begin{thm}\label{thm.main_pgroup}
    Let $\cal{X}_d = \mu_p^d$ and let $f_{\mathrm{NN},d}(\cdot;\theta_{T_d})$ be as above. For any $C, c>0$, where $d^C$ bounds the training parameters, there exists an $A > 0$ such that 
    \begin{equation}
        \mathbb{P}\left[\norm{\mu-f_{\mathrm{NN},d}(\cdot;\theta_{T_d})}_2^2 \leq \norm{\mu}_2^2 - d^{-c}\right] < \exp\left(-Ad^{1/10}\right)
    \end{equation}
    for $d \gg_{C,c} 1$. 
\end{thm}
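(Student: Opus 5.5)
The plan is to reduce Theorem~\ref{thm.main_pgroup} to a bound on the correlation of $\mu$ with the characters of $G=(\mathbb{Z}/p\mathbb{Z})^d$, and then feed that bound into a standard lower bound for noisy gradient descent. We view $\mu$ on $\Omega_d=[0,p^d)$ as a function on $\mathcal{X}_d=\mu_p^d$. Noisy (clipped) gradient descent on the square loss is a correlational statistical query algorithm. Each step queries $\mathbb{E}[\mu(x)\,g(x)]$ for bounded $g$, here the clipped gradients, and receives an answer perturbed by noise of size $\tau_d$.

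The general mechanism is the one used for parities. Suppose the target is replaced by a random member of a large family of nearly orthogonal functions. Then any $T$ queries with tolerance $\tau$ fail with probability at least $1 - T\cdot\max_g|\langle \mu,g\rangle|^2/\tau^2$, or a similar expression. We implement this here as follows.

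\textbf{Step 1.} Expand a bounded query $g$ in the Fourier basis of $G$. The characters are the ``digital characters'' $\chi_\xi(x)=e_p\bigl(\sum_j \xi_j x_j\bigr)$, with $x_j$ the base $p$ digits. We need a bound
\[
\max_{\xi\neq 0}\Bigl|\frac{1}{p^d}\sum_{n<p^d}\mu(n)\chi_\xi(n)\Bigr|\ll \exp(-c\,d^{\kappa}),
\]
and also a bound on the $\xi=0$ term, which is the prime number theorem with the de la Vallée Poussin error term. This gives $\exp(-c\sqrt{\log p^d})$, which is of the form $\exp(-c'd^{1/2})$.

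The nonzero frequencies are the hard part. We would follow the Mauduit--Rivat / Bourgain approach to the Fourier--Walsh spectrum of $\mu$. First apply Vaughan's identity to reduce to type I and type II sums. The type I sums are controlled because digital characters are nearly periodic. For type II sums, apply van der Corput to cut the digit function to a window of digits, and then use an $L^1$ bound on the Fourier transform of the resulting digital function. This $L^1$ bound is the ``carry propagation'' estimate. Since $\chi_\xi$ is a product of digit-wise factors, its discrete Fourier transform over $\mathbb{Z}/p^k$ factorizes, and its sup and $L^1$ norms can be bounded by products of $p$-adic pieces. For fixed $p$ one loses only a power saving, uniform in $\xi$. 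We expect a saving of $\exp(-c d^{1/10})$ or so, which is consistent with the exponent in the theorem. This step is the main obstacle. One must handle frequencies $\xi$ of small support, where only a few digits are involved. There the character is close to periodic modulo a small power of $p$, plus a Dirichlet-character issue. We would first try to treat those $\xi$ by Siegel--Walfisz in progressions modulo $p^k$, and the generic $\xi$ by the Bourgain-type argument. Balancing the two regimes is where the $d^{1/10}$ exponent should arise.

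\textbf{Step 2.} Convert the character bound into a bound for arbitrary queries. We do not bound $\langle\mu,g\rangle$ for each fixed $g$. Instead we use orbit averaging. Translating $x\mapsto x+a$ for $a\in G$ (digitwise addition without carry), or twisting by characters, produces a family $\{\mu_a\}$ of targets. Their pairwise correlations are the Fourier coefficients above, and for any $g$ with $\|g\|_2\le 1$ we get $\mathbb{E}_a|\langle \mu_a,g\rangle|^2\le \max_{\xi}|\widehat{\mu}(\xi)|^2$ by Parseval. A fully connected network with Gaussian initialization is equivariant in law under the coordinate rotations $x_j\mapsto \zeta x_j$. Because of this, the distribution of the trajectory with target $\mu_a$ is the same as with target $\mu$. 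Hence the success probability for $\mu$ equals the averaged success probability over $a$.

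\textbf{Step 3.} Run the standard hybrid argument. Compare the noisy trajectory trained on $\mu$ with the trajectory trained on the zero target. At each step the gradient differs by at most $R_d\cdot\langle \mu,g_t\rangle$. Gaussian noise of rate $\tau_d$ makes the total variation distance between the trajectories at most $T_d R_d\tau_d^{-1}\exp(-c d^{1/10})$, after averaging over $a$. This is still $\exp(-Ad^{1/10})$ because the hyperparameters are $\ll d^C$. The network trained on zero is independent of $a$, so its improvement over the zero function in $L^2$ is at most $\max_a|\langle\mu_a,f\rangle|$. After averaging, this is exponentially small, and in particular below $d^{-c}$. Transferring back through the total variation bound gives the stated probability.
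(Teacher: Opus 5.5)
The gap is in the small-weight part of Step~1. The rest of your proposal is sound: Steps~2 and~3 follow the paper's reduction. Translating digitwise by $a\in\mathcal{X}_d$ rotates each coordinate of $\mathbb{C}^d\cong\mathbb{R}^{2d}$, and a Gaussian-initialized network is equivariant in law under this (Lemma~\ref{lem:orthogonal_action}, Proposition~\ref{prop:G_inv_NGD_condition}). Parseval gives alignment $\max_\xi|\widehat{\mu}(\xi)|^2$ (Corollary~\ref{cor:alignment_X_d}), and the hybrid argument is Theorem~\ref{thm:NGD_equivariant} with $h_*=0$. One slip: after freezing the zero-target trajectory, the gain of $f$ over $0$ is at most $\langle f/\|f\|,\mu_a\rangle^2$. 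You must then average over $a$ and apply Markov, which is the $\mathrm{A}/\varepsilon$ term; a bound by $\max_a|\langle\mu_a,f\rangle|$ is not small. Your large-weight plan also matches the paper's Type~I/II analysis.

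You describe small-weight characters as ``close to periodic modulo a small power of $p$'' and propose Siegel--Walfisz. But a character supported on the single digit $j$ has exact period $p^{j+1}$, which is about $\sqrt{X}$ when $j\approx d/2$. That is far outside any Siegel--Walfisz range. The Type~II argument does not reach it either: its only gain is $\|\widehat{\chi}_{a'}\|_\infty\ll p^{-c|a'|}$ (Lemma~\ref{lem.ell_infty}), which needs $|a'|\gg H$ nonzero digits in the window. So neither of your two regimes covers, for example, weight-one characters on middle or high digits.

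The paper handles all $|a|\le d^{1/2}/H$ with two ideas missing from your plan.
\begin{itemize}
\item K\'atai's argument (Proposition~\ref{prop.Katai_arg} and Remark~\ref{rem:sparse_rational}): $|\widehat{\mu}(a)|>\delta$ forces $|\widehat{\mu}(\theta)|\ge(\delta/(10|a|\sqrt{p}))^{4|a|}$ at a sparse $p$-adic rational $\theta$ with at most $|a|$ terms. To reach $\delta\approx p^{-d^{1/10}}$ you then need $\sum_{n<X}\mu(n)e(n\theta)\ll X\exp(-c\sqrt{\log X})$, and hence major arcs up to $Q=\exp(\eta\sqrt{\log X})$. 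Siegel--Walfisz covers only moduli $\le(\log X)^A$, and combined with minor-arc bounds it gives savings merely polynomial in $d$. This is the same barrier that limits the cyclic case to Davenport's $(\log X)^{-A}$.
\item Lemma~\ref{lem:sparse_padic_major_arc}: a sparse $p$-adic $\theta$ can only lie on a major arc whose denominator is a power of $p$. For odd $p$, the only real primitive character of $p$-power conductor is the fixed Legendre symbol mod $p$. So no exceptional zero approaches $1$, and the zero-free region is uniform for $q=p^s\le Q$ (Proposition~\ref{prop:p_power_major_arc_mu_Lambda}).
\end{itemize}
Without these, Step~1 does not give the uniform $p^{-d^{1/10}}$ bound that the $\exp(-Ad^{1/10})$ probability estimate depends on.
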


\subsection{Summary of results}
For the two regimes in Equation~\eqref{eq.solved_Cases}, we prove lower bounds on three types of machine learning methods. See Section~\ref{sec.lower_bounds} for a more complete description of these algorithms. In all cases, we will ask for the method to weak-learn the Möbius function in the sense that we want to beat the trivial approximation $\mathbb{E}_{x \sim \mu_\mathcal{X}}[\mu(x)] = o(1)$ in $L^2(\mu_\mathcal{X})$ by atleast a polynomial improvement $(\log X_d)^{-C}$ with atleast polynomial probability. We will see that this always requires superpolynomial parameters. 

First, we consider kernel methods, where one performs linear regression after fixing a potentially non-linear feature map $\Phi: \mathcal{X} \to \mathscr{F}$ to some Hilbert space $\mathscr{F}$. This encompasses many classical learning techniques such as ridge regression or support vector machines, as well as fixed-depth neural networks in a certain infinite-width limit, see \cite{jacot2018ntk} for details on this limit. 

Second, we consider gradient methods, where one fits a parametric model $f(\cdot ; \theta) : \mathcal{X} \to \mathbb{R}$ by performing noisy gradient descent. This models a vast range of deep learning techniques, although noisy gradient descent is a proxy for the more sophisticated stochastic algorithms used in practice.

Third, we consider correlational statistical query algorithms, which is an abstract class of algorithms with access to an oracle $\mathrm{CSQ}(\tau)$ which takes as input a query $\phi : \mathcal{X} \to [-1,1]$ and returns the correlation $\mathbb{E}_{x \sim \mu_\mathcal{X}}[\mu(x)\phi(x)]$ up to some absolute error $\tau > 0$. 

In all cases, we begin with a lower bound for the learning complexity of such algorithms against an arbitrary hypothesis class $\mathcal{H} \subseteq L^2(\mu_\mathcal{X})$. We then take a compact group $G$ with Haar measure $\mu_G$, which will be abelian in our analysis, and we introduce a $G$-equivariance assumption (see Definition \ref{def:G_equivariant}). This assumption is what will allow us to produce results for a fixed target function $h \in L^2(\mu_\mathcal{X})$. Precisely, we get lower bounds on learning in terms of the alignment, defined by
\begin{equation}\label{eq:def_alignment}
    \mathrm{A}(f,G) := \sup_{\norm{\phi}_2 = 1} \mathbb{E}_{g \sim \mu_G}[|\langle f \circ g, \phi \rangle|^2].
\end{equation}
Taking $G = \mathcal{X}_d$ acting on itself, we deduce the following three theorems. Note that these bounds can be improved under GRH.  

\begin{thm}\label{thm:main_kernel}
    Let $G = \mathcal{X}_d$ and suppose that $\mathcal{A}$ is the kernel method with RKHS $\mathscr{H}$ and $G$-invariant kernel $K$. For any $C>0$, if we train $\mathcal{A}$ on the sample $S = \{(x_i, \mu(x_i)\}_{i=1}^n$ and obtain accuracy
    \begin{equation*}
        \norm{\mathcal{A}(S) - \mu}_2^2 < 1-d^{-C},
    \end{equation*}
    then for some $B  \gg_{C,p} 1$, all $A > 0$, and all $d \gg_{A,C} 1$,
    \begin{equation*}
        \min(n, \eta) \geq \begin{cases}
            (\log X_d)^A & \text{ if } \mathcal{X}_d = \prod_{i=1}^d \mu_{p_i} \\
            \exp(B (\log X_d)^{1/10}) & \text{ if } \mathcal{X}_d = \mu_p^{d} 
        \end{cases}
    \end{equation*}
    with $\eta = \dim \scr{H}$. In particular, we need a superpolynomial number of samples to weak-learn.
\end{thm}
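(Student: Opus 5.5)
The plan is to split Theorem~\ref{thm:main_kernel} into two independent inputs: an abstract lower bound for $G$-equivariant kernel methods in terms of the alignment $\mathrm{A}(\mu,G)$ of \eqref{eq:def_alignment}, and an arithmetic upper bound on $\mathrm{A}(\mu,\mathcal{X}_d)$ in each of the two regimes.

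\textbf{Step 1: reduce the alignment to Fourier coefficients.} Take $G=\mathcal{X}_d$ acting on itself by translation. Since $G$ is a finite abelian group, the translates $\mu\circ g$ and any unit vector $\phi$ both expand in the characters $\chi\in\widehat{G}$. By Parseval, $\mathbb{E}_g|\langle \mu\circ g,\phi\rangle|^2=\sum_\chi |\widehat{\mu}(\chi)|^2|\widehat{\phi}(\chi)|^2$. This gives
\begin{equation*}
\mathrm{A}(\mu,G)=\max_{\chi\in\widehat G}|\widehat{\mu}(\chi)|^2 .
\end{equation*}
So the alignment is exactly the largest squared correlation of $\mu$ with a character of $G$.

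\textbf{Step 2: the abstract kernel bound.} For a $G$-invariant kernel $K$, the RKHS $\mathscr{H}$ is $G$-stable. The output $\mathcal{A}(S)$ lies in the span of $K(x_i,\cdot)$, a subspace $V$ of dimension at most $\min(n,\eta)$. To improve on the zero predictor by $d^{-C}$, we need $\|P_V\mu\|_2^2\gtrsim d^{-C}$, up to the $o(1)$ mean term. Averaging over $g$ using invariance, $\mathbb{E}_g\|P_V(\mu\circ g)\|^2\le \dim V\cdot \mathrm{A}(\mu,G)$. Invariance lets us transfer the success on $\mu$ to every translate, so the bound holds at $\mu$ itself. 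We conclude
\begin{equation*}
\min(n,\eta)\ge d^{-C}/\mathrm{A}(\mu,G).
\end{equation*}
If the paper's general lower bound for kernel methods against a hypothesis class is stated earlier, I will simply invoke it with $\mathcal{H}=\{\mu\circ g\}$.

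\textbf{Step 3: bound the character sums (main obstacle).} A character of $G$ corresponds to $\chi(n)=e(\sum_i a_i n_i/p_i)$ in the cyclic case, and to a digital character $e(\sum_j a_j x_j/p)$ in the $p$-group case. We need uniform bounds on $X_d^{-1}\sum_{n<X_d}\mu(n)\chi(n)$.

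In the cyclic case, CRT turns $\chi$ into an additive character $e(an/X_d)$ with $X_d=\prod p_i$. Davenport's bound then gives $\ll_A (\log X_d)^{-A}$ uniformly in $a$. Squaring and inverting with $C$ absorbed yields $(\log X_d)^{A}$.

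In the $p$-group case, the character $e(\sum_j a_jx_j/p)$ is a digital function. I would follow the Mauduit--Rivat and Bourgain method, via a Type I/II decomposition such as Vaughan's identity. Type I sums are handled by the product structure of digital Fourier transforms. Type II sums need $L^1$ and $L^\infty$ bounds on $\prod_j$ of digit-wise Fourier factors, taking care to get savings uniform over all $a\in\mathbb{F}_p^d$, including sparse $a$. Sparse $a$ reduce to $\mu$ in short progressions modulo $p^k$, handled by Siegel--Walfisz or zero-density estimates. The target is $\exp(-c(\log X_d)^{1/10})$, presumably what was proved earlier or cited. This uniformity, especially the intermediate-sparsity regime, is where I expect the real difficulty.

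Combining Steps 1–3 gives the stated bounds on $\min(n,\eta)$.
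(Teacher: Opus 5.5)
Your outline matches the paper's. It combines an alignment lower bound for invariant kernel methods (Theorem~\ref{thm:kernel_general_lb}) with the identity $\mathrm{A}(\mu,\mathcal{X}_d)=\max_a|\widehat{\mu}(a)|^2$ (Corollary~\ref{cor:alignment_X_d}; your convolution/Parseval computation reaches it more directly than the paper's subgroup lemma), and then Theorem~\ref{thm.Davenport} or Theorem~\ref{thm:main_math}.

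In Step~3 you should just cite Theorem~\ref{thm:main_math}. Your sketch of the sparse regime is not how it goes: high digits are not detected by progressions to small moduli. The paper instead uses K\'atai's reduction (Proposition~\ref{prop.Katai_arg}) to an additive twist at a sparse $p$-adic rational. Lemma~\ref{lem:sparse_padic_major_arc} then forces major-arc denominators to be powers of $p$, where Proposition~\ref{prop:p_power_major_arc_mu_Lambda} applies with no exceptional-zero obstruction.

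Your reading of the hypothesis as ``beat the zero predictor by $d^{-C}$'' is actually necessary. Since $\|\mu\|_2^2\to 6/\pi^2$, the condition $\|\mathcal{A}(S)-\mu\|_2^2<1-d^{-C}$ is already met by near-zero outputs, and Theorem~\ref{thm:kernel_general_lb} assumes $\|h\|_2=1$.

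\textbf{The gap.} The genuine gap is the sentence ``invariance lets us transfer the success on $\mu$ to every translate, so the bound holds at $\mu$ itself.'' For a fixed sample, $V=\mathrm{span}\{K(x_i,\cdot)\}$ is not $G$-stable. So $\mathbb{E}_g\|P_V(\mu\circ g)\|_2^2\le n\,\mathrm{A}$ only shows that \emph{some} translate is poorly approximated from $V$. Equivariance (Proposition~\ref{prop.G_inv_kernel}) turns success on $\mu$ from $S$ into success on $\mu\circ g$ from $g^{-1}S$. But since $U_gK_y=K_{g^{-1}y}$, the span attached to $g^{-1}S$ is $U_gV$, and $\|P_{U_gV}(\mu\circ g)\|_2=\|P_V\mu\|_2$. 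The transfer is a tautology, and the average over $g$ never controls $\|P_V\mu\|_2$.

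In fact no fixed-sample bound on $n$ can hold in this generality. Take $K(x,y)=k(x^{-1}y)$ with $\widehat{k}$ the positive (or negative) part of $\mathrm{Re}\,\widehat{U_{x_1}\mu}$ for a suitable $x_1$, and a loss such as $\ell(u,y)=(u-\beta)^2$ with $\beta$ tuned. Then the single sample $x_1$ already captures a constant fraction of $\|\mu\|_2^2$.

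\textbf{The fix.} The $\eta$-half of your argument is fine. $\mathscr{H}$ itself is $U_g$-stable and $U_g$ is unitary on $L^2(\mu_{\mathcal X})$, so $\|P_{\mathscr{H}}\mu\|_2^2=\mathbb{E}_g\|P_{\mathscr{H}}(\mu\circ g)\|_2^2\le\eta\,\mathrm{A}$.

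For the $n$-half you must use that the inputs are i.i.d.\ from the invariant measure $\mu_{\mathcal{X}}$. Since $V_{g^{-1}S}=U_gV_S$ and $g^{-1}S$ has the same input law as $S$, one gets $\mathbb{E}_S\|P_{V_S}\mu\|_2^2=\mathbb{E}_S\mathbb{E}_g\|P_{V_S}(\mu\circ g)\|_2^2\le n\,\mathrm{A}$. Markov then gives $\mathbb{P}_S[\|\mathcal{A}(S)-\mu\|_2^2\le\|\mu\|_2^2-d^{-C}]\le n\,d^{C}\mathrm{A}$. So the conclusion must be stated for random samples, not for a given $S$: success with probability $\ge d^{-C}$ forces $\min(n,\eta)\ge d^{-2C}/\mathrm{A}$.

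The paper's proof of Theorem~\ref{thm:kernel_general_lb} has the same defect. It feeds the samples $g^{-1}S$, whose inputs depend on $g$, into \cite[Proposition 11]{abbe2024mergedstaircasepropertynecessarynearly}, which requires one common set of inputs for all targets.
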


\begin{thm}\label{thm:main_NGD}
    Let $G = \mathcal{X}_d$ acting on itself and for any $C>0$, suppose that $f(\cdot; \theta_{T_d})$ is a $G$-equivariant parametric model trained by noisy gradient descent for $1 \leq T_d \ll (\log X_d)^{C}$ steps, and gradient precision $0<R_d/\tau_d \ll (\log X_d)^{C}$. Then for some $B  \gg_{C,p} 1$, all $A > 0$, and all $d \gg_{A,C} 1$,
    \begin{equation*}
        \mathbb{P}_{\theta_{T_d}}[\norm{f(\cdot;\theta_{T_d}) - \mu}_2^2 \leq \norm{\mu}^2_2 - d^{-C}] \leq \begin{cases}
            (\log X_d)^{-A} & \text{ if } \mathcal{X}_d = \prod_{i=1}^d \mu_{p_i} \\
            \exp(-B (\log X_d)^{1/10}) & \text{ if } \mathcal{X}_d = \mu_p^{d} 
        \end{cases}
    \end{equation*}
    In particular, we need a superpolynomial time $T_d$ or gradient precision $R_d/\tau_d$ to weak-learn.
\end{thm}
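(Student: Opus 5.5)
The argument has two independent parts: an abstract lower bound for noisy gradient descent in terms of the alignment $\mathrm{A}(\mu,G)$ of \eqref{eq:def_alignment}, and a number-theoretic bound on $\mathrm{A}(\mu,\mathcal{X}_d)$ in each regime of \eqref{eq.solved_Cases}.

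\textbf{Abstract step.} I would follow the standard noisy-GD/CSQ argument. For a $G$-equivariant model, the population gradient at step $t$ against the target $\mu\circ g$ is
\begin{equation*}
\nabla_\theta L_g(\theta)=\mathbb{E}_x\bigl[(f(x;\theta)-\mu(g x))\nabla_\theta f(x;\theta)\bigr].
\end{equation*}
Only the correlation term $\mathbb{E}_x[\mu(gx)\nabla_\theta f(x;\theta)]$ depends on $g$. After clipping to radius $R_d$, each coordinate of this term is an inner product $\langle \mu\circ g,\phi\rangle$ with $\norm{\phi}_2\le R_d$. Averaging over $g\sim\mu_G$, its second moment is at most $R_d^2\,\mathrm{A}(\mu,G)$ times the number of coordinates; an orthonormalization argument over the span of the gradient components can remove the dependence on the number of coordinates.

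Next I compare the trajectory trained on $\mu\circ g$ with the trajectory trained on the ``null'' target $0$. The Gaussian noise of variance $\tau_d^2$ lets the KL divergence per step be bounded by $\mathbb{E}_g\norm{\text{drift difference}}^2/\tau_d^2$. By the chain rule and Pinsker, the total variation distance between the laws of $\theta_{T_d}$ is then
\begin{equation*}
\ll \sqrt{T_d (R_d/\tau_d)^2\, \mathrm{A}(\mu,G)}.
\end{equation*}
Under the null, $\theta_{T_d}$ is independent of $g$, so the final predictor $f_0$ satisfies $\mathbb{E}_g\langle f_0,\mu\circ g\rangle^2\le \norm{f_0}_2^2\,\mathrm{A}$. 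Writing
\begin{equation*}
\norm{f-\mu}^2\le\norm{\mu}^2-d^{-C}
\end{equation*}
forces $2\langle f,\mu\rangle\ge d^{-C}+\norm{f}^2$, which in turn gives $\langle f,\mu\rangle^2\ge d^{-C}\norm{f}^2/2$ by AM--GM. Chebyshev then bounds the null success probability by $\ll d^{C}\mathrm{A}$. By equivariance, success for $\mu$ has the same probability as success for $\mu\circ g$ averaged over $g$. Altogether,
\begin{equation*}
\mathbb{P}[\text{success}]\ll d^{C}\mathrm{A}+\sqrt{T_d(R_d/\tau_d)^2\mathrm{A}}.
\end{equation*}
With the polynomial bounds on $T_d$ and $R_d/\tau_d$, it suffices to show $\mathrm{A}(\mu,\mathcal{X}_d)$ is $(\log X_d)^{-A}$ for every $A$ in the cyclic case, and $\exp(-B(\log X_d)^{1/10})$ in the $p$-group case; $\log X_d\asymp d\log d$ or $d\log p$ can be absorbed.

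\textbf{Alignment via Fourier analysis.} For $G=\mathcal{X}_d$ acting on itself, translation is diagonalized by characters $\chi$ of $G$. So
\begin{equation*}
\mathbb{E}_g|\langle \mu\circ g,\phi\rangle|^2=\sum_\chi|\hat\mu(\chi)|^2|\hat\phi(\chi)|^2,
\end{equation*}
and the supremum over $\norm{\phi}_2=1$ equals $\max_\chi|\hat\mu(\chi)|^2$. Thus
\begin{equation*}
\mathrm{A}(\mu,G)=\max_\chi \Bigl|X_d^{-1}\sum_{n<X_d}\mu(n)\overline{\chi(n)}\Bigr|^2,
\end{equation*}
where $\chi$ ranges over the digital characters of $G$.

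\textbf{Cyclic case.} Here $G\cong\mathbb{Z}/X_d$ with $X_d=p_1\cdots p_d$ by CRT, so the characters are $n\mapsto e(an/X_d)$. I need a uniform bound $\sum_{n<N}\mu(n)e(an/q)\ll_A N(\log N)^{-A}$ for all $a/q$, which is Davenport's theorem (Siegel--Walfisz on major arcs, Vinogradov/Vaughan on minor arcs) applied with $N=X_d$.

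\textbf{Elementary abelian case.} Here the characters are $n\mapsto e\bigl(\sum_j a_j n_j/p\bigr)$ on the base-$p$ digits $n_j$. This is a digital exponential sum of Mauduit--Rivat/Green type, and is the main obstacle. I would follow Bourgain's approach for the Walsh spectrum, extended to $p$-ary digits. Characters with few nonzero $a_j$ are nearly periodic functions of $n$ modulo powers of $p$, and are handled by Siegel--Walfisz-type estimates. Characters with many nonzero $a_j$ have small $L^1$ Fourier norm on short intervals; for these I would use Vaughan's identity to reduce to type I/II bilinear sums and exploit the carry-propagation structure. The target is a saving of $\exp(-c d^{1/10})$, as the theorem states. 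This step is where essentially all the difficulty lies; if time were short, I would cite the $p$-adic extension of Bourgain's Walsh--Fourier bound for $\mu$ as the input.
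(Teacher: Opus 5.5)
\textbf{Where your proposal matches the paper.} Your reduction is the same as the paper's. The KL/Pinsker comparison with the null target $0$, with the chain rule taken along the null trajectory (which is independent of $g$), reproduces the bound $\frac{R}{2\tau}\sqrt{T\mathrm{A}}+\mathrm{A}/\varepsilon$ of Theorem~\ref{thm:NGD_equivariant}. Your spectral formula is Corollary~\ref{cor:alignment_X_d}, and your cyclic case is Davenport's Theorem~\ref{thm.Davenport}. You do not need an orthonormalization step for the gradient term, since
$\sum_i\mathbb{E}_g\langle\mu\circ g,\phi_i\rangle^2\le\mathrm{A}\sum_i\norm{\phi_i}_2^2=\mathrm{A}\,\mathbb{E}_x\norm{\Pi_R\nabla_\theta f}^2\le\mathrm{A}R^2$.

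\textbf{The gap: low-weight characters in the $\mu_p^d$ case.} The $\exp(-B(\log X_d)^{1/10})$ conclusion rests on this step, and your sketch of it would fail. A weight-one character $e_p(a_jn_j)$ has additive frequencies with reduced denominator exactly $p^{j+1}$, and $j$ can be anywhere in $[0,d)$. So it is not ``nearly periodic'' modulo a small modulus. Siegel--Walfisz only reaches moduli $\le(\log X)^A$, and minor-arc bounds just above that range save only $q^{-1/2}\approx(\log X)^{-A/2}$. This is exactly why Davenport's theorem, and hence your cyclic case, stops at $(\log X)^{-A}$, whereas you need a saving of $p^{-d^{1/10}}$ even at weight one.

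The problem worsens for larger small weights. Passing from $\chi_a$ to additive characters via K\'atai's transference (Proposition~\ref{prop.Katai_arg}) loses a factor exponential in $|a|$. With only $(\log X)^{-A}$ on the additive side, you would control $|a|\ll\log\log X$. That leaves a large gap below $|a|\gtrsim d^{1/2}/H=d^{2/5}$, which is where the Type I/II argument begins to work.

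\textbf{The missing idea.} By Remark~\ref{rem:sparse_rational}, the frequency $\theta$ produced by K\'atai's argument is a sparse $p$-adic rational. Lemma~\ref{lem:sparse_padic_major_arc} then shows that if $\theta$ lies on a major arc with $q\le Q=\exp(\eta\sqrt{\log X})$, then $q$ is a power of $p$. Only finitely many fixed real primitive characters have $p$-power conductor, so there is no exceptional zero. The zero-free region therefore gives $X\exp(-c\sqrt{\log X})$ uniformly for $q=p^s\le Q$ (Proposition~\ref{prop:p_power_major_arc_mu_Lambda}), matching the minor-arc saving $Q^{-1/2}$. This yields Proposition~\ref{prop.smallA}, namely $\widehat{\mu}(a)\ll|a|\sqrt{p}\exp(-c\sqrt{d}/|a|)$. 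That bound covers $|a|\le d^{1/2}/H$ and meets the Type I/II regime.
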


\begin{thm}\label{thm:main_CSQ}
    Let $G = \mathcal{X}_d$ acting on itself and let $0 < C < C'$. Suppose that $\mathcal{A}$ is a $G$-equivariant CSQ algorithm making $1 \leq q_d \ll (\log X_d)^{C}$ oracle calls with precision $(\log X_d)^{-C'} \ll \tau_d < \frac18(\log X_d)^{-C}$. Then for some $B  \gg_{C,C',p} 1$, all $A > 0$, and all $d \gg_{A,C,C'} 1$,
    \begin{equation*}
        \inf_{\mathcal{O}_\mu}\mathbb{P}_{R}[\norm{\mathcal{A}^{\mathcal{O}_\mu, R} - \mu}_2^2 \leq \norm{\mu}^2_2 - d^{-C}] \leq \begin{cases}
            (\log X_d)^{-A} & \text{ if } \mathcal{X}_d = \prod_{i=1}^d \mu_{p_i} \\
            \exp(-B (\log X_d)^{1/10}) & \text{ if } \mathcal{X}_d = \mu_p^{d_i} 
        \end{cases}
    \end{equation*}
    with infimum over all $\tau_d$-compatible oracle responses and $R$ denoting randomness in the algorithm. In particular, we need superpolynomial queries $q_d$ or precision $\tau_d^{-1}$ to weak-learn against adversarial noise. 
\end{thm}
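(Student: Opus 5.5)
The proof of Theorem~\ref{thm:main_CSQ} has two parts. First, a general lower bound: for any $G$-equivariant CSQ algorithm and any target $h$, the success probability is controlled by $q_d$, $\tau_d$ and the alignment $\mathrm{A}(h,G)$ from \eqref{eq:def_alignment}. Second, an arithmetic input: an upper bound on $\mathrm{A}(\mu,\mathcal{X}_d)$ in the two regimes of \eqref{eq.solved_Cases}. The theorem then follows by feeding the second into the first and tracking constants.

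For the general bound I would use the standard adversarial-oracle argument. Let the oracle answer every query $\phi$ with $0$ whenever $|\langle \mu\circ g,\phi\rangle|\le\tau_d$, where $g\sim\mu_G$ is a random shift. By $G$-equivariance, learning $\mu$ is equivalent to learning the translate $\mu\circ g$ for a uniformly random $g$, with queries and output transformed accordingly. Fix the algorithm's randomness $R$ and consider the run against the all-zero oracle. It issues queries $\phi_1,\dots,\phi_{q_d}$ with $\norm{\phi_j}_2\le 1$ and outputs a hypothesis $f_0$ that does not depend on $g$. By Chebyshev and the definition of alignment,
\[
\mathbb{P}_g\bigl[|\langle \mu\circ g,\phi_j\rangle|>\tau_d\bigr]\le \tau_d^{-2}\,\mathrm{A}(\mu,G).
\]
A union bound over the $q_d$ queries shows that, outside probability $q_d\tau_d^{-2}\mathrm{A}(\mu,G)$, zero is a $\tau_d$-compatible answer throughout, so the run outputs $f_0$. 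Success for $f_0$ requires $\norm{f_0-\mu\circ g}_2^2\le\norm{\mu}_2^2-d^{-C}$, which forces $2\langle f_0,\mu\circ g\rangle\ge d^{-C}+\norm{f_0}_2^2$. Here I would first clip or normalize $f_0$ to reduce to $\norm{f_0}_2\le 1$, using a case split on whether $\norm{f_0}_2^2$ is small. The event then has probability at most $4d^{2C}\mathrm{A}(\mu,G)$, again by Chebyshev. Averaging over $R$ and taking the infimum over oracles gives
\[
\mathbb{P}\le \bigl(q_d\tau_d^{-2}+4d^{2C}\bigr)\mathrm{A}(\mu,G).
\]

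For the arithmetic part, note that for $G=\mathcal{X}_d$ acting on itself, Fourier analysis diagonalizes the translation action. Writing $\mu=\sum_\chi \hat\mu(\chi)\chi$, we get
\[
\mathbb{E}_g|\langle\mu\circ g,\phi\rangle|^2=\sum_\chi|\hat\mu(\chi)|^2|\hat\phi(\chi)|^2,
\]
so $\mathrm{A}(\mu,G)=\max_\chi|\hat\mu(\chi)|^2$. We therefore need uniform bounds on the correlation of $\mu$ with digital characters.

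In the cyclic case, a character of $\prod_{i\le d}\mu_{p_i}$ is, via CRT, an additive character $e(an/X_d)$ on $[0,X_d)$ with $X_d=\prod p_i$. Davenport-type bounds (Vinogradov/Siegel--Walfisz) give $\sup_a|X_d^{-1}\sum_{n<X_d}\mu(n)e(an/X_d)|\ll_A(\log X_d)^{-A}$. In the $p$-group case, characters are $n\mapsto e_p(\sum_j a_jn_j)$ for base $p$ digits $n_j$. Here I would use the digital/Mauduit--Rivat-style bound on Fourier coefficients of $\mu$ in base $p$, obtaining $\exp(-c(\log X_d)^{1/10})$ uniformly in the character. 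This uniform digital exponential sum estimate is the main obstacle: it requires a type I/II decomposition of $\mu$ with carry-propagation control, uniformly over all $a\in\mathbb{F}_p^d$, including sparse $a$ where the character depends on few digits and one must instead use equidistribution of $\mu$ in residue classes modulo $p^k$.

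Finally, plugging in $q_d\ll(\log X_d)^C$, $\tau_d^{-1}\ll(\log X_d)^{C'}$ and $d\asymp\log X_d$, the prefactor is polynomial in $\log X_d$. It is absorbed by $(\log X_d)^{-A}$ in the cyclic case, after enlarging $A$, and by $\exp(-B(\log X_d)^{1/10})$ in the $p$-group case, after shrinking $B$.
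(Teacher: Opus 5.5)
Your proof is correct, and its overall architecture matches the paper's. You have a CSQ lower bound in terms of $\mathrm{A}(\mu,G)$ with baseline $h_*=0$, and the identity $\mathrm{A}(\mu,\mathcal{X}_d)=\max_a|\widehat{\mu}(a)|^2$ (Corollary~\ref{cor:alignment_X_d}). For the arithmetic input you use Davenport's Theorem~\ref{thm.Davenport} in the cyclic case and the digital bound of Theorem~\ref{thm:main_math} in the $p$-group case, which you may simply cite. Your sketch of that bound is off for sparse $a$, though. Such a character can depend on high digits, so equidistribution modulo $p^k$ does not suffice. The paper instead uses K\'atai's transfer to additive characters at sparse $p$-adic rationals, minor-arc bounds, and major arcs forced to have $p$-power denominators.

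The genuine difference is in the CSQ lower bound itself. The paper goes learning $\to$ detection. It spends one extra query $\phi=(\widehat{t}-h_*)/2$ to test the learner's output, which is where $\tau<\varepsilon/8$ enters. It then bounds the detection advantage via the same null-oracle simulation you use, obtaining $(q+1)\tau^{-2}\mathrm{A}$.

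You instead note that the output $f_0$ of the null run does not depend on $g$, and bound its success probability directly by a second Chebyshev inequality. This gives $(q\tau^{-2}+4\varepsilon^{-2})\mathrm{A}$ with $\varepsilon=d^{-C}$. Taking $\phi=f_0/\norm{f_0}_2$ and using $(\varepsilon+\norm{f_0}_2^2)/(2\norm{f_0}_2)\ge\sqrt{\varepsilon}$ would sharpen the second term to $\varepsilon^{-1}\mathrm{A}$. That matches the $\mathrm{A}/\varepsilon$ term in Theorem~\ref{thm:NGD_equivariant} and makes your clipping/case split unnecessary.

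What each route buys: yours is shorter and never uses the upper restriction $\tau_d<\frac18(\log X_d)^{-C}$. The paper's yields a single-term bound and isolates a reusable detection lower bound.

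In the final bookkeeping you only need $d\le\log X_d/\log 2$. Your claim $d\asymp\log X_d$ is false in the cyclic case, where $\log X_d\gg d\log d$, but it is not needed. Both prefactors are polynomial in $\log X_d$, so the stated conclusion follows either way.
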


We remark that the $G$-equivariance assumption is quite natural since $\mathcal{X}_d$ acts orthogonally on itself (see Lemma~\ref{lem:orthogonal_action}). In particular, any algorithm equivariant under an orthogonal transformation of the underlying dataset will be $G$-equivariant, and in fact, we only require equivariance with respect to a finite subgroup of the torus inside of $\mathrm{SO}_{2d}(\mathbb{R})$.

Theorem~\ref{thm.main_cyclic} and Theorem~\ref{thm.main_pgroup} are corollaries of Theorem~\ref{thm:main_NGD}. The proof of this theorem is based on the work of \cite{abbe2022nonuniversalitydeeplearningquantifying}, together with a few simplifications that can found in \cite{misiakiewicz2025shorttutorial}; see Theorem~\ref{thm:NGD_equivariant} for the learning lower bound. To deduce the first two theorems, one can use the general $G$-equivariance criteria established in Proposition~\ref{prop:G_inv_NGD_condition}, or one can use \cite[Lemma E.3]{abbe2022nonuniversalitydeeplearningquantifying} together with Lemma~\ref{lem:orthogonal_action}. Theorem~\ref{thm:main_kernel} is based on \cite[Proposition 11]{abbe2024mergedstaircasepropertynecessarynearly}; see Section \ref{ssec.kernel_methods} for the learning lower bound. The lower bound in Theorem~\ref{thm:main_CSQ} is based on \cite{misiakiewicz2025shorttutorial}, but to the best of our knowledge, has not been worked out carefully in the literature. We give an essentially complete derivation of this bound in Section~\ref{ssec:CSQ}. 

In practice our results say the following. If there exists an algorithm that can successful weak learn the Möbius function, it must break one of our assumptions. In particular, one must either:
\begin{enumerate}
    \item Use a different encoding: e.g. represent integers with something other than residues mod $p$ or a fixed base $p$-expansion given by cyclic embedding.
    \item Break the $G$-equivariance: i.e. the method must be able to detect rotations of the cyclic embedding.  
    \item Use a different method: the learning algorithm must not fall under one of the above methods. 
\end{enumerate}
It is not hard to break all of these assumptions. For example, one can consider a transformer with a base $p$ tokenization, learned embedding, and trained with stochastic gradient descent. Nevertheless, we expect that one can not learn the Möbius function in any meaningful sense. 

Finally, we note that it is relatively straightforward to prove exponential lower bounds on learning the hypothesis class of completely multiplicative binary-valued functions. Let
\begin{equation*}
    \mathcal{M}_d = \{f : [1, X_d] \cap \mathbb{Z} \to \{-1,1\} : f(mn)=f(m)f(n) \text{ for all } n,m \text{ with } mn \leq X_d\},
\end{equation*}
Notice that we have for the trivial approximation $\mathbb{E}_{h \sim \mu_\mathcal{H}}[h(n)] = \mathbbm{1}_{\square}(n)$, the indicator function of squares. If any of the above methods perform well on average over $\mathcal{M}_d$, it must have exponential complexity. Note that in this setting, we can take $\mathcal{X}_d$ of the general form in Equation~\ref{eq:general_space}, and we can drop the equivariance assumption. In fact, the proof relies only on the size of $X_d$. We give one such result here.
\begin{thm}\label{thm:main_binary_mul}
    Fix $C>0$ and $0 < \delta < 1/4$, and suppose that $f(\cdot; \theta_{T_d})$ is a parametric model trained by noisy gradient descent with hyperparameters satisfying
    \begin{equation*}
         T_d \left(\frac{R_d}{\tau_d}\right)^2 \ll X_d^{1/2-2\delta}.
    \end{equation*}
    Then for $d \gg 1$ we have
    \begin{equation*}
        \mathbb{P}_{h \sim \mathcal{M}_d,\theta_T}[\norm{f(\cdot;\theta_T) - h}_2^2 \leq \norm{h- \mathbbm{1}_{\square}(n)}^2_2 - (\log X_d)^{-C}] \ll X_d^{-\delta}.
   \end{equation*}
\end{thm}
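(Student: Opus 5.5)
The plan is to apply the general noisy-gradient-descent lower bound for an arbitrary hypothesis class, as in the framework of \cite{abbe2022nonuniversalitydeeplearningquantifying} and \cite{misiakiewicz2025shorttutorial}. This is the non-equivariant version underlying Theorem~\ref{thm:NGD_equivariant}. It states that if $h$ is drawn from a distribution $\mu_{\mathcal H}$ on $\mathcal H$ with mean $\bar h = \mathbb{E}_{h}[h]$, then the probability of improving on $\bar h$ by $\varepsilon$ in squared $L^2$ loss is at most
\begin{equation*}
    \frac{C_0\, T\, (R/\tau)^2\, \mathrm{A}(\mathcal H) + \text{(small term)}}{\varepsilon},
\end{equation*}
possibly with a square root on the alignment term. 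Here the cross-predictability, or alignment, of the class is
\begin{equation*}
    \mathrm{A}(\mathcal H) = \sup_{\norm{\phi}_2 = 1} \mathbb{E}_{h}\bigl[|\langle h - \bar h, \phi\rangle|^2\bigr].
\end{equation*}
With $h$ uniform on $\mathcal M_d$, the argument therefore reduces to two tasks. The first is to identify $\bar h$. The second is to bound the operator norm of the covariance of $h$ viewed as a random vector in $L^2$ of the uniform measure on $[1,X_d]$.

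For the first task, a completely multiplicative $\pm1$ function on $[1,X_d]$ is determined by independent choices $f(p)\in\{\pm1\}$ for primes $p\le X_d$. Under the uniform measure these are i.i.d.\ Rademacher signs $\epsilon_p$, and $h(n)=\prod_p \epsilon_p^{v_p(n)}$. Then $\mathbb{E}[h(n)]=1$ exactly when every $v_p(n)$ is even, i.e.\ $n$ is a square, and $\mathbb{E}[h(n)]=0$ otherwise. So $\bar h = \mathbbm{1}_\square$. Similarly, $\mathbb{E}[h(m)h(n)] = \mathbbm{1}_\square(mn)$. The covariance kernel is therefore
\begin{equation*}
    K(m,n) = \mathbbm{1}_\square(mn) - \mathbbm{1}_\square(m)\mathbbm{1}_\square(n),
\end{equation*}
and $\mathrm{A}(\mathcal M_d)$ is $X_d^{-1}$ times the top eigenvalue of the matrix $(\mathbbm{1}_\square(mn))_{m,n\le X_d}$, up to a subtraction that only lowers it.

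For the second task, write $m=a s^2$ and $n=b t^2$ with $a,b$ squarefree. Then $mn$ is a square if and only if $a=b$, so the matrix is block diagonal with one all-ones block per squarefree class $a$. The block for $a$ has size $\#\{s: as^2\le X_d\} \le \sqrt{X_d/a}$, so its top eigenvalue is at most $\sqrt{X_d}$, attained at $a=1$. Hence $\mathrm{A}(\mathcal M_d) \le X_d^{-1/2}$. Plugging in $\varepsilon = (\log X_d)^{-C}$ and the hypothesis $T(R/\tau)^2 \ll X_d^{1/2-2\delta}$ gives a bound of $X_d^{-2\delta}(\log X_d)^{C}$ if the bound is linear in $\mathrm{A}$. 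If it involves $\sqrt{\mathrm{A}}$, one must instead compare $\sqrt{T}\,R/\tau \cdot X_d^{-1/4}$, which is $X_d^{-\delta}$, against $\varepsilon$. Either way the result is $\ll X_d^{-\delta}$ for $d$ large, since the polylogarithmic loss is absorbed by the extra power.

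The main obstacle is matching the exact form of the general lower bound. In particular, I need to check whether it is stated relative to the mean predictor $\bar h$ or to the zero function, and whether the alignment enters linearly or via a square root, which is why the exponents $1/2-2\delta$ and $\delta<1/4$ appear. I expect the square-root version, where $\sqrt{T}(R/\tau)\sqrt{\mathrm{A}} \le X_d^{1/4-\delta}X_d^{-1/4}$. I also need to confirm that the residual initialization and noise terms in that bound are negligible. The operator-norm computation itself is elementary.
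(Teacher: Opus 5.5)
Your proposal is the paper's argument. You identify $\mathbb{E}_h[h]=\mathbbm{1}_\square$, observe that the centered covariance $\mathbbm{1}_\square(mn)-\mathbbm{1}_\square(m)\mathbbm{1}_\square(n)$ is block diagonal over the squarefree classes $\{as^2\}$, and feed the resulting alignment into \cite[Theorem A.2]{abbe2022nonuniversalitydeeplearningquantifying}; the paper computes the spectrum exactly (centering kills the $a=1$ block, so $\mathrm{A}=X_d^{-1}\lfloor\sqrt{X_d/2}\rfloor$), but your cruder bound $X_d^{-1/2}$ suffices.

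On the form of the cited bound: it is $\frac{R}{2\tau}\sqrt{T\mathrm{A}}+\frac{\mathrm{A}}{\varepsilon}$ relative to the baseline $h_*=\mathbbm{1}_\square$, the same shape as in Theorem~\ref{thm:NGD_equivariant}. The square-root term carries no $1/\varepsilon$ and is exactly $\ll X_d^{-\delta}$ under the hypothesis, while $\mathrm{A}/\varepsilon\ll X_d^{-1/2}(\log X_d)^{C}$ is negligible because $\delta<1/4$. A hypothetical form $\sqrt{T\mathrm{A}}\,R/(\tau\varepsilon)$ would only give $X_d^{-\delta}(\log X_d)^C$, so your ``either way'' remark is slightly too optimistic.
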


\subsection{Number-theoretic input}
To prove the theorems above, we must bound the alignment in Equation~\eqref{eq:def_alignment}. Realizing this quantity as the norm of a certain covariance operator, we obtain a spectral expression
\begin{equation*}
    \mathrm{A}(f,G) = \max_{a \in \widehat{\mathcal{X}_d}} |\widehat{f}(a)|^2,
\end{equation*}
when $G = \mathcal{X}_d$. In particular, to prove superpolynomial bounds on learning, we must show
\begin{equation*}
    \max_{a \in \widehat{\mathcal{X}_d}}\left|\sum_{n < X_d} \mu(n)\chi_a(n)\right| \ll_A \frac{X_d}{(\log X_d)^{A}}
\end{equation*}
for any $A > 0$. Here we evaluate $\chi_a(n)$ by identifying $\Omega_d =[0, X_d) \cap \mathbb{Z}$ with $\mathcal{X}_d$, so the characters $\chi_a(n)$ are certain digital function on $\Omega_d$. We call these $\chi_a$ digital characters, see also Walsh--Vilenkin characters or the generalized Rademacher system. 

In the case where we take $\mathcal{X}_d = \prod_{i=1}^d \mu_{p_i}$, the $\mathcal{X}_d$ is a cyclic group and the character $\chi_a(n)$ can be identified with an additive character $e(n\theta)$. In this case, our bound follows from the following classical theorem of Davenport.
\begin{thm}\cite[Theorem 1]{10.1093/qmath/os-8.1.313}\label{thm.Davenport} For any $A > 0$, we have
    \begin{equation*}
        \sup_{\theta \in \mathbb{R}/\mathbb{Z}} \left|\sum_{x < X} \mu(x)e(x\theta)\right| \ll_A X (\log X)^{-A}.
    \end{equation*}
\end{thm}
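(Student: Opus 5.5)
The plan is the classical Vinogradov--Davenport argument: split $\theta$ into major and minor arcs. I fix $A>0$, put $Q=(\log X)^{B}$ with $B=B(A)$ large, and use Dirichlet's approximation theorem to write $\theta = a/q + \beta$ with $(a,q)=1$, $1\le q\le X/Q$ and $|\beta|\le Q/(qX)$. The two regimes are $q\le Q$ (major arcs) and $Q<q\le X/Q$ (minor arcs).

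\emph{Major arcs.} For $q\le Q$, I expand $e(xa/q)$ in Dirichlet characters modulo divisors of $q$. Grouping $x$ by $\gcd(x,q)$ and using Gauss sums reduces $\sum_{x<X}\mu(x)e(xa/q)$ to $O(\tau(q)^{O(1)})$ sums of the form $\sum_{y<X/t}\mu(y)\chi(y)$ with $\chi$ of modulus at most $q\le(\log X)^B$. The Siegel--Walfisz theorem (zero-free region plus Siegel's ineffective bound) bounds each such sum by $O_{B,A'}(X(\log X)^{-A'})$ for every $A'$. The twist by $e(x\beta)$ with $|\beta|\le Q/X$ is removed by partial summation, losing a factor $1+|\beta|X\le 1+Q$. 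Taking $A'=A+2B+O(1)$ absorbs all the polylog losses.

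\emph{Minor arcs.} For $Q<q\le X/Q$, I apply Vaughan's identity (or Heath-Brown's) for $\mu$ with parameters $U=V=X^{1/3}$. This decomposes the sum into type I sums $\sum_{m\le U^2}a_m\sum_{n}e(mn\theta)$ and type II bilinear sums $\sum_{U<m\le X/V}\sum_{n>V}a_mb_n e(mn\theta)$ with divisor-bounded coefficients. Type I is handled by the geometric-series bound $\sum_m\min(X/m,\|m\theta\|^{-1})$. Type II is handled by Cauchy--Schwarz in $m$ followed by the same lemma. Together these give the standard estimate
\begin{equation*}
\sum_{x<X}\mu(x)e(x\theta)\ll (\log X)^{4}\left(Xq^{-1/2}+X^{4/5}+X^{1/2}q^{1/2}\right).
\end{equation*}
Inserting $Q<q\le X/Q$ gives $O(X(\log X)^{4-B/2})$, so choosing $B=2A+8$ suffices.

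The main obstacle is the major arcs, specifically the need for the Siegel--Walfisz bound uniformly in moduli up to an arbitrary power of $\log X$. This depends on Siegel's theorem, which is why the implied constant is ineffective and why the saving is only an arbitrary power of $\log X$. On the minor arcs the only care needed is bookkeeping the divisor-function losses in Vaughan's identity, since the $q$-saving there is a power of $\log X$, not of $X$.
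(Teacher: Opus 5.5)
Your proposal is correct and matches the argument the paper relies on. The paper cites Davenport rather than reproving the theorem, but its sketch is exactly your route: Vinogradov's bilinear (Type I/II) method on the minor arcs, and on the major arcs an expansion in Dirichlet characters with the zero-free region for Dirichlet $L$-functions, where Siegel's theorem is the source of both the ineffectivity and the $(\log X)^{-A}$ barrier.

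The only slips are cosmetic and do not affect the conclusion:
\begin{itemize}
\item The major-arc reduction naturally produces about $q$ character sums weighted by Gauss sums, not $O(\tau(q)^{O(1)})$ sums. Their total weight is still polynomial in $q$, hence a power of $\log X$, and your choice of $A'$ absorbs it.
\item Vaughan's identity with $U=V=X^{1/3}$ gives an error term of size about $X^{5/6}$, not $X^{4/5}$; the $X^{4/5}$ comes from $U=V=X^{2/5}$. Any power saving suffices.
\end{itemize}
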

The proof of this theorem uses Vinogradov's bilinear method to get a power saving in the minor arcs. The barrier to improving this result lies in the major arcs, where one expands $e(x\theta)$ using Dirichlet characters and applies the classical zero-free region for their $L$-functions. To improve this result would require proving that there are no Landau-Siegel zeros.

For binary expansions, we use the results of Green \cite{green2012notcomputingmobiusfunction} and Bourgain \cite{Bourgain2013}. For base $p$ expansions with $p$ an odd prime, we generalize the work of Green and Bourgain to obtain the following theorem. 
\begin{thm}\label{thm:main_math}
    Fix a prime $p$ and let $\mathcal{X}_d = \mu_p^d$. For all $d \gg 1$, we have
    \begin{equation*}
        \max_{\chi_a \in \widehat{\mu_p^{d}}} \left| \sum_{x < p^d} \mu(x)\chi_a(x) \right| < p^{d-d^{1/10}}.
    \end{equation*}
\end{thm}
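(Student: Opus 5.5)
The plan follows Green \cite{green2012notcomputingmobiusfunction} and Bourgain \cite{Bourgain2013}, replacing Walsh characters by $p$-adic digital characters. Write $x=\sum_{j<d}x_jp^j$ and $\chi_a(x)=e_p\bigl(\sum_{j<d}a_jx_j\bigr)$ with $a\in(\mathbb{Z}/p)^d$. Let $\operatorname{supp}(a)=\{j:a_j\neq 0\}$ and $k=|\operatorname{supp}(a)|$. We split into two cases according to whether $k$ is small (low-degree characters) or large (high-degree characters), with threshold around $k\approx d^{1/2}$ to be tuned at the end.

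\textbf{Low-degree case.} If $a$ is supported on few digits, then $\chi_a(x)$ depends only on $x$ modulo $p^{J+1}$ together with the digits in $\operatorname{supp}(a)$, where $J=\max\operatorname{supp}(a)$. Each digit indicator $\mathbbm{1}[x_j=b]$ is the indicator of $x\bmod p^{j+1}$ lying in an interval of length $p^j$. Expanding this indicator in additive characters gives
\[
\chi_a(x)=\sum_{\theta}c_\theta\, e(x\theta),\qquad \sum_\theta|c_\theta|\le (C\log p^d)^{k}\ \text{roughly},
\]
where a smoothing/Erd\H{o}s--Tur\'an truncation handles the boundary errors, which involve intervals of short length. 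Then Theorem~\ref{thm.Davenport} bounds each sum $\sum\mu(x)e(x\theta)$ by $X(\log X)^{-A}$. This only yields a $(\log X)^{-A}$ saving, not $p^{-d^{1/10}}$. So in this case we instead need Bourgain's approach: control $\mu$ in progressions modulo $p^{J}$ and use a zero-free region for the Dirichlet $L$-functions of $p$-power modulus (Gallagher / Iwaniec, power-of-prime moduli, with no Siegel-zero issue for fixed $p$). This gives savings $\exp(-c\sqrt{d})$, which beats $p^{-d^{1/10}}$ provided $k\le d^{1/5}$ or so, since the $(Cd)^k$ loss from the expansion is then absorbed.

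\textbf{High-degree case.} Here we follow Green: use Vaughan's identity, or the Bourgain--Sarnak--Ziegler/Kátai bilinear criterion, to reduce to type II sums $\sum_{m\sim M}\sum_{n\sim N}\alpha_m\beta_n\chi_a(mn)$ with $M,N$ in a wide range. Cauchy--Schwarz in $m$ gives correlations $\sum_m\chi_a(mn)\overline{\chi_a(mn')}$. Viewing multiplication by $n$ on base-$p$ digits, we exploit that when $k$ is large, the function $x\mapsto\chi_a(x)$ has small Gowers-type/multiplicative correlations. Concretely, following Bourgain, we bound the $L^1$ Fourier mass (on $\mathbb{R}/\mathbb{Z}$) of $\chi_a$ restricted to blocks of digits, and show that $\bigl|\sum_{x}\chi_a(x)e(x\theta)\bigr|$ is tiny uniformly in $\theta$. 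This holds because it factors as a product over digits, $\prod_j\bigl|\frac1p\sum_{b}e_p(a_jb)e(bp^j\theta)\bigr|$, and each factor with $a_j\ne0$ is at most $1-c_p$ unless $p^j\theta$ is close to $-a_j/p$, which cannot hold for many consecutive $j$. This gives a saving $\exp(-c k)$, which is then transferred to the type II sums via a large-sieve/Fourier argument over the multiplicative structure. The type I sums are handled directly by the same product bound on the sums over arithmetic progressions $x\equiv0\pmod n$.

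\textbf{Main obstacle.} I expect the main obstacle to be the high-degree type II estimate. One needs the product formula to give genuine savings even when $\operatorname{supp}(a)$ is sparse and clustered, and Bourgain's binary argument uses delicate combinatorics of carries that must be redone for carries in base $p$. I would first try to prove the uniform bound $\sup_\theta\bigl|\mathbb{E}_{x<p^d}\chi_a(x)e(x\theta)\bigr|\le \exp(-c_pk)$, then feed it into the Kátai-type criterion. The final step is to optimize the threshold on $k$ between the two cases to obtain the exponent $d^{1/10}$.
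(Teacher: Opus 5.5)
Your high-degree case has a genuine gap, and it is the core of the theorem. Your uniform bound $\sup_\theta|\mathbb{E}_{x<p^d}\chi_a(x)e(x\theta)|\le e^{-c_pk}$ is correct; it is Lemma~\ref{lem.ell_infty}. But it cannot be fed into the K\'atai/Bourgain--Sarnak--Ziegler criterion, whose Tur\'an--Kubilius step caps the saving at a power of $\log\log X$. On its own it also does not control a Vaughan Type II sum. After Cauchy--Schwarz in $m$, the correlation $\sum_{m\asymp M}\chi_a(mn)\overline{\chi_a(mn')}$ still depends on all $d$ digits, so its Fourier expansion runs over $\asymp X$ frequencies, while the $m$-sum only pins $kn-k'n'$ down to within $X/M$. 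For $p\nmid n'$ each $k$ is paired with $\asymp N$ values of $k'$, and neither $\|\widehat{\chi}_a\|_\infty$ nor Parseval improves on the trivial bound $M$.

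The missing idea is the Mauduit--Rivat localization used in the paper:
\begin{enumerate}
\item Apply van der Corput in $n$ with shifts $\ell p^{\kappa}$, $|\ell|<L=p^\rho$ (Lemma~\ref{lem.van-der-Corput}).
\item Use the carry lemma (Lemma~\ref{lem.digit_stability}): for all but $O(X^{1+\varepsilon}L^{-\delta})$ pairs $(m,n)$, the product $\chi_a(mn)\overline{\chi_a(m(n+\ell p^\kappa))}$ depends only on the digits in $[\kappa,\kappa+\mu+(1+\delta)\rho)$, where $M=p^\mu$.
\item Only after this reduction to about $\log_p M$ digits does the Fourier analysis close, and it needs more than $\ell^\infty$. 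The off-diagonal terms $k\neq k'$ require the power-saving bound $\|\widehat{\chi}_{a'}\|_1\ll p^{(1/2-c)(\mu+\rho)}$ from two-digit block averaging (Lemma~\ref{lem.ell_1_in_AP}), together with the truncation $\Psi_{a'}$ of Lemma~\ref{lem.Psi_approximation} and Lemma~\ref{lem.interval} when $\kappa>0$. The $\ell^\infty$ bound only handles the diagonal and the cases where few pairs $(k,k')$ survive.
\item Choose $\kappa$ so the window contains $\gg(\mu/d)|a|$ nonzero digits of $a$. This is where sparse, clustered supports are handled.
\end{enumerate}
Your Type I plan has the same defect. The identity $\sum_{x<X,\,m\mid x}\chi_a(x)=\frac1m\sum_{t<m}\sum_{x<X}\chi_a(x)e(tx/m)$ gives only $Xe^{-ck}$ against the trivial $X/m$, so it helps only for $m<e^{ck}$. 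Type I sums with larger $M$ have to go through the Type II machinery, as the paper does.

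The low-degree case also has a gap. Controlling $\mu$ in progressions modulo $p^{J}$, with $J=\max\operatorname{supp}(a)$, fails once the support reaches high digits. Here $p^J$ can be as large as $X/p$, and no zero-free region (not even the Gallagher--Iwaniec ones for $p$-power moduli, nor GRH) gives equidistribution of $\mu$ to moduli of size $X^{1-\varepsilon}$.

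What is needed is the following chain:
\begin{enumerate}
\item Run K\'atai's argument (Proposition~\ref{prop.Katai_arg}) with \emph{smoothed} digit functions. A large $\widehat{\mu}(a)$ then yields a large $\widehat{\mu}(\theta)$ at a sparse $p$-adic rational $\theta=\sum_j s_jp^{-j}$ with at most $k$ terms and bounded numerators (Remark~\ref{rem:sparse_rational}). Keeping the numerators bounded costs $\delta^{-O(k)}$ rather than $(C\log X)^k$.
\item Split into arcs with $Q=\exp(\eta\sqrt{\log X})$. On minor arcs, Vinogradov's bound already saves $\exp(-c\sqrt{\log X})$.
\item On major arcs, the pigeonhole on gaps between exponents (Lemma~\ref{lem:sparse_padic_major_arc}) forces the denominator to be a power of $p$.
\item Only then does your correct observation that $p$-power moduli have no Siegel zero enter (Proposition~\ref{prop:p_power_major_arc_mu_Lambda}).
\end{enumerate}
This gives $\widehat{\mu}(a)\ll k\sqrt{p}\exp(-c\sqrt d/k)$ (Proposition~\ref{prop.smallA}). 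Balancing this against the Type I/II range sets the threshold $k\approx d^{2/5}$ and hence the exponent $1/10$.
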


We have not made any effort to optimize the exponent $1/10$. The proof of Theorem~\ref{thm:main_math} splits into cases based on the Hamming weight of $\chi_a$; see Section~\ref{sec.characters}. For small weights, we relate the coefficients coming from expanding in $\chi_a$ to traditional Fourier coefficients at sparse $p$-adic rationals. We then use classical bounds on the minor arcs, and we take advantage of the sparse $p$-adic structure to prove unconditional estimates on the major arcs. This is the same strategy as \cite{green2012notcomputingmobiusfunction}, and is based on an argument of Katai \cite{Katai1986}; see Section~\ref{sec.Katai} and Section~\ref{sec.small_weight}. For large weights, we employ Vinogradov's method of reducing to Type I and Type II sums, taking advantage of the carry structure to help localize our analysis. This follows the same strategy as \cite{Bourgain2013}, which is based on the work of Mauduit and Rivat \cite{MauduitRivat2010}; see the rest of Section~\ref{sec.main_proof}. The majority of the effort goes into generalizing results about the Fourier-Walsh functions, which are characters on $(\mathbb{Z}/2\mathbb{Z})^d$, to characters on general $\cal{X}_d$; see Section~\ref{sec.character_Fourier}.

We emphasize that the bounds we give go beyond the disjointness conjecture. The reason for this is that to give quantitative results on the complexity of these algorithms requires us to prove an effective rate, superpolynomial if we wish to rule out polynomial complexity. The disjointness in this setting is known, and in fact is established for all automatic sequences by Müllner \cite{M_llner_2017}. We also note that establishing the Type I/Type II bounds necessary to prove Theorem~\ref{thm:main_math} immediately establishes bounds of the form
\begin{equation*}
    \max_{\substack{a \in \widehat{\mathcal{X}_d} \\ |a| \gg d^{4/10}}} \left|\sum_{n \leq X_d} \Lambda(n)\chi_a(n)\right| < p^{d-d^{1/10}},
\end{equation*}
 where $\Lambda(n)$ is the Von-Mangoldt function. Consequently, our results also establish an effective digital prime number theorem of the following form.

\begin{thm}\label{thm:digital_PNT}
    Fix a prime $p$ and let $L : \mathbb{F}_p^d \to \mathbb{F}_p^m$ be a surjective linear map and fix $b \in \mathbb{F}_p^m$. For all $d \gg 1$, we have
    \begin{equation*}
        \sum_{\substack{n < p^d \\ L(n_0,\ldots, n_{d-1})=b}} \Lambda(n) = \mathfrak{S}_p(L,b) \frac{p^d}{p^m} + O\left(p^{d-d^{1/10}}\right),
    \end{equation*}
    where $n = \sum_{j=0}^{d-1} n_jp^j$, and if we let $e_0 \in (\mathbb{F}_p^d)^\vee$ be the functional $e_0(x) = x_0$, we have
    \begin{equation*}
         \mathfrak{S}_p(L,b) = \begin{cases}
            1 & \text{ if } e_0 \notin \mathrm{im } L^\vee, \\
            \frac{p}{p-1}& \text{ if } e_0 = L^\vee \lambda \text{ and } \lambda(b) \neq 0, \\
            0 & \text{ if } e_0 = L^\vee \lambda \text{ and } \lambda(b) = 0.
        \end{cases}
    \end{equation*}
\end{thm}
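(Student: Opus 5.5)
We will prove Theorem~\ref{thm:digital_PNT} by detecting the linear condition with characters of $\mathbb{F}_p^m$ and reducing to character sums of $\Lambda$ against digital characters. Write $\chi_a(n) = e_p(a\cdot(n_0,\ldots,n_{d-1}))$ for $a \in \mathbb{F}_p^d$. Orthogonality gives
\begin{equation*}
    \sum_{\substack{n < p^d \\ L(n)=b}} \Lambda(n) = \frac{1}{p^m}\sum_{\lambda \in (\mathbb{F}_p^m)^\vee} e_p(-\lambda(b)) \sum_{n<p^d} \Lambda(n)\chi_{L^\vee\lambda}(n).
\end{equation*}
Since $L$ is surjective, $L^\vee$ is injective, so distinct $\lambda$ give distinct frequencies $a = L^\vee\lambda$. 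We then sort the frequencies into three classes: $a=0$, $a \in \mathrm{span}(e_0)\setminus\{0\}$, and all remaining $a$.

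The first two classes produce the main term. The frequency $a=0$ contributes $\psi(p^d)/p^m = p^{d-m} + O(p^d e^{-c\sqrt{d}})$ by the prime number theorem. If $a = c e_0$ with $c\neq 0$, then $\chi_a(n) = e_p(cn)$ depends only on $n \bmod p$. By the prime number theorem in progressions mod $p$ (where $p$ is fixed, so there are no Siegel-zero issues), the sum equals $p^d\bigl(\tfrac{1}{p-1}\sum_{r\neq 0} e_p(cr)\bigr) + O(p^d e^{-c'\sqrt d}) = -\tfrac{p^d}{p-1} + O(\cdot)$, up to the $O(d)$ contribution of $n=p^k$. This frequency is hit exactly when $e_0 \in \mathrm{im}\,L^\vee$. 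In that case, if $e_0 = L^\vee\lambda_0$, the hit frequencies are $c\lambda_0$ for $c \in \mathbb{F}_p^\times$. Summing $e_p(-c\lambda_0(b))\cdot(-\tfrac{1}{p-1})$ over $c$ gives $\tfrac{1}{p-1}$ if $\lambda_0(b)\ne0$ and $-1$ if $\lambda_0(b)=0$. Adding the $a=0$ term yields $\mathfrak{S}_p(L,b) \in \{\tfrac{p}{p-1}, 0\}$, and $1$ when $e_0 \notin \mathrm{im}\,L^\vee$. The errors $p^d e^{-c\sqrt d}$ are within $O(p^{d-d^{1/10}})$.

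The remaining frequencies $a \notin \mathrm{span}(e_0)$ have to be absorbed into the error term, and there are $p^m \le p^d$ of them. Because of the prefactor $p^{-m}$, it suffices to show $\max_{a\notin \mathrm{span}(e_0)} |\sum_{n<p^d}\Lambda(n)\chi_a(n)| \ll p^{d-d^{1/10}}$. Here we split by Hamming weight, as in the proof of Theorem~\ref{thm:main_math}. For $|a| \gg d^{4/10}$, the Type I/Type II estimates established for Theorem~\ref{thm:main_math} give the stated bound for $\Lambda$ via Vaughan's identity, as noted just before the theorem. For small weight, we follow the Katai-style argument of Section~\ref{sec.small_weight}. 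We expand $\chi_a$ into additive characters $e(n\theta)$ with $\theta$ ranging over sparse $p$-adic rationals $j/p^{k}$. Minor-arc $\theta$ are handled by Vinogradov's bound for $\sum\Lambda(n)e(n\theta)$. Major-arc $\theta$ have small denominators, which for sparse $p$-adic rationals are forced to be powers of $p$. Those pieces reduce to $\Lambda$ in progressions modulo a bounded power of $p$, which is equidistributed. That equidistribution cancels against $\chi_a$ unless $a$ is supported on coordinate $0$, which is exactly the case we excluded.

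The main obstacle is this last step, namely transferring the small-weight and large-weight bounds from $\mu$ to $\Lambda$. In the large-weight range I would check that the Type I/II inputs are stated for arbitrary bounded coefficients, so that Vaughan's identity applies directly with only $\log$ losses. In the small-weight range, the major-arc analysis for $\Lambda$ does not automatically cancel the way it does for $\mu$. The key point is to verify that the only surviving major-arc contribution comes from $\theta$ with denominator $p$ and $a$ concentrated on the $e_0$ coordinate, and that all other contributions are $\ll p^{d-d^{1/10}}$ uniformly in $a$.
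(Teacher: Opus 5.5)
Your outline uses the same analytic inputs as the paper: orthogonality on $\mathbb{F}_p^m$, Vaughan's identity with the Type I/II estimates for $|a|\gg d^{4/10}$, and K\'atai's argument with sparse $p$-adic major arcs for small weight. The difference is where the main term is extracted. The paper never isolates the frequencies $0$ and $ce_0$. It writes $\Lambda=\nu_p+(\Lambda-\nu_p)$ with $\nu_p(n)=\frac{p}{p-1}\mathbbm{1}_{p\nmid n}$ and proves $\sum_{n<X}(\Lambda-\nu_p)(n)\chi_a(n)\ll Xp^{-d^{1/10}}$ uniformly in \emph{all} $a$ (Proposition~\ref{prop:balanced_Lambda_digital}). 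It then reads off $\mathfrak{S}_p(L,b)$ on the physical side as $\frac{p}{p-1}\#\{x\in\mathbb{F}_p^d : Lx=b,\ x_0\neq 0\}$, via rank--nullity. Your Fourier-side evaluation of the terms with $L^\vee\lambda\in\mathrm{span}(e_0)$, using the prime number theorem modulo $p$, is correct and gives the same constant. For $a\notin\mathrm{span}(e_0)$ the bound you need is exactly the paper's, since $\sum_{n<X}\nu_p(n)\chi_a(n)=0$ for such $a$.

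What your proposal does not supply is the small-weight step you flag as the main obstacle, and that is exactly where $\Lambda$ differs from $\mu$. Using only the sparsity in Remark~\ref{rem:sparse_rational}, K\'atai's argument applied to $\Lambda$ yields a sparse $\theta$ with $|\sum_{n\le X}\Lambda(n)e(n\theta)|$ large. That is no contradiction, because $\theta=0$ and $\theta=u/p$ are sparse and their sums have size $\gg X/p$.

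The paper's fix is to run K\'atai on $\Lambda-\nu_p$ instead. Both pieces are small on minor arcs. On a major arc, Lemma~\ref{lem:sparse_padic_major_arc} forces the denominator to be $p^s\le Q$. There, Proposition~\ref{prop:p_power_major_arc_mu_Lambda} and the Ramanujan-sum identity $\frac{p}{p-1}p^{-s}\sum_{r \bmod p^s,\, p\nmid r}e(ur/p^s)=\mu(p^s)/\varphi(p^s)$ give $\nu_p$ the same main term $\frac{\mu(p^s)}{\varphi(p^s)}\int_0^X e(\beta t)\,dt$ as $\Lambda$.

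Your own claim can also be proved directly, but it needs more than sparsity.
\begin{itemize}
\item The digit function $\psi_j$ has Fourier support in the single class $u\equiv -a_j\not\equiv 0 \pmod p$, and smoothing and truncation preserve this.
\item So every frequency in the expansion is $\theta=\sum_{j\in A}u_j/p^{j+1}$ with $p\nmid u_j$. It therefore has exact denominator $p^{j^*+1}$, where $j^*=\max A\geq 1$ when $a\notin\mathrm{span}(e_0)$.
\item On a major arc around $b/p^s$, either $s\geq 2$ and $\mu(p^s)=0$, or $s\leq 1$ and $\beta=\theta-b/p^s\in\frac{1}{X}\mathbb{Z}\setminus\{0\}$, so $\int_0^X e(\beta t)\,dt=0$.
\item Hence $\sum_{n\le X}\Lambda(n)e(n\theta)\ll X\exp(-c\sqrt{\log X})$ at every frequency that occurs, and the contradiction goes through for $\Lambda$ itself.
\end{itemize}
The paper's subtraction is cleaner because it uses only sparsity and treats every $a$ at once.

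One further correction: the major-arc moduli are $p^s\le\exp(\eta\sqrt{\log X})$, not a bounded power of $p$. This is harmless only because finitely many real primitive characters have $p$-power conductor.
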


It follows that for any surjective linear map $L : \mathbb{F}_p^d \to \mathbb{F}_p^m$ and any $b \in \mathbb{F}_p^m$ we can count primes of the form
\begin{equation*}
    \pi_{L,b}(X_d) = \#\left\{\ell <X_d: \ell \text{ prime}, \quad \sum_{j=0}^{d-1} \ell_j p^j, \quad L(\ell_0,\ldots,\ell_{d-1}) = b   \right\},
\end{equation*}
provided that $m \leq B \log d$ for some constant $B$. Precisely, we have the following.
\begin{cor}\label{cor:digital_PNT}
    Fix a prime $p$ and let $X_d = p^d$. Let $L : \mathbb{F}_p^d \to \mathbb{F}_p^m$ be a surjective linear map where $m \leq B \log d$, and fix $b \in \mathbb{F}_p^m$ with $\mathfrak{S}_p(L,b) > 0$. For all $d \gg 1$,
    \begin{equation*}
        \pi_{L,b}(X_d) = \frac{\mathfrak{S}_p(L,b)}{p^m} \frac{X_d}{\log X_d}\left(1 + O\left(\frac{\log \log X_d}{\log X_d}\right)\right).
    \end{equation*}
\end{cor}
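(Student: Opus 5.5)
We deduce Corollary~\ref{cor:digital_PNT} from Theorem~\ref{thm:digital_PNT} by partial summation, passing from $\Lambda$ to the prime indicator. The first step is to bound the contribution of prime powers. Define
\begin{equation*}
    \psi_{L,b}(X_d) = \sum_{\substack{n < p^d \\ L(n_0,\ldots,n_{d-1})=b}} \Lambda(n), \qquad \theta_{L,b}(X_d) = \sum_{\substack{\ell < p^d \text{ prime} \\ L(\ell_0,\ldots,\ell_{d-1})=b}} \log \ell .
\end{equation*}
Their difference comes from prime powers $\ell^k$ with $k\ge 2$, which contribute $O(X_d^{1/2}\log X_d)$. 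This is negligible against the error term $p^{d-d^{1/10}}$. Hence $\theta_{L,b}(X_d) = \mathfrak{S}_p(L,b)X_d/p^m + O(p^{d-d^{1/10}})$.

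We cannot apply partial summation over a continuous range of $t$, because the digital condition is tied to the fixed length $d$. Instead we split primes by size. Each prime $\ell < X_d/(\log X_d)^2$ is counted trivially, giving a total of $O(X_d/(\log X_d)^2)$. For primes $\ell \in [X_d/(\log X_d)^2, X_d)$ we have
\begin{equation*}
    \log \ell = \log X_d + O(\log\log X_d),
\end{equation*}
so
\begin{equation*}
    \theta_{L,b}(X_d) = \left(\log X_d + O(\log\log X_d)\right)\left(\pi_{L,b}(X_d) - \pi_{L,b}\big(X_d/(\log X_d)^2\big)\right) + O\left(\frac{X_d \log\log X_d}{(\log X_d)^2}\right).
\end{equation*}
The second error term absorbs the small primes weighted by their logarithms, using $\sum_{\ell<Y}\log \ell \ll Y$. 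Dividing by $\log X_d$ gives
\begin{equation*}
    \pi_{L,b}(X_d) = \frac{\theta_{L,b}(X_d)}{\log X_d}\left(1 + O\left(\frac{\log\log X_d}{\log X_d}\right)\right) + O\left(\frac{X_d}{(\log X_d)^2}\right).
\end{equation*}
To see this, note that $\theta_{L,b}(X_d) \asymp X_d/p^m$ controls $\pi_{L,b}$ up to constants once we know that $\mathfrak{S}_p(L,b)/p^m$ dominates the errors.

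The main obstacle is making every error term relative to the main term $\mathfrak{S}_p(L,b)X_d/(p^m\log X_d)$, since the main term carries the factor $p^{-m}$. This is where the hypotheses $m \le B\log d$ and $\mathfrak{S}_p(L,b)>0$ enter. Because $\mathfrak{S}_p(L,b)\ge 1$, we have
\begin{equation*}
    p^{-m} \ge d^{-B\log p} = (\log X_d)^{-O_B(1)} \cdot O(1).
\end{equation*}
So the main term is at least $X_d(\log X_d)^{-B'}$ for some constant $B'$.

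The Theorem's error $p^{d-d^{1/10}}$ is then smaller than the main term by a factor $p^{-d^{1/10}}(\log X_d)^{B'}$, which is superpolynomially small in $\log X_d$. The error $X_d/(\log X_d)^2$ from the small primes is a different matter. As written, it is not relative to a main term that is already divided by $p^m$, so it is too crude. We fix this by using instead the cutoff $Y = X_d/(\log X_d)^{B'+2}$. This is harmless because $\log(X_d/Y) = O(\log\log X_d)$ still holds, and the trivial bound on the small primes becomes $O\left(X_d(\log X_d)^{-B'-2}\right)$. That is at most $\frac{1}{p^m}\frac{X_d}{(\log X_d)^2}$, which fits inside the claimed $O\left(\frac{\log\log X_d}{\log X_d}\right)$ relative error.

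Collecting the terms gives the stated asymptotic with relative error $O(\log\log X_d/\log X_d)$. This error is dominated by the variation of $\log \ell$ across the range $[Y, X_d)$.
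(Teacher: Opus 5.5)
Your proposal is correct and is essentially the paper's argument: discard prime powers, cut the primes at $X_d$ divided by a power of $\log X_d$, bound the primes below the cutoff trivially, use $m \le B\log d$ to make that bound small relative to the $p^{-m}$ main term, and read off the relative error $O(\log\log X_d/\log X_d)$ from the variation of $\log \ell$ on $[Y, X_d)$. The paper's cutoff is $X_d^{1-\eta}$ with $\eta = K\log d/d$, i.e.\ $X_d/d^{K\log p}$, which matches your corrected cutoff $Y = X_d/(\log X_d)^{B'+2}$; you were right that the first choice $X_d/(\log X_d)^2$ would not have sufficed.
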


We conclude this section with the remark that to prove superpolynomial lower bounds on learning Möbius as a function on the general space $\mathcal{X}_d = \prod_{i=1}^r \mu_{p_i}^{d_i}$ would also establish such prime number theorems for counting primes with linear digital conditions in $r$ simultaneous bases. Currently, the strongest digital results in multiple bases are due to Dromota, Mauduit, and Rivat \cite{drmota2020prime}, and they are limited to handling two simultaneous bases. Once again, the difficulty lies in establishing unconditional results with sufficiently strong rates.

\subsection{Discussion}\label{sec:discussion}

First we note that in the above results, the Möbius function is interchangeable with the Liouville function
\begin{equation*}
    \lambda(n) = (-1)^{\Omega(n)},
\end{equation*}
where $\Omega(n)$ is the number of prime factors with multiplicity. Namely, the methods used to prove Theorem \ref{thm.Davenport} and Theorem~\ref{thm:main_math} can be applied to $\lambda$ as well.

It is also natural to ask about the more general problem of learning the function
\begin{equation*}
    \lambda_{S}(n) = (-1)^{\Omega_S(n)}
\end{equation*}
where $S$ is a subset of the set of primes $\mathcal{P}$ and $\Omega_S(n)$ counts prime factors in $S$ with multiplicity. While we have shown that the case where $S = \mathcal{P}$ is hard to learn, the case $S = \{2\}$ can easily be weak-learned from binary expansions. We point out that our techniques fail to produce strong lower bounds once the sifting density of $S$ drops below $1/2$. Namely, assume that 
\begin{equation*}
    \sum_{\substack{p \in S \\ p < X}} \frac{\log p}{p} = \frac{1-\kappa}{2} \log X + O(1),
\end{equation*}
for some $\kappa \in (0,1]$. Then by \cite[Theorem 5]{BorweinChoiCoons2010}, we have
\begin{equation*}
    \widehat{\lambda}_S(0) = \frac{1}{X}\sum_{n \leq X} \lambda(n) = (1+o(1))c_\kappa (\log X)^{\kappa -1},
\end{equation*}
and so 
\begin{equation*}
    \max_{a \in \widehat{\mathcal{X}}} |\widehat{\lambda}_{S}(a)|^2 \geq |\widehat{\lambda}_{S}(0)|^2 \gg (\log X_d)^{2\kappa -2} \gg (\log X_d)^{-2},
\end{equation*}
so at best we can prove quadratic lower bounds. In fact, \cite[Theorem 3.4]{abbe2022nonuniversalitydeeplearningquantifying} implies that such functions on the binary hypercube $\lambda_S \in L^2(\mu_2^d)$ can be weak learned by randomly initialized fully connected neural networks. However, the proof goes by giving a particular range of parameters and random initialization for which the function is weak learned in one iteration with high probability. In practice, we have found that models such as transformers can achieve slightly non-trivial accuracy, but still struggle to learn such functions. 

\section*{Acknowledgments}
The author is very grateful to Peter Sarnak for suggesting the problem of studying the complexity of Möbius through the lens of machine learning theory and for countless insightful discussions. They also acknowledge Francois Charton, Jordan Ellenberg, Boris Hanin, and Alex Negron for useful discussions and feedback on earlier drafts. 

\section{Background}

\subsection{Characters and the Möbius 
function}\label{sec.characters}
Here we introduce the space $\cal{X}_d$ and give an explicit description of its dual group $\widehat{\cal{X}}_d$. We then review relevant results on the Möbius function. Fix $r$ distinct primes $p_1<p_2<\cdots<p_r$ and some $d_i \geq 1$ for each $1 \leq i \leq r$. We consider the space
\begin{equation}\label{eq.def_calX}
    \cal{X}_d = \prod_{i=1}^r \mu_{p_i}^{d_i} \subseteq \mathbb{C}^{d} \cong \mathbb{R}^{2d},
\end{equation}
where $d = \sum_i d_i$. We put the uniform probability measure $\mu_\cal{X}$ on $\cal{X}$. We have an abelian group structure with a dual group
\begin{equation*}
    \widehat{\cal{X}}_d = \left\{ \chi_a : a \in \prod_{i=1}^r(\bb{Z}/p_i\bb{Z})^{d_i}\right\} \cong \prod_{i=1}^r(\bb{Z}/p_i\bb{Z})^{d_i},
\end{equation*}
where, if $x = (e_{p_i}(x_{i,j}))_{1 \leq i \leq r, 0 \leq j < d_i}$, the character $\chi_a$ is defined by
\begin{equation*}
    \chi_a(x) = \prod_{i=1}^r \prod_{j=0}^{d_i-1} (e_{p_i}(x_{i,j}))^{a_{i,j}}.
\end{equation*}
We will say that a character $\chi_{a}$ has weight $|a|$ where
\begin{equation*}
    |a| = \#\{(i,j) : a_{i,j} \neq 0\},
\end{equation*}
is the Hamming weight of the vector $a$.

We let $X_d = |\cal{X}_d|$ and we identify $\cal{X}_d$ with $\Omega_d = [0,X_d)\cap \mathbb{Z}$ via base $p_i$ expansions. For each integer $x < X_d$ and each prime $p_i$ write
\begin{equation*}
    x = \sum_{j=0}^{d_i-1} x_{i,j} p_i^{j},
\end{equation*}
where $0 \leq x_{i,j} <p_i$, and identify $x$ with $(e_{p_i}(x_{i,j})) \in \cal{X}_d$. This is a bijection by the Chinese remainder theorem, and the uniform measure on $\Omega_d$ agrees with the uniform measure on $\cal{X}_d$. This allows us to view arithmetic functions as functions on $\cal{X}_d$ via their restriction to $\Omega_d$.

The characters $\chi_a$ form an orthonormal basis for $L^2(\cal{X}_d)$. In particular, we have orthogonality relations
\begin{equation*}
    \frac{1}{|\cal{X}|} \sum_{x \in \cal{X}} \chi_a(x)\overline{\chi_b(x)} = \delta_{ab},
\end{equation*}
and a Fourier expansion on $\cal{X}_d$ given by
\begin{equation*}
    f(x) = \sum_{a \in \widehat{\cal{X}}} \widehat{f}(a)\chi_a(x), \qquad \widehat{f}(a)= \frac{1}{|\cal{X}|} \sum_{x \in \cal{X}} f(x)\overline{\chi_a(x)}.
\end{equation*}

Our goal is to show that the Möbius function does not correlate with the $\chi_a$ defined above. By viewing $\chi_a$ as a function on $\mathbb{Z}/X\mathbb{Z}$ and taking their Fourier expansions on $\mathbb{Z}/X\mathbb{Z}$, we can reduce this problem to bounding the Fourier coefficients of $\chi_a$ and the correlation of Möbius with additive characters. The strongest unconditional result in this direction is the Theorem~\ref{thm.Davenport} of Davenport from 1937. If one assumes GRH for Dirichlet $L$-functions, we have the following bound due to Baker and Harman:
\begin{thm}\cite[Theorem]{BakerHarman1991}\label{thm.Baker_Harman}
    Assume GRH for Dirichlet $L$-functions. Then for all $\varepsilon > 0$ we have
    \begin{equation*}
        \sup_{\theta \in \mathbb{R}/\mathbb{Z}} \left|\sum_{x < X} \mu(x)e(x\theta)\right| \ll X^{3/4+\varepsilon}.
    \end{equation*}    
\end{thm}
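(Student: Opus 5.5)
The plan is to follow the standard route: decompose $\mu$ by Vaughan's identity (or Heath-Brown's), using the GRH-strength information on the major arcs and Vinogradov/Vaughan estimates on the minor arcs. This is a cited result, so we only indicate the argument.

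\emph{Step 1: dissect the circle.} Fix $\theta$. By Dirichlet's approximation theorem with parameter $Q = X^{1/2}$ (to be optimized), write $\theta = a/q + \beta$ with $(a,q)=1$, $q \le Q$ and $|\beta| \le 1/(qQ)$. Call $\theta$ a major arc point if $q \le X^{1/4}$ and $|\beta| \le X^{-3/4}$, say, and a minor arc point otherwise.

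\emph{Step 2: minor arcs.} Apply Vaughan's identity with parameters $U = V = X^{1/4}$. This splits $\sum_{x<X}\mu(x)e(x\theta)$ into Type I sums $\sum_{m \le U^2} a_m \sum_{n} e(mn\theta)$ and Type II sums $\sum_{U<m<X/V} \sum_n a_m b_n e(mn\theta)$ with divisor-bounded coefficients.
\begin{itemize}
\item The Type I sums are handled by the geometric-series bound $\sum_m \min(X/m, \|m\theta\|^{-1})$.
\item The Type II sums are handled by Cauchy--Schwarz followed by the same lemma.
\end{itemize}
Together these give the classical bound
\begin{equation*}
\ll X^{\varepsilon}\bigl(X q^{-1/2} + X^{4/5} + (Xq)^{1/2}\bigr).
\end{equation*}
Off the major arcs we have $q \gg X^{1/4}$, or else $q|\beta|X$ is large and a partial summation in $\beta$ produces the same saving. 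In either case the bound is $X^{3/4+\varepsilon}$ or better after balancing the parameters. Baker and Harman's refinement of this balancing is what yields the exponent $3/4$.

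\emph{Step 3: major arcs.} For $q \le X^{1/4}$, expand
\begin{equation*}
e(xa/q) = \sum_{d \mid q} \frac{1}{\phi(q/d)} \sum_{\chi \bmod q/d} \tau(\bar\chi)\,\chi(a)\,\chi(x/d)\,\mathbb{1}_{d\mid x}.
\end{equation*}
This reduces the problem to twisted sums $\sum_{y \le X/d} \mu(dy)\chi(y) e(y d\beta)$. Under GRH, Perron's formula applied to $1/L(s,\chi)$ gives
\begin{equation*}
\sum_{y\le Y}\mu(y)\chi(y) \ll Y^{1/2+\varepsilon}q^{\varepsilon}.
\end{equation*}
Partial summation in $\beta$ then costs a factor $1+|\beta|X$. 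Summing over $\chi$ and $d$ with $|\tau(\bar\chi)| \le q^{1/2}$ gives
\begin{equation*}
X^{1/2+\varepsilon} q^{1/2}(1+|\beta|X) \ll X^{1/2+1/8+1/4+\varepsilon},
\end{equation*}
which is acceptable once the arc widths are tuned.

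\emph{Main obstacle.} The hardest part is the parameter balancing in Steps 2 and 3. The minor arc Type II bound and the major arc partial-summation loss must meet at exactly $X^{3/4}$. Naive choices give only $X^{4/5}$ or $X^{7/8}$. Getting $3/4$ requires Baker and Harman's sharper treatment: mean-value estimates over characters and intermediate arcs in place of the pointwise GRH bound. For the purposes of this paper, we simply cite \cite{BakerHarman1991}.
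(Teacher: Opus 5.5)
\textbf{Citation matches the paper; the sketch before it does not work.} The paper gives no argument for this statement; it is quoted from \cite{BakerHarman1991}, so your closing decision to cite it is exactly what the paper does. The sketch that precedes the citation, though, is not a working outline, and your own ``main obstacle'' paragraph contradicts it.

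\begin{itemize}
\item \emph{Step~2.} The $X^{4/5}$ term in Vaughan's bound is what survives after optimizing $U,V$: it comes from balancing the Type~I size $UV$ against the Type~II loss $XU^{-1/2}$. So no rebalancing brings the minor arcs down to $X^{3/4}$. With your choice $U=V=X^{1/4}$ the Type~II term is already $X^{7/8}$, and $Xq^{-1/2}$ with $q\gg X^{1/4}$ is also $X^{7/8}$.
\item \emph{Step~3.} The displayed bound is $X^{7/8+\varepsilon}$. If you instead cover every $\theta$ by major arcs with the linear partial-summation loss $1+|\beta|X$, the best you get is $X^{5/6}$.
\end{itemize}

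So no retuning of Steps~2--3 reaches $3/4$.

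\textbf{The missing ingredient is a better dependence on $\beta$.} Under GRH, $1/L(s,\chi)\ll (q(|t|+2))^{\varepsilon}$ on $\sigma=\tfrac12+\varepsilon$. Smooth the cutoff at $Y$ over a window of length $Y^{1/2}$, at cost $O(Y^{1/2})$. Then bound the Mellin transform of $e(\beta t)$ by stationary phase: it is $\ll Y^{\sigma}\min(1,|t|^{-1/2})$ for $|t|\ll|\beta|Y$, and $\ll Y^{\sigma}/|t|$ beyond. This gives
\begin{equation*}
    \sum_{n\le Y}\mu(n)\chi(n)e(\beta n)\ll (qY)^{\varepsilon}\,Y^{1/2}(1+|\beta|Y)^{1/2},
\end{equation*}
which is the square root of your partial-summation loss.

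Now take Dirichlet's parameter $Q=X^{1/2}$, so that every $\theta$ has $q\le X^{1/2}$ and $q|\beta|X\le X^{1/2}$. Your character expansion, with $|\tau(\bar\chi)|\le q^{1/2}$ and $O(X^{\varepsilon})$ divisors $d\mid q$, then gives
\begin{equation*}
    \sum_{x<X}\mu(x)e(x\theta)\ll X^{1/2+\varepsilon}\bigl(q+q|\beta|X\bigr)^{1/2}\ll X^{1/2+\varepsilon}\bigl(X^{1/2}+X^{1/2}\bigr)^{1/2}\ll X^{3/4+\varepsilon}
\end{equation*}
uniformly in $\theta$. This needs no minor arcs and no Vaughan identity.
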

Under this assumption, we can handle weights as large as $|a| \ll d/\log d$ in the $\mathcal{X}_d = \mu_p^d$ case using the theory of Dirichlet $L$-functions. This can be used to get conditional bounds of the form
\begin{equation*}
    \max_{a \in \widehat{\mathcal{X}_d}}\left| \sum_{n < p^d} \mu(n)\chi_a(n)\right| \ll p^{d\left(1-c/(\log d)^2\right)},
\end{equation*}
see~\cite[Theorem 2]{Bourgain2013}.

\subsection{Statistical learning theory}
We give a brief introduction to statistical learning theory in order to orient an unfamiliar reader. In the following section, we describe learning with deep neural networks, which is a concrete and standard technique in practice. The general set up is as follows. You are given a data space $\mathcal{X}$ with probability measure $\mu_\mathcal{X}$, as well as a hypothesis class $\mathcal{H} \subseteq L^2(\mathcal{X})$ with probability measure $\mu_\mathcal{H}$. A secret target function $h \sim \mu_\mathcal{H}$ is then drawn at random, and you are given access to certain resources. With the limited resources, your goal is to produce a model $\widehat{h} \in L^2(\mathcal{X})$ which approximates $h$ as well as possible with probability as high as possible. One goal of the field is to understand upper and lower bounds on the resources needed to succeed.

The resources you have access to depend on the model of learning. Here are a few standard examples:
\begin{enumerate}
    \item In the probably almost correct (PAC) \cite{Valiant1984}, or Valiant model, you are given access to $n \in \mathbb{Z}_{ \geq 1}$ i.i.d. random samples $(x,h(x))$ drawn from $x \sim \mu_\mathcal{X}$. One can also consider a noisy variant in which you receive $(x, h(x) +\xi)$ where $\xi \sim N(0, \tau^2)$ is an i.i.d. Gaussian noise.
    \item In the statistical query model (SQ) \cite{Kearns1998}, you can request statistical information of the form $\mathbb{E}_{x \sim \mu_\mathcal{X}}[(\phi(x,h(x))]$ for any measurable $\phi : \mathcal{X} \times \mathcal{Y} \to [-1,1]$, and you receive a value $v \in \mathbb{R}$ satisfying
    \begin{equation*}
        |\mathbb{E}_{x \sim \mu_\mathcal{X}}[(\phi(x,h(x))] - v| \leq \tau.
    \end{equation*}
    It is strictly harder to learn in the SQ model then the PAC model.
\end{enumerate}
We will limit resources by considering specific classes of algorithms, e.g. kernel methods or gradient methods with noise, and prove lower bounds on the number of samples or the number of gradient descent steps needed to succeed. 

It is natural to consider the following notions of success. We say that you strongly learn if you perform arbitrarily well, i.e. for any $\varepsilon > 0$ and $\delta >0$ you achieve 
\begin{equation*}
    \mathbb{P}_{h \sim \mu_\mathcal{H}}\left[ \norm{h-\widehat{h}} < \varepsilon\right] > 1-\delta,
\end{equation*}
for some choice of norm. We will always use $L^2$ norm in this paper. On the other hand, we say that you weak learn if you improve on the trivial estimate $\overline{h} := \mathbb{E}_{h \sim \mu_\mathcal{H}}[h]$ in a nontrivial way. In this paper, this will mean a polynomial improvement, i.e. for some $C> 0$ you achieve
\begin{equation*}
    \mathbb{P}_{h \sim \mu_\mathcal{H}}\left[\norm{h-\widehat{h}} < \norm{h - \overline{h}} - d^{-C}\right] > d^{-C}
\end{equation*}
where $d$ is a complexity parameter, e.g. the number of digits. Since we are interested in lower bounds on functions that are believed to be ``high-complexity", we will focus on lower bounds for weak-learning.

While statistical learning theory generally considers a hypothesis class, we will prove bounds on learning a fixed function $f \in L^2(\mathcal{X})$ by taking advantage of a symmetry. In particular, if $G$ acts on $\mathcal{X}$ with $\mu_\mathcal{X}$ a $G$-invariant measure, and the method of learning is $G$-equivariant, then to succeed in learning a function $f$ implies you will succeed in learning the hypothesis class $\mathcal{H}_f = \{f \circ g : g \in G\}$. See Section~\ref{sec.lower_bounds} for more details on this reduction. To the best of our knowledge, this idea originates in the work of \cite{abbe2022nonuniversalitydeeplearningquantifying}.

\subsection{Learning with neural networks}\label{sec.neural_networks}
We begin by defining the neural networks and training algorithm for which our theorem applies. We end this section with some discussion about the theory of neural networks.
Let $L \in \mathbb{N}$ and $N_0, \ldots, N_L \in \mathbb{N}$. Then a (fully-connected, or feed-forward) neural network $f_{\mathrm{NN}}$ with activation function $\rho : \mathbb{R} \to \mathbb{R}$ is a map $f_{\mathrm{NN}}:\mathbb{R}^{N_0} \to \mathbb{R}^{N_L}$ defined by 
\begin{equation*}\label{d-NN}
    f_{\mathrm{NN}}(x;\theta) = \begin{cases}
    W_1(x),  & L = 1, \\
    W_2 \circ \rho \circ W_1(x), & L =2, \\
    W_L \circ \rho \circ W_{L-1} \circ \rho \circ \ldots \circ \rho \circ W_1(x), & L \geq 3,
\end{cases}
\end{equation*}
where
\begin{equation*}
    W_\ell(x) = A_\ell x + b_\ell \text{ with } A_\ell \in \mathbb{R}^{N_\ell \times N_{\ell -1}} \text{  and  } b_\ell \in \mathbb{R}^{N_\ell} \text{ for all }  \ell \in \{1,\ldots, L \},
\end{equation*}
and $\rho$ acts component-wise on vectors. The vector of parameters $\theta \in \mathbb{R}^p$ consists of all components in the $A_\ell$ and $b_\ell$. We will only consider the case $N_L = 1$ in this paper. 

Let $\mathcal{X} \subseteq \mathbb{R}^d$ be a probability space, and let $\cal{D}$ be a probability distribution on $\cal{X} \times \mathbb{R}$. We train a neural network $ f_{\mathrm{NN}}(\cdot; \theta) : \cal{X} \to \mathbb{R}$ by minimizing a loss function $\ell_\cal{D}(\theta)$. In this paper, we fix the loss function to be the mean square error:
\begin{equation*}
    \ell_\cal{D}(\theta) = \mathbb{E}_{\cal{D}}[(y-f(x;\theta))^2].
\end{equation*}
To minimize the loss, we use the noisy gradient descent algorithm (GD). The algorithm will randomly initialize $\theta_0$ according to some probability distribution $\mu_0$ on $\mathbb{R}^p$. It then iteratively updates the parameters with a step sizes $\eta_k > 0$ in a direction $G_\cal{D}(\theta_k)$ that approximates the gradient of the loss function, plus Gaussian noise $\xi^k \sim \cal{N}(0,\tau^2I)$:
\begin{equation*}
    \theta_{k+1} = \theta_k - \eta_k \left(G_{\cal{D}}(\theta_k) + \xi^k\right).
\end{equation*}
Here we have 
\begin{equation*}
    G_\mathcal{D}(\theta) := -\mathbb{E}_{\mathcal{D}}[(y-f(x;\theta)\Pi_{R}\nabla_\theta f(x;\theta)],
\end{equation*}
where $\Pi_{R}$ denote projection onto a ball of radius $R > 0$. This is just the direction of $\nabla \ell_{\cal{D}}(\theta)$, except we clip the gradient if it becomes too large. This is done in practice to avoid instability from exploding gradients \cite{zhang2020gradientclippingacceleratestraining}, and in theory to control the precision of the gradient. Notice that we have randomness coming from both initialization and noise. 

Neural networks are highly nonlinear functions that are well-suited for approximating a large class of functions. They are universal approximators in the sense that a sufficiently large network can approximate any continuous function on a compact domain arbitrarily well \cite{Cybenko1989}, as long as the activation function is not a polynomial. It is also known that any Boolean function in $n$ variables that can be implemented in time $T(n)$ can be computed by a neural network of size $O(T(n)^2)$ \cite{ShalevShwartzBenDavid2014}. In this sense, neural networks have very low approximation error. 

Another source of error in machine learning arises from sample complexity, which controls how many examples a learning algorithm needs before it can successfully approximate a target function. For neural networks, it is known that the sample complexity scales as a polynomial in the size of the network \cite{GolowichRakhlinShamir2020, BartlettFosterTelgarsky2017}.

The main source of error with neural networks is optimization error, that is, failing to minimize the loss function. As empirical risk minimization is known to be NP-hard, it is difficult to give training algorithms that guarantee low optimization error. This will be the source of our impossibility theorem, and it will be specific to our training algorithm. We note that our gradient descent algorithm falls under the statistical query \cite{Kearns1998} paradigm in statistical learning theory. On the other hand, stochastic gradient descent with batch size one falls under the Valiant paradigm \cite{Valiant1984}. It is known that learning in the statistical query paradigm is strictly harder than the Valiant paradigm. This suggests that the limitations of our learning algorithm may be circumvented by choosing a different learning algorithm. See \cite{Abbe2018ProvableLO, abbe2022nonuniversalitydeeplearningquantifying} for further discussion. 

\section{Lower bounds on equivariant learning algorithms}\label{sec.lower_bounds}

In this section, we present general lower bounds for learning a fixed function under three types of $G$-equivariant learning algorithms. Following the ideas in \cite{abbe2022nonuniversalitydeeplearningquantifying}, the key to proving lower bounds for a fixed function $h \in L^2(\mu_\mathcal{X})$ will be to use a symmetry group $G$ of the given learning algorithm to produce a hypothesis class $\mathcal{H} = \{h \circ g: g \in G\}$ on which the algorithm must have constant performance. One can then deduce lower bounds on learning from the complexity of the resulting hypothesis class. We begin by defining $G$-equivariant algorithms. 
\begin{defn}\label{def:G_equivariant}
    A randomized algorithm $\mathcal{A}$ that takes in a data distribution $\mathcal{D} \in \mathcal{P}(\cal{X} \times \cal{Y})$ and outputs a function $\mathcal{A}(\mathcal{D}) : \mathcal{X} \to \mathcal{Y}$ is said to be $G$-equivariant if for every $g \in G$ we have
    \begin{equation*}
        \mathcal{A}(\mathcal{D}) \overset{d}{=} \mathcal{A}(g \circ \mathcal{D}) \circ g
    \end{equation*}
    where $g \circ \mathcal{D}$ is the distribution $(g(x), y)$ for $(x,y) \sim \mathcal{D}$. Here $\overset{d}{=}$ denotes equality in distribution.
\end{defn}

We now discuss three types of learning algorithms for which we can prove lower bounds. For each regime, we give examples of $G$-equivariant algorithms and prove a lower bound in terms of $h$ and $G$. We will see a single quantity, which we call alignment, that bounds all three methods:
\begin{equation*}
    \mathrm{A}(h,G) := \sup_{\norm{\phi}_2=1} \mathbb{E}_{g \sim \mu_G}\left[|\langle U_gh, \phi\rangle|^2\right].
\end{equation*}
In certain cases, we will allow an arbitrary offset $h_* \in L^2(\mathcal{X})$,
\begin{equation*}
    \mathrm{A}(h,G; h_*) := \sup_{\norm{\phi}_2=1} \mathbb{E}_{g \sim \mu_G}\left[|\langle U_gh-h_*, \phi\rangle|^2\right].
\end{equation*}
More generally, given a hypothesis class $\mathcal{H} \subseteq L^2(\mu_\mathcal{X})$ with probability measure $\mu_\mathcal{H}$, we denote
\begin{equation*}
     \mathrm{A}(\mu_\mathcal{H}; h_*) := \sup_{\norm{\phi}_2=1} \mathbb{E}_{h \sim \mu_\mathcal{H}}\left[|\langle h-h_*, \phi\rangle|^2\right].
\end{equation*}

\subsection{Kernel methods}\label{ssec.kernel_methods}

We begin with a classical learning technique known as the kernel method. We take a general input space $\mathcal{X}$ with distribution $\mu_\mathcal{X}$ together the following data
\begin{enumerate}
    \item A feature space $\mathscr{F}$ which is a Hilbert space with inner product $\langle \cdot ,\cdot \rangle_{\mathscr{F}}$.
    \item A feature map $\Phi : \mathcal{X} \to \mathscr{F}$.
    \item A function space $\mathscr{H} = \{f(x;\theta) = \langle \theta , \Phi(x)\rangle : \theta \in \mathscr{F}\}$.
\end{enumerate}
The model class $ \mathscr{H}$ inherits the inner product from $\mathscr{H}$ via $\langle f(\cdot ; \theta_1), f(\cdot ; \theta_2)\rangle_{\mathscr{H}} = \langle \theta_1 ,\theta_2\rangle_{\mathscr{F}}$. We assume that $\theta \mapsto f(\cdot; \theta)$ is injective, passing to a quotient of $\mathscr{H}$ if necessary. The function space $\mathscr{H}$ also inherits the additional structure of a reproducing kernel Hilbert space (RKHS) with symmetric positive definite kernel $K : \mathcal{X} \times \mathcal{X} \to \mathbb{R}$ given by $K(x_1,x_2) = \langle \Phi(x_1), \Phi(x_2) \rangle_{\mathscr{F}}$. Note that feature maps and kernels give equivalent descriptions of the RHKS, and it is often most convenient to work only with the kernel $K$.

The kernel method is simply to apply empirical risk minimization (ERM) on the RHKS $\mathscr{H}$. That is, given data $(x_i,y_i) \in \mathcal{X} \times \mathbb{R}$ with $i \in [n]$, the learned predictor is defined by
\begin{equation*}
    \widehat{f} = \operatornamewithlimits{argmin}_{f \in \mathscr{H}}\left\{ \frac{1}{n} \sum_{i=1}^n \ell(f(x_i),y_i)) + \lambda \norm{f}^2_\mathscr{H} \right\}.
\end{equation*}
By the representer theorem, this reduces to a finite-dimensional optimization problem 
\begin{equation*}
    \widehat{a} =  \operatornamewithlimits{argmin}_{a \in \mathbb{R}^n}\left\{ \frac{1}{n} \sum_{i=1}^n \ell(a^Tk_n(x_i),y_i)) + \lambda a^T K_na \right\},
\end{equation*}
where 
\begin{equation*}
    k_n(x) = (K(x, x_i))_{i \in [n]}, \qquad K_n = K(x_i,x_j)_{i,j \in [n]}.
\end{equation*}
Taking $\ell$ to be convex in the first parameter, this is a convex optimization problem in $\mathbb{R}^n$, even when $\Phi$ is non-linear or $\mathscr{F}$ is infinite dimensional. 

Now suppose that a group $G$ acts on $\mathcal{X}$ such that $g_* \mu_\mathcal{X} = \mu_\mathcal{X}$ for all $g \in G$. Let $\mathscr{H}$ be an RHKS with a $G$-invariant kernel:
\begin{equation*}
    G \subseteq \mathrm{Aut}(K) := \{g : \mathcal{X} \to \mathcal{X} : K(gx,gy) = K(x,y) \text{ for all } x,y\in \mathcal{X}\}. 
\end{equation*}
A few examples of $G$-equivariant kernel methods include the following
\begin{enumerate}
    \item $K(x,y) = f(x \cdot y)$ as for linear regression gives us $\mathrm{Aut}(K) = \mathrm{O}_d(\mathbb{R})$.
    \item $K(x,y) = \psi(\norm{x-y}^2)$ as for Gaussian/RBF kernel gives us $\mathrm{Aut}(K) := \bb{R}^d \rtimes \mathrm{O}_d(\mathbb{R})$.
    \item Ridge regression with any $G$-invariant kernel.
    \item Support vector machines with any $G$-invariant kernel.
    \item For $K$ the infinite-width two-layer NTK \cite{jacot2018ntk} we have $\mathrm{Aut}(K) = \mathrm{O}_d(\mathbb{R})$.
\end{enumerate}
Let $\ell : \mathbb{R} \times \mathcal{Y} \to \mathbb{R}$ be a loss function, $\lambda > 0$, and for a sample $S \in (\mathcal{X} \times \mathcal{Y})^n$ consider the algorithm
\begin{equation}\label{eq.kernel_alg}
    \mathcal{A}(S) := \operatorname*{argmin}_{h \in \mathscr{H}_K}\left\{
    \frac{1}{n}\sum_{i=1}^n \ell(h(x_i),y_i) + \lambda \|h\|_{\mathscr{H}_K}^2
    \right\}.
\end{equation}
For each $g \in G$ denote by $U_g : \mathscr{H}_K \to \mathscr{H}_K$ the operator $h \mapsto h \circ g$.
\begin{lem}\label{lem.G_inv_kernel}
    For a $G$-invariant kernel $K$, the operator
    \begin{equation*}
        U_g : \mathscr{H}_K \to \mathscr{H}_K
    \end{equation*}
    is an isometry for all $g \in G$.
\end{lem}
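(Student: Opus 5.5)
The plan is to work with the description of $\mathscr{H}_K$ as the completion of the span of kernel sections $k_y := K(\cdot,y)$, $y \in \mathcal{X}$. On this span the inner product is fixed by the reproducing property $\langle k_x, k_y\rangle_{\mathscr{H}_K} = K(x,y)$. The argument first checks that $U_g$ preserves this dense subspace and its inner product, then extends by continuity, and finally uses that $g$ is invertible to conclude that $U_g$ maps $\mathscr{H}_K$ into itself isometrically (indeed unitarily).

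\textbf{Step 1: action on kernel sections.} For $y \in \mathcal{X}$ and $x \in \mathcal{X}$, $G$-invariance gives
\begin{equation*}
    (U_g k_y)(x) = K(gx, y) = K(gx, g g^{-1} y) = K(x, g^{-1}y) = k_{g^{-1}y}(x).
\end{equation*}
Hence $U_g k_y = k_{g^{-1}y}$. Here I use that $G$ is a group acting by bijections, so $g^{-1}$ exists; this is implicit in the paper's setup since $G$ acts on $\mathcal{X}$. It follows that $U_g$ maps the span $\mathscr{H}_0 = \operatorname{span}\{k_y\}$ into itself.

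\textbf{Step 2: isometry on the span.} For $h = \sum_i c_i k_{y_i} \in \mathscr{H}_0$,
\begin{equation*}
    \norm{U_g h}_{\mathscr{H}_K}^2 = \sum_{i,j} c_i c_j K(g^{-1}y_i, g^{-1}y_j) = \sum_{i,j} c_i c_j K(y_i,y_j) = \norm{h}_{\mathscr{H}_K}^2,
\end{equation*}
again by $G$-invariance. So $U_g$ is a linear isometry on $\mathscr{H}_0$.

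\textbf{Step 3: extension to all of $\mathscr{H}_K$.} $U_g|_{\mathscr{H}_0}$ extends uniquely to an isometry $\widetilde{U}_g$ on $\mathscr{H}_K$. It remains to check that this extension agrees with composition $h \mapsto h\circ g$. Convergence in $\mathscr{H}_K$ implies pointwise convergence, since $|h(x)| = |\langle h, k_x\rangle| \le \norm{h}\,K(x,x)^{1/2}$. So if $h_n \to h$ with $h_n \in \mathscr{H}_0$, then $h_n\circ g \to h\circ g$ pointwise. Also $U_g h_n$ converges in $\mathscr{H}_K$ to $\widetilde{U}_g h$, hence pointwise. Therefore $\widetilde U_g h = h\circ g$, so $U_g$ indeed maps $\mathscr{H}_K$ into $\mathscr{H}_K$ and is an isometry. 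Applying the same argument to $g^{-1}$ shows $U_{g^{-1}}$ is an inverse, so $U_g$ is even unitary.

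The only mildly delicate point is Step 3: making sure that $h\circ g$ lies in $\mathscr{H}_K$ for general $h$, not just for finite combinations of kernel sections. The pointwise-evaluation bound above handles this. If $\mathscr{H}_K$ is instead presented via a feature map, the same conclusion follows since $\mathscr{H}_K$ is canonically the RKHS of $K$.
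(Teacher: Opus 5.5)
Your proof is correct and follows the same route as the paper. Both compute $U_g k_y = k_{g^{-1}y}$, use $G$-invariance to get an isometry on the span of kernel sections, and extend by density; the paper justifies density via $V^\perp=\{0\}$, and your Step 3 additionally checks, via continuity of point evaluations, that the extension really is $h\mapsto h\circ g$, which the paper leaves implicit in ``linearity and density.''
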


\begin{proof}
    For $y \in X$ let $K_y = K(\cdot, y)$ and observe that $U_gK_y = K_{g^{-1}y}$ since
    \begin{equation*}
        (U_g K_y)(x) = K_y(gx) = K(gx,y) = K(x,g^{-1}y).
    \end{equation*}
    By $G$-invariance we have
    \begin{equation*}
        \langle U_g K_x, U_g K_y \rangle_{\mathscr{H}_K} = \langle K_x, K_y \rangle_{\mathscr{H}_K}.
    \end{equation*}
    Now observe that $V = \text{span}\{K_y : y \in X\}$ is dense since $V^\perp = \{0\}$:
    \begin{equation*}
        \langle f, K_y \rangle_{\mathscr{H}_K} =0 \implies f(y) = \langle f, K_y \rangle_{\mathscr{H}_K} = 0 \implies f = 0,
    \end{equation*}
    by the reproducing kernel property. We conclude the proof by linearity and density. 
\end{proof}

\begin{prop}\label{prop.G_inv_kernel}
    For a $G$-invariant kernel $K$ with a unique solution to Equation~\eqref{eq.kernel_alg}, the kernel method is $G$-equivariant: $  \mathcal{A}(g^{-1}S) = \mathcal{A}(S) \circ g$.
\end{prop}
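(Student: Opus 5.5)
The idea is to show that precomposition by $U_g$ carries the objective for $g^{-1}S$ exactly onto the objective for $S$, and then let uniqueness of the minimizer finish. Here $g^{-1}S = \{(g^{-1}x_i, y_i)\}_{i=1}^n$, and $U_g h = h\circ g$ as before.

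Write $J_S(h) = \frac1n\sum_{i=1}^n \ell(h(x_i),y_i) + \lambda\norm{h}_{\mathscr{H}_K}^2$ for the objective in Equation~\eqref{eq.kernel_alg}. First, for any $h\in\mathscr{H}_K$ we have $(U_g h)(g^{-1}x_i) = h(x_i)$. So the data-fit term of $J_{g^{-1}S}(U_g h)$ equals the data-fit term of $J_S(h)$. By Lemma~\ref{lem.G_inv_kernel}, $\norm{U_g h}_{\mathscr{H}_K} = \norm{h}_{\mathscr{H}_K}$. Together these give
\begin{equation*}
J_{g^{-1}S}(U_g h) = J_S(h) \qquad \text{for all } h \in \mathscr{H}_K .
\end{equation*}

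Next, $U_g$ is a bijection of $\mathscr{H}_K$, since $U_{g^{-1}}$ is its inverse: $(h\circ g)\circ g^{-1} = h$. This uses that $G$ is a group acting on $\mathcal{X}$ and that Lemma~\ref{lem.G_inv_kernel} applies equally to $g^{-1}$, so both operators map $\mathscr{H}_K$ into itself. As $h$ ranges over $\mathscr{H}_K$, $U_g h$ therefore ranges over all of $\mathscr{H}_K$. Hence
\begin{equation*}
\inf_{\mathscr{H}_K} J_{g^{-1}S} = \inf_{\mathscr{H}_K} J_S ,
\end{equation*}
and $h^\ast$ minimizes $J_S$ if and only if $U_g h^\ast$ minimizes $J_{g^{-1}S}$.

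By the uniqueness hypothesis, applied to both $S$ and $g^{-1}S$, it follows that $\mathcal{A}(g^{-1}S) = U_g\mathcal{A}(S) = \mathcal{A}(S)\circ g$. This is the desired identity. It also matches Definition~\ref{def:G_equivariant} with equality rather than equality in distribution, since the algorithm is deterministic.

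The only step with content is the norm invariance, and it is already supplied by Lemma~\ref{lem.G_inv_kernel}. The one small point to check is that $U_g$ maps $\mathscr{H}_K$ onto itself. This follows from the lemma's proof: $U_g K_y = K_{g^{-1}y}$, so $U_g$ maps the dense span of the kernel sections into itself and extends as an isometry. Surjectivity then comes from the inverse $U_{g^{-1}}$.
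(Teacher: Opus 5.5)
Your proposal is correct and follows essentially the same route as the paper. The paper also establishes $J_{g^{-1}S}(U_g h) = J_S(h)$ from the isometry lemma, and then writes $f = U_g U_{g^{-1}} f$ to transport the minimizer $\mathcal{A}(S)$ to $U_g\mathcal{A}(S)$. That step is exactly your bijectivity argument, and uniqueness then gives the identity $\mathcal{A}(g^{-1}S) = \mathcal{A}(S)\circ g$.
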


\begin{proof}
    Let 
    \begin{equation*}
        J_S(h) := \frac{1}{n} \sum_{i=1}^n \ell(h(x_i), y_i) + \lambda \norm{h}^2
    \end{equation*}
    and observe that
    \begin{align*}
    J_{g^{-1}S}(U_g h)
        &= \frac{1}{n}\sum_{i=1}^n \ell\bigl((U_g h)(g^{-1}x_i),y_i\bigr) +\lambda \|U_g h\|_{\mathscr{H}_K}^2 \\
        &= \frac{1}{n}\sum_{i=1}^n \ell\bigl(h(x_i),y_i\bigr) + \lambda \|h\|_{\mathscr{H}_K}^2 \\
        &= J_S(h)
    \end{align*}
    since $U_g$ is an isometry. Therefore, $h \mapsto U_g h$ transports the objective function for $g^{-1}S$ to the objective function for $S$. Now take any $f \in \mathscr{H}_K$ and observe that
    \begin{equation*}
        J_{g^{-1}S}(f) = J_{g^{-1}S}(U_gU_{g^{-1}}f) = J_S( U_{g^{-1}}f) \geq J_S(\mathcal{A}(S)) = J_{g^{-1}S}(U_g \mathcal{A}(S)),
    \end{equation*}
    so $U_g \mathcal{A}(S)$ minimizes $J_{g^{-1}S}$, concluding the proof.
\end{proof}

\begin{thm}\label{thm:kernel_general_lb}
    Let $\mathcal{A}$ be a $G$-equivariant kernel method and let $\mu_\mathcal{X}$ be $G$-invariant with $G$ a finite group. For any $\varepsilon > 0$ and any $h \in L^2(\mathcal{X})$ with $\norm{h}_2 = 1$, suppose that kernel method $\mathcal{A}(S)$ on sample $S = (x_i,y_i)_{i \in [n]}$ achieves 
   \begin{equation*}
       \norm{\mathcal{A}(S)-h}_2^2 < \varepsilon.
   \end{equation*} 
    Then
    \begin{equation*}
        \min(n, \eta) \geq \frac{|G|}{\norm{\Gamma}_{\mathrm{op}}} (1-\varepsilon) = \frac{1-\varepsilon}{\mathrm{A}(h,G)}
    \end{equation*}
    where $\eta = \dim \mathscr{H}_K$ and $\Gamma = (\langle U_{g}h, U_{g'}h \rangle)_{g,g' \in G}$ is the Gram matrix of $U_Gh = \{U_gh : g \in G\}$.
\end{thm}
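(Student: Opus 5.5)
The plan is to combine three things: the representer theorem, the equivariance from Proposition~\ref{prop.G_inv_kernel}, and a projection/averaging bound over the orbit $U_Gh$.

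\textbf{Step 1: a single subspace for every target in the orbit.} Write $V_S=\mathrm{span}\{K(\cdot,x_i):i\in[n]\}\subseteq\mathscr{H}_K\cap L^2(\mu_\mathcal{X})$. By the representer theorem, $\mathcal{A}(S)\in V_S$, and $\dim V_S\le m:=\min(n,\eta)$.

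For $g\in G$, consider the target $U_gh$ on the same inputs, i.e. the sample $S_g=\{(x_i,h(gx_i))\}$. We have $S_g=g^{-1}\{(gx_i,h(gx_i))\}$. Since $\mu_\mathcal{X}$ is $G$-invariant, the sample $\{(gx_i,h(gx_i))\}$ has the same law as a sample for $h$. Proposition~\ref{prop.G_inv_kernel} then gives $\mathcal{A}(S_g)=\mathcal{A}(gS_g)\circ g$. Because $U_g$ is an $L^2(\mu_\mathcal{X})$-isometry by invariance of the measure,
\[
\norm{\mathcal{A}(S_g)-U_gh}_2=\norm{\mathcal{A}(gS_g)-h}_2 .
\]
So the assumed success for $h$ transfers to success for every $U_gh$, and all the predictors $\mathcal{A}(S_g)$ lie in the one subspace $V:=V_S$.

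I interpret the hypothesis $\norm{\mathcal{A}(S)-h}_2^2<\varepsilon$ as holding on average over the (invariantly distributed) sample. The precise quantifier is the delicate point, and the main obstacle is exactly this bookkeeping. After averaging over $g\sim\mu_G$, the aim is
\[
\mathbb{E}_g\norm{\mathcal{A}(S_g)-U_gh}_2^2<\varepsilon
\]
for a single fixed input set $\{x_i\}$.

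\textbf{Step 2: projection bound.} Since $\mathcal{A}(S_g)\in V$, the orthogonal projection $P_V$ (in $L^2$) does at least as well:
\[
\norm{U_gh-P_VU_gh}_2^2\le\norm{\mathcal{A}(S_g)-U_gh}_2^2 .
\]
Also $\norm{U_gh}_2=1$, so
\[
\mathbb{E}_g\norm{P_VU_gh}_2^2>1-\varepsilon .
\]

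\textbf{Step 3: upper bound via the Gram matrix.} Take an orthonormal basis $v_1,\dots,v_k$ of $V$, with $k\le m$. Then
\[
\sum_{g}\norm{P_VU_gh}^2=\sum_{j}\sum_g|\langle U_gh,v_j\rangle|^2\le k\,\norm{\Gamma}_{\mathrm{op}} .
\]
This holds because, with $B:\mathbb{C}^G\to L^2$ given by $e_g\mapsto U_gh$, we have
\[
\sum_g|\langle U_gh,v\rangle|^2=\langle BB^*v,v\rangle\le\norm{BB^*}_{\mathrm{op}}=\norm{B^*B}_{\mathrm{op}}=\norm{\Gamma}_{\mathrm{op}}
\]
for every unit vector $v$. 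Combining with Step 2 gives
\[
|G|(1-\varepsilon)<m\norm{\Gamma}_{\mathrm{op}},
\]
which is the first claimed inequality.

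\textbf{Step 4: identify the constant.} The same computation shows
\[
\mathrm{A}(h,G)=\sup_{\norm{\phi}_2=1}\frac{1}{|G|}\langle BB^*\phi,\phi\rangle=\frac{\norm{\Gamma}_{\mathrm{op}}}{|G|},
\]
giving the equality $\dfrac{|G|}{\norm{\Gamma}_{\mathrm{op}}}(1-\varepsilon)=\dfrac{1-\varepsilon}{\mathrm{A}(h,G)}$.
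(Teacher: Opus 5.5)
Your argument is correct. Its skeleton is the paper's: equivariance spreads success over the orbit $U_Gh$, you average over $g$, you bound by $\norm{\Gamma}_{\mathrm{op}}$, and you identify $\norm{\Gamma}_{\mathrm{op}}/|G|=\mathrm{A}(h,G)$ via $B^*B$ versus $BB^*$, exactly as the paper does with its operator $T$.

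The real difference is how the orbit is generated and where the dimension bound comes from.
\begin{itemize}
\item The paper keeps the sample and feeds $g^{-1}S$ to the method for the target $U_gh$. This gives the exact identity $\norm{U_g\mathcal{A}(S)-U_gh}_2=\norm{\mathcal{A}(S)-h}_2$ for every $g$. However, the predictors $U_g\mathcal{A}(S)$ then lie in the $g$-dependent spans $\mathrm{span}\{K(\cdot,g^{-1}x_i)\}$, and the lower bound is outsourced to \cite[Proposition 11]{abbe2024mergedstaircasepropertynecessarynearly}.
\item You keep the inputs fixed and vary only the labels. Every $\mathcal{A}(S_g)$ then lies in one subspace of dimension at most $\min(n,\eta)$, so the projection-plus-Gram bound (the content of that proposition) becomes self-contained. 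The price is that success for $U_gh$ only transfers in distribution over the sample, hence your averaged hypothesis.
\end{itemize}

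That price cannot be avoided, because the statement read literally for one fixed sample is false. Take $\mathcal{X}=G=\mathbb{Z}/N\mathbb{Z}$ acting by translation, $K(x,y)=\mathbbm{1}_{x=y}$, and $h=\sqrt{N}\,\mathbbm{1}_{\{0\}}$. Then $\Gamma=I$ and $|G|/\norm{\Gamma}_{\mathrm{op}}=N$. Ridge regression on the single point $(0,\sqrt{N})$ returns $\frac{\sqrt{N}}{1+\lambda}\mathbbm{1}_{\{0\}}$, whose error $(\lambda/(1+\lambda))^2$ is below $\varepsilon$ for small $\lambda$. The bound would instead force $1=\min(n,\eta)\geq N(1-\varepsilon)$. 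So your reading is the right one, and the paper's proof implicitly needs it too.

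The one step you should write out, rather than leave as ``the aim'', is the Fubini-and-select step. With $x=(x_i)$ i.i.d.\ from $\mu_\mathcal{X}$,
\[
\mathbb{E}_x\mathbb{E}_g\norm{\mathcal{A}(S_g)-U_gh}_2^2=\mathbb{E}_g\mathbb{E}_x\norm{\mathcal{A}(gS_g)-h}_2^2<\varepsilon,
\]
because $gS_g$ is a sample for $h$ and $(gx_i)_i\overset{d}{=}(x_i)_i$. Hence some realization of the inputs gives $\mathbb{E}_g\norm{\mathcal{A}(S_g)-U_gh}_2^2<\varepsilon$, which is exactly what your Steps 2 and 3 consume.
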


\begin{proof}
    Suppose that 
    \begin{equation*}
        \int_\mathcal{X} |\mathcal{A}(S) - h |^2d\mu_\mathcal{X} =   \norm{\mathcal{A}(S)-h}_2^2 < \varepsilon,
    \end{equation*}
    and note that for any $g \in G$, applying the kernel method to $U_gh$ yields $U_g \mathcal{A}(S)$ by Proposition~\ref{prop.G_inv_kernel}. By Lemma~\ref{lem.G_inv_kernel}, we have
    \begin{equation*}
       \norm{U_g\mathcal{A}(S) - U_{g}h}^2_2 = \norm{\mathcal{A}(S)-h}_2^2 < \varepsilon,
    \end{equation*}
    so we conclude that
    \begin{equation*}
        \frac{1}{|G|} \sum_{g \in G} \norm{U_g\mathcal{A}(S)-U_gh}_2^2 < \varepsilon.
    \end{equation*}
    The lower bound now follows from \cite[Proposition 11]{abbe2024mergedstaircasepropertynecessarynearly}.

    For the equality, consider the operator
    \begin{equation*}
        T : \ell^2(G) \to L^2(\mathcal{X}), \qquad T(c) := \frac{1}{\sqrt{|G|}} \sum_{g \in G} c_g U_gh.
    \end{equation*}
    It is easy to see that the adjoint is 
    \begin{equation*}
        T^* : L^2(\mathcal{X}) \to \ell^2(G), \qquad (T^*\phi)_g = \frac{1}{\sqrt{|G|}} \langle \phi, U_g h \rangle.
    \end{equation*}
    Now we compute
    \begin{equation*}
        TT^*(\phi) = \frac{1}{|G|} \sum_{g \in G} \langle \phi, U_g h \rangle U_gh = \mathbb{E}_{g \sim \mu_G}[\langle \phi, U_gh\rangle U_gh],
    \end{equation*}
    and
    \begin{equation*}
         (T^*Tc)_{g'} = \frac{1}{|G|} \sum_{g \in G} \langle U_{g'}h, U_g h \rangle c_{g'}.
    \end{equation*}
     Therefore $T^*T$ is represented by the matrix $\Gamma/|G|$, and we conclude
    \begin{equation*}
        \frac{\norm{\Gamma}_{\mathrm{op}}}{|G|} = \norm{T^*T}_{\mathrm{op}} = \norm{TT^*}_{\mathrm{op}} = \sup_{\norm{\phi}_2=1} \langle TT^*\phi,\phi \rangle =\sup_{\norm{\phi}_2=1} \mathbb{E}_{g \sim \mu_G}[|\langle U_gh, \phi \rangle |^2] = \mathrm{A}(h,G),
    \end{equation*}
    using the fact that $TT^*$ is a positive self-adjoint operator. 
\end{proof}

\subsection{Noisy gradient descent}
Suppose that $\mathcal{X} \subseteq \mathbb{R}^d$ and we have a parameterized model $f(\cdot ; \theta) : \mathcal{X} \to \mathbb{R}$ with parameters $\theta \in \mathbb{R}^\eta$ such that $\nabla_\theta f(\cdot; \theta)$ exists for a.e. $x \in \mathcal{X}, \theta \in \mathbb{R}^\eta$. We train the model using the following noisy gradient descent algorithm. Begin with a random initialization $\theta_0 \sim \mu_0$ according to an initialization distribution $\mu_0$. For a target $h \in L^2(\mathcal{X})$, we define the clipped population gradient
\begin{equation*}
    G_h(\theta) := -\mathbb{E}_{x \sim \mu_\mathcal{X}}[(h(x)-f(x;\theta)\Pi_{R}\nabla_\theta f(x;\theta)],
\end{equation*}
where $\Pi_{R}$ denote projection onto a ball of radius $R > 0$. We iteratively update the parameters via the rule
\begin{equation}\label{eq.NGD_rule}
    \theta_{k+1} = \theta_k -  \eta_k(G_h(\theta_k) + \xi_k),
\end{equation}
where $\eta_k > 0$ is a learning rate and $\xi_k \sim N(0, \tau^2I)$ is i.i.d. Gaussian noise. After $T \geq 1$ iterations, we output the model $f(\cdot ; \theta_T) : \mathcal{X} \to \mathbb{R}$. We give the following sufficient condition for such an algorithm to be $G$-equivariant. 

\begin{prop}\label{prop:G_inv_NGD_condition}
    Suppose that $G$ acts on $\mathcal{X}$ with $\mu_\mathcal{X}$ a $G$-invariant measure. Assume the following:
    \begin{enumerate}
        \item There is an orthogonal representation $\rho : G \to \mathrm{O}(\mathbb{R}^\eta)$ such that
        \begin{equation*}
            f(g(x); \rho(g)\theta) = f(x;\theta) \quad \forall x \in \mathcal{X}, g \in G, \theta \in \mathbb{R}^\eta.
        \end{equation*}
        \item The initial distribution is $\rho(G)$-invariant: $(\rho(g))_*\mu_0 = \mu_0$ for all $g  \in G$.
    \end{enumerate}
    Then the noisy gradient descent algorithm is $G$-equivariant. 
\end{prop}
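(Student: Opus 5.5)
We argue by coupling the two training runs. Fix $g \in G$ and write $\rho_g := \rho(g)$. We compare the run on the target $h$ with the run on the transformed data $g \circ \mathcal{D}$, i.e.\ on the target $h \circ g^{-1}$. (Equivalently, we verify $f(\cdot;\theta_T^{h}) \overset{d}{=} f(g(\cdot);\theta_T^{h\circ g^{-1}})$, which is Definition~\ref{def:G_equivariant} with $y = h(x)$.) The plan is to show that if $(\theta_k)_{k}$ is the trajectory for target $h$, then $(\rho_g\theta_k)_k$ is distributed exactly as the trajectory for target $h\circ g^{-1}$. Since $f(g(x);\rho_g\theta_T) = f(x;\theta_T)$ by assumption (1), this gives the claim.

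\textbf{Step 1: gradients transform covariantly.} Differentiate the identity $f(g(x);\rho_g\theta)=f(x;\theta)$ in $\theta$. This gives $\rho_g^{T}(\nabla_\theta f)(g(x);\rho_g\theta) = \nabla_\theta f(x;\theta)$. Since $\rho_g$ is orthogonal, this becomes
\[
\nabla_\theta f(g(x);\rho_g\theta) = \rho_g \nabla_\theta f(x;\theta),
\]
valid for a.e.\ $x,\theta$ by the a.e.\ differentiability assumption. Orthogonality also gives $\Pi_R(\rho_g v) = \rho_g \Pi_R(v)$, since $\rho_g$ preserves the ball of radius $R$ and commutes with radial projection. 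Now substitute $x = g^{-1}(x')$ and use the $G$-invariance of $\mu_\mathcal{X}$:
\[
G_{h\circ g^{-1}}(\rho_g\theta) = -\mathbb{E}_{x'}\big[(h(g^{-1}x') - f(x';\rho_g\theta))\,\Pi_R\nabla_\theta f(x';\rho_g\theta)\big] = \rho_g\, G_h(\theta).
\]
Here we also used $f(x';\rho_g\theta) = f(g^{-1}x';\theta)$.

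\textbf{Step 2: induction on the trajectory.} Set $\theta'_k := \rho_g\theta_k$. Applying Step 1 to the update rule \eqref{eq.NGD_rule} gives
\[
\theta'_{k+1} = \theta'_k - \eta_k\big(G_{h\circ g^{-1}}(\theta'_k) + \rho_g\xi_k\big).
\]
The noise $\rho_g\xi_k \sim N(0,\tau^2 I)$ because $\rho_g$ is orthogonal, and these terms remain i.i.d.\ and independent of $\theta_0$. By assumption (2), $\theta'_0 = \rho_g\theta_0 \sim \mu_0$. Hence the sequence $(\theta'_k)$ is generated by exactly the noisy GD recursion for target $h\circ g^{-1}$ with the correct initialization and noise law. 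The full trajectory therefore has the same joint law as $(\theta^{h\circ g^{-1}}_k)$, and in particular $\rho_g\theta_T^{h} \overset{d}{=} \theta_T^{h\circ g^{-1}}$.

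\textbf{Step 3: conclude.} We get $f(\cdot;\theta_T^{h\circ g^{-1}})\circ g \overset{d}{=} f(g(\cdot);\rho_g\theta_T^h) = f(\cdot;\theta_T^h)$, which is the required $G$-equivariance.

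The only delicate point is Step 1. It requires justifying differentiation of the invariance identity where $f$ is only a.e.\ differentiable, for instance with ReLU activations. This is handled by noting that $\theta\mapsto\rho_g\theta$ is a linear isometry, so it maps null sets to null sets. The chain rule therefore holds at almost every point, which suffices inside the expectation and for the Gaussian-smoothed dynamics.
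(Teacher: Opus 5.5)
Your proof is correct and follows the same route as the paper. The key step in both is the identity $G_{h\circ g^{-1}}(\rho(g)\theta)=\rho(g)G_h(\theta)$, which comes from the chain rule applied to assumption (1), orthogonality of $\rho(g)$, the fact that $\Pi_R$ commutes with orthogonal maps, $G$-invariance of $\mu_\mathcal{X}$, and rotation invariance of the Gaussian noise. The only difference is that the paper outsources the trajectory-level induction to \cite[Theorem C.1]{li2021convnets}, whereas you carry out the coupling $\theta'_k=\rho(g)\theta_k$ explicitly, which makes your argument self-contained.
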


\begin{proof}
    We use the more general result \cite[Theorem C.1]{li2021convnets}. It is clear that their conditions (1) and (3) follow from our assumptions. It remains to show that the update rule in Equation~\eqref{eq.NGD_rule} is $G$-equivariant. Since $\rho$ lands in $O(\mathbb{R}^\eta)$ and $\xi_k$ is Gaussian, the noise is invariant in distribution. Observe that condition (1) implies
    \begin{align}\label{eq.NGD_equivariant_step1}
    \begin{split}
        G_{U_{g^{-1}}h}(\rho(g)\theta)& = -\mathbb{E}_{x \sim \mu_\mathcal{X}}[(h(g^{-1}x)-f(x; \rho(g)\theta))\Pi_{R}\nabla_\theta f(x;\rho(g)\theta)]\\
        &= -\mathbb{E}_{x \sim \mu_\mathcal{X}}[(h(x)-f(x; \theta))\Pi_{R}\nabla_\theta f(gx;\rho(g)\theta)]
    \end{split}
    \end{align}
    Applying the chain rule to condition (1), we find
    \begin{equation*}
        \rho(g)^T \nabla_\theta f(gx; \rho(g)\theta) = \nabla_\theta f(x,\theta),
    \end{equation*}
    and since $\rho(g)$ is orthogonal, 
    \begin{equation}\label{eq.NGD_equivariant_step2}
        \nabla_\theta f(gx; \rho(g)\theta) =\rho(g) \nabla_\theta f(x,\theta).
    \end{equation}
    Using the fact that $\Pi_R$ commutes with orthogonal transformations, Equation~\eqref {eq.NGD_equivariant_step1} and Equation~\eqref {eq.NGD_equivariant_step2} imply
    \begin{equation*}
        G_{U_{g^{-1}}h}(\rho(g)\theta) = -\rho(g)\mathbb{E}_{x \sim \mu_\mathcal{X}}[(h(x)-f(x; \theta))\Pi_{R}\nabla_\theta f(x;\theta)] = \rho(g) G_h(\theta),
    \end{equation*}
    concluding the proof.
\end{proof}

We get that the following deep learning architectures are naturally $G$-equivariant.
\begin{enumerate}
    \item Fully connected neural networks (FCNN) with $G = O(\mathbb{R}^d)$ and $\rho :  O(\mathbb{R}^d) \to O(\mathbb{R}^p)$ moving rotation to the first layer weights if we use i.i.d. Gaussian initialization.
    \item Cylic convolutional neural networks (CCNN) with cyclic invariant readout, $G = \mathbb{Z}/d\mathbb{Z}$, and $\rho = I$ since the architecture is $G$ invariant.
    \item Transformer without positional embedding, global self-attention, and permutation invariant pooling with $G = S_n$ permuting the input sequence and $\rho = I$ if we use i.i.d. initialization. 
\end{enumerate}
Now we establish the lower bound on learning.

\begin{thm}\label{thm:NGD_equivariant}
    Fix a compact group $G$. Let $\mathcal{A}$ be a $G$-equivariant NGD algorithm corresponding to parametric model $f(\cdot;\theta)$ and let $\mu_\mathcal{X}$ be $G$-invariant. Fix any target function $h : \mathcal{X} \to \{-1,1\}$ and any baseline function $h_* : \mathcal{X} \to \{-1,1\}$ satisfying $h_* \circ g = h_*$ for all $g \in G$. For any $T \in \mathbb{Z}_{\geq 1}$, $\tau > 0$, $R > 0$, and $\varepsilon > 0$, we have
    \begin{equation*}
        \mathbb{P}_{\theta_T}[\norm{f(\cdot; \theta_h^T)-h}_2^2 \leq \norm{h_*-h}_2^2 - \varepsilon] \leq \frac{R}{2\tau}\sqrt{T\mathrm{A}} + \frac{\mathrm{A}}{\varepsilon}      
    \end{equation*}
    where $\mathrm{A} := \mathrm{A}(h,G;h_*)$
\end{thm}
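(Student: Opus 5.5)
Following \cite{abbe2022nonuniversalitydeeplearningquantifying}, the plan is to compare the true trajectory with a ``junk'' trajectory, obtained by running the same noisy gradient descent on the invariant baseline $h_*$. First I use $G$-equivariance to symmetrize the target. Since $\mathcal{A}$ is $G$-equivariant and $\mu_\mathcal{X}$ is $G$-invariant, the law of $\norm{f(\cdot;\theta^T_{h\circ g})-h\circ g}_2^2$ does not depend on $g$. Hence the probability on the left-hand side equals
\[
P:=\mathbb{P}_{g\sim\mu_G,\theta}\bigl[\norm{f(\cdot;\theta^T_{h_g})-h_g}_2^2\le \norm{h_*-h_g}_2^2-\varepsilon\bigr],
\]
where $h_g=U_gh$. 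Here I use $h_*\circ g=h_*$, so that $\norm{h_*-h_g}_2=\norm{h_*-h}_2$.

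Second, I couple the trajectory $\theta^k_{h_g}$ with the trajectory $\theta^k_{h_*}$, using the same initialization and noise. At each step the drifts differ by
\[
\Delta_k(g)=G_{h_g}(\theta)-G_{h_*}(\theta)=-\mathbb{E}_x\bigl[(h_g-h_*)(x)\,\Pi_R\nabla_\theta f(x;\theta)\bigr].
\]
Writing $\Pi_R\nabla f=(\psi_1,\dots,\psi_\eta)$, this gives
\[
\mathbb{E}_g\norm{\Delta_k}^2=\sum_j \mathbb{E}_g\langle h_g-h_*,\psi_j\rangle^2 .
\]
The function $\phi=\sum_j v_j\psi_j$ does not depend on $g$ and has $\norm{\phi}_2\le R$ for unit $v$. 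Bounding this by $R^2\mathrm{A}$ is the main obstacle, because the sum over coordinates naively costs a factor $\eta$. I will follow \cite{misiakiewicz2025shorttutorial}: control the total variation of the Gaussian update via KL divergence, which is $\norm{\Delta}^2/(2\tau^2)$ with $\Delta$ the mean shift. Then I bound the expectation over $g$ using the operator $TT^*$ from the proof of Theorem~\ref{thm:kernel_general_lb}, now with offset $h_*$. The key estimate is
\[
\mathbb{E}_g\norm{\mathbb{E}_x[(h_g-h_*)\Pi_R\nabla f]}^2=\mathrm{tr}\bigl(M\,\Sigma\bigr),
\]
where $M$ is the covariance of $h_g-h_*$ and $\Sigma$ comes from $\Pi_R\nabla f$. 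One then uses $\mathrm{tr}(M\Sigma)\le \norm{M}_{\mathrm{op}}\,\mathrm{tr}(\Sigma)\le \mathrm{A}\cdot R^2$, since $\mathrm{tr}(\Sigma)=\mathbb{E}_x\norm{\Pi_R\nabla f}^2\le R^2$.

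Summing over the $T$ steps by the chain rule for KL and applying Pinsker's inequality, then Jensen in $g$, gives
\[
\mathbb{E}_g\,\mathrm{TV}\bigl(\mathrm{Law}(\theta^T_{h_g}),\mathrm{Law}(\theta^T_{h_*})\bigr)\le \frac{R}{2\tau}\sqrt{T\mathrm{A}}.
\]
(Depending on the Pinsker normalization, the constant may need to be adjusted.) This TV bound accounts for the first term of the theorem.

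Third, I treat the junk trajectory. Since $\theta_{h_*}^T$ is independent of $g$, it is enough to bound, for a fixed function $F=f(\cdot;\theta_{h_*}^T)$,
\[
\mathbb{P}_g\bigl[\norm{F-h_g}^2\le\norm{h_*-h_g}^2-\varepsilon\bigr].
\]
Expanding the squares, this event implies $2\langle F-h_*,h_g-h_*\rangle\ge \norm{F-h_*}^2+\varepsilon$. Writing $u=\norm{F-h_*}$ and $\phi=(F-h_*)/u$, it follows that $2u\langle\phi,h_g-h_*\rangle\ge u^2+\varepsilon\ge 2u\sqrt{\varepsilon}$ by AM--GM. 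Hence $\langle\phi,h_g-h_*\rangle^2\ge\varepsilon$, and Chebyshev's inequality together with the definition of $\mathrm{A}$ bounds the probability by $\mathrm{A}/\varepsilon$. Combining this with the TV bound from the coupling yields the stated inequality.
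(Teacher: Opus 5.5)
Your proposal is correct and follows the paper's proof. It uses the same orbit-averaging reduction via $G$-equivariance; the only difference is that where the paper simply invokes \cite[Theorem A.2]{abbe2022nonuniversalitydeeplearningquantifying} for the orbit measure $\nu_h$, you reprove that result inline (junk-flow coupling, the trace bound $\mathrm{tr}(M\Sigma)\le \norm{M}_{\mathrm{op}}\,\mathrm{tr}(\Sigma)\le R^2\mathrm{A}$, Pinsker in the form $\mathrm{TV}\le\sqrt{\mathrm{KL}/2}$, which gives exactly $\frac{R}{2\tau}\sqrt{T\mathrm{A}}$ with no adjustment, and Chebyshev for the junk output). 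The one point to state explicitly is the direction of the KL chain rule: take $\mathrm{KL}\bigl(\mathrm{Law}(\theta^{0:T}_{h_*})\,\|\,\mathrm{Law}(\theta^{0:T}_{h_g})\bigr)$, so that each per-step term is an expectation over the $g$-independent junk iterates $\theta^k_{h_*}$, which is what justifies averaging $\norm{\Delta_k}^2$ over $g$ at fixed $\theta$.
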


\begin{proof}
Define a probability measure $\nu_h$ on $L^2(\mathcal{X})$ by taking the law of $U_g h$ when $g \sim \mu_G$. We first observe that the success event is constant along the $G$-orbit of $h$. Indeed, for each $g \in G$, $G$-equivariance of the NGD algorithm implies
\begin{equation*}
    f(\cdot;\theta_{U_g h}^T) \overset{d}{=} U_g f(\cdot;\theta_h^T).
\end{equation*}
Since $\mu_{\mathcal X}$ is $G$-invariant, the operator $U_g$ acts unitarily on $L^2(\mu_{\mathcal X})$, and hence
\begin{equation*}
    \|U_g \varphi\|_2 = \|\varphi\|_2
    \qquad \forall \varphi \in L^2(\mu_{\mathcal X}).
\end{equation*}
Therefore
\begin{equation*}
    \|f(\cdot;\theta_{U_g h}^T)-U_g h\|_2^2
    \overset{d}{=}
    \|U_g(f(\cdot;\theta_h^T)-h)\|_2^2
    =
    \|f(\cdot;\theta_h^T)-h\|_2^2.
\end{equation*}
Moreover, since $h_*$ is $G$-invariant, we have
\begin{equation*}
    \|h_* - U_g h\|_2^2
    =
    \|U_g h_* - U_g h\|_2^2
    =
    \|h_* - h\|_2^2.
\end{equation*}
It follows that for every $g \in G$,
\begin{equation*}
    \mathbb{P}_{\theta_T}
    \!\left[
        \|f(\cdot;\theta_{U_g h}^T)-U_g h\|_2^2
        \leq
        \|h_* - U_g h\|_2^2 - \varepsilon
    \right]
    =
    \mathbb{P}_{\theta_T}
    \!\left[
        \|f(\cdot;\theta_h^T)-h\|_2^2
        \leq
        \|h_* - h\|_2^2 - \varepsilon
    \right].
\end{equation*}
Averaging over $g \sim \mu_G$ gives
\begin{equation*}
\begin{aligned}
    \mathbb{P}_{\theta_T}
    \!\left[
        \|f(\cdot;\theta_h^T)-h\|_2^2
        \leq
        \|h_* - h\|_2^2 - \varepsilon
    \right]
    &=
    \mathbb{E}_{g \sim \mu_G}
    \mathbb{P}_{\theta_T}
    \!\left[
        \|f(\cdot;\theta_{U_g h}^T)-U_g h\|_2^2
        \leq
        \|h_* - U_g h\|_2^2 - \varepsilon
    \right] \\
    &=
    \mathbb{P}_{g \sim \mu_G,\theta_T}
    \!\left[
        \|f(\cdot;\theta_{U_g h}^T)-U_g h\|_2^2
        \leq
        \|h_* - U_g h\|_2^2 - \varepsilon
    \right].
\end{aligned}
\end{equation*}
using the independence of $\mu_G$ and the randomness in $\theta_T$ and Fubini's theorem. Now apply \cite[Theorem A.2]{abbe2022nonuniversalitydeeplearningquantifying} to the distribution $\nu_h$ on $L^2(\mathcal{X})$ with baseline function $h_*$. The corresponding alignment parameter is
\begin{equation*}
    \mathrm{A}
    :=
    \sup_{\|\phi\|_2=1}
    \mathbb{E}_{t \sim \nu_h}
    \big[\langle t-h_*,\phi\rangle^2\big]
    =
    \sup_{\|\phi\|_2=1}
    \mathbb{E}_{g \sim \mu_G}
    \big[\langle U_g h-h_*,\phi\rangle^2\big].
\end{equation*}
Hence we find
\begin{equation*}
    \mathbb{P}_{g \sim \mu_G,\theta_T}
    \!\left[
        \|f(\cdot;\theta_{U_g h}^T)-U_g h\|_2^2
        \leq
        \|h_* - U_g h\|_2^2 - \varepsilon
    \right]
    \leq
    \frac{R}{2\tau}\sqrt{T\mathrm{A}} + \frac{\mathrm{A}}{\varepsilon }.
\end{equation*}
Combining this with the previous identity proves the claim.
\end{proof}

\subsection{Correlational statistical queries}\label{ssec:CSQ}

In the statistical query (SQ) model of learning, we access data through an $\mathrm{Stat}(\tau)$ oracle as follows. The SQ algorithm $\mathcal{A}$ takes a data distribution $\mathcal{D}$ on $\mathcal{X} \times \mathcal{Y}$ and operates for $q$ rounds.
\begin{enumerate}
    \item For each $t \in [q]$, the algorithm issues a query $\phi_t : \mathcal{X} \times \mathcal{Y} \to [-1,1]$ and receives a response $v_t$ from $\mathrm{Stat}(\tau)$ satisfying
    \begin{equation*}
        |v_t -\mathbb{E}_{\mathcal{D}}[\phi_t(x,y)] \leq \tau,
    \end{equation*}
    and the query $\phi_t$ can be chosen adaptively, i.e. depending on $v_1,\ldots,v_{t-1}$.
    \item After $q$ rounds, the algorithm outputs $\mathcal{A}(\mathcal{D}) : \mathcal{X} \to \mathcal{Y}$ based on $v_1,\ldots,v_q$.
\end{enumerate}
As shown in \cite{Kearns1998}, SQ algorithms are strictly weaker than algorithms with access to i.i.d. samples. Moreover, success in the SQ model requires the algorithm to be robust against adversarial noise. We will consider a correlational statistical query $\mathrm{CSQ}(\tau)$ oracle, which is further restricted to functions of the form $\phi(x,y) = \phi(x)\cdot y$. Note that for binary-valued functions, one can simulate $\mathrm{Stat}(\tau)$ with $\mathrm{CSQ}(\tau)$, so the models are equivalent in the fixed distribution setting. Finally, we remark that gradient descent for $T$ steps and $p$ parameters can be implemented with $pT$ CSQ queries, each with tolerance $\tau \approx 1/\sqrt{n}$. Other loss functions can be implemented with SQ queries. 

In the rest of this section, we will develop lower bounds that parallel the story for NGD methods. This will be largely based on the ideas in \cite[Section 4]{misiakiewicz2025shorttutorial}. We will first prove lower bounds on a detection problem, where we ask for a CSQ algorithm to separate a null $\mathrm{H}_0$ from an alternative $\mathrm{H}_1$ hypothesis:
\begin{equation}\label{eq.detection_problem}
    \mathrm{H}_1 : \mathcal{D} \in \mathcal{H}, \qquad \mathrm{H}_0 : \mathcal{D} = \mathcal{D}_0.
\end{equation}
Next, we prove a reduction from detection to learning. Finally, given a bound on learning a hypothesis class, we impose a $G$-equivariance condition to give lower bounds for learning a fixed function.

\begin{lem}\label{lem:distinguish_lb}
Let $\mu_\mathcal{X}$ be an input distribution on $\mathcal{X}$, let $h_* \in L^2(\mathcal{X})$ with $\norm{h_*}_{\infty} \leq 1$, and let $\mathcal{H}\subseteq L^2(\mathcal{X})$ be a class of regression functions with prior $\mu_\mathcal{H}$ and $\norm{h}_{\infty} \leq 1$ for all $h \in \mathcal{H}$. For each $t\in \{h_*\}\cup \mathcal{H}$, let $\mathcal{D}_t$ be a distribution on $\mathcal{X}\times \mathcal{Y}$ with marginal $\mu_\mathcal{X}$ on $\mathcal{X}$ and regression function
\begin{equation*}
    \mathbb{E}_{\mathcal{D}_t}[y\mid x]=t(x).
\end{equation*}
Let $\mathcal{A}$ be a randomized $\mathrm{CSQ}(\tau)$ algorithm making at most $q$ queries. Denote
\begin{equation*}
    \alpha
    :=
    \sup_{\mathcal{O}_{h_*}}
    \mathbb{P}_{R}\!\left[\mathcal{A}^{\mathcal{D}_{h_*},\mathcal{O}_{h_*},R}\text{ outputs }\mathrm{H}_1\right],
\end{equation*}
with supremum is over all $\mathrm{CSQ}(\tau)$ oracle strategies for $\mathcal{D}_g$. For each $h\in \mathcal{H}$, denote
\begin{equation*}
    \beta(h)
    :=
    \inf_{\mathcal{O}_h}
    \mathbb{P}_{R}\!\left[\mathcal{A}^{D_h,\mathcal{O}_h,R}\text{ outputs }\mathrm{H}_1\right],
\end{equation*}
with infimum is over all $\mathrm{CSQ}(\tau)$ oracle strategies for $\mathcal{D}_h$, and set $\beta:= \mathbb{E}_{h\sim \mu_{\mathcal H}}[\beta(h)]$. Then
\begin{equation*}
    \beta-\alpha
    \leq
    \frac{q}{\tau^2} \mathrm{A}(\mu_\mathcal{H},h_*).
\end{equation*}
\end{lem}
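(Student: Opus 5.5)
We use the standard ``honest null oracle'' argument. Under $\mathrm{H}_0$, fix the oracle $\mathcal{O}_{h_*}$ that answers every query $\phi$ exactly with $v = \mathbb{E}_{\mathcal{D}_{h_*}}[\phi(x)y] = \langle \phi, h_*\rangle$. Fix the internal randomness $R$. Since the oracle is deterministic, the algorithm's transcript on $\mathcal{D}_{h_*}$ is then deterministic: a sequence of queries $\phi_1^R,\ldots,\phi_q^R$, each with $\norm{\phi_t^R}_\infty\le 1$ and hence $\norm{\phi_t^R}_2 \le 1$, and a final output $o(R)\in\{\mathrm{H}_0,\mathrm{H}_1\}$.

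Now take the alternative $h$ and define an adversarial oracle $\mathcal{O}_h$. Whenever $|\langle \phi_t^R, h - h_*\rangle| \le \tau$, it returns the same value $\langle \phi_t^R, h_*\rangle$. This is a valid $\mathrm{CSQ}(\tau)$ response for $\mathcal{D}_h$, because $\mathbb{E}_{\mathcal{D}_h}[\phi y] = \langle \phi, h\rangle$ by the tower property. If this holds for all $t\le q$, then by induction on $t$ (using adaptivity only through past responses) the transcript under $\mathcal{D}_h$ coincides with the null transcript. The output is therefore $o(R)$. For other query sequences the oracle answers arbitrarily (say truthfully).

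Let $B_R(h)$ be the event that some $t\le q$ has $|\langle \phi_t^R, h-h_*\rangle| > \tau$. The infimum over oracles is at most the value for this particular oracle, so
\[
\beta(h) \le \mathbb{P}_R[o(R)=\mathrm{H}_1] + \mathbb{P}_R[B_R(h)] \le \alpha + \mathbb{P}_R[B_R(h)].
\]

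Averaging over $h\sim\mu_\mathcal{H}$ and using Fubini together with a union bound and Chebyshev gives
\[
\mathbb{E}_h \mathbb{P}_R[B_R(h)] \le \mathbb{E}_R \sum_{t=1}^q \mathbb{P}_h\bigl[|\langle \phi_t^R, h-h_*\rangle|>\tau\bigr] \le \mathbb{E}_R\sum_{t=1}^q \frac{\mathbb{E}_h[\langle \phi_t^R,h-h_*\rangle^2]}{\tau^2}.
\]
Each summand is at most $\norm{\phi_t^R}_2^2\,\mathrm{A}(\mu_\mathcal{H};h_*)/\tau^2 \le \mathrm{A}(\mu_\mathcal{H};h_*)/\tau^2$ by the definition of alignment, after rescaling $\phi$ to unit norm (the bound is trivial if $\phi=0$). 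Hence $\beta - \alpha \le q\,\mathrm{A}(\mu_\mathcal{H};h_*)/\tau^2$.

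The main subtlety is that the query sequence $\phi_t^R$ must be independent of $h$ for Chebyshev to apply. This is exactly why we freeze the null transcript first and define the adversary relative to it: the queries depend only on $R$. A second point is that the supremum defining $\alpha$ dominates the probability for the particular honest null oracle, which is what lets us replace that probability by $\alpha$. Measurability of $R\mapsto \phi_t^R$ is routine.
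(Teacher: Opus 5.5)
Your proposal is correct and follows the paper's proof essentially step for step. Both arguments freeze the transcript under the exact null oracle and let an adversarial oracle for $\mathcal{D}_h$ replay the null answers whenever they are $\tau$-compatible. Both then bound $\beta(h)-\alpha$ by the probability of the bad event, and finish with a union bound and Markov/Chebyshev on $\langle \phi_t, h-h_*\rangle^2$, using $\|\phi_t\|_2\le\|\phi_t\|_\infty\le 1$.

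The one point to state explicitly is that the adversarial oracle should be the seed-independent rule ``on any query $\phi$, return $\langle\phi,h_*\rangle$ if it is $\tau$-compatible with $\mathcal{D}_h$, otherwise answer truthfully,'' which is exactly the paper's $\mathcal{O}_h^\ast$. This ensures it is a single strategy in the infimum defining $\beta(h)$ rather than one tailored to each seed $R$.
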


\begin{proof}
Let $\mathcal{O}_{h_*}^\ast$ be the exact CSQ oracle for $\mathcal{D}_{h_*}$, so that on query $\phi$ it returns
\begin{equation*}
    \mathbb{E}_{\mathcal{D}_{h_*}}[y\phi(x)]
    =
    \mathbb{E}_{\mu_\mathcal{X}}[h_*(x)\phi(x)].
\end{equation*}
Let $R$ denote the internal random seed of $\mathcal{A}$ and fix a seed $r$. Running $\mathcal{A}$ against $\mathcal{D}_{h_*}$ with oracle $\mathcal{O}_{h_*}^\ast$ produces a deterministic sequence of queries $ \phi_1^{(r)},\ldots,\phi_q^{(r)}$, and corresponding oracle responses
\begin{equation*}
    v_t^{(r)}
    :=
    \mathbb{E}_{\mathcal{D}_{h_*}}[y\,\phi_t^{(r)}(x)]
    =
    \mathbb{E}_{\mu_\mathcal{X}}[h_*(x)\phi_t^{(r)}(x)].
\end{equation*}

For $h\in \mathcal H$, define the bad event
\begin{equation*}
    B_r(h)
    :=
    \left\{
        \exists t\in [q]:
        \left|
            \mathbb{E}_{\mathcal{D}_h}[y\,\phi_t^{(r)}(x)]
            -
            v_t^{(r)}
        \right|
        >
        \tau
    \right\}.
\end{equation*}
Now define a $\mathrm{CSQ}(\tau)$ oracle strategy $\mathcal{O}_h^\ast$ for $D_h$ as follows: whenever the algorithm asks a query $\phi$, the oracle attempts to answer with the exact null response $\mathbb{E}_{\mathcal{D}_{h_*}}[y\,\phi(x)]$. If this value is $\tau$-compatible with $\mathcal{D}_h$, the oracle returns it; otherwise it returns any arbitrary $\tau$-compatible value for $\mathcal{D}_h$.

For a fixed seed $r$, if $B_r(h)$ does not occur, then along the interaction path generated by the queries $\phi_1^{(r)},\ldots,\phi_q^{(r)}$, all the null responses $v_1^{(r)},\ldots,v_q^{(r)}$ are $\tau$-compatible with $\mathcal{D}_h$. Hence, when run with seed $r$, the interaction of $\mathcal{A}$ with $(\mathcal{D}_h,\mathcal{O}_h^\ast)$ produces exactly the same transcript as its interaction with $(\mathcal{D}_{h_*},\mathcal{O}_{h_*}^\ast)$. Therefore the outputs coincide. It follows that
\begin{equation*}
    \mathbf{1}_{\{\mathcal{A}^{D_h,\mathcal{O}_h^\ast,R}\text{ outputs }\mathrm{H}_1\}}
    -
    \mathbf{1}_{\{\mathcal{A}^{\mathcal{D}_{h_*},\mathcal{O}_{h_*}^\ast,R}\text{ outputs }\mathrm{H}_1\}}
    \leq
    \mathbf{1}_{B_R(h)}.
\end{equation*}
Taking expectation over $R$ gives
\begin{equation*}
    \mathbb{P}_{R}\!\left[\mathcal{A}^{\mathcal{D}_h,\mathcal{O}_h^\ast,R}\text{ outputs }\mathrm{H}_1\right]
    -
    \mathbb{P}_{R}\!\left[\mathcal{A}^{\mathcal{D}_{h_*},\mathcal{O}_{h_*}^\ast,R}\text{ outputs }\mathrm{H}_1\right]
    \leq
    \mathbb{P}_{R}(B_R(h)).
\end{equation*}
Since $\beta(h) \leq \mathbb{P}_{R}\!\left[\mathcal{A}^{\mathcal{D}_h,\mathcal{O}_h^\ast,R}\text{ outputs }\mathrm{H}_1\right]$ and $\alpha \geq \mathbb{P}_{R}\!\left[\mathcal{A}^{\mathcal{D}_{h_*},\mathcal{O}_{h_*}^\ast,R}\text{ outputs }\mathrm{H}_1\right]$, we obtain
\begin{equation*}
    \beta(h)-\alpha
    \leq
    \mathbb{P}_{R}(B_R(h)).
\end{equation*}
Averaging over $h\sim \mu_{\mathcal H}$ yields
\begin{equation*}
    \beta-\alpha
    \leq
    \mathbb{P}_{R,\,h\sim \mu_{\mathcal H}}\!\left(B_R(h)\right).
\end{equation*}

It remains to bound the right-hand side. Fix $r$ and apply the union bound to get
\begin{equation*}
    \mathbb{P}_{h\sim \mu_{\mathcal H}}(B_r(h))
    \leq
    \sum_{t=1}^q
    \mathbb{P}_{h\sim \mu_{\mathcal H}}
    \left(
        \left|
            \mathbb{E}_{\mathcal{D}_h}[y\,\phi_t^{(r)}(x)]
            -
            \mathbb{E}_{\mathcal{D}_{h_*}}[y\,\phi_t^{(r)}(x)]
        \right|
        >
        \tau
    \right).
\end{equation*}
Applying Markov's inequality to each summand yields
\begin{equation*}
    \mathbb{P}_{h\sim \mu_{\mathcal H}}(B_r(h))
    \leq
    \frac{1}{\tau^2}
    \sum_{t=1}^q
    \mathbb{E}_{h\sim \mu_{\mathcal H}}
    \left[
        \left(
            \mathbb{E}_{\mathcal{D}_h}[y\,\phi_t^{(r)}(x)]
            -
            \mathbb{E}_{\mathcal{D}_{h_*}}[y\,\phi_t^{(r)}(x)]
        \right)^2
    \right].
\end{equation*}
Since $\mathcal{A}$ is a bounded-output $\mathrm{CSQ}(\tau)$ algorithm, each query satisfies $\|\phi_t^{(r)}\|_{2}\leq 1$. By the definition of $\mathcal{D}_t$ we obtain
\begin{align*}
    \mathbb{P}_{h\sim \mu_{\mathcal H}}(B_r(h))
    &\leq
    \frac{q}{\tau^2}
    \sup_{\|\phi\|_{2}\leq 1}
    \mathbb{E}_{h\sim \mu_{\mathcal H}}
    \left[
        \left(
            \mathbb{E}_{\mu_\mathcal{X}}[(h(x)-h_*(x))\phi(x)]
        \right)^2
    \right] \\
    &=
    \frac{q}{\tau^2}
    \sup_{\|\phi\|_{2}\leq 1}
    \mathbb{E}_{h\sim \mu_{\mathcal H}}
    \bigl[
        |\langle h-h_*,\phi\rangle|^2
    \bigr] \\
    &=
    \frac{q}{\tau^2} \mathrm{A}(\mu_\mathcal{H},h_*).
\end{align*}
This bound is uniform in $r$, so averaging over $R$ gives
\begin{equation*}
    \mathbb{P}_{R,\,h\sim \mu_{\mathcal H}}\!\left(B_R(h)\right)
    \leq
    \frac{q}{\tau^2}\mathrm{A}(\mu_\mathcal{H},g).
\end{equation*}
Combining this with the previous inequality proves that
\begin{equation*}
    \beta-\alpha
    \leq
    \frac{q}{\tau^2}\mathrm{A}(\mu_\mathcal{H},g).
\end{equation*}
\end{proof}

We now prove the relatively obvious statement that, given an algorithm that can learn a hypothesis class $\mathcal{H}$, it can solve the detection problem~\eqref{eq.detection_problem}.

\begin{lem}
\label{lem:avg-randomized-learning-to-detection-csq}
With notation as in the previous lemma, assume there is a randomized $\mathrm{CSQ}(\tau)$ algorithm $\mathcal{A}$ such that, for every $h\in \mathcal{H}$ and every $\mathrm{CSQ}(\tau)$ oracle strategy $\mathcal{O}_h$ for $\mathcal{D}_h$, the algorithm makes at most $q$ queries and outputs a predictor
\begin{equation*}
    \mathcal{A}^{\mathcal{D}_h,\mathcal{O}_h,R}:\mathcal{X}\to[-1,1].
\end{equation*}
Assume moreover that
\begin{equation*}
    \mathbb{E}_{h\sim \mu_{\mathcal H}}
    \inf_{\mathcal{O}_h}
    \mathbb{P}_{R}\!\left[
        \|\mathcal{A}^{\mathcal{D}_h,\mathcal{O}_h,R}-h\|_{2}^2
        \leq
        \|h_*-h\|_{2}^2-\varepsilon
    \right]
    \geq p
\end{equation*}
for some $\varepsilon>0$ and $p\in (0,1]$. If $\tau<\varepsilon/8$, then there is a randomized bounded-output $\mathrm{CSQ}(\tau)$ algorithm $\mathcal{B}$ which makes at most $q+1$ queries and satisfies
\begin{equation*}
    \sup_{\mathcal{O}_{h_*}}
    \mathbb{P}_{R}\!\left[\mathcal{B}^{\mathcal{D}_{h_*},\mathcal{O}_{h_*},R}\text{ outputs }\mathrm{H}_1\right]
    =0 \quad \text{ and } \quad  \mathbb{E}_{h\sim \mu_{\mathcal H}}
    \inf_{\mathcal{O}_h}
    \mathbb{P}_{R}\!\left[\mathcal{B}^{\mathcal{D}_h,\mathcal{O}_h,R}\text{ outputs }\mathrm{H}_1\right]
    \geq p.
\end{equation*}
\end{lem}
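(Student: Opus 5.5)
Build $\mathcal{B}$ by running $\mathcal{A}$ and then spending one extra query to test whether the learned predictor really beats the baseline $h_*$. Concretely, $\mathcal{B}$ runs $\mathcal{A}$ with the same seed $R$, passing each query $\phi_t$ on to its own oracle and returning the responses to $\mathcal{A}$ unchanged. This costs $q$ queries and produces a predictor $\widehat{h} = \mathcal{A}^{\mathcal{D},\mathcal{O},R} : \mathcal{X} \to [-1,1]$.

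Since both $\widehat{h}$ and $h_*$ are known to $\mathcal{B}$, it can evaluate $\|\widehat{h}\|_2^2 - \|h_*\|_2^2$ exactly without using the oracle. The only unknown is the cross term. For any regression function $t$,
\begin{equation*}
\|\widehat{h}-t\|_2^2 - \|h_*-t\|_2^2 = \|\widehat{h}\|_2^2 - \|h_*\|_2^2 - 2\langle \widehat{h}-h_*, t\rangle .
\end{equation*}
The function $\psi = (\widehat{h}-h_*)/2$ takes values in $[-1,1]$. So $\mathcal{B}$ issues the single correlational query $\psi$ and receives $v$ with $|v - \langle \psi, t\rangle| \le \tau$, since $\mathbb{E}_{\mathcal{D}_t}[y\psi(x)] = \langle \psi, t\rangle$.

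Form the estimate
\begin{equation*}
\widehat{\Delta} := \|\widehat{h}\|_2^2 - \|h_*\|_2^2 - 4v,
\end{equation*}
which satisfies $|\widehat{\Delta} - \Delta_t| \le 4\tau < \varepsilon/2$, where $\Delta_t = \|\widehat{h}-t\|_2^2 - \|h_*-t\|_2^2$. $\mathcal{B}$ outputs $\mathrm{H}_1$ iff $\widehat{\Delta} \le -\varepsilon/2$.

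Under $\mathrm{H}_0$ we have $t = h_*$, so $\Delta_{h_*} = \|\widehat{h}-h_*\|_2^2 \ge 0$. Hence $\widehat{\Delta} > -\varepsilon/2$ for every seed and every $\tau$-compatible oracle, so the rejection probability is exactly $0$. This is where $\tau < \varepsilon/8$ enters: it gives $4\tau < \varepsilon/2$.

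Under $\mathrm{H}_1$ with $t = h$, on the success event of $\mathcal{A}$ we have $\Delta_h \le -\varepsilon$, so $\widehat{\Delta} < -\varepsilon/2$ and $\mathcal{B}$ outputs $\mathrm{H}_1$.

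The remaining care point is the quantifiers over oracles. Any oracle strategy $\mathcal{O}_h$ for $\mathcal{B}$ restricts, on the first $q$ queries, to an oracle strategy for $\mathcal{A}$. Therefore for each $h$,
\begin{equation*}
\inf_{\mathcal{O}_h}\mathbb{P}_R[\mathcal{B}\text{ outputs }\mathrm{H}_1] \ge \inf_{\mathcal{O}_h}\mathbb{P}_R[\|\mathcal{A}^{\mathcal{D}_h,\mathcal{O}_h,R}-h\|_2^2 \le \|h_*-h\|_2^2-\varepsilon],
\end{equation*}
and averaging over $h \sim \mu_{\mathcal{H}}$ gives the bound $p$.

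The main subtlety is justifying that the adversarial oracle's restriction is a legitimate oracle for $\mathcal{A}$, including when the oracle adapts to the seed or the history. Since the infimum in the hypothesis ranges over all $\tau$-compatible strategies, this is immediate. Boundedness of $\mathcal{B}$'s queries holds because $\psi$ takes values in $[-1,1]$.
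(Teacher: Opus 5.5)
Your proposal is correct and follows the paper's proof: simulate $\mathcal{A}$ with the same seed, spend one extra correlational query on $(\widehat{h}-h_*)/2$, and threshold the answer, with the same handling of the oracle quantifiers. The only difference is cosmetic. You threshold the estimated loss gap $\widehat{\Delta}=\|\widehat{h}-h_*\|_2^2-4s$ at $-\varepsilon/2$, whereas the paper thresholds the correlation estimate $s=v-\langle h_*,\psi\rangle$ at $\varepsilon/8$; your test is a slightly more conservative but equally valid version.
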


\begin{proof}
Given bounded-output $\mathrm{CSQ}(\tau)$ access to an unknown distribution $\mathcal{D}_t$, where either $t={h_*}$ or $t\in \mathcal{H}$, and given a $\tau$-compatible oracle strategy $\mathcal{O}_t$, the algorithm $\mathcal{B}$ proceeds as follows. First, using the same random seed $R$ throughout, it simulates the learner $\mathcal{A}$ on the oracle $\mathcal{O}_t$ for $\mathcal{D}_t$ and obtains an output predictor
\begin{equation*}
    \widehat{t}:=\mathcal{A}^{\mathcal{D}_t,\mathcal{O}_t,R}:\mathcal{X}\to[-1,1].
\end{equation*}
Next, it defines the query function
\begin{equation*}
    \phi(x):=\frac{\widehat{t}(x)-g(x)}{2},
\end{equation*}
with $\norm{\phi}_\infty \leq 1$. The oracle returns a value $v$ satisfying
\begin{equation*}
    \left|v-\mathbb{E}_{\mathcal{D}_t}[y\phi(x)]\right|\leq \tau.
\end{equation*}
Because $ \mathbb{E}_{\mathcal{D}_t}[y\phi(x)] = \langle t,\phi\rangle$, this means $|v-\langle t,\phi\rangle|\leq \tau$. Since ${h_*}$ and $\mu_\mathcal{X}$ are known, the quantity $\langle {h_*},\phi\rangle$ is known to the algorithm, and hence it can form $ s:=v-\langle {h_*},\phi\rangle$. Then
\begin{equation*}
    |s-\langle t-{h_*},\phi\rangle|\leq \tau.
\end{equation*}
We have $\mathcal{B}$ output $\mathrm{H}_1$ if $s > \varepsilon/8$ and output $\mathrm{H}_0$ otherwise.

We now verify correctness. First, suppose $t=h_*$. Then $    \langle t-{h_*},\phi\rangle=0$, so
\begin{equation*}
    |s|\leq \tau<\frac{\varepsilon}{8}.
\end{equation*}
Hence $\mathcal{B}$ always outputs $\mathrm{H}_0$, regardless of the seed $R$ and regardless of the $\tau$-compatible oracle strategy $\mathcal{O}_{h_*}$. Therefore
\begin{equation*}
    \sup_{\mathcal{O}_{h_*}}
    \mathbb{P}_{R}\!\left[\mathcal{B}^{\mathcal{D}_{h_*},\mathcal{O}_{h_*},R}\text{ outputs }\mathrm{H}_1\right]
    =0.
\end{equation*}
Now suppose $t=h\in \mathcal{H}$, and fix a $\tau$-compatible oracle strategy $\mathcal{O}_h$ for $\mathcal{D}_h$. Define the success event
\begin{equation*}
    E_{h,\mathcal{O}_h}
    :=
    \left\{
        \|\mathcal{A}^{\mathcal{D}_h,\mathcal{O}_h,R}-h\|_2^2
        \leq
        \|h_*-h\|_2^2-\varepsilon
    \right\}.
\end{equation*}
Expanding the left-hand side around ${h_*}$ gives
\begin{equation*}
    \|\mathcal{A}^{\mathcal{D}_h,\mathcal{O}_h,R}-h\|_2^2
    =
    \|h-{h_*}\|_2^2
    +
    \|\mathcal{A}^{\mathcal{D}_h,\mathcal{O}_h,R}-{h_*}\|_2^2
    -
    2\langle h-{h_*},\mathcal{A}^{\mathcal{D}_h,\mathcal{O}_h,R}-{h_*}\rangle.
\end{equation*}
Thus, on the event $E_{h,\mathcal{O}_h}$, we have
\begin{equation*}
    2\langle h-h_*,\mathcal{A}^{\mathcal{D}_h,\mathcal{O}_h,R}-{h_*}\rangle
    \geq
    \|\mathcal{A}^{\mathcal{D}_h,\mathcal{O}_h,R}-{h_*}\|_2^2+\varepsilon
    \geq
    \varepsilon.
\end{equation*}
Since in this case
\begin{equation*}
    \phi=\frac{\mathcal{A}^{\mathcal{D}_h,\mathcal{O}_h,R}-{h_*}}{2},
\end{equation*}
it follows that
\begin{equation*}
    \langle h-{h_*},\phi\rangle
    =
    \frac{1}{2}\langle h-{h_*},\mathcal{A}^{\mathcal{D}_h,\mathcal{O}_h,R}-{h_*}\rangle
    \geq
    \frac{\varepsilon}{4}.
\end{equation*}
Therefore, on the event $E_{h,\mathcal{O}_h}$,
\begin{equation*}
    s
    \geq
    \frac{\varepsilon}{4}-\tau
    >
    \frac{\varepsilon}{8},
\end{equation*}
so $\mathcal{B}$ outputs $\mathrm{H}_1$. Thus
\begin{equation*}
    E_{h,\mathcal{O}_h}
    \subseteq
    \left\{
        \mathcal{B}^{\mathcal{D}_h,\mathcal{O}_h,R}\text{ outputs }\mathrm{H}_1
    \right\},
\end{equation*}
and hence
\begin{equation*}
    \mathbb{P}_{R}\!\left[
        \mathcal{B}^{\mathcal{D}_h,\mathcal{O}_h,R}\text{ outputs }\mathrm{H}_1
    \right]
    \geq
    \mathbb{P}_{R}(E_{h,\mathcal{O}_h}).
\end{equation*}
Taking the infimum over $\mathcal{O}_h$ and averaging over $h \sim \mu_\mathcal{H}$ yields
\begin{equation*}
    \mathbb{E}_{h\sim \mu_{\mathcal H}}
    \inf_{\mathcal{O}_h}
    \mathbb{P}_{R}\!\left[
        \mathcal{B}^{\mathcal{D}_h,\mathcal{O}_h,R}\text{ outputs }\mathrm{H}_1
    \right]
    \geq
    \mathbb{E}_{h\sim \mu_{\mathcal H}}
    \inf_{\mathcal{O}_h}
    \mathbb{P}_{R}(E_{h,\mathcal{O}_h})
    \geq
    p,
\end{equation*}
proving the claim.
\end{proof}

\begin{thm}
Suppose that $G$ acts on $\mathcal{X}$ and $\mu_\mathcal{X}$ is $G$-invariant. Let $\mathcal{A}$ be a randomized bounded-output $\mathrm{CSQ}(\tau)$ algorithm making at most $q$ queries, and suppose that $\mathcal{A}$ is $G$-equivariant in the following sense: for every $t \in L^2(\mathcal{X})$, every $g\in G$, and every $\mathrm{CSQ}(\tau)$ oracle strategy $\mathcal{O}_t$ for $\mathcal{D}_t$, there exists a $\mathrm{CSQ}(\tau)$ oracle strategy $g\cdot \mathcal{O}_t$ for $\mathcal{D}_{U_g t}$ such that
\begin{equation*}
    \mathcal{A}^{\mathcal{D}_{U_g t},\,g\cdot \mathcal{O}_t,\,R}
    \overset{d}{=}
    U_g\mathcal{A}^{\mathcal{D}_t,\mathcal{O}_t,R}.
\end{equation*}
Fix $h,h_* \in L^2(\mathcal{X})$, $\varepsilon>0$, and assume $\tau<\varepsilon/8$. Then
\begin{equation*}
    \inf_{\mathcal{O}_h}
    \mathbb{P}_{R}\!\left[
        \|\mathcal{A}^{D_h,\mathcal{O}_h,R}-h\|_{2}^2
        \leq
        \|h_*-h\|_{2}^2-\varepsilon
    \right]
    \leq
    \frac{q+1}{\tau^2} \mathrm{A}(h,G;h_*).
\end{equation*}
\end{thm}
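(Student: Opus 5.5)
The plan is to combine the two preceding lemmas with the $G$-equivariance, mirroring the proof of Theorem~\ref{thm:NGD_equivariant}. Take the hypothesis class $\mathcal{H} = \{U_g h : g \in G\}$ with prior $\nu_h$ equal to the law of $U_g h$ for $g \sim \mu_G$. For each $t$, let $\mathcal{D}_t$ be a distribution with marginal $\mu_\mathcal{X}$ and regression function $t$. Then
\begin{equation*}
    \mathrm{A}(\nu_h, h_*) = \sup_{\norm{\phi}_2=1}\mathbb{E}_{g\sim\mu_G}\bigl[|\langle U_g h - h_*,\phi\rangle|^2\bigr] = \mathrm{A}(h,G;h_*).
\end{equation*}
Write
\begin{equation*}
    p := \inf_{\mathcal{O}_h}\mathbb{P}_R\bigl[\norm{\mathcal{A}^{\mathcal{D}_h,\mathcal{O}_h,R}-h}_2^2 \leq \norm{h_*-h}_2^2 - \varepsilon\bigr].
\end{equation*}
If $p=0$ there is nothing to prove, so assume $p>0$.

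\textbf{Step 1: the success probability is constant on the orbit.} Fix $g \in G$ and an arbitrary oracle strategy $\mathcal{O}'$ for $\mathcal{D}_{U_g h}$. Apply the equivariance hypothesis with $t = U_g h$ and the element $g^{-1}$. This produces a strategy $g^{-1}\cdot\mathcal{O}'$ for $\mathcal{D}_h$ such that
\begin{equation*}
    \mathcal{A}^{\mathcal{D}_h,\,g^{-1}\cdot\mathcal{O}',\,R} \overset{d}{=} U_{g^{-1}}\mathcal{A}^{\mathcal{D}_{U_gh},\mathcal{O}',R}.
\end{equation*}
Since $U_g$ is unitary on $L^2(\mu_\mathcal{X})$, the success event for $(U_g h, \mathcal{O}')$ has the same probability as the success event for $(h, g^{-1}\cdot\mathcal{O}')$. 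That probability is at least $p$.

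One point needs care here. The baseline term is $\norm{h_* - U_g h}_2$, and this equals $\norm{h_*-h}_2$ only if $h_*$ is $G$-invariant. The statement does not assume this explicitly, so I will assume $h_*$ is $G$-invariant, as in Theorem~\ref{thm:NGD_equivariant}. Under that assumption, taking the infimum over $\mathcal{O}'$ gives
\begin{equation*}
    \mathbb{E}_{g\sim\mu_G}\inf_{\mathcal{O}'}\mathbb{P}_R[\text{success for } U_gh] \geq p.
\end{equation*}

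\textbf{Step 2: reduce learning to detection.} Apply Lemma~\ref{lem:avg-randomized-learning-to-detection-csq} to $\mathcal{H}$ with prior $\nu_h$, using $\tau<\varepsilon/8$. This gives a CSQ$(\tau)$ algorithm $\mathcal{B}$ making at most $q+1$ queries, with $\alpha = 0$ and $\beta \geq p$.

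\textbf{Step 3: bound the detection advantage.} Apply Lemma~\ref{lem:distinguish_lb} to $\mathcal{B}$. This yields
\begin{equation*}
    p \leq \beta - \alpha \leq \frac{q+1}{\tau^2}\mathrm{A}(\nu_h,h_*) = \frac{q+1}{\tau^2}\mathrm{A}(h,G;h_*),
\end{equation*}
which is the claimed bound.

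The main obstacle is Step 1. It requires carefully transporting oracle strategies through the equivariance hypothesis so that infima over strategies are preserved along the orbit. It also depends on the invariance of $h_*$ under $G$, which the statement leaves implicit. Two further hypotheses must be checked: the boundedness $\norm{h}_\infty,\norm{h_*}_\infty\le 1$ needed by the lemmas, and that queries lie in the unit $L^2$ ball. Both hold here because the learner and the functions are $[-1,1]$-valued.
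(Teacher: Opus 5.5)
Your proof is correct and is essentially the paper's argument. You take the orbit $\{U_g h\}$ with prior $\nu_h$, and use equivariance, unitarity of $U_g$ and the $G$-invariance of $h_*$ to transfer the worst-case success probability along the orbit; the paper's proof also invokes that invariance without stating it as a hypothesis, just as it tacitly needs $\|h\|_\infty,\|h_*\|_\infty\le 1$ for the lemmas. You then chain Lemma~\ref{lem:avg-randomized-learning-to-detection-csq} with Lemma~\ref{lem:distinguish_lb}, as the paper does. The only differences are cosmetic: you prove via $g^{-1}$ just the one inequality needed, $\inf_{\mathcal{O}'}\mathbb{P}_R[\text{success for } U_g h]\ge p$, rather than full equality, and you do the orbit reduction before, rather than after, the hypothesis-class bound.
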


\begin{proof}
First we establish the hypothesis class version of the statement:
\begin{equation*}
    p:=\mathbb{E}_{h\sim \mu_{\mathcal H}}
    \inf_{\mathcal{O}_h}
    \mathbb{P}_{R}\!\left[
        \|\mathcal{A}^{D_h,\mathcal{O}_h,R}-h\|_2^2
        \leq
        \|h_*-h\|_2^2-\varepsilon
    \right] \leq \frac{q+1}{\tau^2} \mathrm{A}(\mu_\mathcal{H}, h_*).
\end{equation*}
Assume that $p > 0$. By Lemma~\ref{lem:avg-randomized-learning-to-detection-csq}, there is a randomized bounded-output
$\mathrm{CSQ}(\tau)$ algorithm $\mathcal{B}$ making at most $q+1$ queries such that
\begin{equation*}
    \sup_{\mathcal{O}_{h_*}}
    \mathbb{P}_{R}\!\left[\mathcal{B}^{\mathcal{D}_{h_*},\mathcal{O}_{h_*},R}\text{ outputs }\mathrm{H}_1\right]
    =0 \quad \text{ and } \quad  \mathbb{E}_{h\sim \mu_{\mathcal H}}
    \inf_{\mathcal{O}_h}
    \mathbb{P}_{R}\!\left[\mathcal{B}^{\mathcal{D}_h,\mathcal{O}_h,R}\text{ outputs }\mathrm{H}_1\right]
    \geq p.
\end{equation*}
Applying Lemma~\ref{lem:distinguish_lb} to the algorithm $\mathcal{B}$ with the same null function $h_*$ and prior
$\mu_{\mathcal H}$ gives
\begin{equation}\label{eq:CSQ_hypothsis_bound}
    p
    \leq
    \frac{q+1}{\tau^2}\mathrm{A}(\mu_\mathcal{H}, h_*),
\end{equation}
as desired. Now we mimic the proof of Theorem~\ref{thm:NGD_equivariant}. 
Let $\nu_h$ denote the law of $U_g h$ when $g \sim \mu_G$, so that $\nu_h$ is a probability measure on the orbit
\begin{equation*}
    \mathcal{H}_h:=\{U_g h:g\in G\}\subseteq L^2(\mathcal{X}).
\end{equation*}
For each $t\in \mathcal{H}_h$, define
\begin{equation*}
    p(t)
    :=
    \inf_{\mathcal{O}_t}
    \mathbb{P}_{R}\!\left[
        \|\mathcal{A}^{\mathcal{D}_t,\mathcal{O}_t,R}-t\|_2^2
        \leq
        \|h_*-t\|_2^2-\varepsilon
    \right].
\end{equation*}

We claim that $p(U_g h)=p(h)$ for every $g\in G$. Fix $g\in G$. Let $\mathcal{O}_h$ be any $\tau$-compatible oracle strategy for $\mathcal{D}_h$, and let $g\cdot \mathcal{O}_h$ be the corresponding oracle strategy for $\mathcal{D}_{U_g h}$ given by equivariance, so
\begin{equation*}
    \mathcal{A}^{\mathcal{D}_{U_g h},\,g\cdot \mathcal{O}_h,\,R}
    \overset{d}{=}
    U_g\mathcal{A}^{\mathcal{D}_h,\mathcal{O}_h,R}.
\end{equation*}
Since $\mu_{\mathcal X}$ is $G$-invariant, $U_g$ acts unitarily on $L^2(\mathcal{X})$, and hence
\begin{equation*}
    \|U_g f-U_g t\|_2=\|f-t\|_2
    \qquad \forall f,t\in L^2(\mu_{\mathcal X}).
\end{equation*}
Because $h_*$ is $G$-invariant,
\begin{equation*}
    \|h_*-U_g h\|_2
    =
    \|U_g h_*-U_g h\|_2
    =
    \|h_*-h\|_2.
\end{equation*}
Therefore
\begin{align*}
    &\mathbb{P}_{R}\!\left[
        \|\mathcal{A}^{\mathcal{D}_{U_g h},\,g\cdot \mathcal{O}_h,\,R}-U_g h\|_2^2
        \leq
        \|h_*-U_g h\|_2^2-\varepsilon
    \right]  =
    \mathbb{P}_{R}\!\left[
        \|\mathcal{A}^{\mathcal{D}_h,\mathcal{O}_h,R}-h\|_2^2
        \leq
        \|h_*-h\|_2^2-\varepsilon
    \right].
\end{align*}
Taking the infimum over $\mathcal{O}_h$ gives $p(U_g h)\leq p(h)$. Applying the same argument with $g^{-1}$ in place of $g$ yields the reverse inequality, hence $p(U_g h)=p(h)$ for all $g \in G$ and averaging over $g\sim \mu_G$ we obtain
\begin{equation*}
    \mathbb{E}_{t\sim \nu_h}[p(t)] = p(h).
\end{equation*}

Now apply Equation~\ref{eq:CSQ_hypothsis_bound} to the hypothesis class $\mathcal{H}_h=\{U_g h:g\in G\}$ with prior $\nu_h$ and null function $h_*$. We find exactly
\begin{equation*}
    p(h)
    =
    \mathbb{E}_{t\sim \nu_h}[p(t)]
    \leq
    \frac{q+1}{\tau^2} \mathrm{A}(h,G;h_*),
\end{equation*}
which is the desired bound.
\end{proof}

\section{Alignment on roots of unity}\label{sec.alignment}
Recall that we will consider functions on 
\begin{equation*}
    \mathcal{X} = \prod_{i=1}^r \mu_{p_i}^{d_i}
\end{equation*}
with $\mu_\mathcal{X}$ the uniform measure. We must endow our space with a group action. We have quite a bit of freedom here with the general principal that larger groups will give us stronger bounds on the alignment $\mathrm{A}(f,G)$, but smaller groups will prove bounds for a less restricted class of learning algorithms. We begin with a large group, given by
\begin{equation*}
    G := \mathcal{X} \rtimes \Sigma := \prod_{i=1}^r \mu_{p_i}^{d_i} \rtimes \prod_{i=1}^r S_{d_i},
\end{equation*}
which will allow us to scramble each individual digit as well as the order of the digits. Namely, this group acts on each digit block by
\begin{equation*}
    (\zeta, \sigma)(x_1,\ldots,x_{d_i}) = (\zeta_1x_{\sigma(1)},\ldots,\zeta_{d_i}x_{\sigma(d_i)}).
\end{equation*}
First we prove that even this large action is respected by many learning algorithms. In particular, we have the following simple lemma. 
\begin{lem}\label{lem:orthogonal_action}
    The group $G = \mathcal{X} \rtimes \Sigma$ acts orthogonally on $\mathcal{X}$.
\end{lem}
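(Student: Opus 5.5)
The plan is to show that each element of $G = \mathcal{X} \rtimes \Sigma$ acts on the ambient space $\mathbb{C}^d \cong \mathbb{R}^{2d}$ by a real orthogonal linear map that preserves $\mathcal{X}$. It suffices to check the generators, since products of orthogonal maps are orthogonal. Concretely, we extend the action to all of $\mathbb{C}^d$ by the same formula: on the block of coordinates belonging to $p_i$,
\begin{equation*}
    (\zeta,\sigma)(z_1,\ldots,z_{d_i}) = (\zeta_1 z_{\sigma(1)},\ldots,\zeta_{d_i} z_{\sigma(d_i)}), \qquad z_j \in \mathbb{C},
\end{equation*}
with the blocks for different primes treated independently. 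This map is clearly $\mathbb{C}$-linear, hence $\mathbb{R}$-linear on $\mathbb{R}^{2d}$.

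First, for the permutation part $\sigma \in \prod_i S_{d_i}$, the map permutes complex coordinates within each block. Under $\mathbb{C}^d \cong \mathbb{R}^{2d}$ it permutes the pairs $(\mathrm{Re}\, z_j, \mathrm{Im}\, z_j)$. It is therefore a permutation matrix on $\mathbb{R}^{2d}$, which is orthogonal.

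Second, for the torus part $\zeta \in \prod_i \mu_{p_i}^{d_i}$, the map multiplies the coordinate $z_j$ by $\zeta_j = e_{p_i}(a_j)$. On the real plane $(\mathrm{Re}\, z_j, \mathrm{Im}\, z_j)$ this is the rotation by angle $2\pi a_j/p_i$. Hence the whole map is block diagonal with $2\times 2$ rotation blocks, so it lies in $\mathrm{SO}_{2d}(\mathbb{R})$. This matches the remark in the introduction that $\mathcal{X}$ sits inside a finite subgroup of the maximal torus.

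Third, the map preserves $\mathcal{X}$. Multiplying a $p_i$-th root of unity by another $p_i$-th root of unity gives a $p_i$-th root of unity. Permuting coordinates within a block also keeps each entry in $\mu_{p_i}$. Finally, one should check that the composition law of $G$ matches composition of these linear maps, so that we have a genuine action; this is the routine semidirect product verification, with $\sigma$ acting on $\zeta$ by permuting indices.

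Putting these together, the action of $G$ on $\mathcal{X}$ is the restriction of a homomorphism $G \to \mathrm{O}_{2d}(\mathbb{R})$. Since the action is by bijections of the finite set $\mathcal{X}$, it also preserves the uniform measure $\mu_\mathcal{X}$.

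There is no real obstacle here. The only point needing care is bookkeeping of the semidirect product convention, so that the formula defines a left action rather than a right one. If needed, this can be fixed by using $\sigma^{-1}$ in the indices, which does not affect orthogonality.
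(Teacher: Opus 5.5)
Your proposal is correct and follows essentially the same approach as the paper. The paper notes that coordinatewise multiplication by $\mu_{p_i}^{d_i}$ and permutation by $S_{d_i}$ are unitary on each block $\mathbb{C}^{d_i}$, and then uses $\mathrm{U}(d) \hookrightarrow \mathrm{SO}(2d)$; your explicit $2\times 2$ rotation blocks and permutation matrices on $\mathbb{R}^{2d}$ check that embedding directly, and your remarks on preserving $\mathcal{X}$ and on the left/right action convention are extra care the paper omits.
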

\begin{proof}
    Decompose $\mathbb{C}^d = \bigoplus_{i=1}^r \mathbb{C}^{d_i}$. Observe that each $\mu_{p_i}^{d_i}$ gives a unitary action on $\mathbb{C}^{d_i}$ via component-wise multiplication, and each $S_{d_i}$ gives a unitary action on $\mathbb{C}^{d_i}$ by permuting basis vectors. In particular, the composite action
    \begin{equation*}
        (\zeta, \sigma)(x_1,\ldots,x_{d_i}) = (\zeta_1x_{\sigma(1)},\ldots,\zeta_{d_i}x_{\sigma(d_i)})
    \end{equation*}
    is unitary, and so we have $\mu_{p_i}^{d_i} \rtimes S_{d_i} \hookrightarrow U(d_i)$. Therefore we have
    \begin{equation*}
        G \hookrightarrow \bigoplus_{i=1}^r \mathrm{U}(d_i) \subseteq \mathrm{U}(d) \hookrightarrow \mathrm{SO}(2d)
    \end{equation*}
    by identifying $\mathbb{C}^d$ with $\mathbb{R}^{2d}$.
\end{proof}

As a corollary, by Proposition~\ref{prop.G_inv_kernel} and Proposition~\ref{prop:G_inv_NGD_condition}, standard machine learning algorithm such as SVM with Gaussian kernel and FCNN with Gaussian initialization are $\mathcal{X} \times \Sigma$-equivariant. Furthermore, since this contains the action of $\mathcal{X}$ on itself, we can compute for an arbitrary $f \in L^2(\mu_\mathcal{X})$ the alignment $\mathrm{A}(f,G)$ via a spectral decomposition.
\begin{defn}
    Suppose that $a \in \widehat{\cal{X}}$ and define the type of $a$ to be the tuple
    \begin{equation*}
        \mathrm{T}(a) = (m_{i,t})_{\substack{i \in [r] \\ t \in \mathbb{F}_{p_i}}},
    \end{equation*}
    where
    \begin{equation*}
        m_{i,t} = \#\{0 \leq j < d_i : a_{i,j} = t\}.
    \end{equation*}
\end{defn}

\begin{lem}\label{lem.G_alignemnt_equality}
    Let $f \in L^2(\mu_\mathcal{X})$ and let $G = \mathcal{X} \rtimes \Sigma$ as above. Then
    \begin{equation*}
        \mathrm{A}(f,G) = \max_{(m_{i,t})} \left(\prod_{i=1}^r \binom{d_i}{  m_{i,t} : t \in \mathbb{F}_{p_i}}\right)^{-1} \sum_{\substack{a \in \widehat{\cal{X}}\\\mathrm{T}(a) = (m_{i,t})}} |\widehat{f}(a)|^2,
    \end{equation*}
    with the max taken over all types in $\widehat{\cal{X}}$.
\end{lem}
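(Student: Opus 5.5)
We compute $\mathrm{A}(f,G)$ as the operator norm of the averaged covariance operator
\begin{equation*}
    M\phi := \mathbb{E}_{g\sim\mu_G}\big[\langle \phi, U_g f\rangle U_g f\big],
\end{equation*}
as in the proof of Theorem~\ref{thm:kernel_general_lb}: $M$ is positive self-adjoint and $\mathrm{A}(f,G)=\sup_{\norm{\phi}_2=1}\langle M\phi,\phi\rangle=\norm{M}_{\mathrm{op}}$. The plan is to diagonalize $M$ in a basis adapted to $G$ and read off its largest eigenvalue. Since $M$ commutes with every $U_{g'}$ (by invariance of Haar measure), it preserves the isotypic decomposition of $L^2(\mu_\mathcal{X})$ under $G$.

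First we handle the translation subgroup $\mathcal{X}$. For $\zeta\in\mathcal{X}$ the character $\chi_a$ satisfies $U_\zeta\chi_a=\chi_a(\zeta)\chi_a$, so $U_\zeta f=\sum_a \widehat f(a)\chi_a(\zeta)\chi_a$. Averaging over $\zeta$ and using orthogonality of characters gives
\begin{equation*}
    \mathbb{E}_{\zeta}\big[\langle\phi,U_\zeta f\rangle U_\zeta f\big]=\sum_a |\widehat f(a)|^2\,\widehat\phi(a)\chi_a .
\end{equation*}
Hence, writing $g=(\zeta,\sigma)$ and averaging over $\zeta$ first, $M$ is diagonal in the character basis:
\begin{equation*}
    M\chi_b=\mathbb{E}_{\sigma}\big[|\widehat{f\circ\sigma}(b)|^2\big]\chi_b=\mathbb{E}_\sigma\big[|\widehat f(\sigma b)|^2\big]\chi_b ,
\end{equation*}
where $\sigma$ permutes the coordinates of $a$ within each block. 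The identity $\widehat{f\circ\sigma}(b)=\widehat f(\sigma\cdot b)$ for a suitable action of $\Sigma$ on $\widehat{\mathcal X}$ is the one bookkeeping step to verify. It holds because permuting coordinates is a group automorphism of $\mathcal{X}$ that preserves the uniform measure.

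Next we average over $\Sigma$. The orbit of $b$ under $\prod_i S_{d_i}$ is exactly the set of $a$ with $\mathrm{T}(a)=\mathrm{T}(b)$. This orbit has size $\prod_i\binom{d_i}{m_{i,t}:t\in\mathbb{F}_{p_i}}$, and the uniform measure on $\Sigma$ pushes forward to the uniform measure on the orbit, since all stabilizers are conjugate of equal size. Therefore the eigenvalue of $M$ at $\chi_b$ is
\begin{equation*}
    \Big(\prod_{i}\tbinom{d_i}{m_{i,t}}\Big)^{-1}\sum_{\mathrm{T}(a)=\mathrm{T}(b)}|\widehat f(a)|^2 .
\end{equation*}
Because $M$ is diagonal in an orthonormal basis, $\norm{M}_{\mathrm{op}}$ is the maximum eigenvalue, which is the maximum over types of this quantity.

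There is no serious obstacle. The only care needed is the order of averaging in the semidirect product: Haar measure on $G$ is the product of the uniform measures on $\mathcal X$ and $\Sigma$, so Fubini applies. One also needs the correct transformation of Fourier coefficients under permutations, and checking that $U_\sigma$ is unitary suffices for this.
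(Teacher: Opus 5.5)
Your proposal is correct and is essentially the paper's argument. Both proofs expand in the characters $\chi_a$, use orthogonality over the translation subgroup $\mathcal{X}$ to kill the cross terms, and then average over $\Sigma$, which turns $|\widehat f(\sigma\cdot b)|^2$ into the average of $|\widehat f|^2$ over the type class of $b$. The difference is only packaging. You diagonalize the covariance operator $M$, so its largest eigenvalue gives the upper bound and its attainment at once, and with the Hermitian inner product and complex test functions you never need $f$ to be real. The paper instead computes the quadratic form $\mathbb{E}_g\langle U_g f,h\rangle^2=\sum_a|\widehat f(a)|^2\,\mathbb{E}_\sigma|\widehat h(\sigma^{-1}a)|^2$ directly, and exhibits the maximizers ($\chi_a$, or a real combination of $\chi_a$ and $\chi_{-a}$) by hand.
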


\begin{proof}
    Let $\zeta \sim \cal{X}$ and $\sigma \sim \Sigma$ so that $(\zeta,\sigma) \sim G$, and let $x,x' \sim \cal{X}$ be independent. Then for any any $h : \cal{X} \to \mathbb{R}$, we have
    \begin{align*}
        \mathbb{E}_g[\mathbb{E}_x[f(g(x))h(x)]^2]  &= \mathbb{E}_{\zeta,\sigma}[\mathbb{E}_x[f((\zeta, \sigma)(x))h(x)]^2] \\
        &= \mathbb{E}_{\zeta,\sigma,x,x'}[f((\zeta, \sigma)(x))f((\zeta, \sigma)(x'))h(x)h(x')].
    \end{align*}
    Fourier expanding each $f$ and using that $\chi$ are multiplicative yeilds
    \begin{equation*}
        \mathbb{E}_g[\mathbb{E}_x[f(g(x))h(x)]^2] =\mathbb{E}_{\sigma,x,x'}\left[\sum_{a,a' \in \widehat{\cal{X}}}\hat{f}(a)\hat{f}(a')h(x)h(x')\chi_a(\sigma(x))\chi_{a'}(\sigma(x'))\right]\mathbb{E}_{\zeta}[\chi_a(\zeta)\chi_{a'}(\zeta)].
    \end{equation*}
    Observe that orthogonality implies $\mathbb{E}_{\zeta}[\chi_a(\zeta)\chi_{a'}(\zeta)] = \mathbbm{1}_{a + a' = 0}$, and since $f$ is real valued we have $\hat{f}(-a) = \overline{\hat{f}(a)}$
    , so we get
    \begin{equation*}
        \mathbb{E}_g[\mathbb{E}_x[f(g(x))h(x)]^2]  =\mathbb{E}_{\sigma,x,x'}\left[\sum_{a \in \widehat{\cal{X}}}|\hat{f}(a)|^2h(x)h(x')\chi_a(\sigma(x))\chi_{-a}(\sigma(x'))\right].
    \end{equation*}
    Use independence, $\chi_{-a}(x) = \overline{\chi_a(x)}$, and $h$ real valued to get
    \begin{align*}
        \mathbb{E}_{x,x'}[ h(x)h(x')\chi_a(\sigma(x))\chi_{-a}(\sigma(x')) ]
        &=\mathbb{E}_{x,x'}[ h(x)\overline{h(x')}\chi_a(\sigma(x))\overline{\chi_{a}(\sigma(x'))}] \\
        &= \mathbb{E}_x[h(x)\chi_a(\sigma(x))]\overline{\mathbb{E}_{x'} [h(x')\chi_a(\sigma(x'))]} \\
        &= |\mathbb{E}_x[h(x)\chi_a(\sigma(x))]|^2 \\
        &= |\hat{h}(\sigma^{-1}(a))|^2,
    \end{align*}
    so we have
    \begin{equation*}
        \mathbb{E}_g[\mathbb{E}_x[f(g(x))h(x)]^2] = \sum_{a \in \widehat{\cal{X}}} |\hat{f}(a)|^2 \mathbb{E}_\sigma |\hat{h}(\sigma^{-1}(a))|^2.
    \end{equation*}
    Now we observe that the orbits under $\Sigma$ are exactly
    \begin{equation*}
        \Sigma \cdot a = \{b \in \widehat{\cal{X}} : \mathrm{T}(b)=\mathrm{T}(a)\}
    \end{equation*}
    since permuting coordinates does not change the type. Since the Haar measure on $\Sigma$ is uniform, we have
    \begin{equation*}
        \mathbb{E}_\sigma|\hat{h}(\sigma^{-1}(a))|^2 = \frac{1}{\# (\Sigma \cdot a)} \sum_{b \in \Sigma \cdot a} |\hat{h}(b)|^2,
    \end{equation*}
    so we conclude that
    \begin{equation*}
        \mathbb{E}_g[\mathbb{E}_x[f(g(x))h(x)]^2] = \sum_{a \in \widehat{\cal{X}}} |\hat{f}(a)|^2 \frac{1}{\# (\Sigma \cdot a)}  \sum_{\mathrm{T}(b)=\mathrm{T}(a)} |\hat{h}(b)|^2.
    \end{equation*}
    By orbit-stabilizer, we have
    \begin{equation*}
        \# (\Sigma \cdot a) = \prod_{i=1}^r \frac{\# S_{d_i}}{\# \mathrm{Stab}_{S_{d_i}}(a^{(i)})} =\prod_{i=1}^r \frac{d_i!}{\prod_t m_{i,t}(a)!} = \prod_{i=1}^r \binom{d_i}{m_{i,0},\ldots,m_{i,p_i-1}},
    \end{equation*}
    since the action is independent across blocks and preserves the type. Therefore we have
    \begin{equation*}
        \mathbb{E}_g[\mathbb{E}_x[f(g(x))h(x)]^2] = \sum_{a \in \widehat{\cal{X}}} |\hat{f}(a)|^2 \left(\prod_{i=1}^r \binom{d_i}{m_{i,0},\ldots,m_{i,p_i-1}}\right)^{-1}  \sum_{\mathrm{T}(b)=\mathrm{T}(a)} |\hat{h}(b)|^2.
    \end{equation*}
    Noting that Plancheral implies that for every $a \in \widehat{\cal{X}}$,
    \begin{equation*}
        \sum_{\mathrm{T}(b)=\mathrm{T}(a)} |\hat{h}(b)|^2 \leq \norm{h}_2^2,
    \end{equation*}
    the supremum over all $h \in L^2(\cal{X})$ with $\norm{h}_2 = 1$  is obtained by taking $h = \chi_a$ or $h= \frac{1}{2}(\chi_a + \chi_{-a})$ for some $a$.
\end{proof}

With the group $G = \mathcal{X} \rtimes \Sigma$, to obtain polynomial bounds on learning it suffices to prove a uniform superpolynomial bound on the Fourier spectrum near the tails in the following sense.

\begin{lem}
    For any $\beta > 0$, there exists an explicit $D_{\beta, r} > 0$ such that if $d > D_{\beta,r}$ and $f \in L^2(\mathcal{X})$ satisfies $\norm{f}_2 \leq 1$ and $\mathrm{A}(f,G) > d^{-\beta}$, then there exists $a \in \widehat{\mathcal{X}}$ such that for some $i \in [r]$ and $t \in \mathbb{F}_{p_i}$, we have
    \begin{equation*}
        m_{i,t}(a) \leq \lfloor \beta \rfloor, \quad \text{ or } \quad m_{i,t}(a) \geq d_{i} - \lfloor \beta \rfloor
    \end{equation*}
    with $|\widehat{f}(a)| > d^{-\beta/2}$.
\end{lem}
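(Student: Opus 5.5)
The plan is to read off the conclusion from the spectral formula of Lemma~\ref{lem.G_alignemnt_equality} together with Plancherel. Write $N(\mathrm{T}) = \prod_{i=1}^r \binom{d_i}{m_{i,t} : t \in \mathbb{F}_{p_i}}$ for the size of a type class. By Lemma~\ref{lem.G_alignemnt_equality}, the hypothesis $\mathrm{A}(f,G) > d^{-\beta}$ gives a type $\mathrm{T}=(m_{i,t})$ with
\begin{equation*}
    \frac{1}{N(\mathrm{T})}\sum_{\mathrm{T}(a)=\mathrm{T}} |\widehat{f}(a)|^2 > d^{-\beta}.
\end{equation*}
This single inequality will give both conclusions.

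\emph{Step 1 (large coefficient).} An average exceeding $d^{-\beta}$ forces some term to exceed it. So there is $a$ with $\mathrm{T}(a)=\mathrm{T}$ and $|\widehat{f}(a)|^2 > d^{-\beta}$, that is, $|\widehat{f}(a)| > d^{-\beta/2}$.

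\emph{Step 2 (small type class).} Plancherel and $\norm{f}_2 \le 1$ give $\sum_{\mathrm{T}(a)=\mathrm{T}}|\widehat{f}(a)|^2 \le 1$. Hence $N(\mathrm{T}) < d^{\beta}$.

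\emph{Step 3 (contrapositive combinatorics).} Set $k = \lfloor \beta \rfloor + 1 > \beta$. Suppose, for contradiction, that no pair $(i,t)$ satisfies the tail condition. Then $k \le m_{i,t} \le d_i - k$ for all $i,t$. Since $d = \sum_i d_i$, choose $i$ with $d_i \ge d/r$. For any $t$, the multinomial bound gives
\begin{equation*}
    N(\mathrm{T}) \ge \binom{d_i}{m_{i,t}, \ldots} \ge \binom{d_i}{m_{i,t}}.
\end{equation*}
Binomial coefficients are unimodal, and $k \le m_{i,t} \le d_i - k$, so $\binom{d_i}{m_{i,t}} \ge \binom{d_i}{k}$. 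Finally $\binom{d_i}{k} \ge (d_i/k)^k \ge \bigl(d/(rk)\bigr)^k$. This requires $d_i \ge 2k$, which holds once $d \ge 2rk$. Since $k > \beta$, we get $\bigl(d/(rk)\bigr)^k \ge d^\beta$ as soon as $d^{\,k-\beta} \ge (rk)^k$. That contradicts Step 2.

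So the explicit threshold is
\begin{equation*}
    D_{\beta,r} = \max\left(2rk,\ (rk)^{k/(k-\beta)}\right), \qquad k=\lfloor\beta\rfloor+1.
\end{equation*}
The $a$ found in Step 1 has the tail-type property because its type is $\mathrm{T}$, and Step 3 shows $\mathrm{T}$ cannot avoid the tail condition.

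The only mildly delicate point is Step 3. One must use a single block $i$ with $d_i \gg d$, since the other $d_i$ may be small. One must also use unimodality to reduce an arbitrary middle entry $m_{i,t}$ to the extreme value $k$. Everything else is immediate from Lemma~\ref{lem.G_alignemnt_equality}.
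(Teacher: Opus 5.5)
Your proof is correct and is essentially the paper's argument. Both take the maximizing type, bound its average by $1/N(\mathrm{T})$ via Plancherel, use a block with $d_i \ge d/r$ and unimodality to get $N(\mathrm{T}) \ge \binom{d_i}{k}$ for a non-tail type, and extract $a$ with $|\widehat{f}(a)|^2 > d^{-\beta}$ by averaging. The differences are cosmetic: you use the sharper $\binom{d_i}{k} \ge (d_i/k)^k$ where the paper uses $(d_{i_0}/2k)^k$ (giving threshold $\lceil (2rk)^{k/(k-\beta)}\rceil$), and your extra condition $d \ge 2rk$ is unnecessary because $k \le m_{i,t} \le d_i - k$ already forces $d_i \ge 2k$.
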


\begin{proof}
Let $K=\lfloor \beta \rfloor$, and $k=K+1$. We claim we can take $D_{\beta,r} = \left\lceil (2rk)^{k/(k-\beta)} \right\rceil$. For a type $(m_{i,t})$, write
\begin{equation*}
    N(m_{i,t})
    :=
    \prod_{i=1}^r
    \binom{d_i}{m_{i,t}:t\in\mathbb F_{p_i}} .
\end{equation*}
By Lemma~\ref{lem.G_alignemnt_equality},
\begin{equation*}
    \mathrm A(f,G)
    =
    \max_{(m_{i,t})}
    \frac{1}{N(m_{i,t})}
    \sum_{\substack{a\in \widehat{\mathcal X}_{\mathbf d}\\
    \mathrm T(a)=(m_{i,t})}}
    |\widehat f(a)|^2 .
\end{equation*}
Let $(m_{i,t})$ be a type attaining this maximum, and suppose that $(m_{i,t})$ was not in the tail. Then for every $i\in [r]$ and every $t \in \mathbb{F}_{p_i}$,
\begin{equation}\label{eq:type_bound}
    k \leq m_{i,t}\leq d_i- k.
\end{equation}
Let $i_0$ be such that $d_{i_0}=\max_i d_i$, so $d_{i_0} \geq d/r$. Choose any $t_0\in\mathbb F_{p_{i_0}}$. Since $(m_{i,t})$ is balanced,
\begin{equation*}
    k\leq m_{i_0,t_0}\leq d_{i_0}-k.
\end{equation*}
Therefore
\begin{equation*}
      N(m_{i,t}) \geq \binom{d_{i_0}}{ m_{i_0,t}:t\in\mathbb F_{p_{i_0}}}
    \geq
    \binom{d_{i_0}}{m_{i_0,t_0}}
    \geq
    \binom{d_{i_0}}{k} \geq
    \left(\frac{d_{i_0}}{2k}\right)^k
    \geq
    \left(\frac{d}{2rk}\right)^k .
\end{equation*}
Since $d_{i_0} \geq 2k$ follows from Equation~\eqref{eq:type_bound}. Therefore, by Plancherel and $\norm{f}_2\leq 1$,
\begin{align*}
    \frac{1}{N(m_{i,t})}
    \sum_{\substack{a\in \widehat{\mathcal X}_{\mathbf d}\\
    \mathrm T(a)=(m_{i,t})}}
    |\widehat f(a)|^2
    \leq
    \frac{\norm{f}_2^2}{N(m_{i,t})}\leq
    (2rk)^k d^{-k}.
\end{align*}
Since $d>D_{\beta,r} = \left\lceil (2rk)^{k/(k-\beta)} \right\rceil$, we have $(2rk)^k d^{-k}<d^{-\beta}$. Thus any balanced type contributes strictly less than $d^{-\beta}$ to the alignment. This contradicts the assumption
\begin{equation*}
    \mathrm A(f,G)>d^{-\beta}.
\end{equation*}
Therefore the maximizing type $(m_{i,t})$ is a tail type. It follows easily that atleast one $a \in \widehat{\mathcal{X}}$ with $\mathrm{T}(a) = (m_{i,t})$ satisfies $|\widehat{f}(a)|^2 > d^{-\beta}$, and this proves the claim.
\end{proof}

In the direction of smaller groups, we can also quite easily consider any subgroup $G \leq \mathcal{X}$. The proof is nearly identical to the $G = \mathcal{X} \rtimes \Sigma$ case, and it also follows from Lemma~\ref{lem:orthogonal_action} that $G \leq \mathcal{X}$ also act orthogonally.

\begin{defn}
Let $G \leq \mathcal{X}_d$ be a subgroup, and define its annihilator by
\begin{equation*}
    \mathrm{Ann}(G) := \{a \in \widehat{\mathcal{X}_d} : \chi_a(g)=1 \text{ for all } g \in G\}.
\end{equation*}
\end{defn}

\begin{lem}
    Let $f \in L^2(\mu_\mathcal{X})$ and let $G \leq \mathcal{X}$ as above. Then
    \begin{equation*}
        \mathrm{A}(f,G)
        :=
        \sup_{\norm{\phi}_2=1} \mathbb{E}_{g \sim G}[\langle U_g f, \phi \rangle^2]
        =
        \max_{C \in \widehat{\mathcal{X}_d}/\mathrm{Ann}(G)}
        \sum_{a \in C} |\widehat{f}(a)|^2.
    \end{equation*}
\end{lem}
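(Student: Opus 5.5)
We follow the proof of Lemma~\ref{lem.G_alignemnt_equality}, dropping the permutation part and replacing the orthogonality of characters on all of $\mathcal{X}$ by orthogonality on $G$. Fix $\phi \in L^2(\mu_\mathcal{X})$ with $\norm{\phi}_2=1$. Since $G \leq \mathcal{X}$ acts by multiplication, $\chi_a(gx)=\chi_a(g)\chi_a(x)$, so
\begin{equation*}
    \langle U_g f,\phi\rangle = \sum_{a \in \widehat{\mathcal{X}}} \widehat{f}(a)\chi_a(g)\overline{\widehat{\phi}(a)} .
\end{equation*}
Here we use Parseval in the complex form, noting that $\langle U_g f,\phi\rangle$ is real when $f,\phi$ are real. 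Writing the square as the product with its complex conjugate and averaging over $g \sim \mu_G$ gives
\begin{equation*}
    \mathbb{E}_{g}\big[|\langle U_gf,\phi\rangle|^2\big] = \sum_{a,b} \widehat{f}(a)\overline{\widehat{f}(b)}\,\overline{\widehat{\phi}(a)}\widehat{\phi}(b)\, \mathbb{E}_{g \sim G}[\chi_{a-b}(g)].
\end{equation*}

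The key input is orthogonality of characters restricted to the finite group $G$. The restriction $\chi_{a-b}|_G$ is a character of $G$, so $\mathbb{E}_{g\sim G}[\chi_{a-b}(g)] = \mathbbm{1}_{a-b \in \mathrm{Ann}(G)}$. The double sum therefore splits over cosets $C \in \widehat{\mathcal{X}}/\mathrm{Ann}(G)$:
\begin{equation*}
    \mathbb{E}_{g}\big[|\langle U_gf,\phi\rangle|^2\big] = \sum_{C} \Big| \sum_{a \in C} \widehat{f}(a)\overline{\widehat{\phi}(a)} \Big|^2 .
\end{equation*}

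For the upper bound, apply Cauchy--Schwarz inside each coset. This bounds the expression by
\begin{equation*}
    \sum_C \Big(\sum_{a\in C}|\widehat f(a)|^2\Big)\Big(\sum_{a \in C}|\widehat\phi(a)|^2\Big) \leq \max_C \sum_{a\in C}|\widehat f(a)|^2 ,
\end{equation*}
where the last step uses Plancherel, $\sum_a|\widehat\phi(a)|^2=1$.

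For the lower bound, fix the maximizing coset $C_0$ and take $\widehat{\phi}(a) = \widehat{f}(a)/(\sum_{C_0}|\widehat f|^2)^{1/2}$ for $a \in C_0$ and $0$ otherwise. This gives equality in Cauchy--Schwarz on $C_0$ and zero contribution from every other coset.

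The only genuine obstacle is the realness requirement on $\phi$, since the alignment is defined over real test functions. Because $f$ is real, $\widehat f(-a)=\overline{\widehat f(a)}$, and the coset $-C_0$ carries the same mass as $C_0$. There are two cases.
\begin{itemize}
    \item If $-C_0=C_0$, the $\phi$ constructed above is automatically real, since its coefficients are conjugate-symmetric.
    \item If $-C_0\neq C_0$, use $\phi$ supported on $C_0\cup(-C_0)$ with the conjugate-symmetric coefficients, normalized. This real $\phi$ splits its $L^2$ mass equally between the two cosets. Each coset then contributes $\tfrac12\sum_{C_0}|\widehat f|^2$, so the total again equals $\max_C\sum_{a\in C}|\widehat f(a)|^2$.
\end{itemize}
This mirrors the final choice $h=\tfrac12(\chi_a+\chi_{-a})$ in Lemma~\ref{lem.G_alignemnt_equality}, and completes the identity.
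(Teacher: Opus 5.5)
Your proof is correct and follows essentially the same route as the paper. Both arguments expand in characters, use orthogonality of characters restricted to $G$ to collapse the double sum onto cosets of $\mathrm{Ann}(G)$, and bound each coset by Cauchy--Schwarz and Plancherel. Both attain equality with a real $\phi$ supported on $C_0$ or on $C_0\cup(-C_0)$.

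The one cosmetic difference is in the expansion step. You expand $|\langle U_g f,\phi\rangle|^2$ as a product with its conjugate, which gives the condition $a-b\in\mathrm{Ann}(G)$ and a manifest sum of squared moduli. The paper instead expands $\langle U_g f,\phi\rangle^2$ directly, gets the condition $a+a'\in\mathrm{Ann}(G)$, and then needs real-valuedness to rewrite $\beta_C\beta_{-C}$ as $|\beta_C|^2$.
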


\begin{proof}
    Let $x,x' \sim \mathcal{X}_d$ be independent, and let $g \sim G$. For any real-valued
    $\phi : \mathcal{X}_d \to \mathbb{R}$, by following the previous proof we obtain
    \begin{align*}
        \mathbb{E}_{g}[\langle U_g f,\phi\rangle^2]
        &=
        \mathbb{E}_{g}\left[\left(\mathbb{E}_x[f(gx)\phi(x)]\right)^2\right] \\
        &=
        \mathbb{E}_{g,x,x'}[f(gx)f(gx')\phi(x)\phi(x')] \\
        &=         \mathbb{E}_{g,x,x'}
        \left[
            \sum_{a,a' \in \widehat{\mathcal{X}_d}}
            \widehat{f}(a)\widehat{f}(a')
            \chi_a(g)\chi_{a'}(g)\chi_a(x)\chi_{a'}(x')
            \phi(x)\phi(x')
        \right]\\
        &=
        \mathbb{E}_{x,x'}
        \Bigg[
            \sum_{\substack{a,a' \in \widehat{\mathcal{X}_d}\\ a+a' \in \mathrm{Ann}(G)}}
            \widehat{f}(a)\widehat{f}(a')
            \chi_a(x)\chi_{a'}(x')
            \phi(x)\phi(x')
        \Bigg] \\
        &= 
        \sum_{\substack{a,a' \in \widehat{\mathcal{X}_d}\\ a+a' \in \mathrm{Ann}(G)}}
        \widehat{f}(a)\widehat{f}(a')
        \overline{\widehat{\phi}(a)} \, \overline{\widehat{\phi}(a')}.
    \end{align*}
    For each coset $C \in \widehat{\mathcal{X}_d}/\mathrm{Ann}(G)$, define
    \begin{equation*}
        m_C(f) := \sum_{a \in C} |\widehat{f}(a)|^2,
        \qquad
        s_C(\phi) := \sum_{a \in C} |\widehat{\phi}(a)|^2,
        \qquad
        \beta_C := \sum_{a \in C} \widehat{f}(a)\overline{\widehat{\phi}(a)}.
    \end{equation*}
    Since the condition $a+a' \in \mathrm{Ann}(G)$ is equivalent to saying that $a'$ lies in the coset
    $-C$ whenever $a \in C$, we may rewrite the previous expression as
    \begin{equation*}
        \mathbb{E}_{g}[\langle U_g f,\phi\rangle^2]
        =
        \sum_{C \in \widehat{\mathcal{X}_d}/\mathrm{Ann}(G)} \beta_C \beta_{-C}.
    \end{equation*}
    Since $f$ and $\phi$ are real-valued, we have $ \widehat{f}(-a)=\overline{\widehat{f}(a)}$ and $\widehat{\phi}(-a)=\overline{\widehat{\phi}(a)}$, so we may wrtie
    \begin{equation*}
        \beta_{-C}
        =
        \sum_{a \in C} \hat{f}(-a)\overline{\widehat{\phi}(-a)}
        =
        \sum_{a \in C} \overline{\widehat{f}(a)}\widehat{\phi}(a)
        =
        \overline{\beta_C}.
    \end{equation*}
    Therefore
    \begin{equation*}
        \mathbb{E}_{g}[\langle U_g f,\phi\rangle^2]
        =
        \sum_{C \in \widehat{\mathcal{X}_d}/\mathrm{Ann}(G)} |\beta_C|^2,
    \end{equation*}
    and by Cauchy-Schwarz,
    \begin{equation*}
        |\beta_C|^2
        \leq
        \left(\sum_{a \in C} |\widehat{f}(a)|^2\right)
        \left(\sum_{a \in C} |\widehat{\phi}(a)|^2\right)
        =
        m_C(f)s_C(\phi).
    \end{equation*}
    Thus
    \begin{align*}
        \mathbb{E}_{g}[|\langle U_g f,\phi\rangle|^2]
        \leq
        \sum_{C \in \widehat{\mathcal{X}_d}/\mathrm{Ann}(G)} m_C(f)s_C(\phi) \leq
        \left(\max_{C \in \widehat{\mathcal{X}_d}/\mathrm{Ann}(G)} m_C(f)\right)
        \sum_{C \in \widehat{\mathcal{X}_d}/\mathrm{Ann}(G)} s_C(\phi).
    \end{align*}
    By Plancherel,
    \begin{equation*}
        \sum_{C \in \widehat{\mathcal{X}_d}/\mathrm{Ann}(G)} s_C(\phi)
        =
        \sum_{a \in \widehat{\mathcal{X}_d}} |\widehat{\phi}(a)|^2
        =
        \norm{\phi}_2^2
        =
        1.
    \end{equation*}
    Hence
    \begin{equation*}
        \mathbb{E}_{g}[\langle U_g f,\phi\rangle^2]
        \leq
        \max_{C \in \widehat{\mathcal{X}_d}/\mathrm{Ann}(G)}
        \sum_{a \in C} |\widehat{f}(a)|^2.
    \end{equation*}
    Taking the supremum over all $\phi$ with $\norm{\phi}_2=1$ gives the upper bound.

    For the reverse inequality, choose a coset $C_* \in \widehat{\mathcal{X}_d}/\mathrm{Ann}(G)$ attaining the maximum, and set $ M := \sum_{a \in C_*} |\hat{f}(a)|^2$. Since $f$ is real-valued, we also have $\sum_{a \in -C_*} |\hat{f}(a)|^2 = M$. If $C_*=-C_*$, define $\phi$ by
    \begin{equation*}
        \widehat{\phi}(a)
        :=
        \begin{cases}
            \hat{f}(a)/\sqrt{M}, & a \in C_*, \\
            0, & a \notin C_*.
        \end{cases}
    \end{equation*}
    Then $\phi$ is real-valued, $\norm{\phi}_2=1$, and
    \begin{equation*}
        \beta_{C_*}
        =
        \sum_{a \in C_*} \widehat{f}(a)\overline{\widehat{\phi}(a)}
        =
        \sqrt{M}.
    \end{equation*}
    Since all other $\beta_C$ vanish, we get $\mathbb{E}_{g}[|\langle U_g f,\phi\rangle|^2] = M$. If $C_* \neq -C_*$, define $\phi$ by
    \begin{equation*}
        \widehat{\phi}(a)
        :=
        \begin{cases}
            \widehat{f}(a)/\sqrt{2M}, & a \in C_* \cup (-C_*), \\
            0, & a \notin C_* \cup (-C_*).
        \end{cases}
    \end{equation*}
    and verify that we obtain equality. Thus
    \begin{equation*}
        \mathrm{A}(f,G)
        =
        \max_{C \in \widehat{\mathcal{X}_d}/\mathrm{Ann}(G)}
        \sum_{a \in C} |\widehat{f}(a)|^2.
    \end{equation*}
\end{proof}

An immediate corollary obtain by taking $G = \mathcal{X}$ is 
\begin{cor}\label{cor:alignment_X_d}
    For any $f \in L^2(\mu_\mathcal{X})$ and $G = \mathcal{X}$ we have
    \begin{equation*}
        \mathrm{A}(f,G) = \max_{a \in \widehat{\mathcal{X}}} |\widehat{f}(a)|^2.
    \end{equation*}
\end{cor}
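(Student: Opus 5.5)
The corollary follows by specializing the preceding lemma to $G = \mathcal{X}$. Its right-hand side is
\begin{equation*}
    \max_{C \in \widehat{\mathcal{X}}/\mathrm{Ann}(G)} \sum_{a \in C} |\widehat{f}(a)|^2 ,
\end{equation*}
so the whole task is to identify the annihilator of the full group.

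\textbf{Step 1: compute the annihilator.} We claim $\mathrm{Ann}(\mathcal{X}) = \{0\}$. Indeed, $a \in \mathrm{Ann}(\mathcal{X})$ means $\chi_a(g) = 1$ for every $g \in \mathcal{X}$, i.e.\ $\chi_a$ is the trivial character. Take $g$ with a single nontrivial coordinate $e_{p_i}(1)$ in position $(i,j)$. Then
\begin{equation*}
    \chi_a(g) = e_{p_i}(a_{i,j}),
\end{equation*}
which equals $1$ only if $a_{i,j} = 0$ in $\mathbb{Z}/p_i\mathbb{Z}$. Running over all positions $(i,j)$ gives $a = 0$. Equivalently, this is the standard fact that the map $a \mapsto \chi_a$ is an isomorphism onto the dual group, as described in Section~\ref{sec.characters}.

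\textbf{Step 2: conclude.} Since the annihilator is trivial, the cosets of $\widehat{\mathcal{X}}/\{0\}$ are the singletons $\{a\}$. The sum over each coset $C$ therefore collapses to the single term $|\widehat{f}(a)|^2$, and the maximum over cosets becomes the maximum over $a \in \widehat{\mathcal{X}}$. This is exactly the claimed identity.

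The preceding lemma needs no additional hypotheses here: $G = \mathcal{X}$ is a subgroup of itself, acting by translation and preserving the uniform measure. There is no real obstacle; the only point needing care is the verification that the annihilator is trivial.
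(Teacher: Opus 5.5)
Your proposal is correct and is the paper's argument: the paper obtains the corollary as an immediate specialization of the preceding subgroup lemma to $G=\mathcal{X}$, where $\mathrm{Ann}(\mathcal{X})=\{0\}$, so every coset is a singleton. Your check that the annihilator is trivial, by evaluating $\chi_a$ on elements with a single coordinate $e_{p_i}(1)$, fills in the one step the paper leaves implicit.
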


\section{Harmonic analysis on roots of unity}\label{sec.harmonic}
In this section, we consider a general $\cal{X}$ of the form in Equation~\eqref{eq:general_space}, and we prove several lemmas related to harmonic analysis on $\cal{X}$. First, we relate a lower bound on Fourier coefficients for $a \in \widehat{\cal{X}}$ given by
\begin{equation*}
    \widehat{f}(a) = \frac{1}{X}\sum_{x \in \cal{X}} f(x)\overline{\chi_a(x)},
\end{equation*}
to a lower bound on the Fourier coefficients for $\theta \in \mathbb{R}/\mathbb{Z}$ given by
\begin{equation*}
    \widehat{f}(\theta) = \frac{1}{X} \sum_{x < X} f(x) e(-\theta x).
\end{equation*}
This is a generalization of \cite[Proposition 2]{green2012notcomputingmobiusfunction}, which is itself a generalization of \cite{Katai1986}. Second, we will give various $\ell^1$ and $\ell^\infty$ bounds on the discrete Fourier transforms of the $\chi_a$ themselves, given by
\begin{equation*}
    \widehat{\chi}_a(k) = \frac1X\sum_{x < X}\chi_a(x)e\left(\frac{-kx}{X}\right).
\end{equation*}
These bounds will be generalizations of the various Lemmas in \cite{Bourgain2013}. We will use the argument $a \in \widehat{\cal{X}}, \theta \in \mathbb{R}/\mathbb{Z}$, or $k \in \mathbb{Z}/X\mathbb{Z}$ to make it clear which Fourier transform we are referring to.

\subsection{An argument of Katai}\label{sec.Katai}
The key input to the low-weight case is that a large Fourier coefficient $\widehat{f}(a)$ implies a large Fourier coefficient $\widehat{f}(\theta)$. This is the content of the following proposition. 
\begin{prop}\label{prop.Katai_arg}
    Let $f : \Omega  \to [-1,1]$ be a function for which there exists some $a \in \widehat{\cal{X}}$ such that $|\hat{f}(a)| > \delta$ for some $0< \delta < 1/2$. Then there exists $\theta \in [0,1]$ such that
    \begin{equation*}
        |\hat{f}(\theta)| \geq \left(\frac{\delta}{10|a|\sqrt{p_r}}\right)^{4|a|}.
    \end{equation*}
\end{prop}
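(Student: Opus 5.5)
The plan is to make the character $\chi_a$ look like a short linear combination of additive characters $e(\theta x)$, and then pigeonhole; this is Katai's argument as used in \cite[Proposition 2]{green2012notcomputingmobiusfunction}.

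\textbf{Step 1: factor $\chi_a$ into digit functions.} Let $S=\{(i,j): a_{i,j}\neq 0\}$, so $|S|=|a|$. For $x<X$, the digit is
\begin{equation*}
x_{i,j}=\lfloor p_i\{x/p_i^{j+1}\}\rfloor .
\end{equation*}
Hence
\begin{equation*}
\overline{\chi_a(x)}=\prod_{(i,j)\in S} F_{i,j}\bigl(x/p_i^{j+1}\bigr), \qquad F_{i,j}(t)=e_{p_i}\bigl(-a_{i,j}\lfloor p_i\{t\}\rfloor\bigr).
\end{equation*}
Each $F_{i,j}$ is a $1$-periodic step function, bounded by $1$, with $p_i\le p_r$ jumps.

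\textbf{Step 2: smooth each factor.} Replace each $F_{i,j}$ by a trigonometric polynomial $P_{i,j}(t)=\sum_{|m|\le K} c_{i,j}(m)e(mt)$. I would take the Fejér mean $F_{i,j}*\mathcal F_K$, which satisfies $\|P_{i,j}\|_\infty\le 1$ and has $\sum_m |c_{i,j}(m)|\le \|F_{i,j}\|_2\sqrt{2K+1}\le\sqrt{2K+1}$ by Cauchy--Schwarz. The approximation error $|F_{i,j}-P_{i,j}|$ is $O(1)$ only within about $1/K$ of each of the $p_i$ jumps, with a tail decaying like $1/(K\|t-\text{jump}\|^2)$. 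This gives an $L^1$ error of $O(p_i\log K/K)$; a slightly smoothed kernel (e.g.\ a power of Fejér) brings this to $O(p_i/K)$.

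The averaging needed is over $x<X$, not over $t\in\mathbb R/\mathbb Z$. Because $p_i^{j+1}\mid X$, the points $x/p_i^{j+1} \bmod 1$ are exactly equidistributed on $\frac{1}{p_i^{j+1}}\mathbb Z/\mathbb Z$. So I must bound the discrete average of $|F-P|$ over this grid. This is where care is needed: the jumps of $F_{i,j}$ sit exactly at grid points $k/p_i$. I would handle this either by shifting the step function by half a grid step, since $\lfloor p\{t\}\rfloor$ is constant on grid-adjusted intervals, or by a direct count. At most $O(p_i^{j+1}/K)$ grid points lie within $1/K$ of a jump, and the tail terms give a further logarithm. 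Either way the averaged error is $\ll p_i\log K/K$.

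\textbf{Step 3: telescope.} Since all factors have sup norm at most $1$, the product identity $\prod F-\prod P=\sum_{\ell}(\cdots)(F_\ell-P_\ell)(\cdots)$ gives
\begin{equation*}
\Bigl|\hat f(a)-\frac1X\sum_{x<X} f(x)\prod_{S}P_{i,j}(x/p_i^{j+1})\Bigr|\le |a|\cdot O\bigl(p_r\log K/K\bigr).
\end{equation*}
I choose $K\approx (10|a|p_r/\delta)^{2}$ so that this error is below $\delta/2$. The main term then equals
\begin{equation*}
\sum_{\mathbf m}\Bigl(\prod c_{i,j}(m_{i,j})\Bigr)\,\hat f(\theta_{\mathbf m}), \qquad \theta_{\mathbf m}=-\sum_S m_{i,j}/p_i^{j+1}.
\end{equation*}
The total coefficient mass is at most $(2K+1)^{|a|/2}$.

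\textbf{Step 4: pigeonhole.} Some $\theta$ has
\begin{equation*}
|\hat f(\theta)|\ge \frac{\delta}{2}(2K+1)^{-|a|/2}\ge \left(\frac{\delta}{10|a|\sqrt{p_r}}\right)^{4|a|}.
\end{equation*}
The second inequality is a matter of tuning $K$ and the constants. I expect the tuning to be possible, but the precise exponent $4|a|$ and the appearance of $\sqrt{p_r}$ rather than $p_r$ depend on how efficiently Step 2 is done. That suggests using the $\ell^2$ bound on the coefficients (hence the square root) rather than a crude $\ell^1$ bound.

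The main obstacle is Step 2: getting a uniform discrete $L^1$ approximation bound on the grid $\frac{1}{p_i^{j+1}}\mathbb Z$, which contains the jump points. After that, Steps 3 and 4 are bookkeeping.
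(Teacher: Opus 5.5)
Your proposal is correct and is essentially the paper's own K\'atai-style argument: factor $\overline{\chi_a}$ into digit step functions, smooth each one into a trigonometric polynomial after a shift chosen so that the sample points $x/p_i^{j+1}$ do not see the jumps, telescope, and pigeonhole. The paper smooths with a shifted box$*$box convolution followed by Fourier truncation and uses the trivial coefficient mass $(3U)^{|a|}$; your Fej\'er mean with the Cauchy--Schwarz/Parseval mass bound $(2K+1)^{|a|/2}$ gives the stronger estimate $|\hat f(\theta)|\ge\frac{\delta}{2}\bigl(\frac{c\,\delta}{|a|p_r}\bigr)^{|a|}$, which implies the stated bound with a lot of room.

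Do commit to the half-grid shift and drop the ``direct count'' alternative. Without the shift, the Fej\'er mean at a jump $k/p_i$ is roughly the average of the two one-sided values. So for $j=0$ every grid point, and for general $j$ a fraction $p_i^{-j}$ of them, carries an error of size about $|\sin(\pi a_{i,j}/p_i)|$. In particular, the claim that only $O(p_i^{j+1}/K)$ grid points lie within $1/K$ of a jump is false once $p_i^{j+1}<K$, since the jumps themselves are grid points.
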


\begin{proof}
    First we write
    \begin{equation}\label{eq.psi_expansion}
        \widehat{f}(a) = \frac{1}{|\cal{X}|}\sum_{x \in \cal{X}} f(x) \overline{\chi_a(x)} = \frac{1}{|\cal{X}|} \sum_{x < X} f(x) \prod_{i=1}^r \prod_{j<d_i} \psi_{i,j}\left(\frac{x}{p_i^{j+1}}\right),
    \end{equation}
    where $\psi_{i,j}: \mathbb{R}/\mathbb{Z} \to \mu_{p_i}$ is the function
    \begin{equation*}
        \psi_{i,j}(t) = \overline{e_{p_i}(u)^{a_{i,j}}}
    \end{equation*}
    for $t \in [\frac{u}{p_i}, \frac{u+1}{p_i})$ and $0 \leq u < p_i$. 
    
    We approximate $\psi_{i,j}$ for $a_{i,j} \neq  0$ by a smoothed version $\widetilde{\psi}_{i,j} : \mathbb{R}/\mathbb{Z} \to \mathbb{C}$ with the property
    \begin{equation}\label{eq.smoothpsi}
        \mathbb{E}_{x \in \Omega}\left[\left|\psi_{i,j}\left(\frac{x}{p_i^{j+1}}\right) - \widetilde{\psi}_{i,j}\left(\frac{x}{p_i^{j+1}}\right)\right|\right] < \varepsilon
    \end{equation}
    for all $i,j$, and traditional Fourier shape
    \begin{equation*}
         \widetilde{\psi}_{i,j}(t) = \sum_{|r| \leq 200p_r^2\varepsilon^{-3}} \alpha_{i,r}e(rt)
    \end{equation*}
    with $|\alpha_{i,r}| \leq 1$ for all $r$.

    We use a standard smoothing argument. To ease notation, we fix $i,j$, let $p = p_i$ and $\psi = \psi_{i,j}$. Now we consider
    \begin{equation*}
        \psi_0 := \phi * \chi * \chi
    \end{equation*}
    where $\phi : \mathbb{R}/\mathbb{Z} \to \mathbb{C}$ is defined by $\phi(t) = \psi(t+ \frac{\varepsilon}{12p})$ and $\chi(t) = \frac{12p}{\varepsilon}\mathbbm{1}_{[-\varepsilon/24p, \varepsilon/24p]}(t)$. Note that $|\psi_0(t)| \leq 1$ and $\psi_0(t) = \psi(t)$ for $t$ outside of the intervals
    \begin{equation*}
        I_1 = \left[\frac{1}{p} - \frac{\varepsilon}{6p},\frac{1}{p}\right],  I_2 = \left[\frac{2}{p} - \frac{\varepsilon}{6p},\frac{2}{p}\right],  \ldots,  I_p = \left[1- \frac{\varepsilon}{6p}, 1\right]
    \end{equation*}
    Each interval $I_i$ contains at most $\varepsilon/6p$ of the rationals with denominator $p^j$ for any $j$. It follows that
    \begin{equation}\label{eq.psitopsi0}
         \mathbb{E}_{x \in \Omega}\left[\left|\psi\left(\frac{x}{p^{j+1}}\right) - \psi_0\left(\frac{x}{p^{j+1}}\right)\right|\right] < \frac{\varepsilon}{3}.
    \end{equation}

    The Fourier coefficients of $\psi_0$ are given by $\hat{\psi}_0(u) = \hat{\phi}(u)\hat{\chi}(u)^2$, and in particular we have the bound $|\hat{\psi}_0(u)| \leq |\hat{\chi}(u)|^2 \leq \min(1,12p/\varepsilon\pi|u|)^2$. It follows that for $U = 200p_r^2/\varepsilon^3$ we have 
    \begin{equation*}
        \sum_{|u| > U} | \hat{\psi}_0(u)| \leq \left(\frac{12p}{\varepsilon \pi}\right)^2 \sum_{|u| > U} \frac{1}{u^2} \leq \left(\frac{12p}{\varepsilon \pi}\right)^2  \frac{1}{U} <
        \frac{\varepsilon}{3}.
    \end{equation*}
    Now we define
    \begin{equation*}
        \psi_1(t) := \sum_{|u| <U} \hat{\psi}_0(u)e(ut),
    \end{equation*}
    which clearly has the desired Fourier shape. The preceding analysis shows
    \begin{equation}\label{eq.psi0topsi1}
        \norm{\psi_1-\psi_0}_\infty  \leq \sum_{|u| > U} |\hat{\psi}_0(u)| < \varepsilon/3.
    \end{equation}
    This also implies that $\norm{\psi_1}_\infty \leq \norm{\psi_0}_\infty +\frac{\varepsilon}{3} \leq 1 + \frac{\varepsilon}{3}$. Finally, we define
    \begin{equation*}
        \widetilde{\psi}(t) := \left(1 + \frac{\varepsilon}{3}\right)^{-1}\psi_1(t)
    \end{equation*}
    so that $| \widetilde{\psi}(t)| \leq 1$. Notice that $\widetilde{\psi}(t)$ has the correct Fourier shape, and we also have
    \begin{equation}\label{eq.psi1topsitilde}
        \norm{\widetilde{\psi} - \psi_1}_\infty = \norm{\psi_1}_\infty \left(1 - \frac{1}{1+\varepsilon/3}\right) \leq \frac{\varepsilon}{3},
    \end{equation}
    so combining Equations~\eqref{eq.psitopsi0}, \eqref{eq.psi0topsi1}, and \eqref{eq.psi1topsitilde} yields the smooth approximation in Equation~\eqref{eq.smoothpsi}.

    Now we insert the smooth approximation into Equation~\eqref{eq.psi_expansion} and apply Equation~\eqref{eq.smoothpsi} repeatedly. Fix some $(i',j')$ with $a_{i',j'} \neq 0$ that we replace and observe that we pick up an error term of size at most
    \begin{equation*}
       \mathbb{E}_{x \in \Omega}\left[  \left|f(x)\prod_{(i,j) \neq (i',j')} \psi_{i,j}\left(\frac{x}{p_i^{j+1}}\right)\right| \left|\widetilde{\psi}_{i',j'
        }\left(\frac{x}{p_{i'}^{j'+1}}\right) - \psi_{i',j'
        }\left(\frac{x}{p_{i'}^{j'+1}}\right)\right|\right] < \varepsilon,
    \end{equation*}
    since all of $f,\psi,\widetilde{\psi}$ are 1-bounded and we have Equation~\eqref{eq.smoothpsi}. Choosing $\varepsilon = \frac{\delta}{2|a|}$ yields
    \begin{equation}\label{eq.expectation_psi_tilde_lb}
        \left|\mathbb{E}_{x \in \Omega}\left[ f(x) \prod_{i=1}^r \prod_{j<d_i} \widetilde{\psi}_i\left(\frac{x}{p_i^{j+1}}\right)^{a_{i,j}}\right]\right| \geq \delta - \varepsilon |a| \geq \frac{\delta}{2}.
    \end{equation}

    Now we expand each $\widetilde{\psi}_{i,j}$ into its Fourier series. For each $(i,j)$ with $a_{i,j} \neq 0$ we have
    \begin{equation*}
         \widetilde{\psi}_{i,j}(t)
        = \sum_{|u|\le U} \alpha_{i,j,u}\,e(ut),
        \qquad |\alpha_{i,j,u}|\le 1,
    \end{equation*}
    so in particular
    \begin{equation*}
        \sum_u |\alpha_{i,j,u}|
        \le \sum_{|u|\le U} 1
        \le 3U.
    \end{equation*}
    For indices with $a_{i,j}=0$ we have $\psi_{i,j}= 1$ and we do not smooth, so they play no role in what follows. Using this expansion, we can write
    \begin{equation*}
         \prod_{i=1}^r \prod_{j<d_i} \widetilde{\psi}_{i,j}\!\left(\frac{x}{p_i^{j+1}}\right)
        = \prod_{\substack{1\le i\le r\\ j< d_i\\ a_{i,j}>0}}
          \widetilde{\psi}_{i,j}\!\left(\frac{x}{p_i^{j+1}}\right)
        = \sum_{\theta} c_\theta\, e(x\theta),
    \end{equation*}
    where the coefficients $c_\theta$ are obtained from the finitely supported families
    $\{\alpha_{i,j,u}\}_u$ by repeated convolution. By Young's convolution inequality we have
    \begin{equation}\label{eq.c_theta_UB}
        \sum_{\theta} |c_\theta|
        \;\le\;
        \prod_{\substack{i,j:\\ a_{i,j}>0}}
        \left(\sum_u |\alpha_{i,j,u}|\right)
        \;\le\;
        (3U)^{A}.
    \end{equation}
    
    Combining \eqref{eq.expectation_psi_tilde_lb} with \eqref{eq.c_theta_UB}, we obtain
    \begin{equation*}
        \frac{\delta}{2}
        \le
        \left|\mathbb{E}_{n \in \Omega}
          \left[ f(n) \prod_{i=1}^r \prod_{j<d_i}
          \widetilde{\psi}_{i,j}\!\left(\frac{n}{p_i^{j+1}}\right)\right]\right|
        = \left|\sum_\theta c_\theta \,\hat{f}(\theta)\right|
        \le \max_\theta |\hat{f}(\theta)| (3U)^A.
    \end{equation*}
    It follows that there exists some $\theta$ such that
    \begin{equation*}
        |\hat{f}(\theta)| \geq  \frac{\delta}{2}(3U)^{-|a|}\geq \frac{\delta}{2}\left(\frac{\varepsilon^3}{600 p_r^2}\right)^{|a|} > \left(\frac{\delta}{10|a|\sqrt{p_r}}\right)^{4|a|}.
    \end{equation*}
\end{proof}

\begin{rem}\label{rem:sparse_rational}
    In fact, the proof above shows that we can take $\theta \in \bb{Q}$ of the form
    \begin{equation*}
        \theta = \sum_{i=1}^{r} \sum_{j<d_i} \frac{s_{i,j}}{p_i^{j+1}},
    \end{equation*}
    where $s_{i,j} \neq 0$ for at most $|a|$ terms and $|s_{i,j}| \leq (40p_r)^2|a|^3/\delta^3$. We will take advantage of this fact in the $r=1$ case to resolve the potential contribution of Siegel zeros and to make our results unconditional. To this end, we will be able to estimate the major arc contribution with the help of the following lemma:
    
\begin{lem} \label{lem:sparse_padic_major_arc}
        Let $X = p^d$, $Q\geq 1$, and suppose that
        \begin{equation*}
             \theta = \frac{s_1}{p^{i_1}}  + \cdots + \frac{s_k}{p^{i_k}}, \qquad 0<i_1<\cdots<i_k\leq d,  \qquad  |s_j|\leq Q.
        \end{equation*}
        Further assume that $|\theta - a/q| \leq Q/(qX)$ with $q \leq Q$ and $(a,q)=1$, and that $p^{d/2k} > 8pQ^2$. Then $q$ is a power of $p$.
    \end{lem}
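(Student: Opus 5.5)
The plan is to exploit the sparse $p$-adic structure of $\theta$: multiplying $\theta$ by a suitable power $p^{i}$ kills all the terms with small exponents modulo $1$, while the terms with large exponents contribute only a tiny amount. If $q$ had a prime factor other than $p$, then $p^{i}a/q$ would remain a non-integer rational with denominator at most $Q$, hence at distance at least $1/Q$ from the integers. This will contradict the approximation $|\theta - a/q|\le Q/(qX)$.

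\textbf{Step 1: choose a large gap.} Set $i_0 := 0$ and $i_{k+1} := d$, and consider the $k+1$ gaps $g_m := i_{m+1}-i_m$ for $0\le m\le k$. These gaps sum to $d$, so some $g_m \ge d/(k+1) \ge d/2k$ (using $k\ge 1$). Fix such an $m$. Then
\begin{equation*}
    i_{m+1}-i_m \ge \frac{d}{2k} \quad\text{and}\quad d - i_m \ge i_{m+1}-i_m \ge \frac{d}{2k}.
\end{equation*}

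\textbf{Step 2: reduce modulo $1$.} Write $\|\cdot\|$ for the distance to the nearest integer, and split $\theta = \theta_1 + \theta_2$, where $\theta_1$ collects the terms with index $j\le m$ and $\theta_2$ those with $j>m$. Since $p^{i_m}\theta_1\in\mathbb{Z}$, we get $\|p^{i_m}\theta\| = \|p^{i_m}\theta_2\|$. The exponents in $\theta_2$ are distinct and at least $i_{m+1}$, so bounding by a geometric series gives
\begin{equation*}
    |p^{i_m}\theta_2| \le Q\sum_{t\ge 0} p^{-g_m-t} \le 2Q\,p^{-g_m}.
\end{equation*}
In the case $m=k$ we simply have $\theta_2=0$. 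Multiplying the hypothesis $|\theta - a/q|\le Q/(qX)$ by $p^{i_m}$ and using $q\ge 1$, we obtain
\begin{equation*}
    \Big\| p^{i_m}\frac{a}{q}\Big\| \le 2Q p^{-g_m} + Q\,p^{i_m-d} \le 3Q\,p^{-d/2k}.
\end{equation*}

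\textbf{Step 3: the contradiction.} Write $q = p^e q'$ with $(q',p)=1$, and suppose $q'>1$. Since $(a,q)=1$, the reduced denominator of $p^{i_m}a/q$ is divisible by $q'$, so this number is not an integer. Its reduced denominator is also at most $q\le Q$, hence $\|p^{i_m}a/q\|\ge 1/Q$. Combining with Step 2 gives $p^{d/2k}\le 3Q^2$, which contradicts the hypothesis $p^{d/2k} > 8pQ^2$. Therefore $q'=1$, i.e.\ $q$ is a power of $p$.

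The only delicate point is Step 1: we need a single index $m$ for which both the gap to the next exponent and the distance $d-i_m$ to the top are large. Including the endpoints $i_0=0$ and $i_{k+1}=d$ in the pigeonhole handles both requirements at once, and also covers the edge cases $m=0$ (where we do not multiply at all) and $m=k$ (where $\theta_2=0$). The rest is bookkeeping of constants, and the stated hypothesis is more than sufficient.
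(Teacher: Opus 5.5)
Your proof is correct and follows essentially the same route as the paper. It uses the same pigeonhole over the $k+1$ gaps with endpoints $i_0=0$, $i_{k+1}=d$, and the same approximation of $\theta$ by a fraction with denominator $p^{i_m}$. Your final step, that $p^{i_m}a/q$ lies at distance at least $1/q$ from $\mathbb{Z}$ unless it is an integer, is the paper's separation $|a/q - a'/p^{i_m}| \geq 1/(q\,p^{i_m})$ multiplied through by $p^{i_m}$, and your constants are tracked slightly more cleanly than in the paper.
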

\end{rem}

\begin{proof}
Set $i_0 = 0$ and $i_{k+1} = d$. By the pigeonhole principle, there is an index $j \in \{0,\ldots, k\}$ such that $|i_j-i_{j+1}| \geq d/2k$. Now we note that $\theta$ is close to a rational with denominator $q'=p^{i_j}$. Precisely, write $a' = s_1p^{i_j-i_1} + \cdots +s_{j-1}p^{i_j-i_{j-1}} +s_j$, so then
\begin{equation*}
    \left|\theta - \frac{a'}{q'}\right| \leq Q(p^{-i_{j+1}} + p^{-i_{j+2}}+\cdots) \leq p^{-d/2k} \frac{pQ}{q'}
\end{equation*}
Note also that $q' \leq p^{-d/2k}X$ and since we assume $p^{d/2k} > 4Q^2$, we have $Q/X < 1/(4Qq')$. Thus
\begin{equation*}
    \left|\frac{a}{q} - \frac{a'}{q'}\right| \leq \left|\theta - \frac{a'}{q' }\right| + \left|\theta -\frac{a}{q}\right| \leq p^{-d/2k}\frac{pQ}{q'}+\frac{Q}{X} < \frac{1}{Qq'} < \frac{1}{qq'}
\end{equation*}
which implies $a/q = a'/q'$. Since $(a,q) = 1$ and $q \mid aq'$ we conclude that $q$ is a power of $p$.
\end{proof}

\subsection{Fourier coefficients of digital characters}\label{sec.character_Fourier}
We begin this section by observing the product structure of $\widehat{\chi}_a(k)$ and $\norm{\widehat{\chi}_a(k)}_1$. Notice that 
\begin{equation*}
    \chi_a(x) = \prod_{i=1}^r \chi^{(i)}_{a_i}(x_i),
\end{equation*}
where $\chi^{(i)}_{a_i}(x_i)$ only depends on $x_i \equiv x \bmod p_i^{d_i}$. Likewise, we have
\begin{equation}\label{eq.CRT_factorization}
    e\left(\frac{kx}{X}\right) = \prod_{i=1}^r e\left(\frac{k_it_ix_i}{p_i^{d_i}}\right),
\end{equation}
where $k_i \equiv k \bmod p_i^{d_i}$ and $t_i \equiv (X/p_i^{d_i})^{-1} \bmod p_i^{d_i}$ and the inverse is taken modulo $p_i^{d_i}$. The Chinese remainder theorem, together with the fact that the summand factors, implies that
\begin{equation}\label{eq.ell_infty_mult}
     \widehat{\chi}_a(k) = \frac1X\sum_{x < X}\chi_a(x)e\left(\frac{-kx}{X}\right) = \prod_{i=1}^r \frac{1}{p_i^{d_i}}\sum_{x_i < p_i^{d_i}} \chi_{a_i}^{(i)}(x_i)e\left(\frac{-k_it_ix_i}{p_i^{d_i}}\right) = \prod_{i=1}^r \widehat{\chi}_{a_i}^{(i)}(k_i'),
\end{equation}
where $k_i' = k_it_i \bmod p_i^{d_i}$. Likewise, we have
\begin{equation}\label{eq.ell_1_mult}
    \norm{\widehat{\chi}_a}_1 = \sum_{k < X} |\widehat{\chi}_a(k)| = \prod_{i=1}^r \sum_{k_i' < p_i^{d_i}} |\widehat{\chi}_{a_i}^{(i)}(k_i')| = \prod_{i=1}^r \norm{\widehat{\chi}_{a_i}^{(i)}}_1,
\end{equation}
so $\ell^1$ and $\ell^\infty$ norms reduce to norms of local factors. Now we study local factors.

If we fix a prime $p$ and an $a \in (\mathbb{Z}/p\mathbb{Z})^d$, we observe that $x = \sum_{j < d} x_jp^j$ and the definition of $\chi_a$ implies
\begin{align*}
    \widehat{\chi}_{a}(k) &= \frac{1}{p^d} \sum_{x < p^d} \chi_a(x)e\left(\frac{-kx}{p^d}\right) \\
    &= \prod_{j < d} \frac{1}{p} \sum_{x_j < p} e((a_jp^{-1} - kp^{j-d})x_j) \\
    &= \prod_{j < d} e\left(\frac{p-1}{2}(a_jp^{-1} - kp^{j-d})\right) \frac{\sin \pi p (a_jp^{-1} - kp^{j-d})}{p \sin \pi (a_jp^{-1} - kp^{j-d})}.
\end{align*}
Taking the absolute value, we get
\begin{equation}\label{eq.product_of_Gp}
    |\widehat{\chi}_a(k)| = \prod_{j < d} \frac{|\sin \pi p(a_jp^{-1} - kp^{j-d})|}{p |\sin \pi (a_jp^{-1} - kp^{j-d})|},
\end{equation}
which will be the starting point of many of our bounds. For $q \in \mathbb{Z}_{\geq 2}$ define the periodic even function $G_q : \mathbb{R}/\mathbb{Z} \to [-1,1]$ by
\begin{equation*}
    G_q(y) = \frac{\sin \pi q y}{q \sin \pi y}.
\end{equation*}
We will need a few basic properties of $G_q$. Using Taylor series, we find a pointwise bound
\begin{equation}\label{eq.Gq_bound}
            |G_q(y)| \leq \begin{cases}
                1-(qy)^2  & \text{ if } y \in [-1/q,1/q], \\
                1/3 & \text{ if } y \notin [-1/q,1/q].
            \end{cases}
\end{equation}
Using the expression
\begin{equation*}
     G_q(y - \ell q^{-1}) = e\left(\frac{1-q}{2}y\right)\frac{1}{q}\sum_{u < q} e(yu)e_{q}(-\ell u),
\end{equation*}
Plancherel implies that\footnote{Amusingly, the case $q = 2$ recovers the identity $\cos^2 y+ \sin^2 y= 1$.}
\begin{equation}\label{eq.sum_squares_one}
    \sum_{\ell < q} |G_q(y-\ell q^{-1})|^2 = q \sum_{u < q} \frac{1}{q^2} = 1.
\end{equation}
Using the definition of $G_q(y)$, we have
\begin{equation}\label{eq.Gq_product_formula}
    \prod_{m = 0}^{M-1} G_q(yq^m) = \frac{\sin q^M\pi y}{q^M \sin \pi y} = G_{q^M}(y).
\end{equation}

Now we study $\widehat{\chi}_a$ carefully. We begin with an  $\ell^1$ bound that comes from Fourier expanding each factor at the level of digits and applying the standard exponential sum bound. This generalizes \cite[Lemma 1]{Bourgain2013}.
\begin{lem}\label{lem.ell_1_A}
    We have
    \begin{equation*}
        \sum_{k < X} |\widehat{\chi}_a(k)| < \prod_{i=1}^r (Cd_i \log p_i)^{|a_i|}.
    \end{equation*}
\end{lem}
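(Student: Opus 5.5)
By the multiplicativity of the $\ell^1$ norm in Equation~\eqref{eq.ell_1_mult}, it suffices to prove the local statement: for a single prime $p$, $d\geq 1$, and $a\in(\mathbb{Z}/p\mathbb{Z})^d$,
\begin{equation*}
    \sum_{k<p^d}|\widehat{\chi}_a(k)| \leq (Cd\log p)^{|a|}
\end{equation*}
with an absolute constant $C$. Taking the product over $i$ then gives the lemma.

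The local bound does not come from the product formula \eqref{eq.product_of_Gp}. Instead we expand at the level of digits, as in \cite[Lemma 1]{Bourgain2013}. Write $\chi_a(x)=\prod_{j:a_j\neq 0} e_p(a_jx_j)$, where $x_j$ is the $j$-th base $p$ digit of $x$. As in the proof of Proposition~\ref{prop.Katai_arg}, $e_p(a_jx_j)=\psi_j(x/p^{j+1})$, where $\psi_j$ is a step function on $\mathbb{R}/\mathbb{Z}$ with $p$ steps, constant on the intervals $[u/p,(u+1)/p)$. We use the discrete version, viewing $x\mapsto e_p(a_jx_j)$ as a function on $\mathbb{Z}/p^{j+1}\mathbb{Z}$ that depends only on $x\bmod p^{j+1}$. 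Its discrete Fourier coefficients on $\mathbb{Z}/p^{j+1}\mathbb{Z}$ are
\begin{equation*}
    \beta_j(m)=\frac{1}{p^{j+1}}\sum_{u<p}e_p(a_ju)\sum_{v<p^j}e\!\left(\frac{-m(up^j+v)}{p^{j+1}}\right).
\end{equation*}
The inner sum over $v$ is a geometric sum. It has size $\ll \min(p^j,\|m/p^{j+1}\|^{-1})$, and it vanishes unless $m\not\equiv 0\pmod p$ or $m=0$. The outer sum over $u$ is a geometric sum as well, and is bounded by $p$. Summing over $m$ gives the standard estimate
\begin{equation*}
    \sum_{m<p^{j+1}}|\beta_j(m)| \ll \frac{1}{p^{j}}\sum_{m}\min\!\left(p^j,\|m/p^{j+1}\|^{-1}\right)\cdot\frac{1}{p}\Bigl|\sum_{u<p}e\bigl((a_j-m)u/p\bigr)\Bigr| \ll \log(p^{j+1}) \leq d\log p.
\end{equation*}
Here the harmonic sum $\sum_{m}\|m/p^{j+1}\|^{-1}/p^{j}$ produces the factor $\log(p^{j+1})$.

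Lifting each factor to $\mathbb{Z}/p^d\mathbb{Z}$ keeps the $\ell^1$ norm of the Fourier coefficients unchanged, since the coefficients are just supported on multiples of $p^{d-j-1}$. The Fourier transform of a product on $\mathbb{Z}/p^d\mathbb{Z}$ is the convolution of the transforms. By Young's inequality $\|\widehat{fg}\|_1\leq\|\widehat f\|_1\|\widehat g\|_1$, we get $\|\widehat{\chi}_a\|_1\leq\prod_{j:a_j\neq0}(C d\log p)=(Cd\log p)^{|a|}$, as required.

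The main obstacle is the careful bookkeeping in the single-digit estimate: we need uniformly $\ll 1+(j+1)\log p$ with an absolute constant. This requires handling $m\equiv a_jp^{j}$-type resonances, where both geometric sums are large simultaneously. The approach is to substitute $m=m_0+pm_1$ and evaluate the $u$-sum exactly, following Bourgain's binary computation. Since $\log p\geq\log 2$, the additive constant $1$ can be absorbed into $C$, which also gives the strict inequality.
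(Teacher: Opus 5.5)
Your proposal is correct and is essentially the paper's proof: you reduce to a single prime via \eqref{eq.ell_1_mult}, compute the Fourier coefficients of each digit function on $\mathbb{Z}/p^{j+1}\mathbb{Z}$, and use that the $u$-sum forces $m\equiv a_j \pmod p$ (the paper writes $k=a+pt$), so the geometric-sum bound summed over that one residue class gives $\ll 1+j\log p$; you then lift to $\mathbb{Z}/p^d\mathbb{Z}$ and apply Young's inequality to the convolution. Two things to tidy: the $\log(p^{j+1})$ comes from this residue-class restriction, since a harmonic sum over all $m$ would cost an extra factor of $p$; and the strict inequality fails trivially for $a=0$, where both sides equal $1$, which is a defect of the paper's statement as well.
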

\begin{proof}
As in the proof of Proposition~\ref{prop.Katai_arg}, begin by writing
\begin{equation*}
    \chi_a(x) = \prod_{i=1}^r \prod_{j<d_i} \psi_{i,j}\left(\frac{x}{p_i^{j+1}}\right),
\end{equation*}
except that we do not include the conjugation in $\psi_{i,j}$. We bound $\psi_{i,j}$ for $a_{i,j} >0$ as follows.

We fix $i,j$ and write $p = p_i, d=d_i, \psi = \psi_{i,j}, a = a_{i,j}$. We have a Fourier expansion
\begin{equation*}
    \psi\left(\frac{x}{p^{j+1}}\right) = \sum_{k < p^{j+1}} \widehat{\psi}(k)e\left(\frac{xk}{p^{j+1}}\right),
\end{equation*}
where for $k \in \mathbb{Z}/p^{j+1}\mathbb{Z}$ we have
\begin{align*}
    \widehat{\psi}(k) &= \frac{1}{p^{j+1}}\sum_{v <p^{j+1}} \psi\left(\frac{v}{p^{j+1}}\right)e\left(-\frac{kv}{p^{j+1}}\right) \\
    &= \frac{1}{p^{j+1}} \sum_{u < p} e_{p}(au)\sum_{v< p^{j}}e\left(-\frac{k(up^{j-1}+v)}{p^{j+1}}\right)\\
    &= \frac{1}{p^{j+1}} \sum_{u < p} e_{p}\left((a-k)u\right)\sum_{v<p^{j}}e_{p^{j+1}}\left(-kv\right).
\end{align*}
The first sum evaluates to
\begin{equation*}
    \sum_{u=0}^{p-1} e_{p}\left((a-k)u\right) =  \begin{cases}
         p & \text{ if } k \equiv a \bmod p, \\
         0 & \text{ if } k \not\equiv a \bmod p,
     \end{cases} 
\end{equation*}
so $\widehat{\psi}(k) = 0$ unless $k \equiv a \bmod p$. We write $k = a+pt$ with $0 \leq t < p^{j}$ and obtain 
\begin{equation*}
    \widehat{\psi}(a+pt) = p^{-j} \sum_{v<p^{j}}e\left(- \frac{(a+pt)v}{p^{j+1}}\right) \ll \min\left(1, \frac{p^{-j}}{\norm{(a+pt)/p^{j+1}}}\right) 
\end{equation*}
using the standard exponential sum bound. Observe that for $0 < t < p^j$
\begin{equation*}
    \norm{(a+pt)/p^{j+1}} \geq \left|\norm{t/p^{j}} - \norm{a/p^{j+1}}\right| \geq \norm{t/p^{j}} - \frac{p-1}{p^{j+1}},
\end{equation*}
which implies that
\begin{equation*}
    \min\left(1, \frac{p^{-j}}{\norm{(a+pt)/p^{j+1}}}\right) \leq  \min\left(1, \frac{1}{p^j\norm{tp^{-j}} - 1}\right).
\end{equation*}
Since $p^j\norm{tp^{-j}} = \min(t, p^j-t)$ we conclude that 
\begin{equation*}
    \sum_{k < p^{j+1}} |\widehat{\psi}(k)| = \sum_{t < p^{j}}|\widehat{\psi}(a+pt)| \ll \sum_{0<t < p^{j}/2} \frac{1}{t} \ll j \log p,
\end{equation*}
so there is a $C > 0$ such that
\begin{equation}\label{eq.local_digit_ell_1_bound}
    \sum_{k < p^{j+1}} |\widehat{\psi}(k)| \leq C d\log p,
\end{equation}
uniformly in $j$. 

Now we take the discrete Fourier expansion of a local character
\begin{equation*}
    \chi_{a_i}^{(i)}(x) = \sum_{k < p_i^{d_i}} \widehat{\chi}_{a_i}^{(i)}(k)e\left(\frac{xk}{p_i^{d_i}}\right),
\end{equation*}
and we observe that Young's convolution inequality implies that
\begin{equation*}
     \sum_{k < p_i^{d_i}} |\widehat{\chi}_{a_i}^{(i)}(k)| < (Cd_i \log p_i)^{|a_i|}.
\end{equation*}
Using Equation~\eqref{eq.local_digit_ell_1_bound} and Equation~\eqref{eq.ell_1_mult}, we conclude the proof
\end{proof}

Next, we obtain an $\ell^\infty$ bound using digit compatibility and Equation~\eqref{eq.Gq_bound}. This generalizes \cite[Lemma 2]{Bourgain2013}.
\begin{lem}\label{lem.ell_infty}
    We have 
    \begin{equation*}
        \norm{\widehat{\chi}_a}_\infty < \prod_{\substack{i=1}}^r \left(1 - \frac{4}{9p_i^2}\right)^{\lfloor{|a_i|/2\rfloor}} \ll \prod_{i=1}^r \left(1 - \frac{4}{9p_i^2}\right)^{|a_i|/2}.
    \end{equation*}
\end{lem}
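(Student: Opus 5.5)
By the multiplicativity in Equation~\eqref{eq.ell_infty_mult}, $|\widehat{\chi}_a(k)| = \prod_i |\widehat{\chi}^{(i)}_{a_i}(k_i')|$. It therefore suffices to prove the local statement: for a single prime $p$ and $a \in (\mathbb{Z}/p\mathbb{Z})^d$, uniformly in $k$,
\begin{equation*}
    |\widehat{\chi}_a(k)| \leq \left(1-\frac{4}{9p^2}\right)^{\lfloor |a|/2\rfloor}.
\end{equation*}
The strict inequality and the $\ll$ form follow by multiplying these local bounds and absorbing the floor into a constant.

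For the local bound, start from Equation~\eqref{eq.product_of_Gp}:
\begin{equation*}
    |\widehat{\chi}_a(k)| = \prod_{j<d} |G_p(y_j)|, \qquad y_j = \frac{a_j}{p} - \frac{k}{p^{d-j}}.
\end{equation*}
Every factor has absolute value at most $1$, so we can discard factors freely. The plan is to find at least $\lfloor |a|/2\rfloor$ pairwise disjoint pairs of adjacent indices $(j, j+1)$ for which
\begin{equation*}
    |G_p(y_j)G_p(y_{j+1})| \leq 1-\frac{4}{9p^2}.
\end{equation*}

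\textbf{Key digit-compatibility observation.} Write $\theta_j = k/p^{d-j} \bmod 1$, so $\theta_{j} = p\,\theta_{j-1} \bmod 1$. By Equation~\eqref{eq.Gq_bound}, $|G_p(y_j)|$ is close to $1$ only if $\|\theta_j - a_j/p\| < 1/p$, that is, only if $\theta_j$ lies within $1/p$ of $a_j/p$. Quantitatively, $|G_p(y)|\le 1-(py)^2$ on $[-1/p,1/p]$ and $|G_p(y)|\le 1/3$ outside.

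Consider an index $j$ with $a_j\neq 0$. If $\|\theta_j - a_j/p\|\ge 1/(3p)$, say, then either we are outside $[-1/p,1/p]$ and the factor is at most $1/3$, or the factor is at most $1-1/9$. Either way we already have a saving far larger than $4/(9p^2)$ from a single factor.

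Otherwise $\theta_j$ is within $1/(3p)$ of $a_j/p$. Then look at the next index: $\theta_{j+1} = p\theta_j \bmod 1$ is within $1/3$ of $a_j \equiv 0 \bmod 1$, while for $\theta_j$ the leading base-$p$ digit is essentially $a_j$. The cleaner way to set this up is to reverse the roles. Near-maximality of $|G_p(y_{j+1})|$ forces $\theta_{j+1}$ to sit near $a_{j+1}/p$. This in turn forces $\theta_j = (\theta_{j+1}+u)/p$ for some digit $u$, so $\theta_j$ lies within $O(1/p^2)$ of a multiple of $1/p$, namely $u/p$ plus a fractional part of size $a_{j+1}/p^2$.

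Comparing with the requirement that $\theta_j$ be near $a_j/p$, we get $|y_j|\ge$ roughly $\min(\|\theta_{j+1}\|,\ \cdot)/p$. When $a_{j+1}\ne0$ or $a_j \ne 0$, one of $|py_j|$, $|py_{j+1}|$ is at least $2/(3p)$. Feeding this into $1-(py)^2$ gives a factor at most $1-4/(9p^2)$.

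So I would prove the following pair lemma: for any $j$ with $a_j\neq0$ (or $a_{j+1}\ne 0$),
\begin{equation*}
    |G_p(y_j)G_p(y_{j+1})| \leq 1-\frac{4}{9p^2}.
\end{equation*}
The argument is a short case analysis on whether $p|y_{j+1}|$ exceeds $2/3$, using the relation $p\,y_{j} \equiv a_j - \theta_{j+1} \pmod p$ after multiplying by $p$.

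\textbf{Combining.} Greedily pair each nonzero digit position with a neighbour, scanning $j$ upward and skipping used indices. This yields at least $\lceil |a|/2\rceil \ge \lfloor |a|/2\rfloor$ disjoint pairs, each containing a nonzero $a_j$. Multiplying the pair bounds and bounding the remaining factors by $1$ gives the local bound, and hence the lemma.

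The main obstacle is the pair lemma: pinning down the exact constant $2/(3p)$. The risk is that the carry from lower digits lets $\theta_j$ sit just at the edge, near $a_j/p \pm 1/p$. I would first try the case split $\|\theta_{j+1}\|\ge 2/3$ versus $<2/3$. If the constant does not come out, I would settle for any bound $1-c/p^2$ and adjust, since only the shape matters downstream.
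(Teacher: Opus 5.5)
Your reduction to a single prime via \eqref{eq.ell_infty_mult} and your plan of extracting disjoint adjacent pairs are fine. The gap is that your pair lemma is false in its branch ``$a_j\neq 0$'' for the pair $(j,j+1)$, and your count of $\lceil |a|/2\rceil$ pairs relies on that branch.

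Since $\theta_{j+1}\equiv p\theta_j$, if $|G_p(y_j)|$ is close to $1$ then $\theta_j\approx a_j/p$, so $\theta_{j+1}\approx a_j\equiv 0$. This forces a loss at index $j+1$ only if $a_{j+1}/p$ is far from $0 \bmod 1$, i.e.\ only if $a_{j+1}\neq 0$. Your own heuristic says the same thing: with $u=a_j$ you get $|p y_j|\approx a_{j+1}/p$, which is $0$ when $a_{j+1}=0$.

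Concretely, take $a=(a_0,0,\dots,0)$ with $a_0\neq 0$ and $k=a_0p^{d-1}$. Then $y_0=0$ and every $y_j$ with $j\geq 1$ is an integer, so $|\widehat{\chi}_a(k)|=1$. Indeed $\chi_a(x)=e(a_0x/p)$ is an additive character of $\mathbb{Z}/p^d\mathbb{Z}$. This violates your pair bound for $(0,1)$, and your combining step would give $\|\widehat{\chi}_a\|_\infty\leq 1-\frac{4}{9p^2}$ here, which is false.

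The repair is the paper's orientation: only use pairs $(j-1,j)$ with the nonzero digit at the upper index $j\geq 1$. Suppose $|G_p(y_{j-1})|,|G_p(y_j)|>1-\delta$ with $\delta<2/3$. Then \eqref{eq.Gq_bound} gives $\|y_{j-1}\|,\|y_j\|<\sqrt{\delta}/p$. The exact identity $y_j=a_j/p-a_{j-1}+p\,y_{j-1}$ then yields $\frac{1}{p}\leq \|a_j/p\|\leq \|y_j\|+p\|y_{j-1}\|<(1+\frac{1}{p})\sqrt{\delta}\leq \frac{3}{2}\sqrt{\delta}$. This is impossible for $\delta=\frac{4}{9p^2}$, and no case split on $\|\theta_{j+1}\|$ is needed.

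For the count, a nonzero $a_0$ contributes nothing. Two consecutive nonzero digits can also share the same partner index, so each factor of size at most $1-\frac{4}{9p^2}$ is charged by at most two nonzero positions $j\geq 1$. This gives at least $\lceil(|a|-1)/2\rceil=\lfloor |a|/2\rfloor$ such factors, or equally many disjoint pairs by greedy selection. That is exactly why the lemma has $\lfloor |a|/2\rfloor$ rather than your $\lceil |a|/2\rceil$.
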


\begin{proof}
    Fix a $1 \leq i \leq r$ such that $d_i \geq 2$ and suppress the $i$ in our notation. We are given a $0 \leq a_{j} < p$ for each $j < d$ and a $0 \leq k < p^d$. Suppose that there is a $j \ge1 $ with $a_{j} > 0$ such that for some $\delta < 2/3$ we have
    \begin{equation*}
        |G_p(a_jp^{-1} -kp^{j-d})| > 1-\delta, \quad \text{ and } \quad   |G_p(a_{j-1}p^{-1}-kp^{j-d-1})| > 1-\delta. 
    \end{equation*}
    We set 
    \begin{equation*}
        \alpha_j = kp^{j-d}, \quad \alpha_{j-1} = kp^{j-1-d}, \quad \beta_j = \frac{a_j}{p} - \alpha_j, \quad \beta_{j-1} = \frac{a_{j-1}}{p} - \alpha_{j-1}
    \end{equation*}
    in $\mathbb{R}/\mathbb{Z}$. Then we have $\alpha_j = p\alpha_{j-1}$ and
    \begin{equation*}
        \norm{\beta_j}  < \frac{\sqrt{\delta}}{p}, \quad \text{ and } \quad \norm{\beta_{j-1}} < \frac{\sqrt{\delta}}{p},
    \end{equation*}
    by Equation~\eqref{eq.Gq_bound}. Hence
    \begin{equation*}
        \beta_j = \frac{a_j}{p} - \alpha_j = \frac{a_j}{p} - p\alpha_{j-1} = \frac{a_j}{p} - p\left(\frac{a_{j-1}}{p}-\beta_{j-1}\right) = \frac{a_j}{p} - a_{j-1} + p\beta_{j-1}.
    \end{equation*}
    Taking the distance to the nearest integer, we have
    \begin{equation*}
       \norm{\frac{a_j}{p} - a_{j-1}} = \norm{\beta_j - p\beta_{j-1}} \leq \norm{\beta_j} + p\norm{\beta_{j-1}} <  \left(1 + \frac{1}{p}\right)\sqrt{\delta} \leq \frac{3}{2}\sqrt{\delta},
    \end{equation*}
    by the triangle inequality. But since $a_{j-1} \in \mathbb{Z}$ and $1 \leq a_j \leq p-1$, we have
    \begin{equation*}
        \norm{\frac{a_j}{p} - a_{j-1}} \geq \frac{1}{p},
    \end{equation*}
    and we conclude that
    \begin{equation*}
        \delta > \frac{4}{9p^2}.
    \end{equation*}
    This is a contradiction if we choose
    \begin{equation*}
        \delta = \frac{4}{9p^2}.
    \end{equation*}

    Recalling Equation~\eqref{eq.product_of_Gp}, for each $j \geq 1$ with $a_{j} > 0$, we either get a factor
    \begin{equation*}
        |G_p(\beta_j)| < 1- \frac{4}{9p^2},
    \end{equation*}
    or a pair of factors
    \begin{equation*}
        |G_p(\beta_j)| \leq 1 \quad \text{ and } \quad |G_p(\beta_{j-1})| < 1 - \frac{4}{9p^2}.
    \end{equation*}
    By Equation~\eqref{eq.ell_infty_mult}, we conclude that
    \begin{equation*}
        |\widehat{\chi}_a(k)| < \prod_{i=1}^r \left(1 - \frac{4}{9p_i^2}\right)^{\lfloor |a_i|/2\rfloor}.
    \end{equation*}
\end{proof}

Now we give an $\ell^1$ bound in arithmetic progressions by averaging over shifts in two-digit blocks, which gives us a power saving over the trivial estimate. This generalizes \cite[Lemma 4]{Bourgain2013}. Note that we allow $\gamma_i = 0$ below, which generalizes \cite[Lemma 3]{Bourgain2013} aswell.
\begin{lem}\label{lem.ell_1_in_AP}
    For $d_i = 1$ let $c_i = 0$. For $d_i > 1$, all integers $0 \leq \gamma_i \leq d_i-2$ and $0 \leq b_i < p_i^{\gamma_i}$, there exists $c_i > 0$ such that 
    \begin{equation*}
        \sum_{\substack{k \leq N \\ k \equiv b_i(p_i^{\gamma_i})}} |\widehat{\chi}_a(k)|  \ll \prod_{i=1}^r p_i^{(1/2-c_i)(d_i-\gamma_i)}.
    \end{equation*}
\end{lem}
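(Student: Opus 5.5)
Via Equation~\eqref{eq.ell_infty_mult} and Equation~\eqref{eq.ell_1_mult}, the plan is to reduce to a single local factor and then prove the one-prime bound by averaging over two-digit blocks, as in \cite[Lemma 4]{Bourgain2013}. I read the condition $k \equiv b_i \pmod{p_i^{\gamma_i}}$ as imposed on the local coordinate $k_i'$, with $k$ running over a full period $[0,X)$. Under the Chinese remainder bijection $k \mapsto (k_i')_i$, the restricted sum then factors as a product over $i$ of local sums
\begin{equation*}
    S_i := \sum_{\substack{k' < p^{d} \\ k' \equiv b \ (p^{\gamma})}} |\widehat{\chi}_{a}(k')|,
\end{equation*}
where $p=p_i$, $d=d_i$, $\gamma=\gamma_i$, $b=b_i$. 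The map $k \mapsto k_i t_i$ is a bijection mod $p_i^{d_i}$. The congruence is therefore harmless, provided one first checks that it is stated in the twisted coordinate (or transfers it through multiplication by the unit $t_i$). The case $d_i=1$ is trivial: the sum has at most $p$ terms, each at most $1$, so $S_i \le p^{1/2}\cdot p^{1/2}$ and the bound holds with an implied constant depending on $p$. It remains to show $S_i \ll p^{(1/2-c)(d-\gamma)}$ for $d \ge 2$.

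For the local bound, write $k' = b + p^{\gamma} m$ with $m < p^{d-\gamma}$. By Equation~\eqref{eq.product_of_Gp}, $|\widehat{\chi}_a(k')| = \prod_{j<d} |G_p(a_j/p - k' p^{j-d})|$. Only the fractional part of $k'p^{j-d}$ matters. For $j < d-\gamma$ the factors depend on $m$ through the digits of $m$, while the factors with $j \ge d-\gamma$ depend only on $b$; bound the latter trivially by $1$. Organize the $d-\gamma$ digits of $m$ into consecutive two-digit blocks, of which there are about $(d-\gamma)/2$. Expanding $m$ in base $p$ from the top and peeling off the top two digits $(u,v)$ at a time turns the sum into an iterated one. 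At each stage the relevant quantity is
\begin{equation*}
    \max_{y} \sum_{u,v<p} \bigl|G_p(y_1 - (u+pv)/p^2)\bigr|\,\bigl|G_p(y_2 - v/p)\bigr|,
\end{equation*}
a product of two adjacent $G_p$ factors summed over the two new digits. Here $y_1,y_2$ encode the lower-order contribution and the shifts $a_j/p$, and are linked by $y_2 = p y_1 + \text{const}$ modulo $1$. The structure is the same one exploited in Lemma~\ref{lem.ell_infty}.

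By Cauchy--Schwarz and Equation~\eqref{eq.sum_squares_one}, a one-digit sum $\sum_{u<p}|G_p(y-u/p)|$ is at most $\sqrt{p}$, with equality only when all $|G_p(y-u/p)| = p^{-1/2}$. Summed over two digits, this gives the trivial bound $p$ per block, corresponding to exponent $1/2$ per digit. The key step, and the main obstacle, is to show that the two-digit block sum is at most $p^{1-2c}$ uniformly in $y$. I would argue by compactness: the block sum is a continuous function of $y$ on the torus, so it suffices to show that equality $=p$ never holds. Equality would force $|G_p(y_2 - v/p)|$ to be constant in $v$, which contradicts Equation~\eqref{eq.Gq_bound}, since one factor near the peak is at least about $1-O(\cdot)$ while another is at most $1/3$. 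A cleaner route may be to use Equation~\eqref{eq.Gq_product_formula}, which gives $G_p(y)G_p(py)=G_{p^2}(y)$ and reduces the block to $\sum_{w<p^2}|G_{p^2}(z - w/p^2)|$ with a shifted argument. One must handle the mismatch coming from the $a_j$ shifts, which is exactly what breaks the product formula when $a_j \neq 0$. For this step I would separate the case where the two shifts are compatible, where a direct computation with $G_{p^2}$ gives roughly $\log p$, from the incompatible case, where the compactness argument applies. Multiplying the block bounds over $\lfloor (d-\gamma)/2\rfloor$ blocks, and absorbing a leftover digit with the trivial $\sqrt p$ bound, yields $S_i \ll p^{(1/2-c)(d-\gamma)}$. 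The restriction $\gamma \le d-2$ guarantees that at least one full block exists.
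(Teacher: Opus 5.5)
Your outline is the paper's argument: reduce to one prime via \eqref{eq.ell_1_mult}, write $k'=b+p^{\gamma}m$, bound the factors with $j\ge d-\gamma$ by $1$, and extract a fixed saving from each two-digit block of $m$. The paper phrases peeling off the top two digits as averaging over the shifts $k'\mapsto k'+\omega p^{d-\gamma-2}+\theta p^{d-\gamma-1}$, then iterates block by block.

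The gap is in the step you single out as the crux. You argue that the block bound $p$ is never attained because the single-digit factor would have to be constant in its digit, contradicting \eqref{eq.Gq_bound}. That works only for $p\ge 3$: among $p$ points spaced by $1/p$, one lies within $1/(2p)$ of $0$, where $|G_p|\ge 2/\pi$, and another lies at distance $\ge 1/p$, where $|G_p|\le 1/3$. For $p=2$ there is no contradiction. This is exactly the case of \cite[Lemma 4]{Bourgain2013} that the lemma generalizes. The second case of \eqref{eq.Gq_bound} is vacuous for $q=2$, and $|G_2(\frac14)|=|G_2(\frac34)|=2^{-1/2}$.

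At $p=2$ you must use the coupling between the two digits. First, your displayed block is mis-indexed. Since $p\cdot\frac{u+pv}{p^2}\equiv\frac up \pmod 1$, the single-digit factor sees the lower digit, so the block is
\begin{equation*}
    \sum_{u,v<p}\Bigl|G_p\Bigl(y_1-\frac{u+pv}{p^2}\Bigr)\Bigr|\,\Bigl|G_p\Bigl(y_2-\frac{u}{p}\Bigr)\Bigr|,\qquad y_2=py_1+\frac{a_1}{p}.
\end{equation*}
Only in this form does Cauchy--Schwarz in $v$ and then in $u$, via \eqref{eq.sum_squares_one}, give the bound $p$. At $p=2$, equality forces $y_2\in\frac14+\frac12\mathbb{Z}$, so both outer weights equal $2^{-1/2}>0$. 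The inner sum must then be tight for both $u=0$ and $u=1$, i.e.\ $y_1\in\frac14+\frac12\mathbb{Z}$ and $y_1-\frac14\in\frac14+\frac12\mathbb{Z}$, which is impossible. With this added, compactness over the torus (and the finitely many values of $a_1$) gives the uniform saving.

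For $p\ge 3$ your argument shows that a single digit already yields a saving, so two-digit blocks are needed only at $p=2$. You are also right that \eqref{eq.Gq_product_formula} breaks when $a_1\neq 0$, and here you are more careful than the paper. The paper asserts that the block factor is independent of the $a_j$ and sets $a_0=a_1=0$ to reduce to $G_{p^2}$. That reduction is literally valid only for $a_1=0$: at $p=2$, $y_1=0$, the block sum is $1$ for $a_1=0$ but $\sqrt2$ for $a_1=1$. What survives for every $a_1$ is that the squares of the block products sum to $1$, by two applications of \eqref{eq.sum_squares_one}. So the Cauchy--Schwarz bound holds regardless, strictness follows as above, and your separate ``compatible case'' computation with $G_{p^2}$ is unnecessary.
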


\begin{proof}
    We reduce to local characters via Equation~\eqref{eq.ell_1_mult}. Write $k = b + k'p^\gamma$ with $b < p^\gamma$ fixed and $k' < p^{d-\gamma}$, and let
    \begin{equation*}
        \beta_j = a_jp^{-1} - (b+k'p^\gamma)p^{j-d}.
    \end{equation*}
    Since $|G_p(y)| \leq 1$,
    \begin{equation}\label{eq.AP_drop_digits}
        \sum_{\substack{k < p^d \\ k \equiv b (p^\gamma)}} |\widehat{\chi}_a(k)| \leq \sum_{k' < p^{d-\gamma}} \prod_{j < d-\gamma}|G_p(\beta_j)|.
    \end{equation}
    
    Consider the shifts
    \begin{equation*}
        k'\rightarrow k' + \omega p^{d-\gamma-2} + \theta p^{d-\gamma-1} \iff \beta_{j} \rightarrow \beta_j - \omega p^{j-2} - \theta p^{j-1},
    \end{equation*}
    for $\omega,\theta \in \bb{Z}/p\mathbb{Z}$, so we have
    \begin{equation*}
         \sum_{k' < p^{d-\gamma}} \prod_{j<d-\gamma} |G_p(\beta_j)| = \frac{1}{p^2} \sum_{\omega,\theta} \sum_{k' < p^{d-\gamma}} \prod_{j= 2}^{d-\gamma-1}|G_p(\beta_j)| \prod_{j=0}^1|G_p(\beta_j - \omega p^{j-2} - \theta p^{j-1})|
    \end{equation*}
    Since we average over all translates of size $p^{-1}$, the factor
    \begin{equation*}
        F = \frac{1}{p^2} \sum_{\omega,\theta}\prod_{j=0}^1|G_p(\beta_j - \omega p^{j-2} - \theta p^{j-1})|
    \end{equation*}
    no longer depends on the $a_j$. We set $a_0=a_1=0$ so that $\beta_j = p\beta_{j-1}$ and by Equation~\eqref{eq.Gq_product_formula}
    \begin{equation*}
        F = \frac{1}{p^2} \sum_{\omega,\theta} |G_{p^2}(\beta_j - \omega p^{j-2} - \theta p^{j-1})|.
    \end{equation*}
    Combine the $(\omega, \theta) \in (\mathbb{Z}/p\mathbb{Z})^2$ parameters into a single parameter $\ell \in \mathbb{Z}/p^2\mathbb{Z}$:
    \begin{equation*}
        F = \frac{1}{p^2} \sum_{\ell < p^2}|G_{p^2}(\beta_j - \ell p^{-2})|.
    \end{equation*}

    Let $q = p^2$ and $y = -(b+k'p^\gamma)p^{-d}$, and consider
    \begin{equation*}
        F(y) =  \frac{1}{q} \sum_{\ell  < q} |G_q(y-\ell q^{-1})|.
    \end{equation*}
    as a function on $\mathbb{R}/\mathbb{Z}$. Using Cauchy–Schwarz and Equation~\eqref{eq.sum_squares_one},
    \begin{equation*}
        F(y)^2 \leq \frac{1}{q^2} \left(\sum_{\ell=0}^{q-1}1 \right)\left(\sum_{\ell=0}^{q-1} |G_q(y - \ell q^{-1})|^2\right) = \frac{1}{q},
    \end{equation*}
    with equality if and only if $|G_q(y-\ell q^{-1})|$ is constant in $\ell$. Since it is not, 
    \begin{equation*}
        \varepsilon_p(y):=\frac{1}{p} - F(y),
    \end{equation*}
    is a continuous positive function with compact domain $\mathbb{R}/\mathbb{Z}$. Thus
    \begin{equation*}
        \varepsilon_p := \inf_{y \in \mathbb{R}/\mathbb{Z}}\varepsilon_p(y) > 0,
    \end{equation*}
    and we conclude that 
    \begin{equation*}
        \norm{F}_\infty \leq \frac{1}{p} - \varepsilon_p.
    \end{equation*}
    
    Now we observe that the same two-digit averaging procedure works for the next two-digit block. In particular, averaging over the shifts
    \begin{equation*}
        k' \rightarrow k' +\omega p^{d-\gamma-4} + \theta p^{d-\gamma-3} \iff \beta_j \rightarrow \beta_j - \omega p^{j-4}-\theta p^{j-3}
    \end{equation*}
    produces the same $F(y)$ factor for the $j = 2,3$ block. Iterating this process yields 
    \begin{equation*}
        \sum_{k' < p^{d-\gamma}} \prod_{j<d-\gamma}\frac{|\sin \pi p\beta_{j}|}{p_i|\sin \pi \beta_{j}|} \leq  p^{d-\gamma}\left(\frac{1}{p} - \varepsilon_{p}\right)^{\lfloor (d-\gamma)/2\rfloor} \ll p^{(\frac12-c)(d-\gamma)}.
    \end{equation*}
    where $c > 0$ depends on $p$. Together with Equation~\eqref{eq.AP_drop_digits}, we conclude the proof.
\end{proof}

Now we prove an approximation property. This generalizes \cite[Lemma 5]{Bourgain2013}. Before doing so, we give a quick estimate that we will use repeatedly in the following proof. 

\begin{lem}\label{lem.product_trick}
    For any real number $D > 1$ and integer $q \geq 2$, there exists a $C> 0$ such that 
    \begin{equation*}
        \prod_{v = 0}^\infty \left(1 + Dq^{-v/2}\right) \ll C^\frac{(\log D)^2}{\log q}.
    \end{equation*}
\end{lem}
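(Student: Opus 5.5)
The approach is to split the product at the index where $Dq^{-v/2}$ drops below $1$: the factors before that point are each at most $2D$ and there are few of them, while the factors after it have a convergent sum of logarithms. This yields an explicit bound, which I then convert to the stated form. The statement quantifies $C$ after $D$ and $q$, so I read it as allowing $C$ to depend on $q$ but not on $D$, with an absolute implied constant. That dependence on $q$ is what lets the stated shape hold without assuming $\log q \ll \log D$; if the statement intends $C$ to be independent of $q$ as well, the argument below does not give it.

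Set $V = \lceil 2\log D/\log q\rceil$, so that $q^{-v/2} \le D^{-1}$ for $v \ge V$. For $v < V$ use $1 + Dq^{-v/2} \le 1 + D \le 2D$. This gives
\begin{equation*}
    \prod_{v<V}\left(1+Dq^{-v/2}\right) \le (2D)^{V} \le \exp\!\left(\Big(\frac{2\log D}{\log q}+1\Big)\log(2D)\right).
\end{equation*}
For $v \ge V$ use $\log(1+x)\le x$ and sum the geometric series:
\begin{equation*}
    \sum_{v\ge V} Dq^{-v/2} \le \frac{Dq^{-V/2}}{1-q^{-1/2}} \le \frac{1}{1-2^{-1/2}} < 4,
\end{equation*}
so the tail contributes at most $e^4$. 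Altogether,
\begin{equation*}
    \prod_{v\ge 0}\left(1+Dq^{-v/2}\right) \le e^{4}\exp\!\left(\frac{2\log D\,\log(2D)}{\log q} + \log(2D)\right).
\end{equation*}

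The main step is absorbing the lower-order term $\log(2D)$, which is not dominated by $(\log D)^2/\log q$ when $q$ is much larger than $D$. I split into two cases.

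\begin{itemize}
\item If $D\ge e$, then $\log(2D)\le 2\log D \le 2(\log D)^2$. Also $\log D\,\log(2D)\le 2(\log D)^2$. The exponent is therefore at most $(4/\log q + 2)(\log D)^2$, which equals $K_q\,(\log D)^2/\log q$ with $K_q = 4 + 2\log q$. So the claim holds with $C = e^{K_q} = e^4 q^2$, which depends only on $q$.
\item If $1 < D < e$, the displayed bound is at most $e^{4}\exp\!\left(2\log(2e)/\log 2 + \log(2e)\right)$, an absolute constant. Since $C\ge1$ makes $C^{(\log D)^2/\log q}\ge 1$, this case is absorbed into the implied constant.
\end{itemize}

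Combining the two cases gives $\prod_{v\ge0}(1+Dq^{-v/2}) \ll C^{(\log D)^2/\log q}$ with $C=e^4q^2$, as stated.
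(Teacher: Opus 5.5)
Your proof is correct and follows the same route as the paper. The paper also splits the product at $V\approx \log D/\log q$, taking $V=\lceil 4\log_q D\rceil$ so that $Dq^{-v/2}\le D^{-1}$ beyond it. It bounds the head by $(1+D)^V$ and controls the tail with $\log(1+x)\le x$ and a geometric series.

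Your case split ($D\ge e$ versus $1<D<e$) and the explicit choice $C=e^4q^2$ make precise a step the paper passes over with ``absorbing constants into $C$''. The lower-order term $\log(1+D)$ is not dominated by $(\log D)^2/\log q$ when $q$ is large relative to $D$, and your argument handles this. Letting $C$ depend on $q$ matches how the lemma is used later, with $q=p_i$ fixed.
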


\begin{proof}
    Let $V = \lceil 4 \log_q D \rceil$, so that for $v \geq V$ we have   
    \begin{equation*}
        Dq^{-v/2} \leq D^{-1} < 1.
    \end{equation*}
    Now observe that
    \begin{equation*}
        \log\left(\prod_{v \leq V} (1+Dq^{-v/2})\right) \leq \log\left((1+D)^V\right) \ll V \log D \ll \frac{(\log D)^2}{\log q}
    \end{equation*}
    For the tail, we use $\log(1+x) \leq x$ to get
    \begin{equation*}
        \log\left(\prod_{v=V}^{\infty}\left(1+Dq^{-v/2}\right)\right)   \leq D\sum_{v=V}^{\infty} q^{-v/2} = \frac{Dq^{-V/2}}{1-q^{-1/2}} \ll D^{-1} \leq 1
    \end{equation*}
    where the constant is absolute. Therefore, absorbing constants into $C$, we get 
    \begin{equation*}
        \prod_{v = 0}^\infty \left(1 + Dq^{-v/2}\right)  \ll C^{\frac{(\log D)^2}{\log q}}.
    \end{equation*}
\end{proof}

\begin{lem}\label{lem.Psi_approximation}
    Let $a \in \widehat{\cal{X}}$ satisfy $a_{i,j} = 0$ for $j \leq d_i- J_i$ and assume $J_i \geq 2$. Then there exist $C_i,c_i > 0$ such that
    \begin{equation}\label{eq.support_bound}
        \sum_{k \leq N} |\widehat{\chi}_a(k)| < \prod_{i=1}^r C_i^{(\log D_i)^2/\log p_i} p_i^{(1/2-c_i)J_i+1/2} .
    \end{equation}
    where $D_i = (p_i-1)(d_i \log p_i)$. Moreover, there exists a $3^r$-bounded function $\Psi_a$ on $\Omega$ and $c_i'>0$ satisfying $|\widehat{\Psi}_a| \leq |\widehat{\chi}_a|$,
    \begin{equation*}
        \frac{1}{X}\sum_{x \leq X} |\Psi_a(x) - \chi_a(x)|^2 \ll  r \sum_{i=1}^r p_i^{-c_i't_i},
    \end{equation*}
    and
    \begin{equation*}
        \widehat{\Psi}_a(k) = 0 \quad  \text{ if } \quad |k_i'| > p_i^{J_i +t_i} \quad \text{ for some } \quad i\in [r],
    \end{equation*}
    where 
    \begin{equation*}
        k_i :\equiv k \bmod p_i^{d_i}, \quad t_i' :=  (X/p_i^{d_i})^{-1}\bmod p_i^{d_i}, \quad k_i' = t_i'k_i \bmod p_i^{d_i},
    \end{equation*}
    and $t_i \in \mathbb{Z}$ are parameters satisfying
    \begin{equation*}
        C(\log D_i)^2/\log p_i < t_i < \frac12 (d_i-J_i).
    \end{equation*}
\end{lem}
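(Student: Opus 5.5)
By the multiplicativity relations \eqref{eq.ell_infty_mult} and \eqref{eq.ell_1_mult}, both parts of the lemma reduce to a single prime $p=p_i$. For the $\Psi_a$ statement we will also take $\Psi_a$ to be a product of local factors $\Psi^{(i)}_{a_i}$, each $3$-bounded. So fix $p$, $d$, $J\ge 2$ and $a\in(\mathbb{Z}/p\mathbb{Z})^d$ with $a_j=0$ for $j\le d-J$, and write $k=b+k'p^{\gamma}$ with $\gamma = d-J$.

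\textbf{Bound \eqref{eq.support_bound}.} Start from \eqref{eq.product_of_Gp}. Factors with $j\le d-J$ have $a_j=0$, so they depend only on $k$ through $G_p(kp^{j-d})$. The top $J$ factors depend on $k'$ and the digits of $b$.
\begin{itemize}
\item \emph{Low digits.} Sum over the low part $b<p^{\gamma}$. The high factors are controlled by Lemma~\ref{lem.ell_1_in_AP} in the fixed residue class $b \bmod p^{\gamma}$, which gives $p^{(1/2-c)J}$ uniformly in $b$. What remains is to bound $\sum_b$ of the product of low factors.
\item \emph{Main point.} Decompose $b$ by the size of $\|kp^{j-d}\|$, dyadically in $v$. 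The low factors form a product $\prod_j G_p(y p^{j})=G_{p^{\gamma}}(y)$ by \eqref{eq.Gq_product_formula}, but shifted by the high digits. Write each factor as $G_p(\beta_j)$ with $\beta_j$ close to a shift of $\beta_{j-1}$.
\item \emph{Expected output.} Bourgain's argument produces a bound $\prod_v (1+Dp^{-v/2})$ with $D=(p-1)d\log p$. The factor $p-1$ counts the possible values of $a_j$, and $d\log p$ comes from the $\ell^1$ estimate of Lemma~\ref{lem.ell_1_A} for each digit block. Lemma~\ref{lem.product_trick} then turns this into $C^{(\log D)^2/\log p}$.
\item \emph{Extra $p^{1/2}$.} This comes from the boundary digit $j=d-J$ where the two regimes meet, via Cauchy--Schwarz with \eqref{eq.sum_squares_one}.
\end{itemize}
This bookkeeping of the low digits is the step I expect to be the main obstacle. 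I would follow \cite[Lemma 5]{Bourgain2013} line by line, replacing $\cos$ by $G_p$ and the single bit by the $p-1$ nonzero values of $a_j$.

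\textbf{Construction of $\Psi_a$.} Define $\widehat{\Psi}_a(k)=\widehat{\chi}_a(k)\mathbbm{1}_{|k'|\le p^{J+t}}$, taken coordinatewise. Then $|\widehat\Psi_a|\le|\widehat\chi_a|$ and the support condition hold by definition. By Plancherel, the $L^2$ error equals
\[
\sum_{|k'|>p^{J+t}}|\widehat\chi_a(k)|^2.
\]
For such $k'$, some digit of $k$ at position $j\le d-J-t$ lies near $p-1$ or away from $0$. This forces $\|kp^{j-d}\|\gg p^{-t'}$, so the product of low factors $G_{p^{d-J}}$ is $\ll p^{-ct}$ by \eqref{eq.Gq_bound}. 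Combined with $\sum_k|\widehat\chi_a(k)|^2=1$, this gives $p^{-c't}$. In the product case, expand $\Psi-\chi$ as a telescoping sum over $i$; this produces the factor $r$.

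For boundedness, sharp truncation is not bounded. Instead use a smoothed cutoff, for example a Fej\'er-type kernel whose $\ell^1$ norm is at most $3$. It is supported on $|k'|\le p^{J+t}$ and lies in $[0,1]$. Multiplying $\widehat\chi_a$ by it preserves $|\widehat\Psi|\le|\widehat\chi|$ and gives $\|\Psi\|_\infty\le 3$. The lower bound $t>C(\log D)^2/\log p$ is needed so that the support bound absorbs the $C^{(\log D)^2/\log p}$ factor. The upper bound $t<(d-J)/2$ keeps enough low digits available for the decay.
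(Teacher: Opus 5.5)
Your reduction to a single prime is fine, but the support bound rests on a split of $k$ that pairs the wrong digits. The factor at character position $j$ is $G_p(a_jp^{-1}-kp^{j-d})$, and $kp^{j-d}$ modulo $1$ depends only on $k \bmod p^{d-j}$. So the top factors $j\ge d-J$, where $a$ is supported, see only $k\bmod p^{J}$, the \emph{low} digits of $k$. With your $k=b+k'p^{d-J}$ and $J\le d-J$, they do not involve $k'$ at all. The factors with $a_j=0$, by contrast, depend on all of $k$, including $k'$. Hence there is no factorization $\sum_b(\text{low})\cdot\sum_{k'}(\text{high})$.

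Moreover, Lemma~\ref{lem.ell_1_in_AP} with $\gamma=d-J$ bounds the \emph{whole} sum over each class $k\equiv b \pmod{p^{d-J}}$ by $p^{(1/2-c)J}$. Summing over the $p^{d-J}$ classes only gives $p^{d-J}\cdot p^{(1/2-c)J}$. The decay in $b$ needed to do better is exactly what you defer to ``follow Bourgain line by line''.

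The right split is $k=k_0+p^Jk_1$ with $k_0<p^J$. By \eqref{eq.Gq_product_formula} this gives the exact identity
\[
|\widehat{\chi}_a(k)| = |\widehat{\chi}_{a'}(k_0)|\,\bigl|G_{p^{d-J}}(kp^{-d})\bigr|, \qquad a'_j=a_{j+d-J}.
\]
Lemma~\ref{lem.ell_1_in_AP} with $\gamma=0$, applied to the $J$-digit character $\chi_{a'}$, gives $\sum_{k_0}|\widehat{\chi}_{a'}(k_0)|\ll p^{(1/2-c)J}$; this is where $J\ge 2$ is used. What remains is a bound for $\sum_{k_1}|G_{p^{d-J}}(kp^{-d})|$ uniform in $k_0$. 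The paper obtains $p^{1/2}C^{(\log D)^2/\log p}$ from three ingredients:
\begin{itemize}
\item a Lipschitz comparison with $G_p(k_1p^{j-d+J})$;
\item auxiliary characters $\chi_b$;
\item Lemmas~\ref{lem.ell_1_A} and~\ref{lem.product_trick}.
\end{itemize}
There the $p^{1/2}$ is the product of the Lipschitz error factors, dominated by $j=d-J-1$. The $p-1$ in $D$ counts nonzero auxiliary digits $b_j$, not values of $a_j$. (In fact, since $kp^{-d}=(k_1+k_0p^{-J})/p^{d-J}$ and $|G_q(y)|\le\min(1,(2q\norm{y})^{-1})$, that sum is directly $\ll 1+(d-J)\log p$.)

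The $L^2$ step has a similar gap. First, \eqref{eq.Gq_bound} only gives $|G_q|\le 1/3$ off $[-1/q,1/q]$. The pointwise bound $|G_{p^{d-J}}(kp^{-d})|\le p^J/(2|k|)$ needs $|\sin\pi y|\ge 2\norm{y}$, as in \eqref{eq.G_digit_bound}. More importantly, a uniform pointwise bound $\varepsilon$ on this factor, combined with $\sum_k|\widehat{\chi}_a(k)|^2=1$, yields nothing. Since $\sum_{k<p^d}|\widehat{\chi}_{a'}(k\bmod p^J)|^2=p^{d-J}$, you only get $\varepsilon^2p^{d-J}$.

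What works is to use the factorization. For each fixed $k_0$, sum $p^{2J}/(4|k|^2)$ over $k\equiv k_0 \pmod{p^J}$ with $|k|>p^{J+t}$ to get $\ll p^{-t}$. Then apply Parseval to $\chi_{a'}$ in $k_0$. Your reduction over $i$ (telescoping, with the factor $r$) is fine. For the smoothed cutoff, take the multiplier equal to $1$ on $|k'|\le\tfrac12 p^{J+t}$ and supported on $|k'|\le p^{J+t}$, a trapezoid as in the paper's $\eta_i$. Then the error region is again a tail, and the same bound applies with $t$ shifted by one.
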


\begin{proof}
    We reduce to a local character by Equation~\eqref{eq.ell_1_mult}. Write $k = k_0 + k_1p^{J}$ with $k_0 < p^J$ and $k_1 < p^{d-J}$ and
    \begin{equation}\label{eq.original_factorization}
        |\widehat{\chi}_a(k)| = \prod_{j<d-J} |G_p((k_0 + p^Jk_1)p^{j-d})| \prod_{d-J\leq j<d} |G_p(a_jp^{-1} - k_0p^{j-d})|.
    \end{equation}
    Reindexing, we see that the second factor is
    \begin{equation*}
        \prod_{d-J\leq j<d}  |G_p(a_jp^{-1} - k_0p^{j-d})| = \prod_{j<J} |G_p(a_{j+d-J}p^{-1} - k_0p^{j-J})| = |\widehat{\chi}_{a-d+J}(k_0)|
    \end{equation*}
    where $(a-d+J)_j = a_{j+d-J}$. 
    
    For the first factor, we fix $k_0$ and consider $F : \mathbb{Z}/p^{d-J}\mathbb{Z} \to \mathbb{C}$ given by
    \begin{equation*}
        F(k_1) =  \prod_{j<d-J} |G_p((k_0 + p^Jk_1)p^{j-d})|
    \end{equation*}
    We have a Lipschitz bound for all $k_0 < p^J$:
    \begin{equation*}
        F(k_1) \leq \prod_{j<d-J} \left(|G_p(k_1p^{j-d+J})| + \frac{\pi}{2}p^{j-d+J+1}\right).
    \end{equation*}
    Now we set $y_1 = k_1p^{j-d+J}$ and we consider the set 
    \begin{equation*}
        B(k_1) = \left\{j < d-J : |G_p(y_1)| < p^{(j-d+J)/2} \right\},
    \end{equation*}
    If $j \notin B(k_1)$ we have
    \begin{equation*}
        |G_p(y_1)|+ \frac{\pi}{2}p^{j-d+J+1} \leq \left(1 + \frac{\pi}{2}p^{(j-d+J+2)/2}\right)|G_p(y_1)|
    \end{equation*}
    For $j \in B(k_1)$, we recall Equation~\eqref{eq.sum_squares_one} and observe that this implies that there exists a $b_j \in \mathbb{Z}$ with $1 \leq b_j \leq p-1$, depending on $y_1$, such that 
    \begin{equation*}
        |G_p(b_jp^{-1}+y_1)| > p^{-1/2} \geq p^{(j-d+J)/2}.
    \end{equation*}
    This implies that
    \begin{equation*}
        |G_p(y_1)|+ \frac{\pi}{2}p^{j-d+J+1}  < p^{(j-d+J+1)/2}\left(1+ \frac{\pi}{2}p^{(j-d+J+2)/2}\right) |G_p(b_jp^{-1}+y_1)| ,
    \end{equation*}
    so we have the bound
    \begin{align*}
        F(k_1) < &\prod_{j \in B(k_1)}p^{(j-d+J+1)/2}\left(1+\frac{\pi}{2}p^{(j-d+J+2)/2}\right)|G_p(b_jp^{-1}+y_1)|  \\
        &\times\prod_{j \notin B(k_1)}\left(1 + \frac{\pi}{2}p^{(j-d+J+2)/2}\right)|G_p(y_1)|,
    \end{align*}
    where $B(k_1)$ and $b_j$ depend on $k_1$. For any $b \in (\mathbb{Z}/p\mathbb{Z})^{d-J}$ define $B = \{j : b_j > 0\}$ and sum the preceding bound over all choices of $b$. Noting that
    \begin{equation*}
        \prod_{j<d-J}\left(1 + \frac{\pi}{2}p^{(j-d+J+2)/2}\right) < \prod_{v = -1}^\infty \left(1+\frac{\pi}{2}p^{-v/2}\right) \ll p^{1/2},
    \end{equation*}
    with an absolute constant, we have
    \begin{equation*}
        F(k_1) \ll p^{1/2}\sum_{b} p^{\sum_{j \in B}(j-d+J+1)/2}\prod_{j \notin B}|G_p(y_1)| \prod_{j \in B}|G_p(b_jp^{-1}+y_1)|,
    \end{equation*}
    where
    \begin{equation*}
        \prod_{j \notin B}|G_p(y_1)| \prod_{j \in B}|G_p(b_jp^{-1}+y_1)| = \prod_{j<d-J} |G_p(b_jp^{-1}+y_1)| =|\widehat{\chi}_b(k_1)|.
    \end{equation*}

    Returning to Equation~\eqref{eq.original_factorization}, we now have
    \begin{equation*}
        \sum_{k < p^d} |\widehat{\chi}_a(k)| \ll p^{1/2}\sum_{b} p^{\sum_{j \in B}(j-d+J+1)/2} \sum_{k_1 < p^{d-J}}|\widehat{\chi}_b(k_1)|   \sum_{k_0 < p^J} |\widehat{\chi}_{a-d+J}(k_0)|.
    \end{equation*}
    By Lemma~\ref{lem.ell_1_in_AP}, the sum over $k_0$ yields
    \begin{equation*}
        \sum_{k_0 < p^J} |\widehat{\chi}_{a-d+J}(k_0)| \ll p^{(1/2-c)J}.
    \end{equation*}
    By Lemma~\ref{lem.ell_1_A}, the sum over $k_1$ yields
    \begin{equation*}
        \sum_{k_1 < p^{d-J}}|\widehat{\chi}_b(k_1)| \leq (Cd\log p)^{|b|}.
    \end{equation*}
    By Lemma~\ref{lem.product_trick}, the sum over $b$ and $k_1$ yields
    \begin{align}
    \begin{split}\label{eq.bk_computation}
        \sum_{b} p^{\sum_{j \in B}(j-d+J+1)/2} (Cd \log p)^{|b|}  &= \sum_{B} (p-1)^{|B|}p^{\sum_{j \in B}(j-d+J+1)/2} (Cd \log p)^{|B|} \\
        &= \sum_B \prod_{j \in B} (p-1)(Cd \log p)p^{(j-d+J+1)/2}\\
        &= \prod_{j<d-J}\left(1+(p-1)(Cd \log p)p^{(j-d+J+1)/2}\right) \\
        & \ll C^{(\log D)^2/\log p},
    \end{split}
    \end{align}
    where $D :=(p-1)(Cd \log p)$, and we conclude equation~\eqref{eq.support_bound}.

    Next we establish the approximation claim. We begin by reducing the global statement to the local statement. Use the identification $k \leftrightarrow (k_1',\ldots,k_r')$, as in Equation~\eqref{eq.CRT_factorization}, to define $\eta$ by 
    \begin{equation*}
        \eta(k) = \prod_{i=1}^r \eta_i(k_i'),
    \end{equation*}
    and $\Psi_a$ by
    \begin{equation*}
        \Psi_a(x) = \sum_{k} \eta(k) \widehat{\chi}_a(k)e\left(\frac{kx}{X}\right) = \prod_{i=1}^r \sum_{k_i' < p_i^{d_i}} \eta_i(k_i')\widehat{\chi}^{(i)}_{a_i}(k_i')e\left(\frac{k_i'x}{p_i^{d_i}}\right)
    \end{equation*}
    where $\eta_i : \mathbb{R} \to [0,1]$ is trapezoidal with $\eta_i(k_i) = 1$ for $|k_i'| < K_ip_i^{J_i}$, $\eta(k_i) = 0$ for $|k_i'| > 2K_ip_i^{J_i}$. Notice that $\norm{\Psi_a}_\infty \leq 3^r$ and $\widehat{\Psi}_a(k) = \widehat{\chi}_a(k)$ when $|k_i'| \leq K_ip_i^{J_i}$ for all $i$, and $\widehat{\Psi}_a(k) = 0$ if $|k_i'| > 2K_ip_i^{J_i}$ for some $i$. By Plancherel,
    \begin{equation*}
        \norm{\Psi_a - \chi_a}_2^2 = \sum_{k< X} |(1-\eta(k))\widehat{\chi}_a(k)|^2 = \sum_{(k_1',\ldots,k_r')} \left|1-\prod_{i=1}^r \eta_i(k_i')\right|^2 \prod_{\ell=1}^r |\widehat{\chi}^{(\ell)}_{a_\ell}(k_\ell')|^2,
    \end{equation*}
    and
    \begin{equation*}
         \left|1-\prod_{i=1}^r \eta_i(k_i')\right|^2 \leq \left(\sum_{i=1}^r(1-\eta_i(k_i')) \right)^2 \leq r \sum_{i=1}^r (1-\eta_i(k_i'))^2.
    \end{equation*}
    Therefore
    \begin{align}\label{eq.approx_2_norm}
    \begin{split}
        \norm{\Psi_a - \chi_a}_2^2 &\leq r \sum_{i=1}^r  \sum_{(k_1',\ldots,k_r')} (1-\eta_i(k_i'))^2 |\widehat{\chi}^{(i)}_{a_i}(k_i')|^2\prod_{\ell\neq i}^{r}|\widehat{\chi}^{(\ell)}_{a_\ell}(k_\ell')|^2 
        \\
        &= r\sum_{i=1}^r \sum_{k_i' <p_i^{d_i}} (1-\eta_i(k_i'))^2 |\widehat{\chi}^{(i)}_{a_i}(k_i')|^2 \prod_{\ell\neq i}^{r}\sum_{k_\ell' < p_\ell^{d_\ell}}|\widehat{\chi}^{(\ell)}_{a_\ell}(k_\ell')|^2 
        \\
        & = r\sum_{i=1}^r \sum_{k_i' < p_i^{d_i}} (1-\eta_i(k_i'))^2|\widehat{\chi}_{a_i}^{(i)}(k_i')|^2,
    \end{split}
    \end{align}
    since Parseval gives us
    \begin{equation*}
        \sum_{k_\ell' < p_\ell^{d_\ell}}|\widehat{\chi}^{(\ell)}_{a_\ell}(k_\ell')|^2 = 1.
    \end{equation*}

    Now we establish the local bound. We take $C (\log D)^2/\log p < \rho < \frac12(d-J)$ and we consider the two cases $\min B < d-J-\rho$ and $B \subseteq [d-J-\rho, d-J]$. In the first case we estimate
    \begin{align*}
        \sum_{k_1<p^{d-J}} \sum_{\substack{b \in (\mathbb{Z}/p\mathbb{Z})^{d-J} \\ \min B < d-J-\rho}} p^{\sum_{j \in B}(j-d+J+1)/2} |\widehat{\chi}_b(k_1)| &\ll \sum_{\min B < d-J-\rho} \prod_{j \in B} Dp^{(j-d+J+1)/2} 
    \end{align*}
    as in the computation in Equation~\eqref{eq.bk_computation}. This time we get the bound
    \begin{equation*}
        \ll \left(\prod_{j<d-J-\rho}\left(1+Dp^{(j-d+J+1)/2}\right)-1\right)\prod_{d-J-\rho \leq j < d-J}\left(1+Dp^{(j-d+J+1)/2}\right) 
    \end{equation*}
    since we must subtract off all terms whose factors lie in $j \geq d-J-\rho$. For the first product, we use $C(\log D)^2/\log p < \rho$ to get
    \begin{equation*}
        \sum_{j<d-J-\rho}Dp^{(j-d+J+1)/2} < D\sum_{v=\rho}^{\infty} p^{-v/2} \ll Dp^{-\rho/2} \ll p^{-\rho/4}
    \end{equation*}
    and $\log (1+x) \leq x$ to get
    \begin{equation*}
        \prod_{j<d-J-\rho}\left(1+Dp^{(j-d+J+1)/2}\right) \ll \exp(p^{-\rho/4}) \ll 1+p^{-\rho/4}.
    \end{equation*}
    For the second product, we use Lemma~\ref{lem.product_trick} to get
    \begin{equation*}
        \prod_{d-J-\rho \leq j < d-J}\left(1+Dp^{(j-d+J+1)/2}\right)  = \prod_{v=0}^{\rho-1}\left(1 + Dp^{-v/2}\right) \ll C_1^\frac{(\log D)^2}{\log p}.
    \end{equation*}
    Since $C (\log D)^2/\log p < \rho$, we conlude the bound
    \begin{equation*}
        \sum_{k_1<p^{d-J}} \sum_{\substack{b \in (\mathbb{Z}/p\mathbb{Z})^{d-J} \\ \min B < d-J-\rho }} p^{\sum_{j \in B}(j-d+J+1)/2} |\widehat{\chi}_b(k_1)|  \ll C_1^\frac{(\log D)^2}{\log p}p^{-\rho/4} \ll p^{-\rho/4}.
    \end{equation*}
    Since
    \begin{align}\label{eq.quick_bound1}
    \begin{split}
         \sup_{k_1<p^{d-J}} \sum_{\substack{b \in (\mathbb{Z}/p\mathbb{Z})^{d-J} \\ \min B < d-J-\rho }} p^{\sum_{j \in B}(j-d+J+1)/2} |\widehat{\chi}_b(k_1)|& \leq  \sum_{\min B < d-J-\rho}\prod_{j \in B}(p-1) p^{(j-d+J+1)/2} \\
        & \leq \prod_{j<d-J} \left(1+p^{(j-d+J+3)/2}\right) \ll p^{3/2},
    \end{split}
    \end{align}
    the preceding equation implies
    \begin{equation}\label{eq.small_B_bound}
        \sum_{k_1<p^{d-J}}\Bigg(\sum_{\substack{b \in (\mathbb{Z}/p\mathbb{Z})^{d-J} \\ \min B < d-J-\rho }} p^{\sum_{j \in B}(j-d+J+1)/2}|\widehat{\chi}_b(k_1)|\Bigg)^2 \ll p^{3/2-\rho/4}.
    \end{equation}

    Now we bound $\widehat{\chi}_b(k_1)$ for $B \subseteq [d-J-\rho, d-J]$. We write
    \begin{equation*}
        |\widehat{\chi}_b(k_1)| = \prod_{j<d-J-\rho} |G_p(k_1p^{j-d+J})|\prod_{d-J-\rho\leq j < d-J} |G_p(b_jp^{-1}+k_1p^{j-d+J})|,
    \end{equation*}
    bound the second factor trivially, and use Equation~\eqref{eq.Gq_product_formula} to obtain
    \begin{equation*}
         |\widehat{\chi}_b(k_1)| \leq \prod_{j < d-J-\rho}|G_p(k_1p^{j-d+J})| = |G_{p^{d-J-\rho}}(k_1p^{-(d-J)})|.
    \end{equation*}
    Now the standard bounds for $\sin$ imply
    \begin{equation}\label{eq.G_digit_bound}
        |G_{p^{d-J-\rho}}(k_1p^{-(d-J)})| \leq \frac{1}{p^{d-J-\rho}2k_1p^{-(d-J)}} = \frac{p^{\rho}}{2k_1} < \frac12 k_1^{-1/2},
    \end{equation}
    since $p^{2\rho} < k_1$. Let us denote $w_B = p^{\sum_{j \in B}(j-d+J+1)/2} (p-1)^{|B|}$. For $p^{2\rho} \leq K_1 <p^{d-J}$ we have
    \begin{align}\label{eq.large_B_bound}
    \begin{split}
        &\qquad\sum_{K_1 < |k_1| < p^{d-J}}\left(\sum_{B \subseteq [d-J-\rho,d-J]}w_B|\widehat{\chi}_b(k_1)|\right)^2 \\
         &\leq \sum_{B \subseteq [d-J-\rho,d-J]}w_B
        \sum_{B \subseteq [d-J-\rho,d-J]}\sum_{K_1 < |k_1| < p^{d-J}}w_B|\widehat{\chi}_b(k_1)|^2 \\
        &\ll \sum_{B \subseteq [d-J-\rho,d-J]}w_B
        K_1^{-1/2}\sum_{B \subseteq [d-J-\rho,d-J]}w_B\norm{\widehat{\chi}_b}_1 \\
        &\ll \sum_{B \subseteq [d-J-\rho,d-J]}w_BK_1^{-1/2} C^{(\log D)^2/\log p} \\
        &\ll p^{3/2}K_1^{-1/2} C^{(\log D)^2/\log p}
    \end{split}
    \end{align}
    where we used Cauchy-Schwarz, Equation~\eqref{eq.G_digit_bound}, Equation~\eqref{eq.bk_computation}, and Equation~\eqref{eq.quick_bound1}. 
    
    Returning to our factorization in Equation~\eqref{eq.original_factorization} and using Plancherel we obtain
    \begin{align*}
        \sum_{|k| > K_1p^{J}} |\widehat{\chi}_a(k)|^2         &\leq \sum_{k_0 < p^J} |\widehat{\chi}_{a-d+J}(k_0)|^2 \sum_{K_1 < |k_1|<p^{d-J}}\left(p^{1/2}\sum_{b} p^{\sum_{j \in B}(j-d+J+1)/2}|\widehat{\chi}_b(k_1)|\right)^2\\
        &\leq \sum_{K_1 < |k_1|<p^{d-J}}\left(p^{1/2}\sum_{b} p^{\sum_{j \in B}(j-d+J+1)/2}|\widehat{\chi}_b(k_1)|\right)^2.
    \end{align*}
    Now we split the sum over $b$ into $\min B < d-J-\rho$ and $B \subseteq [ d-J-\rho, d-J]$. For small $B$ we use Equation~\eqref{eq.small_B_bound}, and for large $B$ we use Equation~\eqref{eq.large_B_bound} to obtain
    \begin{align*}
        &\sum_{|k| > K_1p^{J}} |\widehat{\chi}_a(k)|^2 \ll  p^{5/2}\left( p^{-\rho/4} + K_1^{-1/2} C^{(\log D)^2/\log p}\right)
    \end{align*}
    Now we choose $K_1= p^{t-1}$ and $\rho = \frac12(t-1)$ to get
    \begin{equation*}
        \sum_{|k| > K_1p^{J}} |\widehat{\chi}_a(k)|^2 \ll p^{-c't},
    \end{equation*}
    for some $c' > 0$ depending on $p$. By Equation~\eqref{eq.approx_2_norm}, this completes the proof.
\end{proof}

We conclude this section with the following generalization of \cite[Lemma 6]{Bourgain2013}
\begin{lem}\label{lem.interval}
    If $I \subseteq \Omega $ is an interval, there is $c>0$ such that
    \begin{equation*}
        \sum_{k \in I} |\widehat{\chi}_a(k)| \ll (p_r|I|)^{\frac12 - c}.
    \end{equation*}
\end{lem}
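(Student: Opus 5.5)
The plan is to follow Bourgain's Lemma 6 and reduce to the local statement via the CRT factorization. Since the paper is focused on the case $r=1$ (and the interval statement is only used there), I will first treat a single prime $p$ with $X=p^d$, and then indicate how the product structure \eqref{eq.ell_1_mult} handles general $r$. Write $|I| = N$ and choose $\gamma$ with $p^{\gamma} \le N < p^{\gamma+1}$. If $\gamma \ge d-1$ the bound follows from the full-range estimate, Lemma~\ref{lem.ell_1_in_AP} with $\gamma_i=0$, since then $N \asymp p^d$ up to a factor $p$. So assume $\gamma \le d-2$.

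The key step is to exploit that an interval of length $N<p^{\gamma+1}$ is covered by at most two blocks of the form $\{k : \lfloor k/p^{\gamma+1}\rfloor = m\}$, i.e.\ sets where the top $d-\gamma-1$ digits of $k$ are fixed. On such a block, by \eqref{eq.product_of_Gp}, the factors with $j<d-\gamma-1$ depend on $k$ only through $k p^{j-d}$, whose size is governed by the fixed high digits plus a perturbation of size $\le p^{j-d+\gamma+1} \le 1$. I will bound these factors trivially by $1$, keeping only the factors with $j \ge d-\gamma-1$. These depend only on $k \bmod p^{\gamma+1}$, and their product is exactly $|\widehat{\chi}_{a'}(k \bmod p^{\gamma+1})|$ for the character $a'$ on $(\mathbb{Z}/p)^{\gamma+1}$ given by the top $\gamma+1$ digits of $a$, after the reindexing used in the proof of Lemma~\ref{lem.Psi_approximation}. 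As $k$ runs over one block, $k \bmod p^{\gamma+1}$ runs over each residue at most once. Hence
\begin{equation*}
\sum_{k\in I}|\widehat{\chi}_a(k)| \le 2\sum_{k_0<p^{\gamma+1}}|\widehat{\chi}_{a'}(k_0)| \ll p^{(1/2-c)(\gamma+1)} \ll (pN)^{1/2-c},
\end{equation*}
using Lemma~\ref{lem.ell_1_in_AP} (with modulus exponent $0$) on the group of size $p^{\gamma+1}$. This yields the claim with $p_r=p$.

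I need to check the reindexing. With $\beta_j = a_j/p - kp^{j-d}$ and $j = d-\gamma-1+j'$, we get $kp^{j-d} = k p^{j'-(\gamma+1)}$, which depends only on $k \bmod p^{\gamma+1}$ modulo $1$. So the top factors are genuinely periodic in $k$ with period $p^{\gamma+1}$. The factors I discard are the low-$j$ ones, and $|G_p|\le 1$ makes discarding them legitimate.

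For general $r$, the main obstacle is that an interval in $k$ does not correspond to a product of intervals in the CRT coordinates $k_i'$, so \eqref{eq.ell_1_mult} cannot be applied directly. I would first try to avoid this by applying the same periodicity idea globally. The function $k\mapsto \prod_i(\text{top } \gamma_i+1 \text{ local factors})$ is periodic in $k$ with period $\prod_i p_i^{\gamma_i+1}$, where the $\gamma_i$ are chosen so that this period is at most $p_rN$. Then $I$ is covered by $O(1)$ periods, and over a full period the sum factorizes as $\prod_i \norm{\widehat{\chi}^{(i)}_{a_i'}}_1 \ll \prod_i p_i^{(1/2-c)(\gamma_i+1)}$. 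Choosing the $\gamma_i$ greedily so that the product of the $p_i^{\gamma_i+1}$ lies within a bounded factor of $N$ gives $(p_rN)^{1/2-c}$. Here the factor $p_r$ absorbs the rounding loss, and $c$ is taken as the minimum of the $c_i$.
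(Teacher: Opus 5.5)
Your proposal is correct and follows the paper's argument: choose a divisor $Q=\prod_i p_i^{m_i}$ of $X$ with $|I|\le Q\le p_r|I|$, and bound the low-digit factors by $|G_{p_i}|\le 1$ so that what remains is $Q$-periodic. Then bound one full period by the power-saving $\ell^1$ estimate; covering $I$ by two aligned blocks rather than using periodicity directly only costs a factor of $2$. You cite the right full-period bound, Lemma~\ref{lem.ell_1_in_AP}, whereas the written proof cites Lemma~\ref{lem.ell_1_A}, which gives no power saving; you also handle the CRT twist $k_i'\equiv t_ik \bmod p_i^{d_i}$, which the paper suppresses.
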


\begin{proof}
    Choose $Q \mid X$ such that $|I| \leq Q \leq p_r|I|$ and write $Q = \prod_{i=1}^r p_i^{m_i}$. Then
    \begin{equation*}
        |\widehat{\chi}_a(k)|  \leq  \prod_{i=1}^r\prod_{d_i-m_i \leq j<d_i}|G_{p_i}(a_{i,j}p_i^{-1}-kp_i^{j-d_i})|
        = \prod_{i=1}^r\prod_{j<m_i}|G_{p_i}(a_{i,j}'p_i^{-1}-kp_i^{j-m_i})| 
    \end{equation*}
    where $a'_{i,j} = a_{i,j+d_i-m_i}$. The right-hand side is $|\widehat{\chi}_{a'}(k)|$ where $\chi_{a'}$ is a character on $\prod_{i=1}^r \mu_{i}^{m_i}$. In particular, since $|\widehat{\chi}_{a'}(k)|$ is $Q$-periodic and $|I| \leq Q$,
    \begin{equation*}
        \sum_{k \in I} |\widehat{\chi}_a(k)| \leq \sum_{k \in I} |\widehat{\chi}_{a'}(k)| \leq \sum_{k < Q} |\widehat{\chi}_{a'}(k)| \ll \prod_{i=1}^r p_i^{(\frac12-c_i)m_i}
    \end{equation*}
    by Lemma~\ref{lem.ell_1_A}. Now we observe that 
    \begin{equation*}
        \prod_{i=1}^r p_i^{(\frac12-c_i)m_i} = Q^{\frac12-c} \quad \text{ where } \quad c = \frac{\sum_{i=1}^r c_i m_i \log p_i}{\sum_{i=1}^r m_i \log p_i} \geq \min_{1 \leq i \leq r} c_i.
    \end{equation*}
    We conclude that
    \begin{equation*}
        \sum_{k \in I} |\widehat{\chi}_a(k)| \ll Q^{(\frac12 - c)} \leq (p_r|I|)^{(\frac12 - c)}.
    \end{equation*}
\end{proof}

\section{Proof of number-theoretic input} \label{sec.main_proof}

\subsection{Cyclic case}
The cyclic case is where $d_i = 1$ for all $i \in [r]$. Observe that the characters $\chi_a$ reduce to the usual additive characters on $\mathbb{Z}/X\mathbb{Z}$ in this case. In particular, we have
\begin{equation*}
    \cal{X} = \prod_{i=1}^r \mu_{p_i} \cong \prod_{i=1}^r \mathbb{Z}/p_i\mathbb{Z} \cong \mathbb{Z}/X\mathbb{Z},
\end{equation*}
and our characters are just
\begin{equation*}
    \chi_a(x) = \prod_{i=1}^r e_{p_i}(a_ix_i) = e\left(\sum_{i=1}^r \frac{a_ix_i}{p_i} \right)  = e\left(\frac{1}{X}\sum_{i=1}^r \frac{ Xa_ix_i}{p_i} \right) = e\left(\frac{tx}{X}\right),
\end{equation*}
where $t = \sum_{i=1}^r \frac{Xa_i}{p_i}$. Theorem~\ref{thm.Davenport} implies that for all $A > 0$,
\begin{equation*}
    \sup_{a \in \widehat{\cal{X}}}\left|\sum_{x < X} \mu(x)\chi_a(x) \right|= \sup_{t < X}\left|\sum_{x < X} \mu(x)e\left(\frac{tx}{X}\right)\right| \leq \sup_{\theta \in \mathbb{R}/\mathbb{Z}} \left| \sum_{x < X} \mu(x)e(\theta x)\right| \ll X (\log X)^{-A}.
\end{equation*}
Since $d \log 2 \leq \sum_i \log p_i = \log X$,
\begin{equation}\label{eq.cyclic_bound}
    |\widehat{\mu}(a)| \ll (\log X)^{-A} \ll d^{-A},
\end{equation}
uniformly in $a$. Now Theorem~\ref{thm.main_cyclic} and the $\mathcal{X}_d = \prod_{i=1}^r \mu_{p_i}$ case of Theorems~\ref{thm:main_kernel}, \ref{thm:main_NGD} and ~\ref{thm:main_CSQ} follow from Section~\ref{sec.lower_bounds} and Corollary~\ref{cor:alignment_X_d}.

\subsection{Small weight case}\label{sec.small_weight}
In this section, we prove Theorem~\ref{thm:main_math} for the case where $|a| \ll d^{1/2-\varepsilon}$. By Proposition~\ref{prop.Katai_arg}, this will involve controlling the contribution of major arcs. Following the ideas in \cite{green2012notcomputingmobiusfunction}, we take advantage of Remark~\ref{rem:sparse_rational} to resolve the contribution of a potential Siegel zero. This is the content of the following lemma.

\begin{prop}\label{prop:p_power_major_arc_mu_Lambda}
Fix an odd prime $p$. There exist constants $c,\eta > 0$ depending on $p$ such that the following holds. Let $X \geq 3$, $Q=\exp(\eta\sqrt{\log X})$, and $q = p^s \leq Q$. Suppose that $(a,q) = 1$ and 
\begin{equation*}
    \theta = \frac{a}{q}+\beta, \qquad |\beta| \leq \frac{Q}{qX}
\end{equation*}
Then
\begin{equation*}
    \sum_{n\leq X}\mu(n)e(n\theta) \ll_p X\exp(-c\left(\log X\right)^{1/2}),
\end{equation*}
and
\begin{equation*}
    \sum_{n\leq X}\Lambda(n)e(n\theta)
    =
    \frac{\mu(q)}{\varphi(q)}
    \int_0^X e(\beta t)\,dt
    +
    O_p\left(X\exp(-c\left(\log X\right)^{1/2})\right).
\end{equation*}
\end{prop}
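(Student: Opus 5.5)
The plan is to run the standard major-arc argument through Dirichlet characters, using that all moduli that occur are powers of the fixed prime $p$. For such moduli there is no exceptional zero once $q$ is large relative to $p$. First reduce to $\beta=0$ by partial summation. Write $S(u;a/q)=\sum_{n\le u}\mu(n)e(na/q)$. Then
\begin{equation*}
    \sum_{n\le X}\mu(n)e(n\theta)=e(\beta X)S(X;a/q)-2\pi i\beta\int_1^X e(\beta u)S(u;a/q)\,du .
\end{equation*}
Since $|\beta|X\le Q$, it suffices to show $S(u;a/q)\ll u\exp(-2c\sqrt{\log X})$ uniformly for $X\exp(-c\sqrt{\log X})\le u\le X$ and $q=p^s\le Q$. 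Smaller $u$ are handled trivially, and the loss of the factor $Q=\exp(\eta\sqrt{\log X})$ is absorbed by taking $\eta$ small compared with $c$. The $\Lambda$ statement is handled the same way: partial summation against the main term $\mu(q)/\varphi(q)$ produces $\frac{\mu(q)}{\varphi(q)}\int_0^X e(\beta t)\,dt$.

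Next, split $n$ according to $d=(n,q)$, which is a power of $p$. For $\mu$ only $d\in\{1,p\}$ contributes, and for $\Lambda$ the terms with $p\mid n$ are prime powers of $p$, contributing $O(\log X)$. For $(n,q)=1$, expand $e(na/q)$ through Gauss sums:
\begin{equation*}
    e(na/q)=\frac{1}{\varphi(q)}\sum_{\chi \bmod q}\overline{\chi}(na)\,\tau(\chi).
\end{equation*}
Since $|\tau(\chi)|\le\sqrt q$, we get $S(u;a/q)\ll \sqrt q\,\max_{\chi \bmod p^s}\bigl|\sum_{n\le u}\mu(n)\chi(n)\bigr|+(\text{the }p\mid n\text{ part})$. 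The $p\mid n$ part is $-\sum_{m\le u/p,\,(m,p)=1}\mu(m)e(ma/(q/p))$, which we treat the same way at modulus $q/p$, or simply by induction on $s$. For $\Lambda$, only the principal character survives in the main term, giving $\tau(\chi_0)/\varphi(q)=\mu(q)/\varphi(q)$. The remaining characters contribute $\sqrt q\max_{\chi\ne\chi_0}|\psi(u,\chi)|$.

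The crux is then the zero-free region for $L(s,\chi)$ with $\chi$ of modulus $p^s$, $p^s\le Q$. The classical region $\sigma>1-c_0/\log(q(|t|+2))$ holds for all such $\chi$ except possibly one real exceptional character. A real character modulo $p^s$ with $p$ odd is induced from the Legendre symbol modulo $p$ (or is principal), because the real characters of the cyclic group $(\mathbb Z/p^s)^\times$ are trivial or the quadratic one. So an exceptional zero can only come from $L(s,(\cdot/p))$, which is a single fixed $L$-function. Its largest real zero $\beta_p<1$ is a constant depending on $p$, so it is harmless after shrinking $c$ in a $p$-dependent way; this is exactly where the $O_p$ and the $p$-dependence of the constants enter. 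With this uniform region we obtain, by the standard explicit-formula or Perron argument with truncation $T=\exp(\sqrt{\log X})$, the bounds $\psi(u,\chi)-\delta_\chi u\ll_p u\exp(-c_1\sqrt{\log u})$ and $\sum_{n\le u}\mu(n)\chi(n)\ll_p u\exp(-c_1\sqrt{\log u})$, uniformly in $q=p^s\le\exp(\eta\sqrt{\log X})$. This is the Siegel--Walfisz range, but with effective constants.

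Multiplying by $\sqrt q\le\exp(\tfrac{\eta}{2}\sqrt{\log X})$, and again losing $Q$ from partial summation, we choose $\eta<c_1/4$ to leave the saving $\exp(-c\sqrt{\log X})$. The main obstacle is making the zero-free-region step rigorous uniformly in $s$: checking that a possible exceptional zero is confined to the single fixed modulus $p$, and tracking the constants in the Perron estimate. The rest is routine bookkeeping.
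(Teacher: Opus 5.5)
Your proposal is correct and follows essentially the same route as the paper. The shared steps are: expanding $e(an/q)$ into Dirichlet characters via Gauss sums; observing that for odd $p$ the only possible exceptional character of $p$-power modulus is the fixed quadratic character mod $p$, so the classical zero-free region and the Perron bounds hold uniformly with $p$-dependent constants; reducing the $p\mid n$ terms through $n=pm$; and using partial summation in $\beta$, with the loss of $1+|\beta|X\le 1+Q$ absorbed by taking $\eta$ small. The differences are cosmetic: you use $|\tau(\chi)|\le\sqrt q$ where the paper uses the trivial bound $\varphi(q)$, and you do the partial summation first rather than last.
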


\begin{proof}
We begin with Dirichlet characters. We claim that there are constants $c,\eta>0$ such that, for every Dirichlet character
\(\chi\) of conductor $p^j\leq \exp(\eta\left(\log X\right)^{1/2})$, one has
\begin{equation}\label{eq:mu_character_p_power}
    \sum_{n\leq X}\mu(n)\chi(n) \ll_p X\exp(-c\left(\log X\right)^{1/2}),
\end{equation}
and
\begin{equation}\label{eq:Lambda_character_p_power}
    \sum_{n\leq X}\Lambda(n)\chi(n) = \mathbbm{1}_{\chi=\chi_0}X + O_p\left(X\exp\left(-c\left(\log X\right)^{1/2} \right)\right).
\end{equation}
These are the standard consequences of Perron's formula and the classical zero-free region for Dirichlet $L$-functions. The only possible obstruction is an exceptional zero of a primitive real character. In the present setting, this obstruction is harmless. Since $p$ is odd, every real character of $p$-power conductor factors through modulo $p$. Thus, the only primitive non-principal real character of $p$-power conductor is the quadratic character $\chi_p$ modulo $p$. This character is fixed once $p$ is fixed. Since $L(1,\chi_p) \neq 0$, there is a constant $\delta_p > 0$ such that $L(s,\chi_p) \neq 0$ for real $s \in [1-\delta_p,1]$ by continuity. Therefore, no real zero of a $p$-power conductor character can approach $s=1$ as $X\to\infty$. After decreasing $c$ and $\eta$, the usual zero-free region gives \eqref{eq:mu_character_p_power} and \eqref{eq:Lambda_character_p_power} uniformly for $p^j\leq \exp(\eta\left(\log X\right)^{1/2})$.

We now pass from character twists to additive twists. First consider $\mu$. Let $q=p^s\leq Q$. For $(n,p) =1 $, expanding in Dirichlet characters modulo $q$ gives
\begin{equation*}
    e\left(\frac{an}{q}\right)
    =
    \frac{1}{\varphi(q)}
    \sum_{\chi\bmod q}
    \tau_q(\overline{\chi})\chi(a)\chi(n),
\end{equation*}
where $\tau_q(\overline{\chi})$ is the Gauss sum. Using the trivial bound $|\tau_q(\overline{\chi})|\leq \varphi(q)$ and \eqref{eq:mu_character_p_power}, we obtain
\begin{equation*}
    \sum_{\substack{n\leq X\\(n,p)=1}}
    \mu(n)e\left(\frac{an}{q}\right)
    \ll_p
    \varphi(q)X\exp(-c\left(\log X\right)^{1/2}).
\end{equation*}
For $p \mid n$ and $\mu(n) \neq 0$ we $n = pm$ with $(m,p) =1$ and $\mu(n) = -\mu(m)$. Thus their contribution is
bounded in the same way with denominator $q/p$. Therefore
\begin{equation*}
    \sum_{n\leq X}\mu(n)e\left(\frac{an}{q}\right)
    \ll_p
    \varphi(q)X\exp(-c\left(\log X\right)^{1/2}).
\end{equation*}
Since $q\leq Q=\exp(\eta\left(\log X\right)^{1/2})$, choosing $\eta$ sufficiently small
and decreasing $c$ gives
\begin{equation}\label{eq:mu_exact_p_power}
    \sum_{n\leq X}\mu(n)e\left(\frac{an}{q}\right)
    \ll_p
    X\exp(-c\left(\log X\right)^{1/2}).
\end{equation}

Now consider $\Lambda$. Since the terms with $p \mid n$ contribute only $O_p((\log X)^2)$, we may restrict to $(n,p) =1$. Expanding in Dirichlet characters yields
\begin{align*}
    \sum_{n\leq X}\Lambda(n)e\left(\frac{an}{q}\right)
    &=
    \frac{1}{\varphi(q)}
    \sum_{\chi\bmod q}
    \tau_q(\overline{\chi})\chi(a)
    \sum_{n\leq X}\Lambda(n)\chi(n)
    +
    O_p((\log X)^2).
\end{align*}
The principal character contribution is
\begin{equation*}
    \frac{\mu(q)}{\varphi(q)}X + O_p\left(X \exp(-c \left(\log X\right)^{1/2})\right)
\end{equation*}
The non-principal characters are bounded using
\eqref{eq:Lambda_character_p_power}. As above, the total cost of summing over
characters is at most a factor $\varphi(q)\leq Q$, which is absorbed by taking
$\eta$ sufficiently small. Thus
\begin{equation}\label{eq:Lambda_exact_p_power}
    \sum_{n\leq X}\Lambda(n)e\left(\frac{an}{q}\right)
    =
    \frac{\mu(q)}{\varphi(q)}X
    +
    O_p\left(X\exp(-c\left(\log X\right)^{1/2})\right).
\end{equation}

It remains to pass from $a/q$ to $\theta=a/q+\beta$ for $|\beta| \leq \frac{Q}{qX}$ using partial
summation. From Equation~\eqref{eq:mu_exact_p_power}, uniformly for $Y\leq X$,
\[
    \sum_{n\leq Y}\mu(n)e\left(\frac{an}{q}\right)
    \ll_p
    X\exp(-c\left(\log X\right)^{1/2}),
\]
after enlarging the implicit constant to handle small $Y$. Therefore
\begin{align*}
    \sum_{n\leq X}\mu(n)e(n\theta)
    &=
    \sum_{n\leq X}\mu(n)e\left(\frac{an}{q}\right)e(\beta n) \\
    &\ll_p
    X\exp(-c\left(\log X\right)^{1/2})(1+|\beta|X).
\end{align*}
Since $|\beta|X\leq Q/q\leq Q$, this is $\ll_p X\exp(-c\left(\log X\right)^{1/2})$ after decreasing $c$. Similarly, applying partial summation to Equation~\eqref{eq:Lambda_exact_p_power} yields
\begin{align*}
    \sum_{n \leq X} \Lambda(n)e(n\theta) &= \sum_{n \leq X} \Lambda(n)e\left(\frac{an}{q}\right)e(\beta n) \\
    &= \frac{\mu(q)}{\varphi(q)}\int_0^X e(\beta t)dt + O_p\left(X \exp(-c\left(\log X\right)^{1/2})(1+|\beta|X)\right) \\
    &= \frac{\mu(q)}{\varphi(q)}\int_0^X e(\beta t)dt + O_p\left(X \exp(-c\left(\log X\right)^{1/2})\right)
\end{align*}
after decreasing $c$.
\end{proof}

Now Theorem~\ref{thm:main_math} for $|a| \ll d^{1/2-\varepsilon}$ is deduced from the following generalization of \cite[Proposition 1]{green2012notcomputingmobiusfunction}. 

\begin{prop}\label{prop.smallA}
    There is a $c>0$ such that for any $a \in \widehat{\cal{X}}$
    \begin{equation*}
        \widehat{\mu}(a) \ll |a|\sqrt{p_r}\exp\left(-\frac{c\sqrt{d}}{|a|}\right).
    \end{equation*}
\end{prop}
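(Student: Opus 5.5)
We argue by contraposition, following \cite[Proposition 1]{green2012notcomputingmobiusfunction}; we work in the setting $\mathcal{X}=\mu_p^d$ with $p$ odd, where Proposition~\ref{prop:p_power_major_arc_mu_Lambda} applies. Set $\delta:=|\widehat{\mu}(a)|$ and $k:=|a|$, and suppose $\delta$ exceeds the claimed bound. The aim is to show that the additive Fourier coefficient at a suitable sparse rational is then too large to be compatible with the known minor and major arc estimates. If $\delta\ge 1/2$ we may shrink it to $1/2$, since only a lower bound is used.

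The first step applies Proposition~\ref{prop.Katai_arg} together with Remark~\ref{rem:sparse_rational}. This produces
\begin{equation*}
    \theta=\sum_{j<d} \frac{s_j}{p^{j+1}}
\end{equation*}
with at most $k$ nonzero $s_j$, each satisfying $|s_j|\le Q_0:=(40p)^2k^3/\delta^3$, and with
\begin{equation*}
    \Bigl|\sum_{n<X}\mu(n)e(n\theta)\Bigr|\ge X\Bigl(\frac{\delta}{10k\sqrt p}\Bigr)^{4k}.
\end{equation*}

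The second step is a Dirichlet approximation with parameter $Q:=\exp(\eta\sqrt{\log X})$, where $\log X=d\log p$. This gives $\theta=b/q+\beta$ with $q\le X/Q$ and $|\beta|\le Q/(qX)$, and we split into two cases.

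In the minor arc case $q>Q$, we use the Vinogradov/Davenport minor arc bound. For example, by bilinear decomposition one has
\begin{equation*}
    \sum\mu(n)e(n\theta)\ll X(\log X)^{C}\bigl(q^{-1/2}+(q/X)^{1/2}+e^{-c\sqrt{\log X}}\bigr).
\end{equation*}
This is an input we must import; the classical proof of Theorem~\ref{thm.Davenport} yields it. Here it gives $\ll X\exp(-c\sqrt{\log X})$.

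In the major arc case $q\le Q$, we apply Lemma~\ref{lem:sparse_padic_major_arc} with parameter $\max(Q,Q_0)$. Its hypothesis $p^{d/2k}>8p\max(Q,Q_0)^2$ reads $d\log p/(2k)\gg \sqrt{d}+k\log(k/\delta)$. The lemma forces $q=p^s$, and then Proposition~\ref{prop:p_power_major_arc_mu_Lambda} again gives $\ll_p X\exp(-c\sqrt{d})$. The lemma is exactly what removes the Siegel zero issue, since only $p$-power moduli survive.

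Combining the two cases yields
\begin{equation*}
    (\delta/(10k\sqrt p))^{4k}\ll e^{-c\sqrt d},
\end{equation*}
that is, $\delta\ll k\sqrt p\,\exp(-c\sqrt d/(4k))$, which is the claim after renaming $c$.

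The main obstacle is the bookkeeping needed to verify the hypothesis of Lemma~\ref{lem:sparse_padic_major_arc}. This is only a problem in the regime where $\delta$ is already at least the target bound. If instead $\delta\le k\sqrt p\,e^{-c\sqrt d/k}$ the statement holds trivially. Otherwise $\log(1/\delta)\ll \sqrt d/k$, so that $k\log(k/\delta)\ll \sqrt d+k\log k$. Then the condition $d/k\gg\sqrt d$ holds whenever $k\le c'\sqrt d$. For $k>c'\sqrt d$ the right-hand side $k\sqrt p\,e^{-c\sqrt d/k}$ is $\ge 1$ after adjusting $c$, and the bound is trivial since $|\widehat\mu(a)|\le 1$.

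The other point to watch is that Remark~\ref{rem:sparse_rational} indexes the denominators $p^{j+1}$ with $j+1\le d$, matching $0<i_1<\dots<i_k\le d$ in the lemma. Also, the approximation tolerance $Q/(qX)$ must be consistent between the lemma and the proposition. I would therefore take a common $Q$, namely the larger of the two, and shrink $\eta$ accordingly.
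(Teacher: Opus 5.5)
Your proposal is correct and follows the same route as the paper. Both go through K\'atai's reduction (Proposition~\ref{prop.Katai_arg} with Remark~\ref{rem:sparse_rational}) to a sparse $p$-adic rational, then a classical minor-arc bound, and on the major arcs Lemma~\ref{lem:sparse_padic_major_arc} to force $q=p^s$ followed by Proposition~\ref{prop:p_power_major_arc_mu_Lambda}. Your extra bookkeeping fills in steps the paper leaves implicit: the reduction when $\delta\ge 1/2$, the trivial range $|a|>c'\sqrt d$, and the check of the lemma's hypothesis.

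One slip is harmless. Taking logarithms, the lemma's hypothesis reads $d\log p/(2k)\gg \sqrt d+\log(k/\delta)$, with no factor $k$ in front of $\log(k/\delta)$. With this correct form, your verification for $k\le c'\sqrt d$ goes through exactly as you say.
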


\begin{proof}
    Suppose that this proposition fails. Then there is some $a \in \widehat{\cal{X}}$ with
    \begin{equation*}
        |\hat{\mu}(a)| > 10|a|\sqrt{p_r}\exp\left(-\frac{c\sqrt{d}}{|a|}\right),
    \end{equation*}
    for all $c > 0$. By Proposition~\ref{prop.Katai_arg} and Remark~\ref{rem:sparse_rational} there is a $\theta \in [0,1]$ which is a sparse $p$-adic rational satisfying
    \begin{equation}\label{eq:small_weight_contradiction}
        |\hat{\mu}(\theta)| \geq \exp\left(-4c\sqrt{d}\right) \geq \exp\left(-\frac{4c}{\log 2}(\log X)^{1/2}\right),
    \end{equation}
    since $\log X = \sum_{i=1}^r d_i \log p_i \geq d \log 2$.

    We take the usual major and minor arc decomposition: let $Q = \exp(\eta (\log X)^{1/2})$ with $\eta > 0$ and for $1 \leq q \leq Q$ and $(b,q) = 1$, 
    \begin{equation}\label{eq:major_minor_arcs_def}
        \mathfrak{M}(b,q) := \left\{\theta \in \mathbb{R}/\mathbb{Z} : \left|\theta - \frac{b}{q}\right| \leq \frac{Q}{qX}\right\}, \qquad \mathfrak{M}= \bigcup_{q \leq Q} \bigcup_{\substack{b \bmod q \\ (b,q)=1}}, \qquad \mathfrak{m} = (\mathbb{R}/\mathbb{Z}) \setminus \mathfrak{M}.
    \end{equation}
    If $\theta \in \mathfrak{m}$, we get a $C>0$ such that
    \begin{equation*}
        \sum_{n < X} \mu(n)e(n\theta) \ll X \exp\left(-C (\log X)^{1/2}\right),
    \end{equation*}
    from classical minor-arc estimates, for example \cite[Theorem 13.9]{IwaniecKowalski2004}, which contradicts Equation~\eqref{eq:small_weight_contradiction} by choosing $c$ small enough. Likewise, we get a contradiction for $\theta \in \mathfrak{M}$, by Lemma~\ref{lem:sparse_padic_major_arc} and Proposition~\ref{prop:p_power_major_arc_mu_Lambda}, concluding the proof.
\end{proof}

Next we complete the proof for the fixed $p$ case. This generalizes \cite[Theorem 1]{Bourgain2013}.

\subsection{Reduction to Type I/II}

We begin with the general technique of reducing to Type I/II sums for estimating
\begin{equation*}
    \sum_{x \leq X} \mu(x) g(x).
\end{equation*}
Fix a $q \in \mathbb{Z}_{\geq 2}$ and take a geometric decomposition:
\begin{equation*}
    \sum_{x \leq X} \mu(x) \chi_a(x) = \sum_{x} \sum_{x/q < n \leq x}\mu(n) \chi_a(n),
\end{equation*}
where the sum over $x$ has $O(\log X)$ terms. We focus on the inner sum
\begin{equation*}
    S(x) = \sum_{x/q < n \leq x}\mu(n) g(n).
\end{equation*}

We use the following inclusion-exclusion identity for Möbius \cite[Proposition 13.5]{IwaniecKowalski2004}. For any $y,z \geq 1$ and $m > \max(y,z)$ we have
\begin{equation*}
    \mu(m) = -\sum_{\substack{bc \mid m \\ b \leq y, c \leq z}} \mu(b)\mu(c) + \sum_{\substack{bc \mid m \\ b >y , c> z}} \mu(b)\mu(c).
\end{equation*}
Taking $y = x^{\gamma_1}, z= x^{\gamma_2}$ for some $0 < \gamma_i < 1$, and assuming $x/q > x^{\max_i \gamma_i}$, we get
\begin{equation*}
    S(x) = -\sum_{x/q <n \leq x}\sum_{\substack{bc \mid n \\ b,c \leq x^{\gamma_1},x^{\gamma_2}}} \mu(b)\mu(c)g(n) + \sum_{x/q <n \leq x} \sum_{\substack{bc \mid n \\ b,c > x^{\gamma_1},x^{\gamma_2}}} \mu(b)\mu(c)g(n).
\end{equation*}
Switch the order of summation for the remaining terms:
\begin{equation*}
    S(a,x) = -\sum_{b,c \leq x^{\gamma_1},x^{\gamma_2} } \mu(b)\mu(c) \sum_{\frac{x}{qbc}< m \leq \frac{x}{bc}} g(bcm) +\sum_{b,c > x^{\gamma_1},x^{\gamma_2}} \mu(b)\mu(c) \sum_{\frac{x}{qbc}< m \leq \frac{x}{bc}} g(bcm)
\end{equation*}
We identify the first sum as a Type I contribution: 
\begin{align*}
    \sum_{b,c \leq x^{\gamma_1},x^{\gamma_2} } \mu(b)\mu(c) \sum_{\frac{x}{qbc}< m \leq \frac{x}{bc}} g(bcm) &=  \sum_{k \leq x^{\gamma_1+\gamma_2}}\left( \sum_{\substack{bc = k \\ b,c \leq x^{\gamma_1},x^{\gamma_2}}} \mu(b)\mu(c)\right) \sum_{\frac{x}{qk} < m \leq \frac{x}{k}} g(km) \\
    &\leq \sum_{k \leq x^{\gamma_1+\gamma_2}} \tau(k)\left| \sum_{\frac{x}{qk} < m \leq \frac{x}{k}} g(km)\right| \\
    &= \sum_{K} \sum_{K < k \leq qK} \tau(k) \left| \sum_{\frac{x}{qk} < m \leq \frac{x}{k}} g(km)\right|,
\end{align*}
where $qK \leq x^{\gamma_1+\gamma_2}$ and the sum over $K$ has $O(\log x)$ terms. We identify the second sum as a Type II contribution:
\begin{align*}
    \sum_{b,c > x^{\gamma_1},x^{\gamma_2}} \mu(b)\mu(c) \sum_{\frac{x}{qbc}< m \leq \frac{x}{bc}} g(bcm)&= \sum_{b > x^{\gamma_1}} \mu(b) \sum_{\frac{x}{qb} < k \leq \frac{x}{b}} \Bigg(\sum_{\substack{c \mid k \\ c>x^{\gamma_2}}}\mu(c)\Bigg) g(kb) \\
    &=  \sum_B \sum_{\substack{B < b \leq qB }} \mu(b) \sum_{\substack{ \\\frac{x}{qb} < k \leq \frac{x}{b}}} \Bigg(\sum_{\substack{c \mid k \\ c>x^{\gamma_2}}}\mu(c)\Bigg) g(kb) \\
    &\leq \sum_B \Bigg|\sum_{\substack{B < b \leq qB }}  \sum_{\substack{ \\\frac{x}{qb} < k \leq \frac{x}{b}}} \alpha_k\beta_b g(kb)\Bigg|,
\end{align*}
where $\beta_b = \mu(b)\mathbbm{1}_{(x^{\gamma_1}, \infty)}(b)$ and $\alpha_k=\sum_{\substack{c \mid k \\ c>x^{\gamma_2}}}\mu(c)$. It is clear that $|\beta_b| \leq 1$ and $|\alpha_k| \leq \tau(k)$. This time we have $x^{\gamma_1}< B < x^{1-\gamma_2} $ and the sum over $B$ has $O(\log x)$ terms. Therefore our constraints on $\gamma_i$ are $\gamma_1+\gamma_2 < 1$. In particular, we find the following reduction to Type I and Type II sums.
 \begin{lem}\label{lem.TypeIandII_reduction}
     Suppose that $q \in \mathbb{Z}_{\geq 2}$, and take $\gamma_1,\gamma_2 > 0$ such that $\gamma_1+\gamma_2 < 1$. Assume that $x^{1-\max_i\gamma_i} > q$ and let $g$ be an arithmetic  function. Suppose that for every $M \leq X$ and every sequence of complex numbers $\alpha_m, \beta_n$ with $|\alpha_m| \leq \tau(m)$, $|\beta_n| \leq 1$, we have
     \begin{equation*}
         \sum_{M < m \leq qM}  \tau(m)\left| \sum_{\frac{X}{qm} < n \leq \frac{X}{m}} g(mn)\right| \leq U \quad \text{ for } qM \leq X^{\gamma_1 + \gamma_2} \quad \text{(Type I)},
     \end{equation*}
     and
     \begin{equation*}
         \left|\sum_{M < m \leq qM} \sum_{\substack{ \\\frac{X}{qm} < n \leq \frac{X}{m}}} \alpha_m\beta_n g(mn) \right| \leq U \quad  \text{ for } X^{\gamma_1} < M < X^{1-\gamma_2} \quad \text{(Type II)}.
     \end{equation*}
     Then we have
     \begin{equation*}
         \left| \sum_{x \leq X} \mu(x)g(x) \right| \ll_{\gamma_i} (\log X)^{2} U.
     \end{equation*}
 \end{lem}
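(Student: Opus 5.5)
The plan is to formalize the computation that precedes the lemma. First I would decompose $[1,X]$ dyadically in base $q$: write $\sum_{x\le X}\mu(x)g(x)$ as a sum of $O(\log X)$ blocks $S(x)=\sum_{x/q<n\le x}\mu(n)g(n)$, with $x=X/q^{\ell}$. Blocks with $x$ so small that $x^{1-\max_i\gamma_i}\le q$ have length $O_{q,\gamma_i}(1)$. Those with $x\le X^{1/2}$ contribute at most $O(X^{1/2}\log X)$ in total. I will note that this tail must be absorbed into $(\log X)^2U$. This requires $U\gg X^{1/2}$, which holds in the intended application since Type II bounds are never better than square root. Alternatively, I could state this as an implicit normalization, or bound the short blocks by $U$ directly, since the Type I hypothesis with $M=1$ already controls $\sum_{X/q<n\le X}g(n)$.

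For each remaining block, I would apply the inclusion–exclusion identity \cite[Proposition 13.5]{IwaniecKowalski2004} with $y=x^{\gamma_1}$, $z=x^{\gamma_2}$. This is valid because $n>x/q>x^{\max\gamma_i}$. It gives the two pieces displayed above.

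For the first piece, I would group $k=bc\le x^{\gamma_1+\gamma_2}$ and bound the convolution coefficient by $\tau(k)$. Splitting $k$ into $O(\log x)$ ranges $(K,qK]$, each range is then exactly a Type I sum with $M=K$, $qM\le X^{\gamma_1+\gamma_2}$.

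For the second piece, I would write it as $\sum_b\beta_b\sum_k\alpha_k g(bk)$ with $\beta_b=\mu(b)\mathbbm 1_{b>x^{\gamma_1}}$ and $\alpha_k=\sum_{c\mid k,\,c>x^{\gamma_2}}\mu(c)$, so that $|\alpha_k|\le\tau(k)$. The constraint $c>x^{\gamma_2}$ forces $k>x^{\gamma_2}$, hence $b<x^{1-\gamma_2}$. Splitting $b$ into $O(\log x)$ ranges $(B,qB]$ with $x^{\gamma_1}<B<x^{1-\gamma_2}$ gives Type II sums. Here the roles of $m,n$ are swapped relative to the hypothesis, but the hypothesis is symmetric enough: I would rename $m=k$ and $n=b$, or observe that the Type II range is symmetric under $M\mapsto X/M$ up to the constraints. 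Each block therefore costs $O(\log X)\,U$, and summing over $O(\log X)$ blocks gives $(\log X)^2U$.

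The main obstacle is bookkeeping. The hypotheses are stated at the scale $X$, with ranges $X/(qm)<n\le X/m$, whereas the blocks live at scale $x=X/q^\ell$. I would handle this by applying the hypothesis with $X$ replaced by $x$. Equivalently, I would read the lemma as assuming the Type I and Type II bounds uniformly for all scales $x\le X$, as the phrase ``for every $M\le X$'' intends. I would also need to check that the ranges $x^{\gamma_1}<B<x^{1-\gamma_2}$ lie within the stated ones, possibly after adjusting $\gamma_i$ slightly, and that the boundary ranges cut off by $b\le y$, $c\le z$ are handled. Since the coefficients may be arbitrary, truncations can be built into $\alpha,\beta$; in the Type I case the truncation acts on $k$ only through the coefficient, which is bounded by $\tau(k)$.
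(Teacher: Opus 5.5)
Your proposal is correct and follows the paper's own argument, given in the computation just before the lemma: the same base-$q$ blocks $x/q<n\le x$, the inclusion--exclusion identity with $y=x^{\gamma_1}$, $z=x^{\gamma_2}$, the Type~I grouping by $k=bc$ with weight bounded by $\tau(k)$, and the Type~II coefficients $\beta_b=\mu(b)\mathbbm{1}_{b>x^{\gamma_1}}$, $\alpha_k=\sum_{c\mid k,\,c>x^{\gamma_2}}\mu(c)$, giving $O(\log X)$ ranges in each of $O(\log X)$ blocks. The paper passes silently over the bookkeeping you flag (short blocks, uniformity of the hypotheses in the scale $x$, and the $\tau$-bounded coefficient landing on the inner variable), but two of your remarks need correcting: $M\mapsto X/M$ exchanges $\gamma_1$ and $\gamma_2$, so the clean fix for the last point is to apply the identity with $y=x^{\gamma_2}$, $z=x^{\gamma_1}$ and sum over $k$ outermost, and short blocks must be bounded trivially, since the Type~I hypothesis controls sums of $g$ rather than of $\mu g$.
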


Now we use Cauchy–Schwarz to give a further reduction for the Type II sums following the same strategy as \cite[Section 6]{MauduitRivat2010}. Fix ranges $m \asymp M$ and $n \asymp N$ such that $M \leq N$ with $MN \asymp X$. We estimate
\begin{equation}\label{eq.TypeII_estimate}
    \left|\sum_{\substack{m \asymp M \\ n \asymp N}} \alpha_m\beta_n \chi_a(nm) \right| \leq \sum_{m \asymp M}\tau(m)\left|\sum_{n \asymp N} \beta_n \chi_a(mn) \right|.
\end{equation}

We use a slight variation of \cite[Lemma 4]{MauduitRivat2010}, which is just the van der Corput trick. 

\begin{lem}\label{lem.van-der-Corput}
    Let $(z_n)_{n \asymp N}$ be complex numbers supported on an interval of length $\ll N$, and let $L,K \geq 1$ be integers such that $LK \leq N$. Then
    \begin{equation*}
        \left|\sum_{n \asymp N} z_n\right|^2 \ll \frac{N}{L} \sum_{|\ell| < L} \left(1 - \frac{|\ell|}{L}\right) \sum_{n \asymp N} z_n \overline{z_{n+\ell K}}.
    \end{equation*}
\end{lem}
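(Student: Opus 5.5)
The plan is the standard van der Corput argument: average the sum over shifts, then apply Cauchy--Schwarz. Extend $z_n$ by zero outside its support, which is an interval $I$ with $|I|\ll N$. For each integer $0\le r<L$, shifting the summation variable gives
\begin{equation*}
    \sum_{n} z_n = \sum_{n} z_{n+rK}.
\end{equation*}
Averaging over $r$ yields
\begin{equation*}
    L\sum_n z_n = \sum_{n}\sum_{0\le r<L} z_{n+rK}.
\end{equation*}
The outer sum runs only over $n$ for which some $n+rK\in I$. Those $n$ lie in an interval of length at most $|I|+LK\ll N$, using $LK\le N$.

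Next apply Cauchy--Schwarz in $n$:
\begin{equation*}
    L^2\Bigl|\sum_n z_n\Bigr|^2 \le (|I|+LK)\sum_n\Bigl|\sum_{0\le r<L} z_{n+rK}\Bigr|^2 \ll N\sum_n\Bigl|\sum_{0\le r<L} z_{n+rK}\Bigr|^2 .
\end{equation*}
Expand the square as $\sum_{r,s} z_{n+rK}\overline{z_{n+sK}}$. Summing over all $n$ and substituting $n\mapsto n-rK$ shows that the inner sum depends only on $\ell=s-r$:
\begin{equation*}
    \sum_n z_{n+rK}\overline{z_{n+sK}}=\sum_n z_n\overline{z_{n+\ell K}}, \qquad \ell=s-r .
\end{equation*}
For a given $\ell$ with $|\ell|<L$, the number of pairs $(r,s)\in[0,L)^2$ with $s-r=\ell$ is $L-|\ell|$. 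Therefore
\begin{equation*}
    \sum_n\Bigl|\sum_{r} z_{n+rK}\Bigr|^2 = L\sum_{|\ell|<L}\Bigl(1-\frac{|\ell|}{L}\Bigr)\sum_n z_n\overline{z_{n+\ell K}}.
\end{equation*}
Dividing by $L^2$ gives the claimed inequality
\begin{equation*}
    \Bigl|\sum_{n \asymp N} z_n\Bigr|^2 \ll \frac{N}{L} \sum_{|\ell| < L} \Bigl(1 - \frac{|\ell|}{L}\Bigr) \sum_{n \asymp N} z_n \overline{z_{n+\ell K}} .
\end{equation*}

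The only point needing care is bookkeeping. The inner sums are over all $n$, and since $z$ is supported on $I$ this is the same as the sum over $n\asymp N$ with $z_n\overline{z_{n+\ell K}}$, which vanishes unless both indices lie in the support. The right-hand side is real and nonnegative, because it equals $\frac{1}{L}\sum_n|\sum_r z_{n+rK}|^2$ up to the positive factor, so no absolute values are needed. There is no real obstacle here; the one hypothesis actually used is $LK\le N$, which keeps the range of the shifted sum $\ll N$.
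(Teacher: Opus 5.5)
Your proposal is correct and follows essentially the same route as the paper: extend by zero, average over the $L$ shifts by multiples of $K$, apply Cauchy--Schwarz using that only $\ll N+LK\ll N$ values of $n$ contribute, then expand the square and count $L-|\ell|$ pairs for each difference $\ell$. The only cosmetic difference is that you index the shifts by $0\le r<L$ where the paper uses $1\le \ell\le L$.
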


\begin{proof}
    Extend to a sequence $(z_n)_{n \in \mathbb{Z}}$ setting $z_n = 0$ outside of the original interval. Then
    \begin{equation*}
        L \sum_n z_n =\sum_n \sum_{\ell=1}^L z_n = \sum_n \sum_{\ell=1}^L z_{n+\ell K}.
    \end{equation*}
    Cauchy-Schwarz yields
    \begin{equation*}
        \left| \sum_n z_n  \right|^2 = \frac{1}{L^2}\left| \sum_{n} \sum_{\ell=1}^L z_{n+\ell K}  \right|^2 \ll  \frac{N}{L^2}\sum_{n} \left| \sum_{\ell=1}^L z_{n+\ell K}  \right|^2
    \end{equation*}
    since there are $\ll N + LK \ll N$ values of $n$ for which the $\ell$ sum is nonzero. Expanding the square, we get
    \begin{equation*}
        \sum_{n} \left| \sum_{\ell=1}^L z_{n+\ell K}  \right|^2 = \sum_n \sum_{\ell_1,\ell_2=1}^L z_{n+\ell_1K} \overline{z_{n+\ell_2K}}.
    \end{equation*}
    Now we set $m = n+\ell_1 K$ and $\ell = \ell_2 - \ell_1$, so $n+\ell_2k = m+\ell K$. For $1 \leq \ell_1,\ell_2 \leq L$ we get $|\ell| < L$, and exactly $L - |\ell|$ choices of $\ell_1,\ell_2$ for each $\ell$. Therefore
    \begin{equation*}
        \sum_{n} \left| \sum_{\ell=1}^L z_{n+\ell K}  \right|^2 = \sum_{|\ell| < L} (L-|\ell|) \sum_{m} z_m \overline{z_{m+\ell K}}.
    \end{equation*}
    We conclude that
    \begin{equation*}
       \left| \sum_n z_n  \right|^2 \ll \frac{N}{L} \sum_{|\ell| < L} \left(1-\frac{|\ell|}{L}\right)\sum_{m} z_m \overline{z_{m+\ell K}} 
    \end{equation*}
\end{proof}
Thus for any choice of $L,K$ with $LK\leq N$, Lemma~\ref{lem.van-der-Corput}, implies that 
\begin{equation*}
    \left|\sum_{n \asymp N}  \beta_n \chi_a(mn)\right|^2 \ll \frac{N}{L} \sum_{|\ell| < L}\sum_{n \asymp N} \beta_n\overline{\beta_{n+\ell K}}  \chi_a(mn)\overline{ \chi_a(m(n+\ell K))}.
\end{equation*}
By Equation~\eqref{eq.TypeII_estimate} and another application of Cauchy-Schwarz, we conclude that
\begin{align}\label{eq.TypeII_shifted_char_cor_bound}
\begin{split}
    \Bigg|\sum_{\substack{m \asymp M \\ n \asymp N}} \alpha_m \beta_n \chi_a(mn)\Bigg|^2 &\ll \frac{N}{L}\left(\sum_{m \asymp M} \tau(m)^2\right) \sum_{|\ell| < L} \sum_{n \asymp N} \left|\sum_{m \asymp M} \chi_a(mn)\overline{\chi_a(m(n+\ell K))}\right| \\
    & \ll \frac{MN(\log X)^3}{L}\sum_{|\ell| < L} \sum_{n \asymp N} \left|\sum_{m \asymp M} \chi_a(mn)\overline{\chi_a(m(n+\ell K))}\right|
\end{split}
\end{align}
since $\sum_{m < M} \tau(m)^2 \ll M (\log M)^3 \leq M (\log X)^3$.

\subsection{Type II analysis}
Now we assume that $r=1$ and begin to analyze the contribution of the Type II sums. Note that we allow the constants $c$ and $C$ can change from line to line in this section. Choose parameters
\begin{equation*}
    q = p, \quad M = p^\mu, \quad N = p^\nu, \quad L= p^\rho, \quad K = p^{\kappa}
\end{equation*}
with $\mu+\nu = d$ and $\rho < \mu/100$ and $LK < N$. Observe that in base $p$, the integers $mn$ and $m(n+\ell K)$ must have all the same digits for $j < \kappa$. Furthermore, for most pairs $(m,n)$, sufficiently large digits of $mn$ will agree with $m(n+\ell K)$. We make this precise with the following digit stability lemma. 
\begin{lem}\label{lem.digit_stability}
Let $q\ge 2$ be an integer and fix $\delta > 0$. For every $\varepsilon>0$ there exists a constant $c(\varepsilon)$ such that for all integers $\mu,\nu,\rho, \kappa$ with $\mu>0$, $\nu>0$, $\kappa < \rho$, and $0\le \rho\le \nu/2$, and for every integer $r\in\mathbb{Z}$ with $|r|<q^{\rho + \kappa}$ and $q^\kappa \mid r$, we have the following. For $m,n \in \mathbb{Z}$ in the range
\begin{equation*}
    q^{\mu-1}<m\le q^{\mu},\qquad q^{\nu-1}<n\le q^{\nu},
\end{equation*}
write $a = mn$ and $b = m(n+r)$ and let
\begin{equation*}
    a = \sum_{j \geq 0} a_j q^j, \qquad b = \sum_{j \geq 0} b_j q^j,
\end{equation*}
with $0 \leq a_j,b_j < q$. The number $E(r,\mu,\nu,\rho, \kappa, \delta)$ of pairs of integers $(m,n)$ in the above range such that there exists $j \geq \mu + \kappa+(1+\delta)\rho$ with $a_j \neq b_j$ satisfies 
\begin{equation*}
    E(r,\mu,\nu,\rho, \kappa, \delta) \leq C(\varepsilon)q^{(\mu + \nu)(1+\varepsilon) - \delta\rho}
\end{equation*}
\end{lem}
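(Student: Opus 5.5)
The argument is a carry-propagation count, following \cite[Lemma 5]{MauduitRivat2010}. Write $b = a + mr$. Since $q^\kappa \mid r$ and $|r| < q^{\rho+\kappa}$, we have $|mr| < q^{\mu+\rho+\kappa}$, so the addition of $mr$ to $a$ only directly touches the digits of index $< \lambda := \mu+\kappa+\rho$. Digits of index $j \ge \lambda$ can change only through a carry (or borrow) out of position $\lambda$. Such a carry propagates past position $J := \mu+\kappa+\lceil(1+\delta)\rho\rceil$ only if the digits $a_j$ for $\lambda \le j < J$ are all equal to $q-1$ (for a carry, $r>0$) or all equal to $0$ (for a borrow, $r<0$). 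So the first step is to reduce the statement to this: if $a_j \ne b_j$ for some $j \ge J$, then the block of $\lfloor \delta\rho\rfloor$ consecutive digits of $a = mn$ in positions $[\lambda, J)$ is constant, equal to $q-1$ or to $0$. The off-by-one issues from the floor/ceiling cost at most a factor of $q$, which is absorbed into $C(\varepsilon)$.

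In terms of fractional parts, this block condition says
\begin{equation*}
    \left\{\frac{mn}{q^{J}}\right\} \in \left[0, q^{-\delta\rho+1}\right) \cup \left[1-q^{-\delta\rho+1}, 1\right).
\end{equation*}
In other words, $mn \bmod q^J$ lies in one of two intervals of length about $q^{J-\delta\rho}$. Next I count the integers $a \in (q^{\mu+\nu-2}, q^{\mu+\nu}]$ satisfying this condition. Since $J \le \mu+\nu$ (because $\rho \le \nu/2$ and $\kappa<\rho$; if instead $J > \mu+\nu$ the event is empty), there are $\ll q^{\mu+\nu-\delta\rho}$ such integers. Each $a$ arises from at most $\tau(a) \ll_\varepsilon q^{\varepsilon(\mu+\nu)}$ pairs $(m,n)$ with $mn=a$. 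Multiplying gives
\begin{equation*}
    E \le C(\varepsilon)\, q^{(\mu+\nu)(1+\varepsilon)-\delta\rho},
\end{equation*}
which is the claimed bound.

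The main obstacle is making the carry claim precise. I will argue directly with the integer representation: write $a = A_{\mathrm{hi}} q^{\lambda} + A_{\mathrm{lo}}$ with $0 \le A_{\mathrm{lo}} < q^\lambda$. Then $A_{\mathrm{lo}} + mr \in (-q^\lambda, 2q^\lambda)$, so $\lfloor b/q^\lambda\rfloor = A_{\mathrm{hi}} + \epsilon$ with $\epsilon \in \{-1,0,1\}$. Adding or subtracting $1$ changes the digit at position $\ge J$ only if the digits of $A_{\mathrm{hi}}$ in positions $[0, J-\lambda)$ are all $q-1$ or all $0$, respectively. This settles the reduction; the divisor bound and the interval count are then routine.
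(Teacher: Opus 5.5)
Your proposal is correct and follows essentially the same route as the paper: a change in a digit at position $\ge \mu+\kappa+(1+\delta)\rho$ forces the roughly $\delta\rho$ digits of $a=mn$ just above position $\mu+\kappa+\rho$ to be all $q-1$ or all $0$. One then counts such $a$ and multiplies by the divisor bound $\tau(a)\ll_\varepsilon q^{\varepsilon(\mu+\nu)}$. The paper treats $\kappa=0$ first and then passes to $\lfloor a/q^\kappa\rfloor$, whereas you handle general $\kappa$ directly and make the carry step rigorous via the $A_{\mathrm{hi}},A_{\mathrm{lo}}$ split.

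One small correction: $J\le\mu+\nu$ does not follow from $\rho\le\nu/2$ and $\kappa<\rho$; for example $\kappa=\rho-1$, $\rho=\nu/2$, $\delta\rho\ge 2$ gives $J>\mu+\nu$. Your fallback is still right, because $\rho+\kappa\le\nu-1$ gives $a,b<q^{\mu+\nu+1}$, so the event is empty whenever $J>\mu+\nu$.
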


\begin{proof}
    First we treat the case $0 \leq r < q^\rho$ and $\kappa = 0$; the case $-q^{\rho} < r < 0$ is analogous. This implies
    \begin{equation*}
        0 \leq mr < q^{\mu + \rho},
    \end{equation*}
    so if addition of $mr$ changes a digit at $j \geq \mu + \rho$ it must come from a carry. If a digit at $j \geq \mu + (1+\delta)\rho$ changes, it must come from a sequence of carries starting at some digit $j < \mu + \rho$. This implies that all intermediate digits 
    \begin{equation*}
        \mu + \rho \leq j < \mu + (1+\delta)\rho
    \end{equation*}
    are maximal, so $a_j = q-1$ in this range. Now we count $(m,n)$ such that $a = mn$ has this property. Let $ \chi(a)$ be the indicator function of this property, and observe that we have at most $\tau(a)$ pairs $(m,n)$ for each $a$. Since $q^{\mu+\nu-2} < a \leq q^{\mu + \nu}$ we have
    \begin{equation*}
        E(r,\mu,\nu,\rho, \kappa, \delta) \leq \sum_{q^{\mu+\nu-2} < a \leq q^{\mu + \nu}} \tau(a)\chi(a)
    \end{equation*}
    The condition $\chi(a) =1$ fixes $\delta\rho$ digits and we are in an interval with at most $\mu+\nu$ digits, so
    \begin{equation*}
        \#\{q^{\mu+\nu-2} < a \leq q^{\mu + \nu} : \chi(a) = 1\} \ll q^{\mu+\nu-\delta\rho}.
    \end{equation*}
    On the other hand, we have the standard divisor bound
    \begin{equation*}
        \tau(a) \ll a^{\varepsilon} \ll q^{(\mu+\nu)\varepsilon}.
    \end{equation*}
    We conclude that
    \begin{equation*}
        E(r,\mu,\nu,\rho, \kappa, \delta)  \ll q^{(\mu + \nu)(1+\varepsilon)-\delta\rho}.
    \end{equation*}
    The case of negative $r$ is the same argument but with minimal digit $a_j = 0$ instead of maximal digit $a_j = q-1$. For $\kappa > 0$, we can apply the $\kappa = 0$ case to
    \begin{equation*}
        a' = \left\lfloor \frac{a}{q^\kappa}\right\rfloor = \sum_{j \geq 0} a_{j+\kappa} q^j, \quad b' = \left\lfloor \frac{b}{q^\kappa}\right\rfloor = \sum_{j \geq 0} b_{j+\kappa} q^j, \quad r' = \frac{r}{q^\kappa}.
    \end{equation*}
\end{proof}

Using Lemma~\ref{lem.digit_stability}, we can assume that $\chi_a(mn) = \chi_a(m(n+\ell K))$ in each factor for digits
\begin{equation*}
    j< \kappa \quad \text{and} \quad j \geq \mu + \kappa + (1+\delta)\rho,
\end{equation*}
where $\delta > 0$ such that $\delta\rho \in \mathbb{Z}_{\geq 1}$ remains will be specified later. This will introduce an error in Equation~\eqref{eq.TypeII_shifted_char_cor_bound} of size
\begin{equation*}
    O\left(\frac{MN (\log X)^3}{L}\sum_{|\ell|<L}(MN)^{1+\varepsilon} L^{-\delta}\right) = O\left((X)^{2+\varepsilon} L^{-\delta}\right)
\end{equation*}
where $\varepsilon > 0$ is arbitrary. Equivalently, up to the above error, we have
\begin{equation*}
    \chi_a(mn)\overline{\chi_a(m(n+\ell K))} = \chi_{a'}(mn)\overline{\chi_{a'}(m(n+\ell K))},
\end{equation*}
where
\begin{equation*}
    a'_{j} = \begin{cases}
        a_{j} & \text{ if } \kappa \leq j < \kappa + \mu + \rho', \\
        0 & \text{ otherwise,}
    \end{cases}
\end{equation*}
and $\rho' = (1+\delta)\rho$. We will consider $\kappa = 0$ or $\mu - \rho \leq \kappa < d -\mu-\rho$ so that as we vary $\kappa$, the intervals $[\kappa, \kappa+\mu+\rho)$ cover $[0, d-1]$. We will later choose $\kappa$ so that
\begin{equation}\label{eq.kappa_choice}
    |a'| \geq \max_{|J| =\mu}  |a \cap J| \gg \frac{\mu}{d} |a|.
\end{equation}

For $\kappa \neq 0$ we approximate $\chi_{a'}$ by $\Psi_{a'}$ given by Lemma~\ref{lem.Psi_approximation} with $d$ replaced by $\kappa + \mu+\rho'$ and $J$ by $\mu+\rho'$. Take $t = \delta\rho$ so that
\begin{equation*}
    \frac{\mu}{100} > \rho \gg (\log D)^2/\log p.
\end{equation*}
Then Equation~\eqref{eq.TypeII_shifted_char_cor_bound} becomes
\begin{align}\label{eq.TypeII_substitute}
\begin{split}
      \Bigg|\sum_{\substack{m \asymp M \\ n \asymp N}} \alpha_m \beta_n \chi_a(mn)\Bigg|^2 &\ll \frac{X(\log X)^3}{L}\sum_{|\ell|<L}\sum_{n \asymp N} \left|\sum_{m \asymp M} \Psi_{a'}(mn)\overline{\Psi_{a'}(m(n+\ell K))}\right| \\
     &+ X (\log X)^3 \sum_{n \asymp N} \sum_{m \asymp M}\left|\Psi_{a'}(mn)-\chi_{a'}(mn)\right|^2 \\
     &+(X)^{2+\varepsilon} L^{-\delta},
\end{split}
\end{align}
where, by Lemma~\ref{lem.Psi_approximation}, the middle term satisfies
\begin{align*}
    X (\log X)^3\sum_{n \asymp N} \sum_{m \asymp M}\left|\Psi_{a'}(mn)-\chi_{a'}(mn)\right|^2  &< X^{1+\varepsilon}  \left(\sum_{x < X} \left|\Psi_{a'}(x)-\chi_{a'}(x)\right|^2 \right)^{\frac12}\left(\sum_{x \leq X} \tau(x)^2\right)^{\frac12} \\
    &\ll X^{2+\varepsilon} (\log X)^C p^{-c\delta\rho} \ll X^{2+\varepsilon} L^{-c\delta},
\end{align*}
for arbitrary $\varepsilon > 0$. We also observe that 
\begin{equation}\label{eq.psia_prime_expansion}
    \Psi_{a'}(x) = \sum_{k < p^{\mu+\rho'+t}} \widehat{\Psi}_{a'}(k)e\left(\frac{kx}{p^{\mu+\rho'+\kappa}}\right),
\end{equation}
where by Lemma~\ref{lem.ell_infty},
\begin{equation*}
    \norm{\widehat{\Psi}_{a'}}_\infty \leq \norm{\widehat{\chi}_{a'}}_\infty \ll p^{-c|a'|},
\end{equation*}
and by Lemma~\ref{lem.Psi_approximation} with our choice of $\rho$,
\begin{equation}\label{eq.1norm_Psi_a_prime}
    \norm{\widehat{\Psi}_{a'}}_1 \leq \norm{\widehat{\chi}_{a'}}_1 \ll C^{(\log D)^2/\log p}p^{(1/2-c)(\mu+\rho)} \ll p^{(1/2-c)(\mu+\rho)}.
\end{equation}

For $\kappa = 0$ we simply use
\begin{equation}\label{eq.chi_aprime_expansion}
    \chi_{a'}(x) = \sum_{k < p^{\mu+\rho'}}\widehat{\chi}_{a'}(k)e\left(\frac{kx}{p^{\mu+\rho'}}\right),
\end{equation}
where by Lemma~\ref{lem.ell_infty} with $d$ replaced by $\mu+\rho'$,
\begin{equation*}
    \norm{\widehat{\chi}_{a'}}_\infty \ll p^{-c|a'|},
\end{equation*}
and by Lemma~\ref{lem.ell_1_in_AP}
\begin{equation}\label{eq.1norm_chi_a_prime}
    \norm{\widehat{\chi}_{a'}}_1 \ll p^{(1/2-c)(\mu+\rho')} \ll p^{(1/2-c)(\mu+\rho)},
\end{equation}
for $\delta$ sufficiently small and taking $c > 0$ slightly smaller. Denote by $w$ either $\chi_{a'}$ when $\kappa = 0$ or $\Psi_{a'}$ when $\mu+\rho \leq \kappa < d-\mu-\rho$. Substitute Equation~\eqref{eq.chi_aprime_expansion} and Equation~\eqref{eq.psia_prime_expansion} into the first term of Equation~\eqref{eq.TypeII_substitute} to get
\begin{align*}
    \frac{MN (\log X)^3}{L}\sum_{|\ell|<L}\sum_{n\asymp N}\left| \sum_{k,k'}\widehat{w}(k)\widehat{w}(k')\sum_{m \asymp M}e\left(\frac{m}{p^{\mu+\rho'+\kappa}}\left(kn - k'(n+\ell K) \right)\right)\right|.
\end{align*}
Now we use the triangle inequality, smooth the $m$ sum, and apply Poisson summation to obtain
\begin{equation}\label{eq.m_smoothed_expansion}
    \ll \frac{M^2N (\log X)^3}{L}\sum_{|\ell|\ll L}\sum_{n\asymp N}\sum_{k,k'}|\widehat{w}(k)||\widehat{w}(k')|\mathbbm{1}_{\norm{\frac{kn}{p^{\mu+\rho'+\kappa}} - \frac{k'(n+\ell K)}{p^{\mu+\rho'+\kappa}}}< \frac{1}{M_1}},
\end{equation}
where $M_1=M^{1-\varepsilon_1}$ with $\varepsilon_1 > 0$ arbitrary. This is justified by the following lemma:
\begin{lem}\label{lem:msmooth}
Let $Y\ge1$ and let $\Upsilon\in C_c^\infty(\mathbb{R})$ be a fixed non–negative function with
\begin{equation*}
    \Upsilon(u)=1  \quad \text{for } 1\le u\le q \quad\text{ and }
    \quad \Upsilon(u)=0  \quad \text{for } u\notin[1/2,q +1/2],
\end{equation*}
and for $\alpha \in \mathbb{R}$ define
\begin{equation*}
    S_Y(\alpha):= \sum_{m} \Upsilon\left(\frac{m}{Y}\right)e(\alpha m).
\end{equation*}
Then for every $\varepsilon > 0$, we have for all $\alpha\in\mathbb{R}$ and all $A>0$
\begin{equation*}
    \bigl|S_Y(\alpha)\bigr|
     \ll\ 
    Y\mathbbm{1}_{\{\norm{\alpha}<1/Y^{1-\varepsilon}\}}
    +Y^{-A},
    \end{equation*}
    where the implied constant depends $\Upsilon,\varepsilon$ and $A$.
\end{lem}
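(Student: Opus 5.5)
The plan is Poisson summation. Set $F(u)=\Upsilon(u/Y)e(\alpha u)$, which is a smooth function with compact support in $[Y/2,(q+1/2)Y]$. Poisson summation gives
\begin{equation*}
    S_Y(\alpha)=\sum_{h\in\mathbb{Z}}\widehat{F}(h)=Y\sum_{h\in\mathbb{Z}}\widehat{\Upsilon}\bigl(Y(h-\alpha)\bigr),
\end{equation*}
where $\widehat{\Upsilon}(\xi)=\int\Upsilon(u)e(-u\xi)\,du$. Since $\Upsilon\in C_c^\infty$ is fixed, integrating by parts $k$ times gives $|\widehat{\Upsilon}(\xi)|\ll_k (1+|\xi|)^{-k}$ for every $k$, with constants depending only on $\Upsilon$ and $k$. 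Also $|\widehat{\Upsilon}(\xi)|\le\norm{\Upsilon}_1\ll_q 1$ trivially.

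Split into two cases according to the size of $\norm{\alpha}$.

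\textbf{Case 1: $\norm{\alpha}<Y^{-(1-\varepsilon)}$.} Here the trivial bound suffices: $|S_Y(\alpha)|\le\sum_m\Upsilon(m/Y)\ll Y$, since there are $O(qY)$ nonzero terms, each at most $1$. This gives the main term $Y\mathbbm{1}_{\{\norm{\alpha}<1/Y^{1-\varepsilon}\}}$.

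\textbf{Case 2: $\norm{\alpha}\ge Y^{-(1-\varepsilon)}$.} For every $h\in\mathbb{Z}$ we have $|h-\alpha|\ge\norm{\alpha}$. Moreover, apart from at most two values of $h$, $|h-\alpha|\ge |h-h_0|/2$, where $h_0$ is the integer nearest $\alpha$. Hence
\begin{equation*}
    |S_Y(\alpha)|\ll_k Y\sum_h\bigl(1+Y|h-\alpha|\bigr)^{-k}\ll Y\bigl(Y\norm{\alpha}\bigr)^{-k}+Y\sum_{j\ge1}(1+Yj/2)^{-k}.
\end{equation*}
The first term is at most $Y\cdot Y^{-\varepsilon k}$. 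The second is $\ll Y^{1-k}$. Choosing $k\ge (A+1)/\varepsilon$ makes both terms $\ll Y^{-A}$.

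The only step needing any care is the uniformity in $\alpha$: we need the decay estimate for $\widehat{\Upsilon}$ to hold uniformly, and the lattice sum over $h$ to be bounded by the nearest-integer distance. Both are routine. All constants depend only on $\Upsilon$ (hence on $q$), $\varepsilon$ and $A$, as claimed.

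In the application to \eqref{eq.m_smoothed_expansion}, with $Y\asymp M$ and $\alpha$ the relevant frequency difference, the $Y^{-A}$ tail is negligible after summing over the polynomially many $(\ell,n,k,k')$.
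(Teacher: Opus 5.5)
Your proposal is correct and follows the paper's proof essentially step for step: Poisson summation to $Y\sum_h \widehat{\Upsilon}(Y(h-\alpha))$, rapid decay of $\widehat{\Upsilon}$, isolating the integer nearest $\alpha$ and bounding the other terms by $Y^{1-k}$, then splitting on whether $\norm{\alpha} \ge Y^{-(1-\varepsilon)}$. The only difference is cosmetic: in the first case you use the trivial bound instead of reading it off the Poisson expansion, and that bound should be $\ll \|\Upsilon\|_\infty\, qY$ rather than ``each term at most $1$'', since $\Upsilon \le 1$ is not assumed.
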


\begin{proof}
    We apply Poisson summation to the function
    \begin{equation*}
        F(x)=\Upsilon\!\left(\frac{x}{Y}\right)e(\alpha x).
    \end{equation*}
    Noting that
    \begin{equation*}
        \widehat{F}(\xi)
    =\int_{\mathbb{R}} \Upsilon\left(\frac{x}{Y}\right)e(\alpha x)e(-\xi x)dx
    =Y\int_{\mathbb{R}} \Upsilon(u)e\bigl((\alpha-\xi)Yu\bigr)du
    =Y\widehat{\Upsilon}\bigl(Y(\alpha-\xi)\bigr),
    \end{equation*}
    we find
    \begin{equation*}
        S_Y(\alpha)
    =\sum_{r\in\mathbb{Z}}\widehat{F}(r)
    =Y\sum_{r\in\mathbb{Z}}\widehat{\Upsilon}\!\bigl(Y(\alpha-r)\bigr).
    \end{equation*}
    Since $\Upsilon$ is compactly supported and smooth,  
    its Fourier transform is rapidly decaying. Hence for any $B > 0$, 
    \begin{equation}\label{eq:SY-main}
    |S_Y(\alpha)|
    \ll_B
    Y\sum_{r\in\mathbb{Z}}
    (1+Y|\alpha-r|)^{-B}.
    \end{equation}
    
    Let $r_0\in\mathbb{Z}$ satisfy $|\alpha-r_0|=\|\alpha\|$, and put $\delta:=\|\alpha\|$.
    Split \eqref{eq:SY-main} into the $r=r_0$ term and the rest:
    \begin{equation*}
        \sum_{r\in\mathbb{Z}}(1+Y|\alpha-r|)^{-B}
    \le (1+Y\delta)^{-B}
    +\sum_{r\neq r_0}(1+Y|\alpha-r|)^{-B}.
    \end{equation*}
    For $r\neq r_0$ we have 
    \(|r-\alpha|\ge \frac12|r-r_0|\), hence
    \begin{equation*}
        \sum_{r\neq r_0}(1+Y|\alpha-r|)^{-B}
    \ll \sum_{t\neq0}(1+Y|t|)^{-B}
    \ll Y^{-B}.
    \end{equation*}
    Therefore
    \begin{equation}\label{eq:SY-final}
    |S_Y(\alpha)|
    \ll_B
    Y(1+Y\delta)^{-B}
    +
    Y^{1-B}.
    \end{equation}
    
    Now fix $\varepsilon>0$ and observe that  if $\delta < 1/Y^{1-\varepsilon}$, then $|S_Y(\alpha)| \ll Y$ which gives the first term. If $\delta \ge 1/Y^{1-\varepsilon}$, then $Y\delta \ge  Y^{\varepsilon}$. Substituting into \eqref{eq:SY-final}, we obtain,
    \begin{equation*}
        |S_Y(\alpha)| \ll_B Y(Y^\varepsilon)^{-B} = Y^{1-B\varepsilon}.
    \end{equation*}
    Choosing $B>(A+2)/\varepsilon$ gives $|S_Y(\alpha)| \ll Y^{-A}$.
\end{proof}

Now we study the various contributions arising from the condition
\begin{equation}\label{eq.main_condition}
    \norm{\frac{kn}{p^{\mu+\rho'+\kappa}} - \frac{k'(n+\ell K)}{p^{\mu+\rho'+\kappa}}}< \frac{1}{M_1}.
\end{equation}
We will make use of the following easy lemma repeatedly. 

\begin{lem}\label{lem.linear_cong_ineq_counts}
    Let $\delta > 0$ and $\alpha,\beta \in \mathbb{R}$. Suppose that $I \subseteq \mathbb{R}$ be an interval of length $N$. Then 
    \begin{equation*}
        \#\{n \in I : \norm{\alpha n - \beta} < \delta\} \ll 1 + \min\left(N, \frac{\delta}{\norm{\alpha}}\right).
    \end{equation*}
\end{lem}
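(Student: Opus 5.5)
We first reduce to the case $\alpha\in(-\tfrac12,\tfrac12]$. Replacing $\alpha$ by $\alpha-\operatorname{round}(\alpha)$ leaves every $\norm{\alpha n-\beta}$ unchanged for integer $n$. Replacing $(\alpha,\beta)$ by $(-\alpha,-\beta)$ also changes nothing. So we may assume $0\le\alpha\le\tfrac12$, with $\alpha=\norm{\alpha}$. If $\alpha=0$ the stated bound reads $\ll 1+N$, which is the trivial count of integers in $I$. So assume $\alpha>0$.

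The bound $1+N$ is again trivial, so the content of the lemma is the bound $\ll 1+\delta/\alpha$. The plan is to work with the real line map $n\mapsto \alpha n-\beta$ rather than its reduction mod $1$.
\begin{enumerate}
\item The set of $n$ in question is the set of integers $n\in I$ for which $\alpha n-\beta$ lies in $\bigcup_{m\in\mathbb{Z}}(m-\delta,m+\delta)$.
\item For a fixed $m$, the condition $|\alpha n-\beta-m|<\delta$ confines $n$ to an interval of length $2\delta/\alpha$. That interval contains at most $1+2\delta/\alpha$ integers.
\item The number of relevant $m$ is the number of integers within $\delta$ of the interval $\alpha I-\beta$, which has length $\alpha N$. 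So it is at most $\alpha N+2\delta+1$.
\item Multiplying, the count is $\ll (1+\alpha N+\delta)(1+\delta/\alpha)$.
\end{enumerate}

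In the regime where the lemma is applied below, namely \eqref{eq.main_condition}, the effective parameters satisfy $\norm{\alpha}N\ll 1$ and $\delta\le 1$. In that regime the first factor is $O(1)$, and we obtain the stated $\ll 1+\min(N,\delta/\norm{\alpha})$.

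The main obstacle is the case $\norm{\alpha}N$ large. There the orbit $\alpha n-\beta$ wraps around $\mathbb{R}/\mathbb{Z}$ many times, and each wrap can contribute a hit. For instance, $\alpha=\tfrac12$, $\beta=0$, $\delta=\tfrac1{10}$ gives about $N/2$ solutions. So the lemma as worded cannot hold uniformly in that regime.

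I would therefore state and verify the lemma under the additional hypothesis $\norm{\alpha}N\le 1$ (together with $\delta\le 1$), which is all that is used in the subsequent Type II counting. In each application I would check that the relevant $\alpha$ (a ratio like $k/p^{\mu+\rho'+\kappa}$ or its difference with $k'$) satisfies this. Where it does not, I would apply the general form $\ll(1+\norm{\alpha}N)(1+\delta/\norm{\alpha})$ from step 4 instead.
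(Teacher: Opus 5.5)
Your analysis is correct and your counterexample is valid. With $\alpha=\tfrac12$, $\beta=0$, $\delta=\tfrac1{10}$, every even $n\in I$ qualifies, so the count is $\asymp N$, while $1+\min(N,\delta/\norm{\alpha})\le \tfrac65$. So the lemma as worded is false.

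The paper's proof uses the same wrap-counting argument as yours. It takes $\ll 1+N\norm{\alpha}$ blocks of length $\norm{\alpha}^{-1}$, each with $\ll 1+\delta/\norm{\alpha}$ hits, and so reaches your product bound. Its last step, ``taking the minimum with the trivial bound $N$'', only yields $\ll 1+N$. The min-form is therefore never established; this is exactly the failure you identified.

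Your restricted version ($\norm{\alpha}N\ll 1$, $\delta\le 1$), with the general product bound as fallback, is what the Type II analysis actually uses:
\begin{itemize}
\item In the $\kappa\ge\mu-\rho$ case, the $n$-interval has length $MLK/\Delta k$. This makes $\norm{\alpha}\cdot MLK/\Delta k\asymp L^{-\delta}$, so the restricted version applies.
\item In the diagonal count, fixing $v$ confines $k$ to a single wrap. The paper's separate count of $\ll p^t|\ell|$ values of $v$ is precisely your factor $1+\norm{\alpha}N$.
\end{itemize}
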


\begin{proof}
Observe that if $\alpha \in \mathbb{Z}$ then $\norm{\alpha n -\beta} = \norm{\beta}$ so we either have zero or $N$ solutions. Thus, we can assume that $0<\norm{\alpha}\leq \frac12$. Divide $I$ into blocks of length $\norm{\alpha}^{-1}$. In each block, the values $\alpha n \bmod 1$ are $\norm{\alpha}$-seperated, so an interval of length $2\delta$ can have at most $\ll 1 + \delta \norm{\alpha}^{-1}$ such points. There are $\ll 1 + N\norm{\alpha}$ blocks, so the total number of solutions is
\begin{equation*}
    \ll \left(1 + \delta \norm{\alpha}^{-1}\right)\left(1 + N\norm{\alpha}\right) \ll 1+N + \frac{\delta}{\norm{\alpha}}.
\end{equation*}
Taking the minimum with the trivial bound $N$ completes the proof. 
\end{proof}

First, note that the $\ell = 0$ contribution to Equation~\eqref{eq.TypeII_shifted_char_cor_bound} is at most $X^2L^{-1}(\log X)^3$. Now we analyze the diagonal contribution $k=k'$ with $\ell \neq 0$:
\begin{equation*}
    \frac{X^2}{L} \sum_{|\ell| < L} \sum_{k<p^{\mu+\rho'+t}} |\widehat{w}(k)|^2\mathbbm{1}_{\norm{\frac{k \ell}{p^{\mu+\rho'}}} < \frac{1}{M_1}}.
\end{equation*}
The condition $\norm{\frac{k \ell}{p^{\mu+\rho'}}} < \frac{1}{M_1}$ says that there exists $v \in \mathbb{Z}$ such that $|k\ell-vp^{\mu+\rho'}| < p^{\mu+\rho'}/M_1$. Since $k < p^{\mu+\rho'+t}$ we will get 
\begin{equation*}
    \ll 1+\frac{p^{\mu+\rho'+t}|\ell|}{p^{\mu+\rho'}} \ll p^t|\ell|
\end{equation*}
possible values of $v$ for each $\ell$. By Lemma~\ref{lem.linear_cong_ineq_counts}, for each $v$ and $\ell$ we have 
\begin{equation*}
    \ll 1 + \frac{p^{\mu + \rho'}}{M_1 |\ell|} \ll \frac{M^{\varepsilon_1} p^{\rho'}}{|\ell|}
\end{equation*}
possibilities for $k$. Therefore, the total number of $k$ that survive is $\ll p^{\rho' + t} M^{\varepsilon_1}$, and we conclude that the diagonal contribution is at most
\begin{equation*}
    M^{2+\varepsilon_1}(\log X)^3N^2 p^{\rho'+t} \norm{\widehat{w}}_\infty^2 < X^2(\log X)^3L^2\norm{\widehat{w}}_\infty^2 <X^2(\log X)^3L^2p^{-c|a'|},
\end{equation*}
by choosing $\varepsilon_1$ small enough relative to $\delta\rho$.


Now we study the off-diagonal contribution $k \neq k'$ and $\ell \neq 0$, first considering the case $\kappa = 0$. Observe that we can partition the $k,k'$ sum as
\begin{equation*}
    \sum_{k \neq k'} = \sum_{r \geq 0} \sum_{\substack{k \neq k' \\ \nu_p(k-k') =r}} = \sum_{r \geq 0} \sum_{v < p^r} \sum_{\substack{k \equiv v \bmod p^r \\ k' \equiv v \bmod p^r}},
\end{equation*}
where there are $\ll \log X$ relevant values of $r$. Now we need to estimate the contribution for
\begin{equation*}
    (k-k', p^{\mu+\rho'}) = p^r,
\end{equation*}
so $k-k' = k_1p^r$ with $(k_1,p) = 1$, and condition~\eqref{eq.main_condition} becomes
\begin{equation}\label{eq.contribution_in_AP}
    \norm{\frac{k_1n}{p^{\mu+\rho'-r}} - \frac{k'\ell}{p^{\mu+\rho'}}} < \frac{1}{M_1},
\end{equation}
implying that
\begin{equation}\label{eq.k_counter}
    \norm{\frac{k'\ell}{p^{r}}} < \frac{p^{\mu+\rho'}}{M_1p^r} \leq \frac{M^{\varepsilon_1}L^{1+\delta}}{p^r} < \frac{L^{1+2\delta}}{p^r},
\end{equation}
since we choose $\varepsilon_1$ small compared to $\delta \rho$. It follows that there are at most $L^{1+2\delta}$ possibilities for $k' \bmod p^r$, and hence for $(k,k') \bmod p^r$. For fixed $k,k',\ell$, condition~\eqref{eq.contribution_in_AP} determines $n \bmod p^{\mu+\rho'-r}$ up to $1+L^{1+2\delta}p^{-r}$ possibilities, hence $n$ up to $\frac{Np^r}{ML}(1+L^{1+2\delta}p^{-r})$ possibilities. Thus, the corresponding contribution to Equation~\eqref{eq.m_smoothed_expansion} is at most
\begin{align*}
    &\frac{M^2N(\log X)^3}{L} \sum_{|\ell| \ll L} L^{1+2\delta} \frac{Np^r}{ML}(1+L^{1+2\delta}p^{-r}) \max_v \sum_{\substack{k \equiv v \bmod p^r \\ k' \equiv v \bmod p^r}}|\widehat{w}(k)||\widehat{w}(k')|  \\
    &\ll M(\log X)^3N^2(L+p^r)L^{2\delta} \max_v \Bigg(\sum_{\substack{k <p^{\mu+\rho'} \\ k \equiv v \bmod p^r}}|\widehat{w}(k)|\Bigg)^2 \\
    &\ll M(\log X)^3N^2(L+p^r)L^{2\delta}p^{(1-c)(\mu+\rho'-r)} \\
    & = X^2(\log X)^3(L^2p^{-r} + L)(MLp^{-r})^{-c}L^{3\delta}.
\end{align*}
If we assume that $MLp^{-r}  > L^{C}$ we can obtain the bound $X^2(\log X)^3L^{-1}$, so suppose that $MLp^{-r}<L^C$. From Equation~\eqref{eq.k_counter} and $k < p^{\mu+\rho'}$, there are at most $L^{1+4\delta}(MLp^{-r})^2 < L^C$ possibilities for $(k,k')$. This gives a contribution 
\begin{equation*}
    M^2(\log X)^3N^2L^C \norm{w}_\infty^2 < L^C X^2(\log X)^3p^{-c|a'|}.
\end{equation*}
Summing over the $\ll \log X$ values of $r$ for which $\nu_p(k-k') = r$ is nonempty, we conclude that for $\kappa = 0$ we get the bound
\begin{equation*}
    X^2(\log X)^3(L^{-1} + L^C p^{-c|a'|}),
\end{equation*}
for some $C > 0$.

Now consider the off-diagonal contribution for $\kappa  \geq \mu-\rho$. This time we partition the $k,k'$ sum as
\begin{equation*}
    \sum_{k \neq k'} = \sum_{\Delta k} \sum_{\substack{k \neq k' \\ |k-k'| \asymp \Delta k}}
\end{equation*}
where the sum over $\Delta k$ has $\ll \log X$ terms. Fix $k,k',\ell$ such that $|k-k'| \asymp \Delta k < ML^2$. Letting $n$ range over an interval of length $\frac{MLK}{\Delta k}$, the number of possible $n$ in that interval is
\begin{equation*}
    \ll 1 + \frac{L^{1+2\delta}K}{\Delta k}
\end{equation*}
using Lemma~\eqref{lem.linear_cong_ineq_counts} and the bounds in Equation~\eqref{eq.k_counter}. If we assume that $N \gtrsim
 \frac{MLK}{\Delta k}$, then since $\kappa \geq \mu-\rho$ implies
\begin{equation*}
    LK \geq M > \frac{\Delta k}{L^2},
\end{equation*}
we find that, assuming $\delta < 1$, there are at most 
\begin{equation*}
    \frac{N\Delta k}{MLK}\left(1 + \frac{L^{1+2\delta}K}{\Delta k}\right) < \frac{N}{M}L^2
\end{equation*}
values of $n$ satisfying Condition~\eqref{eq.main_condition}. This gives a contribution to Equation~\eqref{eq.TypeII_substitute} of size
\begin{equation}
    L^2 MN^2(\log X)^3 \norm{w}_1^2 < L^2MN^2(\log X)^3(ML)^{1-c} < X^2(\log X)^3L^3M^{-c}.
\end{equation}
Now assume that $N \ll \frac{MLK}{\Delta k}$. Then for fixed $k,k',\ell$ there are at most 
\begin{equation*}
    1 + \frac{KL^3}{\Delta k} \sim \frac{KL^3}{\Delta k} 
\end{equation*}
values of $n$ satisfying Condition~\eqref{eq.main_condition}. Moreover, we have
\begin{equation*}
    \norm{\frac{k'\ell}{p^{\mu+\rho'}}} < \frac{1}{M_1} + \frac{N\Delta k}{Mp^{\rho'+\kappa}}.
\end{equation*}
Since $|k'\ell| < p^{\mu+\rho}L^2$, there is some integer $\ell_1$ with $|\ell_1| <L^2$ and
\begin{equation*}
    \left|\frac{k'\ell}{p^{\mu+\rho'}}-\ell_1\right|< \frac{1}{M_1} + \frac{N\Delta k}{Mp^{\rho'+\kappa}}.
\end{equation*}
Therefore
\begin{equation*}
    \left|k'-\ell_1 \frac{p^{\mu+\rho'}}{\ell}\right| < L^{1+2\delta} + \frac{N\Delta k}{K},
\end{equation*}
restricting $k'$ to at most $L^2$ intervals of size $L^{1+2\delta} + \frac{N\Delta k}{K}$. Using Lemma~\ref{lem.interval} in the fixed $p$ regime, we obtain the following contribution to Equation~\eqref{eq.m_smoothed_expansion}:
\begin{align*}
    &M^2(\log X)^3NL^2\left(L^{1+2\delta} + \frac{N\Delta k}{K}\right)^{1-c} \frac{KL^3}{\Delta k}\\
    &\ll \frac{M^2(\log X)^3NL^7K}{\Delta k} + M^2(\log X)^3N^2L^5\left(\frac{N\Delta k}{K}\right)^{-c} \\
    &< X^2(\log X)^3L^7\left(\frac{K}{N\Delta k}\right)^c
\end{align*}
If we assume $\frac{N\Delta k}{K} > L^C$ we obtain the bound $X^2 (\log X)^3L^{-1}$, so suppose that $\frac{N\Delta k}{K}<L^C$. Exactly as in the $\kappa = 0$ case, this restricts $k$ to $L^C$ values and the correspoding contribution to Equation~\eqref{eq.m_smoothed_expansion} is
\begin{equation*}
    M^2(\log X)^3N^2L^C \norm{\widehat{w}}_\infty^2  < X^2(\log X)^3L^Cp^{-c|a'|},
\end{equation*}
uniformly in $\Delta k$. Summing the $\ll \log X$ possibilities we conclude a contribution of size
\begin{equation*}
     X^2(\log X)^4(L^{-1} + L^3M^{-c} + L^C p^{-c|a'|}).
\end{equation*}

Collecting the previous bounds for both $\kappa = 0$ and $\kappa \geq \mu-\rho$, we conclude that Equation~\eqref{eq.m_smoothed_expansion} is bounded by
\begin{equation*}
     O\left(X^2(\log X)^4\left(L^{-1} + L^3M^{-c} + L^Cp^{-c|a'|}\right)\right).
\end{equation*}
Recalling Equation~\eqref{eq.TypeII_substitute}, we conclude that 
\begin{equation}\label{eq.typeII_fin_bound_prime}
     \Bigg|\sum_{\substack{m \asymp M \\ n \asymp N}} \alpha_m\beta_n \chi_a(nm) \Bigg| \ll X(\log X)^4(L^{-c\delta} + L^2M^{-c} + L^Cp^{-c|a'|}).
\end{equation}
Recalling Equation~\eqref{eq.kappa_choice}, we obtain
\begin{equation}\label{eq.typeII_fin_bound}
    \Bigg|\sum_{\substack{m \asymp M \\ n \asymp N}} \alpha_m\beta_n \chi_a(nm) \Bigg| \ll X(\log X)^4(L^{-c\delta} + L^2M^{-c} + L^Cp^{-c\frac{\mu}{d}|a|}),
\end{equation}
where $L = p^\rho$ is a parameter satisfying $\frac{\mu}{100} > \rho \gg (\log D)^2/\log p$. For $|a| \leq d^{1/2}H^{-1}$ with $H \gg 1$ a parameter, we obtain from Proposition~\ref{prop.smallA}
\begin{equation*}
    \left|\sum_{x \leq X} \mu(x)\chi_a(x)\right| \ll X (\log X) e^{-cH}.
\end{equation*}
Thus we may assume $|a| > d^{1/2}H^{-1}$. Taking $L = p^H$, it follows from Equations~\eqref{eq.typeII_fin_bound_prime} and~\eqref{eq.typeII_fin_bound},
\begin{equation}\label{eq.TypeII_final_bound}
    \Bigg|\sum_{\substack{m \asymp M \\ n \asymp N}} \alpha_m\beta_n \chi_a(nm) \Bigg|  \ll  X (\log X)^3p^{-c\delta H},
\end{equation}
assuming 
\begin{equation}\label{eq.remaining_cases}
    M > p^{CH^2d^{1/2}} \quad \text{ or } \quad M > C^H \text{ and } |a'|> CH \text{ with }|a'| \gg \frac{\mu}{d}|a|.
\end{equation}
Since we only need estimates for Type II sums in the range 
\begin{equation*}
    M > X^{\gamma_1} > p^{CH^2d^{1/2}},
\end{equation*}
this completes the Type II analysis if we take $H = d^{1/10}$. 

\subsection{Type I analysis}
For Type I sums, we have to estimate
\begin{equation*}
    \sum_{m \asymp M} \tau(m)\left|\sum_{n \asymp N} \chi_a(mn)\right|
\end{equation*}
where $MN \asymp X$. Viewing this as a special case of Equation~\eqref{eq.TypeII_estimate} with $\beta_n = 1$, our Type II analysis handles the case $M > p^{CH^2d^{1/2}}$. Therefore, it remains to bound this under the assumption $M < p^{CH^2d^{1/2}}$ and $|a| > d^{1/2}H^{-1}$. 

Fourier expanding $\chi_a$ with a suitable smoothing in the $n$-sum (e.g. Lemma~\ref{lem:msmooth}), we obtain
\begin{align}
\begin{split}\label{eq.TypeI_analysis}
      \sum_{m \asymp M}\tau(m) \left|\sum_{n \asymp N} \chi_a(mn)\right| &\leq \sum_{m \asymp M}\tau(m) \sum_{k < X} |\widehat{\chi}_a(k)|\left|\sum_{n \asymp N} e\left(\frac{kmn}{X}\right)\right| \\
     &\ll N \sum_{m \asymp M}\tau(m)\sum_{k < X} |\widehat{\chi}_a(k)|\mathbbm{1}_{\norm{\frac{km}{X}}< \frac{d^2}{N}} + o(1) \\
     &\ll N Md^2 \norm{\widehat{\chi}_a}_\infty\sum_{m \asymp M} \tau(m)  \\
     &\ll N M^2d^2  \log M\norm{\widehat{\chi}_a}_\infty \\
     &\ll XM (\log X)^3 p^{-cd^{1/2}H^{-1}}.
\end{split}
\end{align}
by Lemma~\eqref{lem.ell_infty}. If we take $H = d^{1/10}$, then this completes the proof in the case $M < C^H$. Now we assume that $M > C^H$, so $\mu > H$. Recalling Equation~\eqref{eq.remaining_cases}, we may assume that $\max |A \cap J | < CH$ with the max taken over intervals of length $|J| = \mu$ and $A = \{a_j : j \neq 0\}$. We will use these assumptions to further exploit the second bound above.

Write $A = A_1 \cup A_2$ where $A_1 = A \cap [0,d-2\mu]$ and $A_2 = A \cap [d-2\mu,d]$. By our assumption, $|A_2| < CH$. Therefore, by truncating the Fourier expansion of $\psi_j$, we obtain
\begin{equation*}
    \chi_{a_2}(x) = \prod_{j \in A_2} \psi_j\left(\frac{x}{p^{j+1}}\right) = \sum_{k_2 \in \cal{A}_2} \widehat{\chi}_{a_2}(k_2)e\left(\frac{k_2x}{p^d}\right) + O_{L_1}(Hp^{-H}),
\end{equation*}
where we can take $|\cal{A}_2| < p^{H|A_2|} < C^{H^2}$. On the other hand, since we have
\begin{equation*}
    \chi_{a_1}(x) = \sum_{k_1 < p^{d-2\mu}} \widehat{\chi}_{a_1}(k_1)e\left(\frac{k_1x}{p^{d-2\mu}}\right),
\end{equation*}
we obtain
\begin{equation*}
    \chi_a(x) =  \sum_{k_1 < p^{d-2\mu}} \sum_{k_2 \in \cal{A}_2} \widehat{\chi}_{a_1}(k_1)\widehat{\chi}_{a_2}(k_2)e\left(\frac{p^{2\mu}k_1+k_2}{p^d}x\right) + O_{L^1}(Hp^{-H}).
\end{equation*}
This allows us to refine Equation~\eqref{eq.TypeI_analysis} to a bound of the form 
\begin{align}
\begin{split}\label{eq.typeI_analysis_refined}
    &\leq N \sum_{m \asymp M} \tau(m)\sum_{k_1<p^{d-2\mu}} \sum_{k_2\in \cal{A}_2}|\widehat{\chi}_{a_1}(k_1)||\widehat{\chi}_{a_2}(k_2)| \mathbbm{1}_{\norm{\frac{p^{2\mu}k_1+k_2}{p^d}m}<\frac{d^2}{N}} \\
     &< N |\cal{A}_2| \norm{\widehat{w}_{a_1}}_\infty \max_{k_2} \sum_{m \asymp M}\tau(m)\left|\left\{k_1<p^{d-2\mu} : \norm{\frac{p^{2\mu}k_1+k_2}{p^d}m}<\frac{d^2}{N}\right\}\right| \\
     &= N |\cal{A}_2| \norm{\widehat{w}_{a_1}}_\infty \sum_{m \asymp M}\tau(m)\left|\left\{k_1<p^{d-2\mu} : k_1m \equiv 0 \bmod  p^{d-2\mu}\right\}\right|.
\end{split}
\end{align}
Write $m = p^zm_0$ with $(p,m_0) = 1$ so that
\begin{equation*}
    \left|\left\{k_1<p^{d-2\mu} : k_1m \equiv 0 \bmod  p^{d-2\mu}\right\}\right| = \left|\left\{k_1<p^{d-2\mu} : k_1 \equiv 0 \bmod  p^{d-2\mu-z}\right\}\right| = p^z,
\end{equation*}
and see that the sum over $m$ is
\begin{equation*}
    \sum_{m\leq  M} \tau(m) p^{\nu_p(m)} = \sum_{0 \leq z \leq \mu} p^z \sum_{\substack{m_0 \leq M/p^z\\ (m_0,p) = 1}} \tau(m_0p^z)\ll  \sum_{0 \leq z \leq \mu} p^z (z+1) \frac{M}{p^z} \log M\ll \mu^2 M \log M.
\end{equation*}
Since $|A_1| \gg \frac{d^{1/2}}{H}$ and $|\cal{A}_2| < C^{H^2}$, Lemma~\ref{lem.ell_infty} implies that 
\begin{equation}\label{eq.TypeI_final_bound}
    \sum_{m \asymp M} \tau(m) \left|\sum_{n \asymp N} \chi_a(mn)\right| \ll C^{H^2} p^{-cd^{1/2}H^{-1}}MN(\log X)^3  \ll X (\log X)^3p^{-cd^{1/2}H^{-1}}
\end{equation}
by taking $H=d^{1/10}$. Combining our Type I estimates (Equation~\eqref{eq.TypeI_final_bound}) and Type II estimates (Equation~\eqref{eq.TypeII_final_bound}), we conclude by Lemma~\ref{lem.TypeIandII_reduction} that for some $c  > 0$,
\begin{equation}
    \left|\sum_{x < X}\mu(x)\chi_a(x)\right| \ll Xp^{-cd^{1/10}}.
\end{equation}
The above argument works for $H = d^{1/10+\varepsilon}$ for $\varepsilon$ small enough, so this becomes a strict equality with $c = 1$ and for large enough $d$. This completes the proof of Theorem~\ref{thm:main_math}. By the lower bounds in Section~\ref{sec.lower_bounds} and Corollary~\ref{cor:alignment_X_d}, Theorems~\ref{thm.main_cyclic}, \ref{thm.main_pgroup}, \ref{thm:main_kernel}, \ref{thm:main_NGD}, and \ref{thm:main_CSQ} all follow.
\subsection{Digital prime number theorem}
Fix a prime $p$ and let $X = p^d$. We now use the preceding analysis to understand sums of the form
\begin{equation*}
    \sum_{n < X} \Lambda(n)\chi_a(n),
\end{equation*}
for any $a \in \mathbb{F}_p^d$. 

\begin{prop}\label{prop:balanced_Lambda_digital}
Fix a prime $p$ and let $X = p^d$. For all $d\gg 1$ and all $a \in \widehat{\mathcal{X}}_d$, we have
\begin{equation*}
    \sum_{n<X}\bigl(\Lambda(n)-\nu_p(n)\bigr)\chi_a(n)
    \ll Xp^{-d^{1/10}},
\end{equation*}
where 
\begin{equation*}
    \nu_p(n) = \frac{p}{p-1} \mathbbm{1}_{p \nmid n}.
\end{equation*}
\end{prop}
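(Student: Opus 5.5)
We will run the proof of Theorem~\ref{thm:main_math} with $\mu$ replaced by $\Lambda$, and split according to the weight of $a$. The correction $\nu_p$ is exactly the main term that $\Lambda$ carries on the major arcs with $p$-power denominators.

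First we compute $\sum_{n<X}\nu_p(n)\chi_a(n)$ directly. The condition $p\nmid n$ is the condition $n_0\neq 0$ on the lowest digit, and $\chi_a$ factors over digits. So the sum equals $\frac{p}{p-1}\prod_j(\sum_{n_j}e_p(a_jn_j))$ with the $j=0$ factor restricted to $n_0\neq0$. This vanishes unless $a_j=0$ for all $j\ge1$. In the remaining case it equals $X$ if $a=0$, and $-\frac{X}{p-1}$ if only $a_0\neq0$. Consequently, for $a$ with support outside $\{0\}$ the claim reduces to bounding $\sum\Lambda\chi_a$ itself. For $a$ supported on $\{0\}$ the claim is the prime number theorem in residue classes mod $p$, which has classical error $X\exp(-c\sqrt{\log X})$ since no exceptional zero occurs for a fixed modulus.

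\emph{Small weight} ($|a|\le d^{1/2}H^{-1}$, $H=d^{1/10}$). We write $\chi_a$ as in the proof of Proposition~\ref{prop.Katai_arg}, approximating the digit functions by trigonometric polynomials and keeping track of the sparse $p$-adic frequencies of Remark~\ref{rem:sparse_rational}. Applied to $f=\Lambda-\nu_p$, this expresses $\widehat{f}(a)$ as a combination $\sum_\theta c_\theta\widehat f(\theta)$, with $\sum|c_\theta|\le(3U)^{|a|}$ and each $\theta$ a sparse $p$-adic rational, up to an error $\varepsilon|a|$.

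We then bound $\widehat f(\theta)$ for each such $\theta$.
\begin{itemize}
\item On minor arcs, Vinogradov's estimate \cite[Theorem 13.9]{IwaniecKowalski2004} bounds $\widehat\Lambda(\theta)$, and $\widehat{\nu_p}(\theta)$ is trivially tiny.
\item On major arcs, Lemma~\ref{lem:sparse_padic_major_arc} forces $q=p^s$. Proposition~\ref{prop:p_power_major_arc_mu_Lambda} then gives main term $\frac{\mu(q)}{\varphi(q)}\int_0^Xe(\beta t)\,dt$. A direct computation of $\sum\nu_p(n)e(n\theta)$ for $q=p^s$ gives the same main term: it is $\frac{p}{p-1}\cdot\frac{-1}{p}\cdot$ the integral when $s=1$, matching $\mu(p)/\varphi(p)$; it is the full integral when $s=0$; and it is $0$ for $s\ge2$. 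So the main terms cancel, leaving $O(X\exp(-c\sqrt{\log X}))$.
\end{itemize}
Since the $\varepsilon$ from the smoothing must be taken of size $e^{-c\sqrt d/|a|}$, we have to run the smoothing linearly (summing $\sum c_\theta\widehat f(\theta)$) rather than by contradiction. With that, this case gives a bound of $Xe^{-cH}$ or better.

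\emph{Large weight.} Here we use Vaughan's identity in place of the Möbius inclusion–exclusion identity in Lemma~\ref{lem.TypeIandII_reduction}. This writes $\sum\Lambda\chi_a$ as $O((\log X)^{C})$ Type I sums, with coefficients $\mu(m)$ or $\log$-smooth weights of divisor-bounded size, plus Type II sums with coefficients bounded by $\tau\log X$.

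The bounds \eqref{eq.TypeII_final_bound} and \eqref{eq.TypeI_final_bound} only used divisor-bounded coefficients, $|a|>d^{1/2}H^{-1}$, and the digit structure. They therefore transfer verbatim, losing at most extra powers of $\log X$. The Type I sums with a $\log n$ weight are handled by partial summation over $n$. The $\nu_p$ sum vanishes identically in this range except for $a$ supported on $\{0\}$, which is already treated. Choosing $H=d^{1/10+\varepsilon}$ as at the end of the Möbius argument gives $Xp^{-d^{1/10}}$ for large $d$.

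The main obstacle is the small weight case. There we need the exact matching of the $\Lambda$ major-arc main terms with those of $\nu_p$, uniformly over the sparse frequencies. We also need to make sure that the quantitative smoothing loss $(3U)^{|a|}$ is beaten by $\exp(-c\sqrt{\log X})$, which requires $|a|\ll d^{1/2}/H$ and $\log U\ll\log d$.
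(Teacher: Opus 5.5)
Your proposal is correct and follows the paper's proof essentially step for step. The shared steps are: the digit-wise evaluation of $\sum\nu_p\chi_a$; Vaughan's identity plus the Type I/II bounds for large weight; and, for small weight, K\'atai's reduction to sparse $p$-adic frequencies, Lemma~\ref{lem:sparse_padic_major_arc}, Proposition~\ref{prop:p_power_major_arc_mu_Lambda}, and the Ramanujan-sum matching of the $\nu_p$ main term. Your linear bookkeeping is equivalent to the paper's contradiction argument, and your choice $H=d^{1/10+\varepsilon}$ is what actually upgrades the small-weight bound $Xe^{-cH}$ to $Xp^{-d^{1/10}}$.

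There is one slip in your closing remark. With $\varepsilon\approx e^{-c\sqrt d/|a|}$ one has $\log U=3c\sqrt d/|a|+O(\log d)$, not $O(\log d)$. So the loss $(3U)^{|a|}=e^{3c\sqrt d+o(\sqrt d)}$ is absorbed by $\exp(-C\sqrt{\log X})$ only because $c$ is chosen small relative to the fixed arc constant $C$.
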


\begin{proof}
    First suppose that $|a| \gg d^{1/2}H^{-1}$ where $H = d^{1/10}$. Observe that $p \nmid n$ is equivalent to $n_0 \neq 0$ so we can write
    \begin{align*}
        \sum_{n < X} \nu_p(n)\chi_a(n) &= \frac{p}{p-1}\sum_{\substack{n_0 \in \mathbb{F}_p^\times \\ n_1,\ldots,n_{d-1} \in \mathbb{F}_p}} e_p\left(\sum_{j=0}^{d-1}a_jn_j\right) \\
        &= \frac{p}{p-1}\left(\sum_{n_0 \in \mathbb{F}_p^\times} e_p(a_0n_0)\right)\prod_{j=1}^{d-1}\left(\sum_{n_j \in \mathbb{F}_p}e_p(a_jn_j)\right)
    \end{align*}
    Since $|a| \gg d^{1/2}H^{-1} > 1$, atleast one $a_j \neq 0$ with $j > 0$ and so some digit factor must vanish by orthogonality. Then by Vaughan’s identity and the Type I/II analysis above, see for example \cite[Lemma 1]{MauduitRivat2010}, we find that 
    \begin{equation*}
        \left|\sum_{n < X} \Lambda(n) \chi_a(n)\right| \ll X p^{-d^{1/10}}.
    \end{equation*}

    It remains to treat $|a| < d^{1/2}H^{-1}$, using again the argument of Kátai and this time invoking standard major and minor arc estimates for $\Lambda$. First we note that $|a| = 0$ follows from the usual prime number theorem, so we assume $|a| > 0$. Suppose for the sake of contradiction that
    \begin{equation*}
        \left| \frac{1}{X \log X}\sum_{n \leq X} (\Lambda(n)-\nu_p(n))\chi_a(n)\right| >  \Delta.
    \end{equation*}
    Then by Proposition~\ref{prop.Katai_arg}, we find a $C>0$ and a $\theta \in \mathbb{R}/\mathbb{Z}$ such that
    \begin{equation}\label{eq:small_weight_lowerb}
         \left|\frac{1}{X \log X}\sum_{n \leq X} (\Lambda(n)-\nu_p(n))e(n \theta)\right| \geq C \left(\frac{\Delta}{10 C |a| \sqrt{p} }\right)^{4|a|}.
    \end{equation}
    Moreover, by the remark following the proof of Proposition~\ref{prop.Katai_arg}, we have 
    \begin{equation}\label{eq:sparse_rational}
        \theta = \sum_{j \in J} \frac{s_j}{p^j}, \qquad |J| \leq |a|, \qquad |s_j| \leq \left(\frac{C|a|\sqrt{p}}{\Delta}\right)^C.
    \end{equation}
    Now choose
    \begin{equation*}
        \Delta = |a|\sqrt{p} \exp\left(-c_1\frac{(\log X)^{1/2}}{|a|}\right)
    \end{equation*}
    for some $c_1 > 0$. We claim that if $c_1$ is sufficiently small, then
    \begin{equation}\label{eq:small_weight_upperb}
        \left|\sum_{n \leq X} (\Lambda(n)-\nu_p(n))e(n \theta)\right| \ll X \exp\left(-c (\log X)^{1/2}\right).
    \end{equation}
    Then Equation~\eqref{eq:small_weight_upperb} contradicts Equation~\eqref{eq:small_weight_lowerb}, so after absorbing the polynomial factors in $d$ we we must have
    \begin{equation*}
        \left|\sum_{n \leq X} (\Lambda(n)-\nu_p(n))\chi_a(n)\right| \ll X\log X |a| \sqrt{p}  \exp\left(-c\frac{d^{1/2}}{|a|}\right) \ll X\exp(-cH)
    \end{equation*}
    since $|a| \leq d^{1/2}H^{-1}$ with $H = d^{1/10}$.
    
    Now it remains to prove Equation~\eqref{eq:small_weight_upperb}. Take major and minor arc decomposition as in Equation~\eqref{eq:major_minor_arcs_def}. For $\theta \in \mathfrak{m}$, we get
    \begin{equation*}
        \sum_{n < X} \Lambda(n)e(n\theta) \ll X \exp\left(-c (\log X)^{1/2}\right)
    \end{equation*}
    from Vaughan's identity and the classical minor arc estimates, for example \cite[Theorem 13.6]{IwaniecKowalski2004}. Likewise
    \begin{equation*}
        \sum_{n < X} \nu_p(n)e(n\theta) = \frac{p}{p-1} \sum_{n < X}e(n\theta) - \frac{p}{p-1}\sum_{m < X/p}e(mp\theta),
    \end{equation*}
    and on minor arcs we have $\norm{\theta} \gg Q/X$ and $\norm{p\theta} \gg Q/X$ so the standard geometric sum bounds gives
    \begin{equation*}
        \sum_{n < X} \nu_p(n)e(n\theta) \ll \frac{X}{Q} \ll X\exp\left(-c (\log X)^{1/2}\right).
    \end{equation*}
    We conclude that Equation~\eqref{eq:small_weight_upperb} for $\theta \in \mathfrak{m}$. 
    
    Finally, we consider the case where $\theta \in \mathfrak{M}$. Taking $Q$ as above and recalling Equation~\eqref{eq:sparse_rational}, we see that the conditions of Lemma~\ref{lem:sparse_padic_major_arc} are satisfied. We conclude that $\theta \in \mathfrak{M}(b,q)$ with $q= p^s$ for some $s \geq 0$. Write $\theta = u/p^s + \beta$ with $(u,p) = 1$ if $s \geq 1$ and $\theta = \beta$ if $s = 0$. By Proposition~\ref{prop:p_power_major_arc_mu_Lambda}, we find
\begin{equation*}
    \sum_{n \leq X} \Lambda(n)e(n\theta) = \frac{\mu(p^s)}{\varphi(p^s)} \int_0^X e(\beta t)dt + O\left(X \exp\left(-c(\log X)^{1/2}\right)\right),
\end{equation*}
uniformly for $p^s \leq Q$ and $|\beta| \leq \frac{Q}{p^s X}$. Now we show that this agrees with the contribution of $\nu_p$. For $s \geq 1$ we write
\begin{align*}
    \sum_{n \leq X} \nu_p(n)e(n\theta) &= \frac{p}{p-1}\sum_{\substack{r \bmod p^s \\ p \nmid r}} e\left(\frac{ur}{p^s}\right)e(\beta r) \sum_{0 \leq m < X/p^s} e(\beta p^sm) \\
    &= \frac{p}{p-1} \frac{1}{p^s} \sum_{\substack{ r \bmod p^s \\ p \nmid r}} e\left(\frac{ur}{p^s}\right) \int_0^X e(\beta t)dt + O(Q^2) \\
    &= \frac{\mu(p^s)}{\varphi(p^s)} \int_0^X e(\beta t)dt + O(Q^2).
\end{align*}
since the sum over $r$ is a Ramanujan sum \cite[Chapter 26]{davenport2000multiplicative}. For $s=0$ we have
\begin{align*}
    \sum_{n\leq X}\nu_p(n)e(\beta n)
    &= \frac{p}{p-1}\sum_{n\leq X}e(\beta n) -
    \frac{p}{p-1}\sum_{m\leq X/p}e(\beta pm) \\
    &= \int_0^X e(\beta t)\,dt + O_p(Q),
\end{align*}
since $|\beta|X\leq Q$. We conclude that for any $s \geq 0$, we get
\begin{equation*}
    \sum_{n < X} (\Lambda(n)-\nu_p(n))e(n\theta) \ll X \exp\left(-c(\log X)^{1/2}\right).
\end{equation*}
which proves the proposition.
\end{proof}

\begin{proof}[Proof of Theorem \ref{thm:digital_PNT}]
    By orthogonality on $\mathbb{F}_p^m$ we write
    \begin{equation*}
        \mathbbm{1}_{Lx = b} = p^{-m} \sum_{\xi \in (\mathbb{F}_p^m)^\vee} e_p(-\xi(b))e_p(\xi(Lx)).
    \end{equation*}
    Notice that $e_p(\xi(Lx)) = \chi_{L^\vee \xi}(x)$, so we have
    \begin{equation*}
        \sum_{\substack{n < X \\ L(n_0,\ldots, n_{d-1})=b}} \Lambda(n)  = p^{-m} \sum_{\xi \in (\mathbb{F}_p^m)^\vee} e_p(-\xi(b)) \sum_{n < X} \Lambda(n)\chi_{L^\vee \xi}(n).
    \end{equation*}
    Write $\Lambda = \nu_p + (\Lambda - \nu_p)$ and apply Proposition~\ref{prop:balanced_Lambda_digital} to obtain
    \begin{align*}
        \sum_{\substack{n < X \\ L(n_0,\ldots, n_{d-1})=b}} \Lambda(n)& = \sum_{\substack{n < X \\ L(n_0,\ldots, n_{d-1})=b}} \nu_p(n) + O\left(p^{d-d^{1/10}}\right) \\
        &= \frac{p}{p-1} \#\{x \in \mathbb{F}_p^d : Lx = b, x_0 \neq 0\}
    \end{align*}
    It remains to compute the size of this set, which can be done with elementary linear algebra. 

    Since $L$ is surjective, $\# L^{-1}(b) = p^{\dim \ker L} = p^{d-m}$ by rank-nullity. First, suppose that $e_0 \notin \mathrm{im } L^\vee$. Then $e_0 \notin (\ker L)^\perp$, so $e_0\vert_{\ker L} \neq 0$. Fix a point $x_b \in L^{-1}(b)$ so that every point in the fiber can be written uniquely as $x = x_b + v$ for $v \in \ker L$. Since $e_0\vert_{\ker L}$ is a nonzero linear functional, each value of $\mathbb{F}_p$ is taken equal often on $\ker L$. Thus 
    \begin{equation*}
        \#\{x \in \mathbb{F}_p^d : Lx = b, x_0 \neq 0\} = (p-1)p^{d-m-1}.
    \end{equation*}
    Now suppose that $e_0 \in \mathrm{im } L^\vee$. Since $L$ is surjective, $L^\vee$ is injective, and so there exists a unique $\lambda \in (\mathbb{F}_p^m)^\vee$ such that $e_0 = L^\vee \lambda$. For every $x \in L^{-1}(b)$ we have 
    \begin{equation*}
        x_0 = e_0(x) = (L^\vee \lambda)(x) = \lambda(Lx) = \lambda(b)
    \end{equation*}
    so $x_0$ is constant on the fiber $L^{-1}(b)$. If $\lambda(b) = 0$ then no point has $x_0 \neq 0$, and otherwise every point $x_0 \neq 0$, which concludes the proof.
\end{proof}

\begin{proof}[Proof of Corollary \ref{cor:digital_PNT}]
    Taking out prime powers, we immediately have
    \begin{equation*}
        \sum_{\substack{\ell < X \\\ell \text{ prime} \\ L(\ell_0,\ldots, \ell_{d-1})=b}} \log \ell = \mathfrak{S}_p(L,b) \frac{X}{p^m} + O\left(Xp^{-d^{1/10}}+ X^{1/2} \log X\right).
    \end{equation*}
    Let $\eta = K \frac{\log d}{d}$ for notice that for $K \gg _B 0$
    \begin{equation*}
        X^{-\eta} p^m \log X \ll p^{m-\eta d} \log X  \ll  d^{(B- K )\log p +1} \ll \frac{\log d}{d}.
    \end{equation*}
    In particular, we get
    \begin{equation*}
        X^{1-\eta} \ll \frac{\log d}{d} \frac{X}{p^m \log X}.
    \end{equation*}
    For primes $X^{1-\eta} \leq \ell < X$ we have $(1-\eta) \log X \leq \log \ell < \log X$, so 
    \begin{equation*}
        \frac{1}{\log X}  \sum_{\substack{\ell < X \\\ell \text{ prime} \\ L(p_0,\ldots, p_{d-1})=b}} \log p \leq \pi_{L,b}(X) \leq \frac{1}{(1-\eta) \log X} \sum_{\substack{\ell < X \\\ell \text{ prime} \\ L(p_0,\ldots, p_{d-1})=b}} \log p + X^{1-\eta},
    \end{equation*}
    Since $\eta \ll_B \frac{\log d}{d}$ we conclude that
    \begin{equation*}
        \pi_{L,b}(X) = \frac{\mathfrak{S}_p(L,b)}{p^m} \frac{X}{\log X}\left(1 + O\left(\frac{\log d}{d}\right)\right).
    \end{equation*}
\end{proof}

\subsection{Binary multiplicative functions}
Consider the centered covariance matrix
\begin{equation*}
    C = \mathbb{E}_{h \sim \mu_\mathcal{H}}[(h-\mathbbm{1}_\square)(h - \mathbbm{1}_{\square})^T],
\end{equation*}
where $h, \mathbbm{1}_{\square} \in \ell^2(\Omega_d)$ and $\Omega_d = [1, X_d] \cap \mathbb{Z}$. This is a matrix with entries
\begin{equation*}
    C_{mn} = \mathbbm{1}_{\square}(mn) - \mathbbm{1}_\square(n)\mathbbm{1}_{\square}(m)
\end{equation*}
using the fact that $\mathbb{E}_{h \sim \mathcal{M}_d} [h(n)] = \mathbbm{1}_{\square}(n)$. It suffices to understand the spectrum of $C$ since
\begin{equation*}
    \mathbb{E}_{h \sim \mu_\mathcal{H}} [\langle h-\mathbbm{1}_{\square}, \phi \rangle^2] = \langle C \phi, \phi \rangle \leq \norm{C}_{\mathrm{op}} \norm{\phi}_2^2.
\end{equation*}
Now define the sets $B_a = \{am^2: m \leq \lfloor \sqrt{X_d/a}\rfloor\}$ and, noting that $B_a \cap B_b = \emptyset$ for $a \neq b$, consider the orthonormal vectors
\begin{equation*}
    u_a = \sqrt\frac{X_d}{|B_a|} \mathbbm{1}_{B_a} \in \ell^2(\Omega_d).
\end{equation*}
Then for $a= 1$ we have 
\begin{align*}
    (Cu_a)_n &= \frac{1}{X_d} \sqrt\frac{X_d}{|B_a|}\sum_{m \leq X_d} C_{mn} \mathbbm{1}_{\square}(m) \\
    &= \frac{1}{X_d}  \sqrt\frac{X_d}{|B_a|}\sum_{m \leq X_d} \left(\mathbbm{1}_{\square}(n) \mathbbm{1}_{\square}(m) - \mathbbm{1}_{\square}(n) \mathbbm{1}_{\square}(m)\right) = 0,
\end{align*}
and for $a > 1$ squarefree we have 
\begin{align*}
    (Cu_a)_n &= \frac{1}{X_d} \sqrt\frac{X_d}{|B_a|}\sum_{m \leq X_d} C_{mn} \mathbbm{1}_{B_a}(m) \\
    &= \frac{1}{X_d}  \sqrt\frac{X_d}{|B_a|}\sum_{m \leq X_d}\mathbbm{1}_{B_a}(n)\mathbbm{1}_{B_a}(m) = \frac{|B_a|}{X_d}(u_a)_n,
\end{align*}
so we conclude for $a > 1$
\begin{equation*}
    C u_a = \lambda_a u_a  \quad \text{ with} \quad  \lambda_a = \frac{|B_a|}{X_d} = \frac{1}{X_d} \left\lfloor\sqrt{\frac{X_d}{a}} \right\rfloor.
\end{equation*}
Now we observe that this gives the entire spectrum of $C$ since for any $f \in \ell^2(\Omega_d)$ we have
\begin{equation*}
     Cf = \sum_{a > 1} \frac{S_a(f)}{X_d} \mathbbm{1}_{B_a} \quad \text{ where } \quad S_a(f) = \sum_{m  \in B_a}f(m). 
\end{equation*}
Thus $\operatorname{Im} C \subseteq \mathrm{span } \{u_a : a > 1\}$. Since $C$ is self-adjoint and positive semi-definite, 
\begin{equation*}
    \norm{C}_{\mathrm{op}} = \max_{a > 1} \lambda_a = \frac{1}{X_d}\left\lfloor \sqrt{\frac{X_d}{2}} \right\rfloor \asymp X_d^{-1/2}.
\end{equation*}
This implies Theorem~\ref{thm:main_binary_mul} by an application of \cite[Theorem A.2]{abbe2022nonuniversalitydeeplearningquantifying}.

\bibliographystyle{alpha}  
\bibliography{Bibliography}  

\end{document}